\let\@wraptoccontribs\wraptoccontribs
\newcommand{\B}[1]{B#1}
\DeclareMathOperator{\BK}{BK}\newcommand{\Bunline}{\underline{\mathcal{B}\textup{un}}}
\DeclareMathOperator{\Disp}{Disp}
\DeclareMathOperator{\Gr}{Gr}
\DeclareMathOperator{\Hck}{\textup{Hck}}
\newcommand{\Hckbar}[1]{#1\text{-}\overline{\Hck}}
\DeclareMathOperator{\Rees}{Rees}
\DeclareMathOperator{\type}{type}
\newcommand{\psz}{\dlbrack z \drbrack}
\newcommand{\lsz}{\dlparent z \drparent}
\newcommand{\ur}{\textup{ur}}
\newcommand{\tensor}[1]{\underset{#1}{\otimes}}
\DeclareMathOperator{\Zip}{Zip}
\begin{document}

\title{Moduli of truncated shtukas and displays}

\author{Eva Viehmann}
\address{Westf\"alische Universit\"at M\"unster\\ Fachbereich f\"ur Mathematik und Informatik FB10\\
Einsteinstra{\ss}e 62\\
48149 M\"unster, Germany}
\email{viehmann@uni-muenster.de}
\author{Torsten Wedhorn}
\address{Technische Universit\"at Darmstadt\\
Fachbereich Mathematik FB04\\
Schlossgartenstra{\ss}e 7 \\
64289 Darmstadt, Germany}
\email{wedhorn@mathematik.tu-darmstadt.de}
\contrib[with an appendix by]{Christopher Lang}
\address{Technische Universit\"at Darmstadt\\
Fachbereich Mathematik FB04\\
Schlossgartenstra{\ss}e 7 \\
64289 Darmstadt, Germany}
\email{clang@mathematik.tu-darmstadt.de}


\subjclass[2020]{Primary: 14D23; Secondary: 14D20, 14D24, 14L15, 14L30, 14G35, 11G18}

\begin{abstract}
We study moduli spaces of truncated local shtukas and truncated displays and describe them as concrete quotient stacks. To do this, we develop a general formalism of frames that can be applied in both cases and is also used to study prismatic displays and prismatic $F$-gauges.
\end{abstract}

\maketitle





\section*{Introduction}

Often cohomology of geometric objects takes the form of a module together with an additional structure. As an example one may think of de Rham cohomology equipped with its Hodge filtration. And sometimes the central objects of study are even themselves ``modules with additional structures'' such as shtukas. In this paper we start a systematic study of moduli spaces of such ``modules with additional structures''. We focus on local shtukas and on displays, even though the general formalism that we develop can also applied to prismatic displays, prismatic Breuil--Kisin modules, or prismatic $F$-gauges as explained in the last section. Here we use two main techniques: Geometrization and truncation.

\subsubsection*{Geometrization}

The idea of geometrization has been used very fruitfully in many areas, in particular recently with $p$-adic cohomology theories. Let us focus here on one aspect: Instead of studying modules over a commutative ring $R$ with some additional structure $\Sigma$ we look for a geometric object $R^{\Sigma}$ such that modules over $R^{\Sigma}$ are the same as modules over $R$ with additional structure $\Sigma$.

A well-known example are modules $M$ over $R$ with descending $\ZZ$-filtrations $\Fil^{\bullet}M$ (where we leave the precise definition of descending $\ZZ$-filtration vague for the moment). The corresponding geometrization is defined as follows. Let the multiplicative group $\GG_m$ act on the affine line $\AA^1$ by $(t,x) \sends t^{-1}x$ and form the corresponding quotient stack $R^{\Fil} := [\GG_{m,R}\backslash \AA^1_R]$. Then it is well-known (see also Proposition~\ref{CharFilteredVB} below) that a vector bundle over $R^{\Fil}$ is the same as a vector bundle over $R$ with a descending $\ZZ$-filtration by subbundles. We have an analogous result in the derived category: One can identify $\Dcal_{\textup{qc}}(R^{\Fil})$ with the filtered derived category of $R$ \cite{Moulinos_Filtrations}.

This point of view has in our opinion at least three advantages.
\begin{assertionlist}
\item
Algebraic constructions can be studied via geometry: For instance, if a filtered vector bundle over $R$ is viewed as a vector bundle over $R^{\Fil}$, its graded vector bundle can be obtained by pullback via the closed immersion $\B{\GG_{m,R}} = [\GG_{m,R}\backslash \{0\}] \to R^{\Fil}$.
\item
There is a very natural way to endow these geometrized algebraic structures with additional symmetries by a group scheme $G$: In the above example a filtered module over $R$ with $G$-structure can be simply defined as a $G$-bundle over $R^{\Fil}$. Compared to the classical way via Tannakian formalism this works also for group schemes $G$ for which one might not be able to identify $G$-bundles over $R$ with exact monoidal functors from the category of algebraic representations of $G$ on finite projective $R$-modules to the category of vector bundles (see \cite[A.28, A.30]{Wedhorn_ExtendBundles}).
\item
One may obtain via geometrization more natural definitions in those cases where the classical definitions do not yield satisfying results: The category of filtered modules defined as usual is not abelian \cite[II \S5.17]{GelfandManin_MethodsHomological}, but the category of all quasi-coherent modules over $R^{\Fil}$ certainly is. So we suggest to redefine a filtered module as a quasi-coherent module over $R^{\Fil}$ (see Definition~\ref{DefFilteredModuleRing}). This definition will have several advantages later in the paper.
\end{assertionlist} 

\medskip

\noindent
Let us illustrate the idea of geometrization via another example, which will be central in this paper, namely local shtukas (with one leg). Let $\kappa$ be a finite field with $q$ elements. Let $G$ be a reductive group over $\kappa$. Then a local $G$-shtuka over a $\kappa$-algebra $R$ is by definition a $G$-bundle $\Escr$ over $R\psz$ together with an isomorphism $\phi^*(\Escr[1/z]) \iso \Escr[1/z]$, where $\phi$ is the partial Frobenius
\begin{equation}\label{EqIntroPhiShtuka}
\phi(\sum_{n=0}^{\infty}a_nz^n) = \sum_{n=0}^{\infty}a_n^qz^n
\end{equation}
on $R\psz$. This structure is now geometrized as follows. Let $R^{\Hck}$ be the scheme obtained by gluing two copies of $\Spec R\psz$ along the open subscheme $\Spec R\lsz$. Then a $G$-bundle on $R^{\Hck}$ is the same as a Hecke triple $(\Escr_1,\Escr_2,\alpha)$, where $\Escr_1$ and $\Escr_2$ are $G$-bundles and $\alpha\colon \Escr_1[1/z] \iso \Escr_2[1/z]$ is an isomorphism.

To obtain a geometrization of $G$-shtukas, we denote by $\tau\colon \Spec R\psz \to R^{\Hck}$ the inclusion onto one of the copies of $\Spec R\psz$ and by $\sigma$ the inclusion onto the other copy precomposed with $\phi$. Given a $G$-bundle on $R^{\Hck}$ viewed as a triple $(\Escr_1,\Escr_2,\alpha)$, its pullback via $\tau$ (resp.~via $\sigma$) is $\Escr_1$ (resp.~$\phi^*\Escr_2$). Hence we define
\[
R^{\Sht} := \colim (\xymatrix{\Spec R\psz
\ar@<.5ex>[r]^-(.4){\tau}\ar@<-.5ex>[r]_-(.4){\sigma} & R^{\Hck}}),
\]
where we take the coequalizer in some $\infty$-category containing the classifying stacks $\B{G}$ of reductive groups $G$. Then $G$-bundles on $R^{\Sht}$ can be identified with local $G$-shtukas over $R$.

\subsubsection*{Truncation}

The second main technique is to approximate the objects that we are interested in (in this paper, mainly local shtukas and displays) by their truncations. For local shtukas and the corresponding Hecke triples this takes the following shape: The naive mod-$z^N$-reduction of a Hecke triple or a local shtuka loses to much information, we would essentially end up with a (pair of) $G$-bundles over $R\psz/(z^N)$. Instead, we pass to a ``completion'' (in the sense of compactification) of the stacks $R^{\Hck}$ and $R^{\Sht}$. Let us explain this for $R^{\Hck}$, since then one obtains the completion of $R^{\Sht}$ again by taking co-equalizers.

If $R$ is a field, then $R^{\Hck}$ is obtained from gluing two copies of a complete discrete valuation ring along their generic point. It is a regular one-dimensional scheme over $R\psz$. Now we aim to enrich its special fiber without changing the groupoid of $G$-bundles over it. For this we add a point of codimension $2$. Of course this makes only sense in the world of algebraic stacks in which negative dimensions exist. More precisely, we define for arbitrary $R$ the quotient stack
\[
R^{\Hck,\infty} = [\GG_{m}\backslash \Spec R\psz[t,u]/(tu-z)],\tag{*}
\]
where $\GG_m$ acts on $t$ (resp.~$u$) with degree $-1$ (resp.~$1$). If $R$ is a field, such algebraic stacks have also been studied in detail in \cite{AHLH_ExistenceModuli}.

There is an open embedding $R^{\Hck} \to R^{\Hck,\infty}$ identifying $R^{\Hck}$ with the locus where $u$ or $t$ is invertible. Pullback via this open immersion defines a functor from the groupoid $\Bun_G(R^{\Hck,\infty})$ of $G$-bundles on $R^{\Hck,\infty}$ to $\Bun_G(R^{\Hck})$. General results on extension of $G$-bundles (e.g.,\cite{Wedhorn_ExtendBundles}) show that this functor is fully faithful and even an equivalence if $R$ is a field (Proposition~\ref{BunGHckbar}). For general $R$ one can also describe the essential image of the functor (Proposition~\ref{VBEssentialImage} for vector bundles, Corollary~\ref{RestrictGBunReAvne0} for general $G$-bundles).

Now we can also define truncated Hecke stacks $R^{\Hck,N}$ for $N \geq 1$ by replacing $R\psz$ with $R\psz/(z^N)$ in (*). Via a very similar coequalizer construction we also obtain truncated shtuka stacks $R^{\Sht,N}$ (Definition~\ref{DefTruncatedShtukaStack}). By definition, an $N$-truncated local $G$-shtuka over $R$ is then a $G$-bundle on $R^{\Sht,N}$. In Theorem~\ref{ColimitNTruncShtuka} we see
\[
\Bun_G(R^{\Sht,\infty}) = \lim_{N<\infty} \Bun_G(R^{\Sht,N}).
\]
Hence we may approximate local $G$-shtukas by truncated local $G$-shtukas. In fact, we deduce this result from the fact that one has
\[
\colim_{N<\infty}R^{\Sht,N} = R^{\Sht,\infty},
\]
where we form the colimit in the 2-category of Adams stacks, whose theory we recall in Appendix~\ref{APPADAMS}.

We also show that the moduli space of truncated local $G$-shtukas decomposes as a direct sum of open and closed connected components parametrized by conjugacy classes $[\mu]$ of cocharacters of $G$. For each connected component we give a concrete description as a quotient stack (Theorem~\ref{DescribeModSpaceTruncShtukas}). This is an application of the general classification result explained below (Theorem~\ref{IntroClassThm}).

Finally we relate 1-truncated local $G$-shtukas to $G$-zips as defined in \cite{PWZ2} again by geometrization: Let $R^{\Zip}$ be the algebraic stack defined by Yaylali \cite{Yaylali_DerivedFZips} such that $G$-bundles over $R^{\Zip}$ can be identified with $G$-zips (Definition~\ref{DefFZipStack}). Then there are universal homeomorphisms $R^{\Sht,1} \to R^{\Zip} \to R^{\Sht,1}$ (Proposition~\ref{CompareZip}).

\subsection*{General results}

Most of the above results on (truncated) local shtukas exist also for (truncated) displays. In addition, it is also possible to study prismatic displays, prismatic Breuil--Kisin modules, or prismatic $F$-gauges with these techniques with analogous arguments. Therefore we develop a general formalism such that all of the above cases are special cases of our formalism.

\subsubsection*{Quotient stacks by $\GG_m$}

Since all of our geometrizations are algebraic quotient stacks by $\GG_m$ we start by studying these stacks, their fixed point locus, attractor locus, and repeller locus (Section~\ref{Sec:Attractor}). Then we recall some facts about quasi-coherent modules, modules of finite presentation and vector bundles on such stacks (Section~\ref{QCOHGGmSTACK}).

Finally, we consider colimits of such stacks. To keep it a purely algebraic theory and to avoid using GAGA theorems comparing $G$-bundles over formal schemes and over algebraic schemes we here work in the 2-category of Adams stacks developed by Sch\"appi and recalled in Appendix~\ref{APPADAMS}. The results of Sch\"appi also give us a tool at hand to identify colimits of such stacks (Theorem~\ref{AdamsStackCocomplete}). We use this to describe certain colimits of quotient stacks by $\GG_m$ (Proposition~\ref{ColimitQuotientStack}).

\subsubsection*{Filtered rings and their Rees stacks}

We then develop a general formalism of filtered modules and filtered algebras. According to our principle of geometrization, a filtered module over a ring $A$ is defined as a quasi-coherent module over the quotient stack $A^{\Fil}$ (Definition~\ref{DefFilteredModuleRing}). We describe the attached derived graded module. We describe vector bundles over $A^{\Fil}$ and show that they are simply vector bundles with a filtration by subbundles (Proposition~\ref{CharFilteredVB}).

Next we study filtered rings, where we only consider filtrations in non-negative degrees. Thus the natural definition according to our principles is the following.

\begin{intro-definition}\label{IntroDefFilteredRing}
A (non-negatively graded) filtered ring with underlying ring $A$ is a quasi-coherent algebra over $A^{\Fil}$ whose underlying $A$-algebra is $A$ and such that its derived graded module is concentrated in non-negative degrees.
\end{intro-definition}
This can also be made more explicit (Section~\ref{Subsec:FilteredRing}) as a sequence of (not necessarily injective maps) maps of $A$-modules
\[
\cdots \ltoover{t} \Fil^2 \ltoover{t} \Fil^1 \ltoover{t} \Fil^0 = A
\]
endowed with a multiplication and satisfying certain compatibilities.

Every quasi-coherent algebra $\Bscr$ over $A^{\Fil}$, given by a $\ZZ$-graded $A$-algebra $B$, corresponds to an affine morphism $\Re(B) = \Spec \Bscr \to A^{\Fil}$, and $\Re(B)$ is called the \emph{Rees stack}. We describe the fixed point, the attractor, and the repeller locus of the Rees stacks (Remark~\ref{ReesAttractor}). For a filtered ring $(A,(\Fil^j))$, the complement of the attractor locus can be identified with $\Spec A$ and one obtains an open immersion
\[
\tau\colon \Spec A \lto \Re(A, (\Fil^j)).
\]

\subsubsection*{Frames}

We develop a general formalism of frames that generalizes the notion of higher frames defined by Lau \cite{Lau_HigherFrames}.

\begin{intro-definition}(Definition~\ref{DefFrame}) \label{IntroDefFrame}
A \emph{frame} is a filtered ring $(A,(\Fil^j)_j)$ together with a map of algebraic stacks
\[
\sigma\colon \Spec A \lto \Re(A, (\Fil^j))
\]
which factors through the complement of the repeller locus.

The composition of $\sigma$ with the structure morphism $\Re(A, (\Fil^j)) \to \Spec A$ corresponds then to a ring endomorphism of $A$ which is denoted by $\phi$.
\end{intro-definition}

As in the special case of local shtukas we thus obtain the attached \emph{frame stack}
\[
\Fcal(A) := \colim (\xymatrix{\Spec A
\ar@<.5ex>[r]^-(.4){\tau}\ar@<-.5ex>[r]_-(.4){\sigma} & \Re(A, (\Fil^j))})
\]
These constructions indeed generalize the local shtuka case explained above:

\begin{intro-example}\label{IntroExampleSht}
Let $\kappa$ be a finite field with $q$ elements, let $R$ be a $\kappa$-algebra, and for $1 \leq N \leq \infty$ set $R_N := R\psz/(z^N)$. Consider the filtered ring
\[
\cdot \ltoover{\cdot z} R_N \ltoover{\cdot z} R_N \ltoover{\cdot z} R_N
\]
defining the Rees stack $\Re(R_N,z)$. Then the complement $\Re(R_N,z)^{\ne -}$ of the repeller locus is canonically isomorphic to $\Spec R_N$ and we define $\sigma$ as the composition
\[
\sigma\colon \Spec R_N \ltoover{\phi} \Spec R_N \cong \Re(R_N,z)^{\ne -} \lto \Re(R_N,z),
\]
where $\phi$ is given by \eqref{EqIntroPhiShtuka}. Then $(R_N,z,\sigma)$ is a frame and we obtain the associated frame stack $\Fcal(R_N,z)$. There are functorial isomorphisms
\[
\Re(R_N,z) \cong R^{\Hck,N}, \qquad \Fcal(R_N,z) \cong R^{\Sht,N}.
\]
Moreover, for $N = \infty$, the complement of the fixed locus in $R^{\Hck,\infty}$ is $R^{\Hck}$, and the corresponding substack of $R^{\Sht,\infty}$ is $R^{\Sht}$.
\end{intro-example}

This example is a special case of frames attached to filtered rings given by $A$-linear maps $v\colon L \to A$, where $L$ is an invertible module over a ring $A$. This is also of interest in the prismatic context. We consider it in detail in Section~\ref{Sec:ReesVAdic}.

\subsubsection*{Moduli spaces of $G$-bundles on frame stacks}

Our main object of study are moduli spaces of $G$-bundles over frame stacks for reductive groups $G$ (in fact, many of our results hold for more general group schemes). For this we study frames in families. More precisely, we consider a functor $A$ that sends an object $R$ in a base category (below this will be the category of algebras over some fixed ring) to a pair consisting of a frame $(A_R, (\Fil_{A_R}^j), \sigma_R)$ and an isomorphism $A_R/t(\Fil^1_{A_R}) \liso R$. We obtain for any stack $\Xcal$ the attached ``loop prestack''  $L^A\Xcal$ defined by $R \sends \Xcal(A_R)$. In the local shtuka case we have $A_R = R\psz$ and in this case, $L^A\Xcal$ is the positive loop stack of $\Xcal$. We assume that the following conditions are satisfied.
\begin{definitionlist}
\item\label{IntroAssumptiona}
$R \sends \Fil^j_{A_R}$ is representable by an affine scheme for all $j$.
\item\label{IntroAssumptionb}
$(A_R,t(\Fil^1_{A_{R}}))$ is a henselian pair.
\item\label{IntroAssumptionc}
$\B{L^AG} \to L^A\B{G}$ is an isomorphism.
\end{definitionlist}
They are satisfied in the explicit contexts discussed below and at the same time allow us to first abstractly discuss general classification results.
For every reductive group $G$ we obtain the prestacks of $G$-bundles
\begin{align*}
\Bunline_G(A)\colon R &\sends \Bun_G\left(\Re(A_R,(\Fil^j_{A_R}))\right), \\
\Bunline_G(\Fcal(A))\colon R &\sends \Bun_G\left(\Fcal(A_R,(\Fil^j_{A_R})),\sigma_R\right).
\end{align*}
The above hypotheses then imply that these are stacks for the fpqc topology. We show that both of these stacks admit decompositions into open and closed substacks
\begin{align*}
\Bunline_G(A) &= \coprod_{[\mu]}\Bunline^{[\mu]}_G(A), \\
\Bunline_G(\Fcal(A)) &= \coprod_{[\mu]}\Bunline^{[\mu]}_G(\Fcal(A)),
\end{align*}
where $[\mu]$ runs through the conjugacy classes of cocharacters of $G$. In this introduction only we ignore the notational problem that there might exist conjugacy classes that are not defined over the same ring over which $G$ is defined. We attach to the functor $R \sends (A_R,(\Fil_{A_R}^j))$ and to a pair $(G,[\mu])$ a functor of groups $E_A(G,[\mu])$ \eqref{EqDefEXcalMu} which is called the display group and which can be shown to be an affine group scheme (not necessarily of finite type) using Assumption~\ref{IntroAssumptiona}. Then the next result is our main general classification result.

\begin{intro-theorem}(Theorem~\ref{GeneralBunlineGMu}, Theorem~\ref{DescribeBunlineFramStack})\label{IntroClassThm}
One has equivalences of stacks
\begin{align*}
\Bunline^{[\mu]}_G(A) &\cong \B{E_A(G,[\mu])}, \\
\Bunline^{[\mu]}_G(\Fcal(A)) &\cong [E_A(G,\mu)\backslash L^AG],
\end{align*}
where the action of the affine group scheme $E_A(G,\mu)$ on $L^AG$ is induced by $\sigma$ and $\tau$ \eqref{EqGenActionEonG}.
\end{intro-theorem}

The proof of the first equivalence uses the Assumption~\ref{IntroAssumptionb} and results in \cite{Wedhorn_ExtendBundles}, recalled in the form used here in Corollary~\ref{LiftingGBundlesReesStack}. The second equivalence follows then quite formally using Assumption~\ref{IntroAssumptionc}.

\subsubsection*{Rees stacks and frames given by the $v$-adic filtration}

The underlying filtered rings for (truncated) shtukas, for (truncated) displays over perfect rings, and for prismatic displays and prismatic $F$-gauges are all given by the so-called $v$-adic filtration, where $A$ is a ring and $v\colon L \to A$ is an $A$-linear map from a line bundle $L$ over $A$. Any ring endomorphism $\phi$ of $A$ induces a frame structure. The attached frame stack is denoted by $\Fcal(A,v,\phi)$. In Section~\ref{Sec:ReesVAdic} we discuss this example in detail. We describe vector bundles on the attached Rees stack $\Re(A,v)$  (Proposition~\ref{DescribeVBReesStack}). Moreover, we obtain the following result.

\begin{intro-theorem}(Corollary~\ref{GBundlesZAdicFRameStack})\label{IntroResNe0}
Let $v$ be injective and let $\Fcal(A,v,\phi)^{\ne0}$ be the open complement of the fixed point locus. Then for every smooth affine group scheme $G$ over $A$, pullback via the inclusion yields a fully faithful functor
\begin{equation}\label{IntroEqResNe0}
\Bun_G(\Fcal(A,v,\phi)) \lto \Bun_G(\Fcal(A,v,\phi)^{\ne0}).
\end{equation}
We can describe its essential image. Moreover \eqref{IntroEqResNe0} is an equivalence if $G$ is reductive, $A$ is a discrete valuation ring, and $v$ is the inclusion of the maximal ideal of $A$.
\end{intro-theorem}

In the local shtuka case, \eqref{IntroEqResNe0} describes the restriction of $\infty$-truncated local $G$-shtukas to local $G$-shtukas. In the prismatic case, \eqref{IntroEqResNe0} describes the functor from prismatic $G$-displays to Breuil--Kisin $G$-modules (Section~\ref{Sec:PrismaticDisplays}).


\subsection*{Further applications}

Beyond the case of local shtukas we also apply our general formalism to displays and to prisms.

\subsubsection*{Moduli spaces of truncated $G$-displays}

In the case of displays we rephrase, extend, and geometrize results by Lau and Daniels. The general formalism developed in the first four sections allows us to define for every $p$-adically complete ring $R$ stacks $R^{\Disp}$ and truncations $R^{\Disp,N,n}$ for integers $N > n \geq 1$. If $R$ is a ring of characteristic $p$, then we can also define truncations for $N = n$, and the stack is then denoted by $R^{\Disp,N}$. A ($(N,n)$-truncated) $G$-display over $R$ is then by definition a $G$-bundle over $R^{\Disp}$ (over $R^{\Disp,N,n}$). In an appendix, Christopher Lang proves that $\GL_h$-displays of type $[\mu_d]$, where $\mu_d$ is the cocharacter
\begin{equation}\label{IntroEqStandardMu}
t \sends \twomatrix{tI_d}{0}{0}{I_{h-d}}
\end{equation}
are indeed the same as $(\GL_h,\mu_d)$-displays as defined by B\"ultel and Pappas, justifying our definition.

Then Theorem~\ref{IntroClassThm} allows us to describe the moduli space of $G$-displays (resp.~of $(N,n)$-truncated $G$-displays) as a quotient stack of the positive Witt loop group by the action of a display group (Theorem~\ref{DescribeGDisplays}) recovering results of Lau in \cite{Lau_HigherFrames}. This description also implies that the moduli space of $G$-displays is the limit of the moduli space of truncated $G$-displays (Corollary~\ref{LimitTruncatedDisplays}).

In this case, the following proposition implies that $1$-truncated $G$-displays are in fact the same as $G$-zips.

\begin{intro-proposition}\label{IntroDisplayZip}
Let $R$ be a ring of characteristic $p$. Then there is a canonical isomorphism of algebraic stacks $R^{\Zip} \iso R^{\Disp,1}$.
\end{intro-proposition}

\subsubsection*{Existence of Traverso bounds}

Both in the shtuka case and in the display case we can now approximate local $G$-shtukas (resp.~$G$-displays) by their truncations. For displays without additional structure choose $G = \GL_h$ and $\mu = \mu_d$ as in \eqref{IntroEqStandardMu}. Then Dieudonn\'e theory yields an identification between ($N$-truncated) $p$-divisible groups of height $h$ and dimension $d$ over an algebraically closed field $k$ of characteristic $p > 0$ and ($N$-truncated) $\GL_h$-displays of type $\mu_d$ over $k$. In this case, it is a classical result that the isomorphism class of such a $p$-divisible group depends only on its $N_0$-truncation for some $N_0$ depending only on $h$ and $d$. These bounds have been studied by Traverso \cite{Traverso_SullaClass} and by Lau, Nicole, and Vasiu \cite{LNV_Traverso}. Here we vastly generalize this result using our descriptions of the moduli spaces to arbitary reductive groups $G$, arbitrary conjugacy classes $[\mu]$, and both for local shtukas and for displays:

\begin{intro-theorem}(Theorem~\ref{ExistTraverso}, Theorem~\ref{ExistTraversoDisplay})\label{IntroTraverso}
Let $G$ be a reductive group and let $[\mu]$ be a conjugacy class of cocharacters of $G$. Let $k$ be an algebraically closed field of characteristic $p > 0$. Then there exists an $N_0 \geq 1$, dependent only on $G$ and $[\mu]$, such that any two $G$-shtukas of type $[\mu]$ (or any two $G$-displays of type $[\mu]$) over $k$ are isomorphic if and only if their $N_0$-truncations are isomorphic.
\end{intro-theorem}

\subsubsection*{Prismatic displays and prismatic $F$-gauges}

In the last section we apply the general results to filtered rings and frames obtained from a bounded prism $\Aline = (A,\delta,I)$. The first case is to consider the $I$-adic filtration. This is a special case of Section~\ref{Sec:ReesVAdic}. The corresponding frame stack is denoted by $\Aline^{\Disp}$. If $\Aline$ is the prism attached to a perfect ring $R$ in characteristic $p$, we have $\Aline^{\Disp} = R^{\Disp}$, justifying the notation. An immediate application of Theorem~\ref{IntroResNe0} yields a fully faithful functor from the groupoid of $G$-displays over $\Aline$ and Breuil-Kisin $G$-bundles over $\Aline$ and determines its essential image (Proposition~\ref{BKDisplay}). This reproves (and slightly generalizes) a result of Ito \cite[5.3.8]{Ito_PrismaticGDisplaysDescent}.

The second filtration that we consider is the Nygaard filtration and the attached Nygaard frame. The corresponding frame stack is called the syntomic frame stack of $\Aline$ and is denoted by $\Aline^{\Syn}$. This notation  is justified by the fact, proven by Bhatt, that if $\Aline$ is the initial object in the prismatic site of a quasi-regular semi-perfectoid ring $R$, then $\Aline^{\Syn} \cong R^{\Syn}$, where $R^{\Syn}$ denotes (an algebraic version of) the syntomification of $R$ defined by Bhatt and Lurie \cite{BhattLurie_AbsolutePrismatic}. Then a prismatic $G$-gauge over $\Aline$ is by definition a $G$-bundle on $\Aline^{\Syn}$. We construct a map of stacks $\Aline^{\Disp} \to \Aline^{\Syn}$ that yields by pullback a functor attaching prismatic $G$-displays to prismatic $G$-gauges which may be seen as the extension of a result of Ito (Remark~\ref{NygaardPrismaticFrame}). 


\bigskip

\noindent{\scshape Acknowledgements.\ }
We thank Christopher Lang, Timo Richarz, and Can Yaylali for helpful remarks. This project was supported by the Deutsche Forschungsgemeinschaft (DFG, German Research Foundation) via TRR 326 and the Cluster of Excellence Mathematics M\"unster.


\tableofcontents

\bigskip\bigskip

\subsection*{Notation}

We always denote by $p$ a prime.

Although we will not need the full machinery of $\infty$-categories, it is convenient to consider ordinary categories or $(2,1)$-categories as special cases of $\infty$-categories via their nerves. Similarly, we consider ordinary groupoids as special cases of $\infty$-groupoids a.k.a. anima. For instance if we have a site $\Ccal$ (in the classical sense), a functor on $\Ccal$ with values in groupoids or in $(2,1)$-categories is a sheaf if it is a sheaf when considered with values in the $\infty$-category of anima or of $\infty$-categories. This puts us firmly in the frame work of \cite{Lurie_HTT}. An equivalent approach without using the language of $\infty$-categories can be found in \cite{Stacks}.

Let $S$ be a scheme. As usual we denote by $\Affrel{S}$ the full subcategory of the category of $S$-schemes consisting of scheme morphisms $X \to S$ with $X$ an affine scheme. A (classical) prestack over $S$ is a functor of $(2,1)$-categories from the category $\Affrel{S}$ to the $(2,1)$-category of groupoids.

A stack over $S$ is a prestack over $S$ that satisfies descent for the fppf-topology. We call a stack $\Xcal$ \emph{algebraic} if its diagonal is representable by algebraic spaces and if there exists a surjective and smooth morphism $U \to \Xcal$, where $U$ is a scheme.
We denote by $|\Xcal|$ the underlying topological space of $\Xcal$.

Let $\Xcal$ and $\Ycal$ be stacks over a stack $\Scal$. We denote by $\Hom_{\Scal}(\Xcal,\Ycal)$ the groupoid of morphisms $\Xcal \to \Ycal$ of stacks over $S$. The stack $T \sends \Hom_S(\Xcal \times_S T,\Ycal)$ is denoted by $\Homline_S(\Xcal,\Ycal)$. More generally, let $\Xcal\colon T \sends \Xcal_T$ be a functor from the category $\Affrel{S}$ to the $(2,1)$-category of stacks over $S$. Then we denote by $\Homline_S(\Xcal,\Ycal)$ the prestack $T \sends \Hom_S(\Xcal_T,\Ycal)$. This is a stack, if $T \sends \Xcal_T$ is a sheaf for the fppf topology.

If $\Xcal$ is a pre-stack, then we have the notion of an $\Oscr_{\Xcal}$-module, see \cite[06WB]{Stacks}. Via the tensor product it is a symmetric monoidal category. A vector bundle on $\Xcal$ is a finite locally free $\Oscr_{\Xcal}$-module in the sense of \cite[03DE]{Stacks}. We denote the exact monoidal category of vector bundles on $\Xcal$ by $\Vec{\Xcal}$.

Let $S$ be an algebraic space and let $G$ be a group algebraic space over $S$ such that $G \to S$ is an fpqc covering (see \cite[App.~A]{Wedhorn_ExtendBundles}), e.g., if $G \to S$ is quasi-compact and flat or if $G \to S$ is flat and locally of finite presentation. Then for every stack $\Xcal$ over $S$ we denote by $\Bun_G(\Xcal)$ the groupoid of morphism $\Xcal \to \B{G}$ over $S$, where $\B{G}$ is the classifying stack of $G$. The pointed set of isomorphism classes of objects in $\Bun_G(\Xcal)$ is denoted by $H^1(\Xcal,G)$. We refer to \cite[Appendix A]{Wedhorn_ExtendBundles} for more details on $G$-bundles on stacks.

Let $\phi\colon A \to B$ be a map of rings. For any $A$-module $M$ we set $\phi^*(M) = B \otimes_A M$. If $N$ is a $B$-module and $u\colon M \to N$ is a $\phi$-linear map (i.e., an additive map satisfying $u(am) = \phi(a)u(m)$ for $a \in A$ and $m \in M$), then we denote the corresponding $B$-linear map $\phi^*(M) \to N$, $b \otimes m \sends bu(m)$ by $u^{\#}$.


\section{Quotient stacks by $\GG_m$}

\subsection{Describing quotients by $\GG_m$ in the affine case}\label{Sec:DescribeGGmQuotients}

Let $A$ be a ring. Recall that the functor $B \sends \Spec B$ induces a contravariant equivalence of the category of $\ZZ$-graded $A$-algebras and the category of affine $A$-schemes with a $\GG_{m,A}$-action \cite[Exp.~I, 4.7.3.1]{SGA3I}. The $\GG_m$-equivariant morphism $\Spec B \to \Spec A$, where $\Spec A$ is endowed with the trivial $\GG_m$-action, induces an affine morphism of algebraic stacks
\[
\Xcal := [\GG_{m,A}\backslash(\Spec B)] \to \B{\GG_{m,A}}
\]
sitting in a Cartesian diagram
\[\xymatrix{
\Spec B \ar[r] \ar[d] & \Xcal \ar[d] \\
\Spec A \ar[r] & \B{\GG_{m,A}}.
}\]
Conversely, every affine morphism $\Xcal \to \B{\GG_{m,A}}$ yields by base change an affine scheme $\Spec B$ over $\Spec A$ with $\GG_m$-action, i.e., a $\ZZ$-graded $A$-algebra.

Let us describe the points of $\Xcal$:

\begin{remark}\label{MapsToQuotientbyGGm}
If $C$ is any $A$-algebra, the groupoid of maps $f\colon \Spec C \to \Xcal$ over $\Spec A$ is the groupoid of pairs $(L,\alpha)$, where
\begin{definitionlist}
\item
$L$ is a projective $C$-module of rank $1$ (a line bundle on $C$) corresponding to the $\GG_m$-bundle $\Spec T(L)$ with $T(L) := \bigoplus_{n\in \ZZ}L^{\otimes n}$ over $C$ and
\item
$\alpha\colon B \to T(L)$ is a map of graded $A$-algebras or, equivalently, a map of graded $C$-algebras $C \otimes_A B \to T(L)$.
\end{definitionlist}
Then $\Spec T(L)$ is the underlying $C$-scheme of the $\GG_m$-bundle corresponding to the line bundle $L$ and one has a cartesian diagram
\begin{equation}\label{EqMapToGGmSpace}
\begin{aligned}\xymatrix{
\Spec T(L) \ar[r]^{\Spec f^*} \ar[d] & \Spec B \ar[d] \\
\Spec C \ar[r]^-f & [\Spec B/\GG_m].
}\end{aligned}
\end{equation}
\end{remark}

\begin{example}\label{MapsToAA1GGm}
Let $A$ be a ring, let $d \in \ZZ$ be an integer, and let $A[t]$ be the polynomial ring which we endow with a $\ZZ$-grading by setting $\deg(t) := d$. We denote the corresponding $\GG_{m,A}$-scheme by $(\AA^1_A)^{(d)}$. Let $C$ be an $A$-algebra. Then the groupoid $[\GG_{m,A}\backslash(\AA^1_A)^{(d)}](C)$ is the groupoid of pairs $(M,v)$, where $M$ is a line bundle on $C$ and $v\colon C \to M^{\otimes d}$ is a $C$-linear map (equivalently, an element in $M^{\otimes d}$). For $d = -1$, we usually consider $v$ has a $C$-linear map $M \to C$.

More generally, let $L$ be a line bundle on $A$ and consider $\Sym_A L = \bigoplus_{j\geq 0}L^{\otimes i}$ which we endow with a $\ZZ$-grading by letting $L$ sit in degree $d$ to get a $\GG_m$-scheme $(\Sym_A L)^{(d)}$. Then for every $A$-algebra $C$ the groupoid $[(\Sym_AL)^{(d)}/\GG_{m,A}](C)$ is the groupoid of pairs $(M,v)$, where $M$ is a line bundle on $C$ and $v\colon L \otimes_A C \to M^{\otimes d}$ is a $C$-linear map.
\end{example}

\begin{remark}\label{AffineMapstoGGmQuotients}
Let $A$ be a ring, let $B$ and $C$ be $\ZZ$-graded $A$-algebras, and set $\Xcal := [\GG_{m,A}\backslash(\Spec B)]$ and $\Ycal := [\GG_{m,A}\backslash \Spec C]$. One has equivalences of groupoids
\begin{align*}
\Hom_{\B{\GG_{m,A}}}(\Xcal,\Ycal) &\cong \Hom^{\GG_{m,A}}_A(\Spec B, \Spec C) \\
&\cong \{\text{$\ZZ$-graded maps $C \to B$ of $A$-algebras}\},
\end{align*}
where $\Hom^{\GG_{m,A}}_A(-, -)$ denotes the set of $\GG_m$-equivariant morphisms of $A$-schemes. This shows in particular that the groupoid $\Hom_{\B{\GG_{m,A}}}(\Xcal,\Ycal)$ is a set.
%
\end{remark}


\subsection{The attractor and the repeller locus}\label{Sec:Attractor}

We will consider the following schemes over $A$ with $\GG_{m,A}$-action.
\begin{assertionlist}
\item
$\PP^1_A$ with its canonical $\GG_m$-action.
\item
$\AA^1_A = (\AA^1_A)^+$ as the open $\GG_m$-invariant subscheme $\PP^1_A \setminus \{\infty\}$.
\item
$(\AA^1_A)^-$ as the open $\GG_m$-invariant subscheme $\PP^1_A \setminus \{0\}$.
\item
$\GG_{m,A}$ as the open $\GG_m$-invariant subscheme $\PP^1_A \setminus \{0,\infty\}$.
\item
$S = \Spec A$ with the trivial $\GG_{m,A}$-action.
\end{assertionlist}
Let $X$ be a scheme with $\GG_m$-action over $A$. Define sheaves for the fpqc topology with $\GG_m$-action on the category of $A$-algebras by
\begin{align*}
X^0 &:= \Homline_A^{\GG_m}(\Spec A,X), \\
X^+ &:= \Homline_A^{\GG_m}((\AA^1_A)^+,X), \\
X^- &:= \Homline_A^{\GG_m}((\AA^1_A)^-,X),
\end{align*}
called the \emph{space of fixed points}, \emph{the attractor}, and \emph{the repeller}, respectively. Note that $\Homline_A^{\GG_m}(\GG_m,X) = X$ by restricting a $\GG_m$-equivariant map $f\colon \GG_m \to X$ to its evaluation $f(1)$ in the unit section of $\GG_m$. This description shows that the automorphism $(-)^{-1}$ of $\GG_m$ induces the identity on $\Homline_A^{\GG_m}(\GG_m,X) = X$.

The zero section of $(\AA^1_A)^+$ and the $\infty$-section of $(\AA^1_A)^-$ are $\GG_m$-equivariant and hence yield by functoriality $\GG_m$-equivariant maps
\[
X^+ \lto X^0, \qquad X^- \lto X^0.
\]
Similarly, the inclusions $\GG_{m,A} \to (\AA^1_A)^+$ and $\GG_{m,A} \to (\AA^1_A)^-$ induce $\GG_m$-equivariant maps
\[
X^+ \lto X, \qquad X^- \lto X.
\]
If $X$ is separated, then $X^+ \to X$ and $X^- \to X$ is a monomorphism since $\GG_m$ is schematically dense in $(\AA^1_A)^+$ and in $(\AA^1_A)^-$. The $\GG_m$-action on $X$ is called \emph{attracting} (resp.~\emph{repelling}) if $X^+ \to X$ (resp.~$X^- \to X$) is an isomorphism.

We will be mostly interested in the affine case. Then attractor, reppeler, and fixed point locus have the following explicit description.

\begin{example}\label{DescribeAttractorRepellerAffine}
Let $A$ be a ring and let $B = \bigoplus_{n\in\ZZ} B_n$ be a $\ZZ$-graded $A$-algebra. Let $I^+, I^- \subseteq B$ be the graded ideals generated by $B_{>0}$ and $B_{<0}$ respectively and let $I^0 = I^+ + I^-$ be the ideal generated by $B_{>0} + B_{<0}$. The ideals $I^{\pm} = \bigoplus_{n\in \ZZ}I^{\pm}_n$ are explicitly given by
\begin{equation}\label{EqDescribeIMinus}
\begin{gathered}
I^{\pm}_n = \begin{cases}
B_n,&\text{if $\pm n > 0$}; \\
\sum_{i > 0}B_{\pm i}B_{\mp i+n},&\text{if $\pm n \leq 0$.}
\end{cases}
\end{gathered}
\end{equation}
One obtains that
\begin{equation}\label{EqDescribeIZero}
\begin{gathered}
I^0_n = \begin{cases}
B_n,&\text{if $n \ne 0$}; \\
\sum_{i>0}B_{-i}B_i,&\text{if $n = 0$}
\end{cases}
\end{gathered}
\end{equation}
Set $B^+ := B/I^-$, $B^- := B/I^+$, and $B^0 := B/I^0 = B_0/\sum_{i>0}B_{-i}B_i$. Then $\Spec B^+$ is the attractor, $\Spec B^-$ is the repeller, and $\Spec B^0 = (\Spec B)^{\GG_m}$ is the fixed point space of the $\GG_m$-scheme $\Spec B$ \cite[\S1]{Richarz_SpacesGGmAction}. We have a commutative diagram of maps of graded $A$-algebras
\[\xymatrix{
B_0 \ar[r] \ar[d] & B_{\geq 0} \ar[d] \ar[r] & B \ar@{=}[d] \\
B^0 \ar[r] & B^+ & B \ar[l]
}\]
where the vertical and the lower horizontal maps are surjective and the maps $B^0 \to B^+$ and $B \to B^+$ correspond to the above defined maps $X^+ \to X^0$ and $X^+ \to X$. We have a similar description with $(-)^-$ instead of $(-)^+$ and $B_{\leq0}$ instead of $B_{\geq0}$.
\end{example}

If $X$ is a group algebraic space and $\GG_m$ acts on $X$ by group  automorphisms, then $X^+$, $X^-$, and $X^0$ are functors of groups. We will need the following special case explained in \cite[2.1]{CGP_PseudoReductive}

\begin{remark}\label{GGmActionGroupScheme}
Let $A$ be a ring, let $G$ be an affine group scheme over $A$ and let
\begin{equation}\label{EqGGmActionOnGroup}
\mu\colon \GG_{m,A} \times G \lto G
\end{equation}
be a $\GG_m$-action by group automorphisms, in other words, $\mu$ is a cocharacter of $\Autline(G)$. For instance, if $\mu$ is cocharacter of $G$, then we obtain $\mu$ as above by composing $\mu$ with $G \to \Autline(G)$ given by conjugation.

For a general $\mu$ as in \eqref{EqGGmActionOnGroup} we set
\[
P^{\pm}(\mu) := G^{\pm}, \qquad G^0 = P^+(\mu) \cap P^-(\mu) = \Cent_G(\mu) := G^{\mu(\GG_m)},
\]
and these are closed $\GG_m$-invariant subgroup schemes of $G$ whose formation is compatible with arbitary base change $\Spec A' \to \Spec A$. One has $P^{\pm}(\mu^{-1}) = P^{\mp}(\mu)$. The canonical $\GG_m$-equivariant maps $P^{\pm}(\mu) \lto \Cent_G(\mu)$ are surjective homomorphisms of group schemes. We set
\[
U^{\pm}(\mu) := \Ker(P^{\pm}(\mu) \to \Cent_G(\mu)).
\]
The multiplication map yields an isomorphism of group schemes\footnote{In \cite[2.1.8]{CGP_PseudoReductive}, this is only formulated if $G$ is of finite presentation and if $\mu$ is given by a cocharacter of $G$, but the proof in loc.~cit. shows that this hypothesis is superfluous.}
\[
\Cent_G(\mu) \rtimes U^{\pm}(\mu) \liso P^{\pm}(\mu)
\]
If $G$ is of finite presentation (resp.~smooth) over $A$, then $P^{\pm}(\mu)$, $U^{\pm}(\mu)$, and $\Cent_G(\mu)$ are of finite presentation (resp.~smooth) over $A$.

If $G$ is a reductive group scheme over $A$, then the group scheme of outer automorphisms of $G$ is \'etale locally constant and hence every $\mu$ is given \'etale locally by a cocharacter of $G$. Then $P^+(\mu)$ and $P^-(\mu)$ are parabolic subgroups of $G$ with unipotent radicals $U^+(\mu)$ and $U^-(\mu)$ and common Levi subgroup $\Cent_G(\mu)$.
%
\end{remark}

\subsection{Quasi-coherent modules and vector bundles over quotient stacks by $\GG_m$}\label{QCOHGGmSTACK}

We continue to denote by $A$ a ring and by $B$ a $\ZZ$-graded $A$-algebra. Set $X = \Spec B$, $\Xcal = [\GG_{m,A}\backslash X]$, and let $\pi\colon X \to \Xcal$ be the canonical map.

\paragraph{Quasi-coherent modules on $\Xcal$.}
The abelian category of quasi-coherent $\Oscr_{\Xcal}$-modules is equivalent to the category of $\ZZ$-graded $B$-modules, where maps are given by homogeneous $B$-linear maps of degree $0$: If $\Mscr$ is a quasi-coherent $\Oscr_{\Xcal}$-module, $\pi^*\Escr$ is a quasi-coherent module over $X = \Spec B$ and hence corresponds to a $B$-module $M$. By descent, this yields an equivalence between quasi-coherent $\Oscr_{\Xcal}$-modules $\Mscr$ and $\GG_m$-equivariant $B$-modules, i.e., with $\ZZ$-graded $B$-modules.

This equivalence respects tensor products and hence is an equivalence of symmetric monoidal abelian categories. Here the tensor product of two graded $B$-modules $M$ and $N$ is the graded $B$-module whose underlying module is the tensor product $M \otimes_B N$ obtained by forgetting the graduations. It is endowed with the $\ZZ$-grading such that $(M \otimes_B N)_d$ is generated by $m \otimes n$ with $m \in M_k$, $n \in N_{l}$ such that $k + l = d$.

Since $\pi$ is affine, $\pi_*$ is exact on quasi-coherent modules. If $M$ is a $B$-module corresponding to a quasi-coherent $\Oscr_X$-module $\Mscr$, then $\pi_*\Mscr$ is the quasi-coherent module corresponding to the graded $B$-module $M$ with $M= M_0$.

Let $f\colon \Xcal \to \Spec B_0$ be the structure morphism, then this is a good moduli space in the sense of \cite{Alper_Good}. If $\Mscr$ is a quasi-coherent $\Oscr_{\Xcal}$-module corresponding to a graded $B$-module $M$, then $f_*\Mscr$ is the quasi-coherent module corresponding to the $B_0$-module $M_0$. Conversely, if $\Nscr$ is a quasi-coherent module over $\Spec B_0$ corresponding to a $B_0$-module $N$, then $f^*\Nscr$ corresponds to the $\ZZ$-graded $B$-module $M$ with $M_0 = N$ and $M_j = 0$ for $j \ne 0$.

By descent for $\pi$, a quasi-coherent $\Oscr_{\Xcal}$-module $\Mscr$ is of finite type (resp.~of finite presentation, resp.~flat, resp.~a vector bundle) if and only if the underlying $B$-module of the corresponding $\ZZ$-graded $B$-module $M$ has the same property.

\paragraph{Twisted line bundles.}

For $e \in \ZZ$ we denote by $B(e)$ the $\ZZ$-graded $B$-module defined by $B$ with its shifted grading, i.e., $B(e)_d = B_{e+d}$. It corresponds to a line bundle on $\Xcal$ that we denote $\Oscr_{\Xcal}(e)$. For every quasi-coherent $\Oscr_{\Xcal}$-module $\Mscr$ we set $\Mscr(e) := \Mscr \otimes_{\Oscr_{\Xcal}} \Oscr_{\Xcal}(e)$. If $\Mscr$ corresponds to the graded $B$-module $M$, then $\Mscr(e)$ corresponds to the graded $B$-module $M(e)$ with $M(e)_d = M_{d+e}$.

If $\Mscr$ and $\Nscr$ are quasi-coherent $\Oscr_{\Xcal}$-modules given by $\ZZ$-graded modules $M$ and $N$, respectively, then $\Hom_{\Oscr_{\Xcal}}(\Mscr,\Nscr)$ is identified with the set of $B$-linear map $M \to N$ of degree $0$. For instance, one has
\begin{equation}\label{EqGradingGlobalSection}
\begin{aligned}
\Gamma(\Xcal,\Nscr(e)) &= \Hom_{\Oscr_{\Xcal}}(\Oscr_{\Xcal},\Nscr(e)) \\
&= \Hom_{B}(B,N(e))^0 = N_e,
\end{aligned}
\end{equation}
where $\Hom_B(-,-)^0$ denotes $B$-linear maps of $\ZZ$-graded $B$-modules of degree $0$.

\paragraph{Vector bundles and modules of finite presentation on $\Xcal$.}

A $\ZZ$-graded $B$-module is called \emph{graded free} if it is isomorphic to a direct sum of graded $B$-modules of the type $B(e)$. It is called \emph{finite graded free} if there exists $r \geq 0$ and $e_1,\dots,e_r \in \ZZ$ such that $M \cong \bigoplus_{i=1}^rB(e_i)$.

To describe vector bundles on $\Xcal$ recall that a $\ZZ$-graded $B$-module $M$ is called \emph{projective} if it satisfies the following equivalent conditions.
\begin{equivlist}
\item
$M$ is projective as an object in the abelian category of $\ZZ$-graded $B$-modules.
\item
$M$ is projective as a $B$-module.
\item
$M$ is a direct summand of a graded free $B$-module.
\end{equivlist}
Therefore we have the following characterization of vector bundles.

\begin{proposition}\label{CharVBonQuotientByGGm}
Let $\Mscr$ be a quasi-coherent $\Oscr_{\Xcal}$-module corresponding to a $\ZZ$-graded $B$-module $M$. Then the following assertions are equivalent.
\begin{assertionlist}
\item
$\Mscr$ is a vector bundle.
\item
$\Mscr$ is flat and of finite presentation.
\item
$\Mscr$ is a direct summand of a finite graded free module, i.e. to a graded module of the form $\bigoplus_{i=1}^r\Oscr_{\Xcal}(e_i)$ for some $r \geq 0$ and some $e_i \in \ZZ$.
\item
$\Mscr$ is Zariski locally on $\Xcal$ a finite graded free module.
\item
$M$ is a finite projective $B$-module.
\end{assertionlist}
\end{proposition}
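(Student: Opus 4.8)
The plan is to deduce the proposition from the descent statements recorded above (which transfer properties of $\Mscr$ to properties of the underlying $B$-module of the associated graded module $M$), from the three equivalent characterizations of a \emph{projective} graded $B$-module recalled just before the proposition, and from standard commutative algebra. The only non-formal point will be getting (d) into the circle.

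First I would establish the block (a) $\Leftrightarrow$ (b) $\Leftrightarrow$ (e). By descent along $\pi$ the module $\Mscr$ is a vector bundle (resp.\ flat and of finite presentation) precisely when the underlying $B$-module of $M$ is finite locally free (resp.\ finitely presented and flat), and for a module over an arbitrary commutative ring each of these two conditions is equivalent to being finite projective. Since (e) is exactly the assertion that $M$ is finite projective over $B$, this already identifies (a), (b) and (e).

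Next comes (e) $\Leftrightarrow$ (c). For (c) $\Rightarrow$ (e): a direct summand of $\bigoplus_{i=1}^r \Oscr_{\Xcal}(e_i)$ corresponds to a direct summand of the graded $B$-module $\bigoplus_i B(e_i)$, hence, forgetting the grading, to a direct summand of the finite free module $B^r$, so $M$ is finite projective. For (e) $\Rightarrow$ (c): $M$ is finitely generated, and being $\ZZ$-graded it admits finitely many homogeneous generators $m_1,\dots,m_r$ with $m_i \in M_{e_i}$; sending the degree-$e_i$ generator of $B(-e_i)$ to $m_i$ produces a surjection of graded $B$-modules $\bigoplus_{i=1}^r B(-e_i) \to M$. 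By the recalled equivalences $M$, being finite projective over $B$ and graded, is projective as an object of the abelian category of graded $B$-modules, so this surjection splits there; hence $M$ is a direct summand of the finite graded free module $\bigoplus_i B(-e_i)$, which is exactly (c).

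Finally I would bring in (d). The implication (d) $\Rightarrow$ (a) is immediate, since being a vector bundle is a Zariski-local condition on $\Xcal$ and a finite graded free module $\bigoplus_i \Oscr_{\Xcal}(e_i)$ is in particular a vector bundle. The remaining implication (e) $\Rightarrow$ (d) is the one I expect to require the most care: one must show that a finite projective graded $B$-module is Zariski-locally on $\Xcal$ finite graded free. Since the open substacks of $\Xcal$ are the $[\GG_{m}\backslash U]$ for $\GG_m$-invariant opens $U \subseteq \Spec B$, and these have a basis given by the distinguished opens attached to homogeneous elements $g \in B$, it suffices to prove: for every point of $\Xcal$, equivalently for every homogeneous prime $\mathfrak p$ of $B$, there is a homogeneous $g \notin \mathfrak p$ such that the homogeneous localization $M_g$ is graded free over $B_g$. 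This is the graded counterpart of the fact that a finite projective module is locally free: the homogeneous localization $M_{(\mathfrak p)}$ is a finite projective graded module over the graded-local ring $B_{(\mathfrak p)}$ and is therefore graded free by the graded Nakayama lemma, and a graded basis of $M_{(\mathfrak p)}$ lifts to homogeneous elements of $M$ which form a graded basis after inverting a single suitable homogeneous $g \notin \mathfrak p$ (using that $M$ is also finitely presented, so that both kernel and cokernel of the candidate basis map are finitely generated and die after one such localization). Assembling the three blocks closes all the equivalences; I would expect this graded-local-to-global step to be the only real content, everything else being bookkeeping with the dictionary between $\Xcal$ and graded $B$-modules.
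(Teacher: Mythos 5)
Your argument is correct and follows the route the paper takes implicitly: the proposition is stated there without proof, as a direct consequence of the descent dictionary between quasi-coherent $\Oscr_{\Xcal}$-modules and $\ZZ$-graded $B$-modules together with the recalled equivalent characterizations of projective graded modules, which is exactly the bookkeeping you carry out for (a)$\Leftrightarrow$(b)$\Leftrightarrow$(e)$\Leftrightarrow$(c). The one step with genuine content, (e)$\Rightarrow$(d), which the paper leaves unaddressed, you handle correctly: distinguished opens $D(g)$ of homogeneous elements do form a basis of the $\GG_m$-invariant opens, the homogeneous localization at a homogeneous prime is gr-local so that finitely generated projective graded modules over it are graded free, and finite presentation lets you spread a graded basis out to a single such $D(g)$.
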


To check whether $\Mscr$ is of finite presentation we will use the following criterion.

\begin{proposition}\label{CharModFP}
Let $\Mscr$ be a quasi-coherent $\Oscr_{\Xcal}$-module corresponding a $\ZZ$-graded $B$-module $M$. Then the following assertions are equivalent.
\begin{equivlist}
\item\label{CharModFPi}
$\Mscr$ is of finite type (resp.~of finite presentation).
\item\label{CharModFPii}
$M$ is of finite type (resp.~of finite presentation) as a $B$-module.
\item\label{CharModFPiii}
There exists an exact sequence of graded $B$-modules
\[
F \lto M \lto 0 \qquad\qquad (\text{resp.} \qquad F' \lto F \lto M \lto 0),
\]
where $F$ and $F'$ are finite graded free $B$-modules.
\end{equivlist}
\end{proposition}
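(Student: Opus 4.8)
The plan is to reduce everything to elementary facts about $\ZZ$-graded $B$-modules. The equivalence of \ref{CharModFPi} and \ref{CharModFPii} is already available from the descent statement recorded above (finiteness of type, resp.\ of presentation, of a quasi-coherent module can be checked after pullback along the fpqc covering $\pi\colon \Spec B \to \Xcal$), so I would concentrate on \ref{CharModFPii} $\Leftrightarrow$ \ref{CharModFPiii}. The implication \ref{CharModFPiii} $\Rightarrow$ \ref{CharModFPii} is immediate: the underlying $B$-module of a finite graded free module $\bigoplus_{i=1}^{r} B(e_i)$ is the finite free module $B^r$, and a complex of graded $B$-modules is exact exactly when the underlying complex of $B$-modules is exact; hence an exact sequence as in \ref{CharModFPiii} presents $M$ as a quotient of a finite free $B$-module (resp.\ as the cokernel of a map between finite free $B$-modules).

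For \ref{CharModFPii} $\Rightarrow$ \ref{CharModFPiii} I would first do the finite type case. Starting from a finite set of $B$-module generators of $M$, I would replace each generator by its finitely many nonzero homogeneous components, producing a finite family of homogeneous generators $m_1,\dots,m_r$ with $m_i \in M_{f_i}$. With the convention $B(e)_d = B_{e+d}$ one has $1 \in B(e)_{-e}$, so each $m_i$ corresponds to a degree-$0$ map of graded $B$-modules $B(-f_i) \to M$ carrying $1$ to $m_i$. Summing these gives a degree-$0$ map $F := \bigoplus_{i=1}^{r} B(-f_i) \to M$ out of a finite graded free module, and it is surjective because the $m_i$ already generate $M$ as a $B$-module; this is the first exact sequence in \ref{CharModFPiii}.

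For the finite presentation case I would apply the finite type case twice. Picking such a surjection $\psi\colon F \to M$ with $F$ finite graded free, its kernel $K := \ker(\psi)$ is again a graded $B$-submodule of $F$, and since $M$ is finitely presented and $F$ finitely generated, $K$ is a finitely generated $B$-module by the standard lemma that the kernel of a surjection from a finitely generated module onto a finitely presented one is finitely generated. As $K$ is graded, the finite type case supplies a surjection $F' \to K$ out of a finite graded free module, and composing with $K \hookrightarrow F$ gives $F' \to F \to M \to 0$ with $F, F'$ finite graded free.

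I do not anticipate a real obstacle: once one works in the category of $\ZZ$-graded $B$-modules the statement is essentially formal. The only things I would watch are the sign in the twist $B(e)$ (so the degree of each homogeneous generator matches the correct shift of $B$) and the exact hypotheses of the commutative-algebra lemma controlling $\ker(\psi)$. An alternative that I would mention but not carry out is the abstract route: $\ZZ$-graded $B$-modules form a Grothendieck abelian category with the $B(e)$ as compact projective generators, and \ref{CharModFPiii} is then the usual description of finitely generated / finitely presented objects in terms of such generators.
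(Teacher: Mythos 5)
Your proof is correct and follows essentially the same route as the paper: descent for \ref{CharModFPi}$\Leftrightarrow$\ref{CharModFPii}, homogeneous generators giving a degree-$0$ surjection $\bigoplus_i B(-f_i) \to M$ for the finite type case, and applying that case to the (finitely generated, graded) kernel for finite presentation. The twist convention you checked is exactly the one the paper uses.
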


\begin{proof}
The equivalence of \ref{CharModFPi} and \ref{CharModFPii} is clear by descent. Let us prove the equivalence of \ref{CharModFPii} and \ref{CharModFPiii}. Since $F$ and $F'$ are both finitely generated $B$-modules, it is clear that \ref{CharModFPiii} implies \ref{CharModFPii}. Conversely, suppose that $M$ is of finite type. Let $m_1,\dots,m_r$ be generators of $M$ which we may assume to be homogeneous, say of degree $e_i \in \ZZ$. Then $m_i$ defines a $B$-linear map $B(-e_i) \to M$, sending $1 \in B(-e_i)_{e_i} = B_0$ to $m_i$. Hence the $m_i$ define a surjection $u\colon \bigoplus_{i=1}^rB(-e_i) \to M$ of graded $B$-modules. If $M$ is of finite presentation, we can apply the same argument to the finitely generated $B$-module $\Ker(u)$.
\end{proof}

\paragraph{Derived categories of quasi-coherent modules.}

The category $D_{\qc}(\Xcal)$ is equivalent to the category of objects in $D(B)$ that are endowed with a $\ZZ$-grading.

\paragraph{Base change.}

Let $B \to C$ be a map of $\ZZ$-graded $A$-algebras. Then we obtain an induced map of algebraic stacks $f \colon [\Spec C/\GG_{m,A}] \to [\Spec B/\GG_{m,A}]$. If $\Escr$ is a quasi-coherent module over $[\Spec B/\GG_{m,A}]$ corresponding to a $\ZZ$-graded $B$-module $M$, then $f^*\Escr$ corresponds to the graded $C$-module $C \otimes_B M$.

We will also use the derived pullback: Let $E \in D_{\qc}([\Spec B/\GG_{m,A}])$, considered as a $\ZZ$-graded family $(M_i)_i$ with $M_i \in D(B)$, then $Lf^*E \in D_{\qc}([\Spec C/\GG_{m,A}])$ corresponds to $(M_i \Lotimes_B C)_{i\in\ZZ}$.


\subsection{Colimits of quotient stacks by $\GG_m$}

Our next goal is to consider colimits of quotient stacks. For this we first have to agree where we form the colimit. To have a purely algebraic theory and not only a theory over formal schemes, we would like to have that for an $I$-adically complete ring $A$ one has $\Spec A = \colim_n \Spec A/I^n$. Of course, if we form $\colim_n \Spec A/I^n$ in the category of fppf sheaves or in the $\infty$-category of all derived stacks we would obtain $\Spf A$. Hence we will form the colimit in the 2-category of Adams stacks (Definition~\ref{DefAdamsStack}) studied in detail by Sch\"appi. He proves that there, one indeed has that $\Spec A = \colim_n \Spec A/I^n$:

\begin{example}(\cite[4.3]{Schaeppi_DescentTannaka})\label{AdicColimitAdam}
Let $A$ be a ring and let $I_0 \supseteq I_1 \supseteq I_2 \supseteq \dots$ be a sequence of ideals in $A$ such that the ideal $I_n/I_{n+1}$ is contained in the Jacobson radical of $A/I_{n+1}$ for all $n \geq 0$. Set $\Ahat := \lim_nA/I_n$. Then the canonical morphisms $\Spec A/I_n \to \Spec \Ahat$ yield an isomorphism of Adams stacks
\[
\colim_n\Spec A/I_n \liso \Spec \Ahat.
\]
\end{example}

%

Let $A$ be a ring and let $n \sends B_n$ be an $\NN^{\opp}$-diagram of $\ZZ$-graded $A$-algebras $B_n = \bigoplus_{i\in \ZZ}(B_n)_i$. Define a $\ZZ$-graded $A$-algebra $B = \bigoplus_{i\in \ZZ}B_i$ with $B_i = \lim_{n}(B_n)_i$ where the multiplication $B_i \times B_j \to B_{i+j}$ is the limit of the multiplications on the $B_n$. Set $\Xcal := [\GG_{m,A}\backslash \Spec B]$ and $\Xcal_n := [\GG_{m,A}\backslash \Spec B_n]$. Note that $\Xcal$ and $\Xcal_n$ are indeed Adams stacks by Proposition~\ref{QuotientStackAdam}. We obtain maps of Adams stacks $\Xcal_n \to \Xcal_{n+1}$ for all $n$ and a map
\[
\colim_n \Xcal_n \lto \Xcal,
\]
where we form the colimit in the 2-category of Adams stacks.

\begin{proposition}\label{ColimitQuotientStack}
Suppose that the following conditions are satisfied.
\begin{definitionlist}
\item\label{ColimitQuotientStacka}
The maps of rings $(B_{n+1})_i \to (B_{n})_i$ are surjective for all $n \geq 1$ and all $i \in \ZZ$.
\item\label{ColimitQuotientStackb}
The kernel of $(B_{n+1})_0 \to (B_{n})_0$ is contained in the Jacobson radical of $(B_{n+1})_0$.
\item\label{ColimitQuotientStackc}
Then $(B_n)_0$-modules $(B_n)_i$ are finite projective for all $n \in \NN$ and $i \in \ZZ$. 
\end{definitionlist}
Then the map of Adams stacks $\colim_n \Xcal_n \lto \Xcal$ is an isomorphism.
\end{proposition}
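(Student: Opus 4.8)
The plan is to reduce the statement about Adams stacks to a statement about graded rings, using the criterion from Example~\ref{AdicColimitAdam} together with the bijection between quotient stacks by $\GG_m$ and graded rings. Concretely: since each $\Xcal_n = [\GG_{m,A}\backslash\Spec B_n]$ and $\Xcal = [\GG_{m,A}\backslash\Spec B]$ with $B_i = \lim_n (B_n)_i$, one should expect $\colim_n \Xcal_n \to \Xcal$ to be an isomorphism precisely when $\Spec B = \colim_n \Spec B_n$ as Adams stacks, because taking the quotient by $\GG_m$ is compatible with the relevant colimits (it is a colimit construction in stacks, and on Adams stacks it should commute with the filtered colimit along the diagram — this will need the cocompleteness/colimit-identification result Theorem~\ref{AdamsStackCocomplete}). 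So the first step is to argue that $\colim_n [\GG_{m,A}\backslash \Spec B_n] \cong [\GG_{m,A}\backslash \colim_n \Spec B_n]$, and then it suffices to show $\colim_n \Spec B_n \liso \Spec B$ in Adams stacks.

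For the latter I would apply Example~\ref{AdicColimitAdam} directly, with the ring ``$A$'' of that example being $B$ and the ideals being $J_n := \ker(B \to B_n)$. Condition~\ref{ColimitQuotientStacka} (surjectivity of the transition maps $(B_{n+1})_i \to (B_n)_i$ in each degree) ensures that $B \to B_n$ is surjective, so that $B_n = B/J_n$ and $B = \lim_n B/J_n$ (the limit of rings computed degreewise, which is the degreewise limit by the definition of $B$; here I'd need that the inverse system of each graded piece is Mittag--Leffler, which again follows from~\ref{ColimitQuotientStacka}). To invoke Example~\ref{AdicColimitAdam} I must check $J_n/J_{n+1} \subseteq \mathrm{rad}(B/J_{n+1}) = \mathrm{rad}(B_{n+1})$. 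Here assumptions~\ref{ColimitQuotientStackb} and~\ref{ColimitQuotientStackc} enter: $J_n/J_{n+1}$ is the (graded) kernel of $B_{n+1}\to B_n$, whose degree-$0$ part is contained in $\mathrm{rad}((B_{n+1})_0)$ by~\ref{ColimitQuotientStackb}; the finite-projectivity of the graded pieces $(B_{n+1})_i$ over $(B_{n+1})_0$ from~\ref{ColimitQuotientStackc} should let me propagate this into all degrees, since a graded ideal of $B_{n+1}$ lying over $\mathrm{rad}((B_{n+1})_0)$ in degree $0$ and consisting of nilpotent-on-fibers elements (or: contained in every graded maximal ideal) lies in the Jacobson radical of $B_{n+1}$. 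This is the point where I expect to have to be careful about what ``Jacobson radical'' means for a graded ring and which maximal ideals one tests against.

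The key steps in order are therefore: (1) identify $\colim_n \Xcal_n$ with the $\GG_m$-quotient of $\colim_n \Spec B_n$, citing the colimit-identification machinery for Adams stacks (Proposition~\ref{QuotientStackAdam} plus Theorem~\ref{AdamsStackCocomplete}); (2) reduce to $\colim_n \Spec B_n \liso \Spec B$ in Adams stacks; (3) verify $B = \lim_n B_n$ with surjective transition maps (from~\ref{ColimitQuotientStacka}); (4) verify the Jacobson-radical hypothesis of Example~\ref{AdicColimitAdam} for the ideals $J_n = \ker(B\to B_n)$, using~\ref{ColimitQuotientStackb} in degree $0$ and~\ref{ColimitQuotientStackc} to pass to all degrees; (5) conclude by Example~\ref{AdicColimitAdam}.

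The main obstacle I anticipate is step~(4): translating the degree-$0$ Jacobson-radical condition~\ref{ColimitQuotientStackb} into a statement about the full graded kernel $J_n/J_{n+1}$ inside $B_{n+1}$. The finite-projectivity in~\ref{ColimitQuotientStackc} is exactly what should make this work — it says $\Spec B_{n+1}$ is, after removing the $\GG_m$-action, a reasonably well-behaved affine scheme over $\Spec (B_{n+1})_0$, so closed points all lie over $\mathrm{Spec}$ of residue fields of $(B_{n+1})_0$ — but making this precise at the level of graded rings (rather than sheepishly base-changing to fibers, where the $\GG_m$-action complicates ``closed points'') is the delicate bookkeeping. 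A secondary, more bureaucratic obstacle is making sure the colimit in step~(1) really commutes with the $\GG_m$-quotient in the 2-category of Adams stacks; I would handle this either by the explicit description of colimits of quotient stacks by $\GG_m$ (if that is what Proposition~\ref{ColimitQuotientStack} is generalizing, one instead argues directly with graded rings and the Tannakian description of Adams stacks, avoiding the quotient-commutes-with-colimit claim altogether).
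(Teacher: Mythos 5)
The core reduction in your plan---steps (1)--(2), replacing the graded statement by the ungraded one $\colim_n\Spec B_n\liso\Spec B$---fails, because that ungraded statement is false under the hypotheses. Take $B_n=k[x]/(x^n)$ over a field $k$ with $\deg(x)=1$: the transition maps are surjective in every degree (hypothesis~\ref{ColimitQuotientStacka}), the degree-zero kernels vanish (hypothesis~\ref{ColimitQuotientStackb}), and every graded piece is free over $(B_n)_0=k$ (hypothesis~\ref{ColimitQuotientStackc}); here $B=\bigoplus_i\lim_n(B_n)_i=k[x]$. But Example~\ref{AdicColimitAdam}, which \emph{does} apply to this tower of ungraded rings since $(x^n)/(x^{n+1})$ is nilpotent in $k[x]/(x^{n+1})$, gives $\colim_n\Spec B_n=\Spec k[[x]]\ne\Spec k[x]=\Spec B$. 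The underlying slip is in your step (3): $B$ is the \emph{degreewise} limit $\bigoplus_i\lim_n(B_n)_i$, which in general differs from the ungraded inverse limit $\lim_nB_n$, and it is the latter that Example~\ref{AdicColimitAdam} computes. Your worry in step (4) is also a genuine failure rather than bookkeeping: hypotheses~\ref{ColimitQuotientStackb} and~\ref{ColimitQuotientStackc} only control degree zero, and the positive-degree part of $\ker(B_{n+1}\to B_n)$ need not lie in the Jacobson radical of $B_{n+1}$ (e.g.\ $B_{n+1}=k[x]$, $B_n=k$, kernel $(x)$, while the Jacobson radical of $k[x]$ is zero). Finally, step (1), commuting the $\GG_m$-quotient with colimits of Adams stacks, is unjustified and in the example above cannot hold: the colimit of the quotient stacks is $[\GG_{m}\backslash\Spec k[x]]$ (that is exactly the content of the proposition), whereas the colimit of the $\Spec B_n$ is $\Spec k[[x]]$, which does not even carry a $\GG_m$-action extending the gradings.

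The paper's proof keeps the grading throughout and verifies the two criteria of Theorem~\ref{AdamsStackCocomplete} directly for $\Xcal$ and the $\Xcal_n$: (i) $\Vec(\Xcal)\to\lim_n\Vec(\Xcal_n)$ is an equivalence, and (ii) surjectivity of a map of finitely presented $\Oscr_{\Xcal}$-modules can be detected on some $\Xcal_n$. For (i), Example~\ref{AdicColimitAdam} is applied only to the degree-zero rings $(B_n)_0$ (this is where~\ref{ColimitQuotientStacka} and~\ref{ColimitQuotientStackb} enter); combined with~\ref{ColimitQuotientStackc} this gives full faithfulness on finite graded free modules, hence on all vector bundles, since these are direct summands of finite graded free ones. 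Essential surjectivity is a lifting argument for graded projective modules: lift a surjection from a finite graded free module along the degreewise surjections, keep it surjective by Nakayama using~\ref{ColimitQuotientStackb}, arrange compatible sections via the snake lemma, and take the image of the resulting compatible idempotents. Criterion (ii) again uses~\ref{ColimitQuotientStackc} and Nakayama degree by degree. So if you want a ``reduction'', the correct one is not to $\Spec B$ but to the degree-zero rings $\Spec(B_n)_0$ (the good moduli spaces), with hypothesis~\ref{ColimitQuotientStackc} transporting the conclusion back to the graded setting.
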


\begin{proof}
We apply Theorem~\ref{AdamsStackCocomplete}. Therefore it suffices to to show that
\begin{assertionlist}
\item\label{ColimitQuotientStack1}
$\Vec(\Xcal) \lto \lim_{n}\Vec(\Xcal_n)$ is an equivalence and that
\item\label{ColimitQuotientStack2}
if $w\colon \Escr \to \Fscr$ is a map of finitely presented modules over $\Xcal$ such that its pullback to $\Xcal_n$ is surjective for some $n$, then $w$ is surjective.
\end{assertionlist}
Hypotheses \ref{ColimitQuotientStacka} and \ref{ColimitQuotientStackb} show by Example~\ref{AdicColimitAdam} that $\colim (B_n)_0 = B_n$ as Adams stacks. Hence $\Vec(B_0) \to \lim_n\Vec((B_n)_0)$ is an equivalence by the last assertion of Theorem~\ref{AdamsStackCocomplete}. By Hypothesis~\ref{ColimitQuotientStackc} this implies that
\[
\Vec(\Xcal) \lto \lim_{n}\Vec(\Xcal_n) \tag{*}
\]
is fully faithful if restricted to finite graded free modules. Since every vector bundle is a direct summand of a finite graded free module, this shows that (*) is fully faithful.

Let us show essential surjectivity. Let $(\Escr_n)_n \in \lim_{n}\Vec(\Xcal_n)$ which we consider as a compatible system $(M_n)_n$ of graded projective $B_n$-modules $M_n$. Choose a surjection $p_1\colon F_1 \to M_1$, where $F_1$ is finite graded free, say $F_1 = \bigoplus_s B_1(e_s)$ for finitely many $e_s \in \ZZ$. Set $F_n := \bigoplus_s B_n(e_s)$ and $F := \bigoplus_s B(e_s)$. By Hypothesis~\ref{ColimitQuotientStacka} we can lift $p_1$ inductively to $B_n$-linear maps $F_n \to M_n$. These maps are surjective in every degree by Nakayamas lemma and by Hypothesis~\ref{ColimitQuotientStackb}. Since $M_n$ is finite graded projective, we find sections $s_n$ of $p_n$ for all $n$.

We claim that we can arrange the $s_n$ inductively such that $s_{n+1}$ lifts $s_n$ for all $n$. Indeed, let $\sbar_{n+1}$ the reduction of $s_{n+1}$ and consider the composition
\[
M_{n+1} \to M_n \vartoover{50}{s_n - \sbar_{n+1}} \Ker(p_n)
\]
Since $\Ker(p_{n+1}) \to \Ker(p_n)$ is surjective by the snake lemma and since $M_{n+1}$ is graded projective, we can lift the composition to a homomorphism $t_{n+1}\colon M_{n+1} \to \Ker(p_{n+1})$ and we can replace $s_{n+1}$ by $s_{n+1} + t_{n+1}$ which proves the claim.

Hence we obtain a compatible family of idempotent endomorphisms $e_n := s_n \circ p_n$ of $F_n$ with $M_n = \Im(e_n)$ which defines an idempotent endomorphism $e$ of $F$, and $\Im(e)$ defines a vector bundle on $\Xcal$ whose reduction to $\Xcal_n$ is $\Escr_n$ for all $n$. This shows \ref{ColimitQuotientStack1}.

To prove \ref{ColimitQuotientStack2}, it suffices to show that $w$ induces a surjective map $M_j \to N_j$ for all $j$ where $M = \bigoplus M_j$ corresponds to $\Escr$ and $N = \bigoplus N_j$ to $\Fscr$. Since $\Fscr$ is of finite presentation, $N$ is the cokernel of a map of finite graded free modules. By \ref{ColimitQuotientStackc} this implies that each $N_j$ is a cokernel of a map of finite projective modules and hence of finite presentation. Therefore \ref{ColimitQuotientStack2} follows from Nakayama's lemma.
\end{proof}

Now suppose that $A$ is given the structure of an $O$-algebra, where $O$ is a Dedekind domain, and let $G$ be a flat affine group scheme of finite type over $O$. Then the classifying stack $\B{G}$ is an Adams stack by Example~\ref{BGAdams}.

\begin{corollary}\label{GeneralLimitGBundle}
Under the hypothesis of Proposition~\ref{ColimitQuotientStack}, there is a canonical isomorphism of groupoids
\[
\Bun_G(\Xcal) \iso \lim_n \Bun_G(\Xcal_n).
\]
\end{corollary}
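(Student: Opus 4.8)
The plan is to deduce the corollary formally from Proposition~\ref{ColimitQuotientStack} together with the fact that $\B{G}$ is an Adams stack, invoking the universal property of colimits in the $2$-category of Adams stacks.

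First, recall that by definition $\Bun_G(\Xcal)$ is the mapping groupoid $\Hom(\Xcal,\B{G})$ of morphisms of stacks over the base, and likewise $\Bun_G(\Xcal_n) = \Hom(\Xcal_n,\B{G})$. The stacks $\Xcal$ and $\Xcal_n$ are Adams stacks by Proposition~\ref{QuotientStackAdam}, and $\B{G}$ is an Adams stack by Example~\ref{BGAdams}. Since the Adams stacks form a full sub-$2$-category of all stacks, these mapping groupoids agree with the corresponding ones computed inside the $2$-category of Adams stacks.

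Second, Proposition~\ref{ColimitQuotientStack} tells us that under the given hypotheses the canonical comparison map $\colim_n \Xcal_n \to \Xcal$ is an isomorphism of Adams stacks, where the colimit is the $2$-colimit in that $2$-category. Applying the universal property of this $2$-colimit to the target $\B{G}$ then yields an equivalence of groupoids
\[
\Bun_G(\Xcal) = \Hom(\Xcal,\B{G}) \liso \lim_n \Hom(\Xcal_n,\B{G}) = \lim_n \Bun_G(\Xcal_n),
\]
which is induced by the restriction functors along $\Xcal_n \to \Xcal$ and is therefore canonical. Here the $2$-limit over $\NN^{\opp}$ on the right is computed, as usual, as the groupoid of compatible systems $(\Escr_n)_n$ of $G$-bundles together with compatible isomorphisms between the restrictions, which is exactly the groupoid $\lim_n \Bun_G(\Xcal_n)$ appearing in the statement.

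There is essentially no obstacle: all of the geometric content is already packaged in Proposition~\ref{ColimitQuotientStack}. The one conceptual point worth emphasizing is that the hypothesis that $\B{G}$ is an Adams stack is precisely what is needed here: it is what guarantees that the colimit $\colim_n \Xcal_n$ — which in the $2$-category of all stacks would only be a formal, $\Spf$-type object — is seen correctly by $\B{G}$, so that passing $\Hom(-,\B{G})$ through the colimit produces the intended limit of groupoids rather than some completion.
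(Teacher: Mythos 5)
Your proposal is correct and follows essentially the same route as the paper: the paper's proof likewise rewrites $\Bun_G(\Xcal)=\Hom_O(\Xcal,\B{G})$, replaces $\Xcal$ by $\colim_n\Xcal_n$ using Proposition~\ref{ColimitQuotientStack}, and applies the universal property of the colimit of Adams stacks against the Adams stack $\B{G}$ to obtain $\lim_n\Bun_G(\Xcal_n)$. Your added remark that mapping groupoids into $\B{G}$ agree whether computed among all stacks or in the full sub-$2$-category of Adams stacks is exactly the implicit point the paper relies on.
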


\begin{proof}
By definition we have
\begin{align*}
\Bun_G(\Xcal) &= \Hom_O(\Xcal,\B{G}) \\
&= \Hom_O(\colim \Xcal_n,\B{G}) \\
&= \lim_n\Hom_O(\Xcal_n,\B{G}) \\
&= \lim_n\Bun_G(\Xcal_n),
\end{align*}
where the second equality holds by Proposition~\ref{ColimitQuotientStack}.
\end{proof}


\section{Filtered rings and their Rees stacks}

\subsection{Filtered modules over rings}\label{Subsec:FilteredModules}

Let $A$ be a ring. Let $A[t]$ be the polynomial ring with the $\ZZ$-grading given by $\deg(t) = -1$, let $(\AA^1_A)^-$ be the corresponding $\GG_{m,A}$-scheme, and set
\begin{equation}\label{EqDefTheta}
A^{\Fil} := [\GG_{m,A}\backslash (\AA^1_A)^-].
\end{equation}
To clarify the notation let us remark that here and in the following lines, $\GG_{m,A}\backslash$ always denotes a quotient by a $\GG_{m,A}$-action whereas all other $\setminus$-signs denote complements. The algebraic stack $A^{\Fil}$ has a stratification given by the open affine immersion
\begin{equation}\label{EqGenericTheta}
\Spec A = [\GG_{m,A}\backslash (\AA^1_A \setminus \{0\})] \lto A^{\Fil}
\end{equation}
induced by the inclusion $\AA^1_A \setminus \{0\} \to \AA^1_A$ and by the closed regular immersion of codimension 1
\begin{equation}\label{EqSpecialTheta}
\B{\GG_{m,A}} = [\GG_{m,A}\backslash \{0\}] \lto A^{\Fil}.
\end{equation}

Let $\varphi\colon A \to B$ be a map of rings. It induces a map of algebraic stacks
\begin{equation}\label{EqFilFunctorial}
\varphi^{\Fil}\colon B^{\Fil} \to A^{\Fil},
\end{equation}
and the diagram
\[\xymatrix{
B^{\Fil} \ar[rr]^{\varphi^{\Fil}} \ar[d] & & A^{\Fil} \ar[d] \\
\Spec B \ar[rr]^{\Spec \varphi} & & \Spec A
}\]
is cartesian with faithfully flat vertical maps of finite presentation.

\begin{definition}\label{DefFilteredModuleRing}
A \emph{filtered $A$-module} is a quasi-coherent $\Oscr_{A^{\Fil}}$-module. A \emph{map of filtered $A$-modules} is a homomorphism of quasi-coherent $\Oscr_{A^{\Fil}}$-modules.
\end{definition}

We obtain the abelian category of filtered $A$-modules. Let us make this definition more explicit. By Section~\ref{QCOHGGmSTACK} a quasi-coherent $\Oscr_{A^{\Fil}}$-module $\Mscr$ is given by a $\ZZ$-graded $A[t]$-module $M$ which can be described as a chain of $A$-modules
\[
\cdots \ltoover{t} M_{j+1} \ltoover{t} M_j \ltoover{t} M_{j-1} \ltoover{t} \cdots
\]
with $A$-linear maps $t\colon M_j \to M_{j-1}$. The underlying $A[t]$-module of $M$ corresponds to the quasi-coherent module obtained from $\Mscr$ by pullback via the canonical map $\AA^1_A \to A^{\Fil}$. Note that the maps $t\colon M_{j} \to M_{j-1}$ are not necessarily injective. A map of filtered $A$-modules is then a map of $\ZZ$-graded $A[t]$-modules.

In the sequel we will usually use notations like $\Mscr$, $\Escr$, $\Bscr$ etc.~to denote the quasi-coherent $\Oscr_{A^{\Fil}}$-module and $M$, $E$, $B$ etc.~to denote the corresponding $\ZZ$-graded $A[t]$-module. 


Occasionally we will also work with objects in the derived category $D_{\qc}(A^{\Fil})$. It can be identified with filtered derived category of $A$ by \cite{Moulinos_Filtrations}.

\begin{remark}\label{VariantsFilteredModule}
By definition, all filtrations are descending. Of course one can also work with increasing filtrations by considering quasi-coherent modules over $[\GG_m\backslash (\AA^1)^+]$, where $\GG_m$ acts on $\AA^1$ with the standard action.

More generally, let $L$ be a line bundle on $A$ and consider $\Sym_A(L) = \bigoplus_{j\geq0}L^{\otimes j}$. Then a quasi-coherent module over the quotient stack $[\GG_{m,A}\backslash \Sym_A(L)]$ is a family $(M_j)_{j\in \ZZ}$ of $A$-modules together with $A$-linear maps $M_j \otimes L \to M_{j+1}$ for all $j$.
\end{remark}

\paragraph{The underlying module.}

We can write $\AA^1_A \setminus \{0\} = \Spec A[t,t^{-1}]$ with
\[
A[t,t^{-1}] = \colim (A[t] \ltoover{t} A[t] \ltoover{t} \cdots).
\]
Therefore pullback along the open immersion \eqref{EqGenericTheta} sends a quasi-coherent $\Oscr_{A^{\Fil}}$-module $\Mscr$ corresponding to $M = (M_j \ltoover{t} M_{j-1})_j$ to the $A$-module
\[
M_{-\infty} := \colim (\cdots \ltoover{t} M_{j+1} \ltoover{t} M_j \ltoover{t} M_{j-1} \ltoover{t} \cdots),
\]
which we call the \emph{underlying $A$-module} of the filtered $A$-module $M$ or of $\Mscr$. It should not be confused with the $A$-module underlying the $A[t]$-module $M = \bigoplus_j M_j$. Occasionally, we will also denote a filtered module by $(M_{\infty}, \Fil^j)$ with $\Fil^j = M^j$, in particular if the maps $M_j \to M_{j-1}$ are all injective, e.g., in the case of vector bundles (see Proposition~\ref{CharFilteredVB} below).

More generally, for $\Mscr \in D_{\qc}(A^{\Fil})$ considered as an object $(\cdots \to M_j \to M_{j-1} \to \cdots)$ in the filtered derived category of $A$-modules, its pullback via $\Spec A \to A^{\Fil}$ is the element $\colim_{j\to-\infty} M_j$ of $D(A)$.

\paragraph{The graded module attached to a filtered module.}

Now consider the closed regular immersion $i\colon \B{\GG_{m,A}} \lto A^{\Fil}$ of codimension $1$. Let $\Mscr$ be in $D_{\qc}(A^{\Fil})$ corresponding to an object $M = (\cdots \to M_j \to M_{j-1} \to \cdots)$ in the filtered derived category of $A$-modules. Then
\[
\gr(M) := Li^*\Mscr
\]
is called the \emph{graded module attached to $M$}. It is an object in $D_{\qc}(\B{\GG_{m,A}})$ which we can also consider as a $\ZZ$-graded object in $D(A)$.

If $\Mscr$ is a filtered $A$-module given by $M = (\cdots \to M_j \to M_{j-1} \to \cdots)$, then $\gr(M)$ is given by
\begin{equation}\label{EqGradedOfFiltered}
\gr(M) = \bigoplus_{j\in \ZZ} (M_{j+1} \ltoover{t} M_j),
\end{equation}
where we consider $M_{j+1} \ltoover{t} M_j$ as a complex in $D(A)$ concentrated in degree $-1$ and $0$:
We have $A = A[t]/(t) \cong (A[t] \ltoover{t} A[t])$ in the derived category $D(A[t])$.

In particular we find
\begin{equation}\label{EqTor1AFil}
\Tor^{A[t]}_1(A,M) = H^{-1}(\gr(M)) = \bigoplus_{j \in \ZZ} \Ker(M_{j+1} \ltoover{t} M_j).
\end{equation}
This also shows that $t\colon M \to M$ is injective if and only if $\Tor^{A[t]}_1(A,M) = 0$. In this case one has $Li^*\Escr = i^*\Escr$ which corresponds to the graded $\ZZ$-module $\bigoplus_{j \in \ZZ} \Coker(M_{j+1} \ltoover{t} M_j)$. This is for instance the case if $\Mscr$ is a flat $\Oscr_{A^{\Fil}}$-module or, equivalently, if $M$ is a flat $A[t]$-module.

\begin{defrem}\label{DefPosGraded}
A filtered $A$-module $M$ is called \emph{positively graded} if $\gr(M)_j = 0$ for all $j < 0$, in other words by \eqref{EqGradedOfFiltered}, if $t\colon M_{j+1} \to M_j$ is an isomorphism for all $j < 0$.
\end{defrem}

\paragraph{Filtered vector bundles.}

\begin{proposition}\label{CharFilteredVB}
A quasi-coherent $\Oscr_{A^{\Fil}}$-module $\Mscr$ corresponding to $(\dots \to M_j \ltoover{t} M_{j-1} \to \dots)$ is a vector bundle over $A^{\Fil}$ if and only if the following conditions are satisfied
\begin{definitionlist}
\item\label{CondVBAffineLinei}
$M_j$ is finite projective for all $j$,
\item\label{CondVBAffineLineii}
$t\colon M_j \to M_{j-1}$ is injective with finite projective cokernel for all $j$,
\item\label{CondVBAffineLineiii}
One has $M_j = 0$ for $j \gg 0$, and $t\colon M_j \to M_{j-1}$ is an isomorphism for $j \ll 0$.
\end{definitionlist}
\end{proposition}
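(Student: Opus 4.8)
The plan is to reduce the statement to Proposition~\ref{CharVBonQuotientByGGm}: with $B = A[t]$ and $\deg(t) = -1$, a quasi-coherent $\Oscr_{A^{\Fil}}$-module $\Mscr$ is a vector bundle if and only if the corresponding $\ZZ$-graded $A[t]$-module $M = \bigoplus_j M_j$ is finite projective over $A[t]$. The building blocks are the twisted free modules: since $A[t]_n = A\cdot t^{-n}$ for $n \leq 0$ and $A[t]_n = 0$ for $n > 0$, the chain underlying $A[t](e)$ is $A$ in every degree $\leq -e$ with identity transition maps and $0$ in degrees $> -e$; more generally, for a finite projective $A$-module $P$, the chain underlying $A[t](e)\otimes_A P$ is $P$ in degrees $\leq -e$ and $0$ above, and this module is finite projective over $A[t]$ because its underlying ungraded $A[t]$-module is $A[t]\otimes_A P$.

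For the ``only if'' direction I would use Proposition~\ref{CharVBonQuotientByGGm} to write $F := \bigoplus_{i=1}^r A[t](e_i) = M\oplus M''$ as $\ZZ$-graded $A[t]$-modules. From the description above, $F$ satisfies \ref{CondVBAffineLinei}--\ref{CondVBAffineLineiii}: each $F_j$ is finite free, $t\colon F_j \to F_{j-1}$ is a split inclusion of finite free modules with finite free cokernel, and $F_j = 0$ for $j\gg 0$ while $t$ is bijective for $j\ll 0$. Because $M$ and $M''$ are $A[t]$-submodules of $F$, the operator $t$ on $F$ is block-diagonal for the decomposition $F_j = M_j\oplus M''_j$, so $t\colon M_j\to M_{j-1}$ inherits injectivity and, in the relevant degree range, bijectivity, $\Coker(t\colon M_j\to M_{j-1})$ is a direct summand of a finite free module, and each $M_j$ is a direct summand of $F_j$; hence $M$ satisfies all three conditions.

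For the ``if'' direction, assume \ref{CondVBAffineLinei}--\ref{CondVBAffineLineiii}. By \ref{CondVBAffineLineii} and \ref{CondVBAffineLineiii}, $\gr(M)_j = \Coker(M_{j+1}\ltoover{t}M_j)$ vanishes for all but finitely many $j$, and I would induct on this number. If it is zero, every $t\colon M_{j+1}\to M_j$ is bijective, which with $M_j = 0$ for $j\gg 0$ forces $M = 0$. Otherwise let $b$ be the largest $j$ with $\gr(M)_j\neq 0$; then $t\colon M_{j+1}\to M_j$ is bijective for $j > b$, so $M_j = 0$ for $j > b$ and $\gr(M)_b = M_b$ is finite projective. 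Set $F := A[t](-b)\otimes_A M_b$ and let $\iota\colon F\to M$ be the graded $A[t]$-linear map that is $\mathrm{id}_{M_b}$ in degree $b$; by $A[t]$-linearity it is forced to be $t^{b-j}\colon M_b\to M_j$ in each degree $j\leq b$, and by \ref{CondVBAffineLineii} every $\iota_j$ is injective. Put $M' := \Coker(\iota)$. One then checks that $M'$ again satisfies \ref{CondVBAffineLinei}--\ref{CondVBAffineLineiii}, with $\gr(M')_j = \gr(M)_j$ for $j < b$ and $\gr(M')_j = 0$ for $j\geq b$, so $M'$ has one fewer nonzero graded piece and is finite projective over $A[t]$ by induction. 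Hence the short exact sequence $0\to F\to M\to M'\to 0$ with first map $\iota$ splits, and since $F$ is finite projective over $A[t]$ by the first paragraph, so is $M$, which by Proposition~\ref{CharVBonQuotientByGGm} finishes the proof.

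The step needing the most care is verifying that the cokernel $M'$ of $\iota$ still satisfies \ref{CondVBAffineLinei}--\ref{CondVBAffineLineiii}. The cleanest bookkeeping is to use the injectivity of the transition maps of $M$ to regard $M$ as the finite projective $A$-module $M_{-\infty} = \colim_{j\to-\infty}M_j$ equipped with the bounded decreasing filtration $\Fil^j$ given by the image of $M_j$ in $M_{-\infty}$, whose graded pieces are the finite projective modules $\gr(M)_j$; then $M'$ corresponds to $M_{-\infty}/\Fil^b$ with its induced filtration, which vanishes in degrees $\geq b$. With this description, $M'_j = \Fil^j/\Fil^b$ in degrees $j\leq b$ is an iterated extension of $\gr(M)_j,\dots,\gr(M)_{b-1}$ with projective successive quotients, hence finite projective; the cokernel of $t\colon M'_j\to M'_{j-1}$ is again $\gr(M)_{j-1}$; and the boundedness and eventual bijectivity conditions are immediate. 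Everything else is the explicit dictionary of Section~\ref{QCOHGGmSTACK} together with the standard fact that an extension of a finite projective module by a finite projective module splits.
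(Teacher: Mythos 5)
Your proof is correct, and for the ``if'' direction it takes a genuinely different route from the paper. The necessity of the three conditions is argued the same way in both cases, by realizing $M$ as a graded direct summand of a finite graded free $A[t]$-module via Proposition~\ref{CharVBonQuotientByGGm}. For sufficiency, the paper applies the flatness criterion of Proposition~\ref{FlatnessCrit} to the ideal $(t) \subseteq A[t]$: finite presentation of $M$ comes from Proposition~\ref{CharModFP} together with \ref{CondVBAffineLineiii}, flatness of $M[1/t]$ from projectivity of $M_{-\infty}$, and flatness of $M/tM$ together with $\Tor^{A[t]}_1(A,M)=0$ from \ref{CondVBAffineLineii} via \eqref{EqTor1AFil}. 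You instead run a d\'evissage on the length of the associated graded: you split off the top graded piece as a shifted free summand $A[t](-b)\otimes_A M_b$, check that the quotient $M'$ again satisfies the hypotheses with strictly shorter associated graded, and conclude by induction using that extensions by (graded) projective modules split. Your argument is more elementary, avoiding the flatness criterion entirely, and it yields the slightly finer structural statement that $M$ is non-canonically isomorphic to $\bigoplus_j A[t](-j)\otimes_A \gr(M)_j$; the paper's argument is shorter given Proposition~\ref{FlatnessCrit}, which it needs elsewhere anyway (e.g.\ in Proposition~\ref{DescribeVBReesStack}). The one step that requires the care you give it --- verifying that $M'$ still satisfies \ref{CondVBAffineLinei}--\ref{CondVBAffineLineiii} --- is handled correctly by passing to the filtration $\Fil^j \subseteq M_{-\infty}$ and identifying $M'_j$ with $\Fil^j/\Fil^b$.
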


\begin{proof}
Since every vector bundle corresponds to a direct summand of a finite graded free $A[t]$-module, the conditions are clearly necessary. Conversely, we have to show that the conditions imply that $M = \bigoplus_{j\in \ZZ}M_j$ is a finite projective $A[t]$-module. For this, we apply Proposition~\ref{FlatnessCrit} to $I = (t)$. Since \ref{CondVBAffineLineiii} holds and $M_j$ is of finite presentation for all $j$, $M$ is an $A[t]$-module of finite presentation by Proposition~\ref{CharModFP}. Moreover, \ref{CondVBAffineLinei} and \ref{CondVBAffineLineiii} imply that the $A$-module $M_{-\infty}$ is finite projective, therefore $M[1/t] = M_{-\infty} \otimes_A A[t,t^{-1}]$ is a finite projective $A[t,t^{-1}]$-module. Hence \ref{CondVBAffineLineii} implies that Condition~\ref{FlatnessCritb} of Proposition~\ref{FlatnessCrit} is satisfied by \eqref{EqTor1AFil}.
\end{proof}

Therefore the underlying module of a filtered vector bundle is again a vector bundle which can be identified with $M_j$ for $j \ll 0$. We obtain the classical version of a filtered vector bundle as, for instance, defined in \cite{Ziegler_FFF}. The attached graded module is the graded vector bundle $\bigoplus_j M_{j-1}/t(M_j)$. 

\begin{remdef}\label{VBSymGGm}
Let $L$ be a line bundle over $A$. Then working Zariski locally one sees that a vector bundle over $[\GG_{m,A}\backslash \Sym_A(L)]$ is a family $(N_j)_{j\in \ZZ}$ of finite projective $A$-modules together with $A$-linear maps $N_j \otimes_A L \to N_{j+1}$ for all $j$ such that $N_j = 0$ for $j \ll 0$, $N_j \otimes_A L \to N_{j+1}$ is an isomorphism for $j \gg 0$, and $N_j \otimes_A L \to N_{j+1}$ is injective with projective cokernel for all $j$.

We call such a datum a \emph{vector bundle with an (increasing) $L$-filtration}.
\end{remdef}

\paragraph{Pullback of filtered modules.}

Let $\psi\colon A \to C$ be a homomorphism of rings yielding a map $\psi^{\Fil}\colon C^{\Fil} \to A^{\Fil}$ \eqref{EqFilFunctorial}. If $\Mscr \in D_{\qc}(A^{\Fil})$ is a filtered object of $D(A)$, we obtain the derived pullback $L\varphi^{\Fil*}\Mscr$. Suppose that $\Mscr$ corresponds to $(M_j \ltoover{t} M_{j-1})_j$ with $M_j \in D(A)$, then $L\varphi^{\Fil*}\Mscr$ corresponds to $(M_j \Lotimes_A B \ltoover{t} M_{j-1} \Lotimes_A B)_j$.

If $\Mscr$ is a flat quasi-coherent module over $A^{\Fil}$ (e.g., if $\Mscr$ is a vector bundle over $A^{\Fil}$) or if $\varphi$ is flat, equivalently if $\varphi^{\Fil}$ is flat, then $L\varphi^{\Fil*}\Mscr =  \varphi^{\Fil*}\Mscr$.


\subsection{Filtered rings and their Rees stacks}\label{Subsec:FilteredRing}

Again let $A$ be a ring and define $A^{\Fil}$ as in \eqref{EqDefTheta}.

\begin{definition}\label{DefFilteredAlg}
\begin{assertionlist}
\item
A \emph{filtered $A$-algebra} is a quasi-coherent $\Oscr_{A^{\Fil}}$-algebra. A \emph{map of filtered $A$-algebras} is a map of quasi-coherent $\Oscr_{A^{\Fil}}$-algebras.
\item
A filtered $A$-algebra is called \emph{positively graded} if it is positively graded as a filtered module. 
\end{assertionlist}
\end{definition}

A filtered $A$-algebra $\Bscr$ is given by a $\ZZ$-graded $A$-algebra $B = \bigoplus B_j$ together with $A$-linear maps $t\colon B_j \to B_{j-1}$ for all $j \in \ZZ$ such that
\[
t(xy) = t(x)y = xt(y), \qquad\qquad\text{for $x \in B_j$, $y \in B_i$}.
\]
As above one sees that if $\Bscr$ is flat as a $A^{\Fil}$-algebra, then $t\colon B \to B$ is injective.

\begin{defrem}\label{DefFilteredRing}
A \emph{filtration on $A$} is a filtered positively graded $A$-algebra with corresponding $\ZZ$-graded $A$-algebra $B$ such that the canonical map $A \to B_0$ is an isomorphism.

A filtration on $A$ is a $\ZZ$-graded $A$-algebra $B = \bigoplus_{j\in \ZZ} B_j$ together with $A$-linear maps
\[
\cdots \ltoover{t} B_{j+1} \ltoover{t} B_j \ltoover{t} \cdots \ltoover{t} B_0 = A \liso B_{-1} \liso B_{-2} \liso \cdots.
\]
In this case, we also write usually $\Fil^j$ instead of $B_j$ and call $(A,(\Fil^j)_{j\geq 0})$ a \emph{filtered ring}. By definition we have an isomorphism $A[t] \iso \bigoplus_{j \leq 0}B_j$ and we usually identify $B_j$ with $A$ for $j \leq 0$.
\end{defrem}

\begin{remdef}\label{DefRees}
A quasi-coherent $\Oscr_{A^{\Fil}}$-algebra $\Bscr$ corresponds to an affine map of stacks
\begin{equation}\label{EqStructureMapRees}
\Spec \Bscr \lto A^{\Fil}.
\end{equation}
If $B$ is the $\ZZ$-graded $A[t]$-algebra corresponding to $\Bscr$ we call
\[
\Re(B) := \Spec \Bscr = [\GG_{m,A}\backslash \Spec B]
\]
the \emph{Rees stack attached to the filtered $A$-algebra $B$}. If $B$ is a filtered ring $(A,(\Fil^j))$, then we also write $\Re(A,(\Fil^j))$ or even $\Re(A)$ if there can't be any confusion about the filtration.
\end{remdef}

\begin{remdef}\label{UnderlyingRing}
Pulling back along the open immersion $\Spec A \to A^{\Fil}$ one obtains for every filtered $A$-algebra $B = (t\colon B_j \to B_{j-1})$ the \emph{underlying $A$-algebra} $B_{-\infty} = \colim_t B_j$, i.e., we have a cartesian diagram
\begin{equation}\label{EqGenericRees}
\begin{aligned}\xymatrix{
\Spec B_{-\infty} \ar[r] \ar[d] & \Re(B) \ar[d] \\
\Spec A \ar[r] & A^{\Fil}.
}\end{aligned}
\end{equation}
If $(A, (\Fil^i))$ is a filtered ring, its underlying ring is $A = B_0 \cong B_{-\infty}$.

For general filtered $A$-algebras, the notion of underlying $A$-algebra can be a bit counterintuitive, see the next Example~\ref{ExampleReesInvertible} for a non-trivial filtered $A$-algebra whose underlying $A$-algebra is the zero ring.
\end{remdef}

\begin{example}\label{TrivialFilteredAAlg}
Let $B_{-\infty}$ be any $A$-algebra, which we consider as quasi-coherent algebra $\Bscr_{-\infty}$ over $\Spec A$. Then pushforward along the affine open immersion $\nu\colon \Spec A \to A^{\Fil}$ defines a filtered $A$-algebra given by the $\ZZ$-graded $A[t]$-algebra $B_{-\infty}[t,t^{-1}]$.

If $\Bscr$ is any filtered $A$-algebra corresponding to $B = (t\colon B_j \to B_{j-1})$, then the canonical map $\Bscr \to \nu_*\nu^*\Bscr$ defines a map of filtered $A$-algebras
\[
B \lto B_{-\infty}[t,t^{-1}]\tag{*}
\]
which in degree $j \in \ZZ$ is given by the natural map $B_j \to B_{-\infty}$. The map (*) corresponds to a map of the associated Rees stacks
\begin{equation}\label{EqDefineTau1}
\tau\colon \Re(B_{-\infty}[t,t^{-1}]) = \Spec B_{-\infty} \lto \Re(B) 
\end{equation}
whose composition with $\Re(B) \to A^{\Fil}$ is the same as the composition $\Spec B_{-\infty} \to \Spec A \to A^{\Fil}$.

If $\Bscr$ defines a filtration $(\Fil^j)$ on $A$, then $B_{-\infty} = A$ and \eqref{EqDefineTau1} is a morphism of stacks over $A^{\Fil}$
\begin{equation}\label{EqDefineTau2}
\tau\colon \Spec A \lto \Re(A,(\Fil^j)).
\end{equation}
For a different way to describe $\tau$ in this case, see also Remark~\ref{ReesAttractor} below.
\end{example}

\begin{example}\label{ExampleReesInvertible}
Let $v\colon \Spec A \to A^{\Fil}$ be a section of the structure map $A^{\Fil} \to \Spec A$. It corresponds to a line bundle $L$ on $A$ and an $A$-linear map $v\colon L \to A$ (Example~\ref{MapsToAA1GGm}).

Define a filtered $A$-algebra $B$ by $B_j := L^{\otimes j}$ for $j \in \ZZ$ where $t\colon B_j \to B_{j-1}$ is given by multiplication with $v$. Its underlying $\ZZ$-graded algebra is $T(L)$ (Remark~\ref{MapsToQuotientbyGGm}) and we denote this filtered $A$-algebra by $T(v)$. One has
\[
\Re(T(v)) = [\GG_m\backslash \Spec T(L)] = \Spec A
\]
and the canonical map $\Spec A \to A^{\Fil}$ is just $v$. Its underlying $A$-algebra is
\[
B_{-\infty} = A[1/v] := \colim (\cdots \ltoover{v} L^{\otimes j+1} \ltoover{v} L^{\otimes j} \ltoover{v} \cdots).
\]
Here the multiplication of two elements $c,c' \in A[1/v]$ represented by $\ctilde \in L^{\otimes j}$ and $\ctilde'\in L^{\otimes j'}$ is defined as the image of $\ctilde \otimes \ctilde' \in L^{\otimes j+j'}$ in $A[1/v]$. Hence $\Spec A[1/v]$ is the largest open subscheme of $\Spec A$ on which $v$ is surjective and hence an isomorphism.

If $I = L$ is an invertible ideal of $A$ and $v$ is the inclusion, we also write $T(I)$ instead of $T(v)$ and $A[1/I]$ instead of $A[1/v]$.
For instance, if $I = (z)$ for a regular element $z \in A$ and $v$ is the inclusion, then $B_{-\infty} = A[1/z]$.

Another example is $v = 0$. In that case, the canonical map $\Spec A \to A^{\Fil}$ is the composition
\[
0\colon \Spec A \lto \B{\GG_{m,A}} \ltoover{i} A^{\Fil}.
\]
Its underlying $A$-algebra is the zero ring $A[1/0]$.
\end{example}

\begin{remark}[Points of the Rees stack]\label{PointsReesStack}
Let $B = (B_{j+1} \ltoover{t} B_j)_j$ be a filtered $A$-algebra and let $C$ be an $A$-algebra. Then by Remark~\ref{MapsToQuotientbyGGm}, the groupoid $\Re(B)(C)$ is the groupoid of triples $(L,v,\alpha)$, where
\begin{definitionlist}
\item\label{PointsReesStack1}
$L$ is a line bundle on $C$ and $v\colon L \to C$ is a $C$-linear map,
\item
$\alpha\colon B \lto T(v)$ is a map of filtered $A$-algebras.
\end{definitionlist}
\end{remark}

Let us describe attractor, repeller and fixed locus in Rees stacks of filtered rings.

\begin{remdef}\label{ReesAttractor}
Let $(A,(\Fil^j))$ be a filtered ring, i.e., it is given by
\[
\cdots \ltoover{t} \Fil^{j+1} \ltoover{t} \Fil^j \ltoover{t} \cdots \ltoover{t} \Fil^0 = A \liso A \liso A \liso \cdots
\]
and $\Fil^j = A$ for $j < 0$. Let $B = \bigoplus_{j\in \ZZ}\Fil^j$ be the associated $\ZZ$-graded $A$-algebra. By \eqref{EqDescribeIMinus} and \eqref{EqDescribeIZero} the ideals defining the attractor, repeller, and fixed locus of $\Spec B$ are given by the $\ZZ$-graded ideals
\begin{equation}\label{EqIdeaFixRees}
\begin{aligned}
I^- &= tB = tA[t] \oplus \bigoplus_{j\geq0}t(\Fil^{j+1}), \\
I^+ &= \bigoplus_{j\leq0}t^{-j+1}(\Fil^{1}) \oplus \bigoplus_{j > 0}\Fil^{j}, \\
I^0 &= I^+ + I^- = t(\Fil^1) \oplus \bigoplus_{j \ne 0}\Fil^{j}.
\end{aligned}
\end{equation}
Hence we obtain
\begin{equation}\label{EqFixRees}
\begin{aligned}
B^+ = B/I^- &= \bigoplus_{j \geq 0} \Fil^j/t(\Fil^{j+1}), \\
B^- = B/I^+ &= \bigoplus_{j \leq 0} A/t^{-j+1}(\Fil^{1}) = (A/t(\Fil^1))[t], \\
B^0 = B/I^0 &= A/t(\Fil^1).
\end{aligned}
\end{equation}
For $? \in \{+,-,0\}$ we denote by
\[
\Re(B)^{?} := [\GG_{m,A}\backslash (\Spec B^{?})]
\]
the corresponding quotient stacks. Then we have
\begin{equation}\label{EqFilRingReppellorFix}
\Re(B)^0 = \B{\GG_{m,B^0}}, \qquad\qquad \Re(B)^- = (A/t\Fil^1)^{\Fil}
\end{equation}
and obtain a cartesian diagram of closed immersions
\begin{equation}\label{EqCartesianAttractorRepellor}
\begin{aligned}\xymatrix{
 & \Re(B)^+ \ar[rd] \\
\Re(B)^0 \ar[ru] \ar[rd] & & \Re(B), \\
 & \Re(B)^- \ar[ru]
}\end{aligned}
\end{equation}
i.e., $\Re(B)^0$ is the stack theoretic intersection of the closed substacks $\Re(B)^+$ and $\Re(B)^-$. We also define for $? \in \{+,-,0\}$ open substacks of $\Re(B)$ by
\[
\Re(B)^{\ne ?} := \Re(B) \setminus \Re(B)^?.
\]
Then
\[
\Re(B)^{\ne0} = \Re(B)^{\ne+} \cup \Re(B)^{\ne-}.
\]
Since $B^+ = B/(t)$, we have a cartesian diagram
\[\xymatrix{
\Re(B)^+ \ar[r] \ar[d] & \Re(B) \ar[d] & \Re(B)^{\ne +} \ar[l] \ar[d]^{\sim}\\
\B{\GG_{m,A}} \ar[r] & A^{\Fil} & \Spec A \ar[l],
}\]
where the right vertical arrow is an isomorphism since we can also describe $\Re(B)^{\ne+}$ as the locus where $t$ is invertible in $\Re(B)$ and \eqref{EqGenericRees} yields an isomorphism
\begin{equation}\label{EqReesStackNePlus}
\Spec A = \Spec B_{-\infty} \iso \Re(B)^{\ne+}.
\end{equation}
Composing this isomorphism with the inclusion $\Re(B)^{\ne+} \to \Re(B)$ is then the map $\tau$ defined in \eqref{EqDefineTau2}.
\end{remdef}

\begin{remark}\label{ExtensionFilteredRing}
Let $(A, (\Fil_A^j)_j)$ be a filtered ring and let $\psi\colon A \to C$ be a ring homomorphism. Set $\Fil^j_C := C \otimes_{A} \Fil^j_A$ for $j \in \ZZ$ with induced maps $t\colon \Fil_C^{j+1} \to \Fil_C^j$. Then $(C, (\Fil_C^j)_j)$ is again a filtered ring, denoted by $\psi^*(A, (\Fil_A^j)_j)$.

%
Let $\Re(A)$ and $\Re(C)$ be the corresponding Rees stacks. Then one has a 2-cartesian diagram of algebraic stacks
\[\xymatrix{
\Re(C) \ar[r]^{\Re(\psi)} \ar[d] & \Re(A) \ar[d] \\
C^{\Fil} \ar[r]^{\psi^{\Fil}} \ar[d] & A^{\Fil} \ar[d] \\
\Spec C \ar[r]^{\Spec \psi} & \Spec A.
}\]
Similarly, one has
\begin{equation}\label{EqExtendAttractor}
\begin{aligned}
\Re(C)^{?} &= C \otimes_{A} \Re(A)^{?}, \\
\Re(C)^{\ne ?} &= C \otimes_A \Re(A)^{\ne ?}
\end{aligned}
\end{equation}
for $? \in \{+,-,0\}$.
\end{remark}

%
%

\begin{defrem}\label{DefMapFilteredRings}
A \emph{map of filtered rings} $(A (\Fil^j_A)) \to (C, (\Fil^j_C))$ is a pair consisting of a map $\psi\colon A \to C$ of rings and a map of filtered $C$-algebras $\psi_{\bullet}\colon \psi^*(A, (\Fil^j_A)) \to (C, (\Fil^j_C))$.

Equivalently, a map of filtered rings is given by a ring homomorphism $\psi\colon A \to C$ and by $A$-linear maps $\psi_j\colon \Fil^j_A \to \Fil^j_C$ for $j \in \ZZ$ such that $\psi_0 = \psi$ and such that for all $j \in \ZZ$ the diagram
\[\xymatrix{
\Fil^{j+1}_A \ar[r]^t \ar[d]_{\psi_{j+1}} & \Fil^j_A \ar[d]^{\psi_j} \\
\Fil^{j+1}_C \ar[r]^t & \Fil^j_C
}\]
commutes.
\end{defrem}


\subsection{Filtered modules over filtered rings}

\begin{definition}\label{DefFilteredModule}
Let $(A, (\Fil^j))$ be a filtered ring and let $\Re(A)$ be the attached Rees stack. Then a \emph{filtered module over $(A, (\Fil^j))$} is a quasi-cohrent module on $\Re(A)$.
\end{definition}

Then a filtered $(A, (\Fil^j))$-module can be also described as a $\ZZ$-graded $A$-module $M = \bigoplus_{i\in \ZZ}M_i$ endowed with the following additional structure.
\begin{definitionlist}
\item
For all $i \in \ZZ$ an $A$-linear map $t\colon M_i \to M_{i-1}$ and
\item
for all $j > 0$ and $i \in \ZZ$ an $A$-linear map $\Fil^j \otimes_A M_i \to M_{i+j}$ such that the diagram
\[\xymatrix{
\Fil^j \otimes_A M_i \ar[r] \ar[d] & M_{i+j} \ar[d]^t \\
\Fil^{j-1} \otimes_A M_i \ar[r] & M_{i+j-1}
}\]
commutes, where the left vertical map is induced by $\Fil^j \to \Fil^{j-1}$. 
\end{definitionlist}
We usually write $(M_{i+1} \to M_i)_i$ for such a datum.

We use the notation of Section~\ref{QCOHGGmSTACK}. In particular, for $e \in \ZZ$ we have the line bundle $\Oscr_{\Re(A)}(e)$ on $\Re(A)$. It corresponds to $M = \bigoplus_{j\in \ZZ}M_j$ with $M_j = \Fil^{i+e}$.

For a general quasi-coherent module $\Escr$ over $\Re(A)$ we set
\[
\Escr(e) := \Escr \otimes \Oscr_{\Re(A)}(e).
\]
If $\Escr$ is a quasi-coherent module on $\Re(A)$ with corresponding graded module $M = \bigoplus M_j$, then $\Gamma(\Re(A),\Escr) = M_0$ and more generally $\Gamma(\Re(A),\Escr(j)) = M_j$ for all $j \in \ZZ$ by \eqref{EqGradingGlobalSection}.

By Proposition~\ref{CharVBonQuotientByGGm}, a quasi-coherent module $\Escr$ on $\Re(A)$ is a vector bundle on $\Re(A)$ if and only if $\Escr$ is a direct summand of a module of the form of a finite direct sum $\bigoplus_i\Oscr(e_i)$ for finitely many $e_i \in \ZZ$.



\section{The display group}

\subsection{Extension and liftings of $G$-bundles}

We will use the following two special cases of results in \cite{Wedhorn_ExtendBundles}. We continue to denote by $A$ a ring.

\begin{definition}\label{DefHenselianGradedRing}
Let $B = \bigoplus_{i\in \ZZ}B_i$ be a $\ZZ$-graded ring and set $J := \sum_{i>0}B_iB_{-i} \subseteq B_0$. Then we call $B$ \emph{henselian} if $(B_0,J)$ is a henselian pair.
\end{definition}

For instance, every $\ZZ$-graded ring $B$ with $B_i = 0$ for $i < 0$ is henselian.

\begin{theorem}(\cite[2.6]{Wedhorn_ExtendBundles})\label{LiftGBundles}
Let $B$ be a henselian $\ZZ$-graded $A$-algebra, set $X := \Spec B$, $\Xcal := [\GG_{m,A}\backslash X]$, and $R := B_0/\sum_{i>0}B_iB_{-i}$. Let $X^{+}$, $X^-$, and $X^0 = \Spec R$ be the attractor, the repeller, and the fixed point locus, of the $\GG_m$-scheme $X$ respectively. Set $\Xcal^{?} := [\GG_m\backslash X^{?}]$ for $? \in \{+,-,0\}$. In particular $\Xcal^0 = \B{\GG_{m,R}}$.

Let $G$ be an affine smooth group scheme over $A$. Then pullback by the inclusions induces full and essentially surjective morphisms of groupoids
\[\xymatrix{
 & \Bun_G(\Xcal^+) \ar[dl] \\
\Bun_G(\B{\GG_{m,R}}) & & \Bun_G(\Xcal) \ar[ul] \ar[dl] \\
& \Bun_G(\Xcal^-) \ar[ul].
}\]
In particular we have isomorphism of pointed sets
\[
H^1(\Xcal,G) \cong H^1(\Xcal^{\pm},G) \cong H^1(\B{\GG_{m,R}},G).
\]
\end{theorem}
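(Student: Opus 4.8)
The plan is to reduce the statement to a lifting result for $G$-bundles along a henselian pair, applied on the level of the good moduli space $\Spec B_0$, and then transport it up along the $\GG_m$-quotient. First I would observe that the morphism $\pi\colon \Xcal = [\GG_{m,A}\backslash X] \to \Spec B_0$ is a good moduli space (as recalled in Section~\ref{QCOHGGmSTACK}), and similarly $\Xcal^{0} = \B{\GG_{m,R}} \to \Spec R$, $\Xcal^{\pm} \to \Spec B^{\pm}_0$. Using $R = B_0/J$ with $J = \sum_{i>0}B_iB_{-i}$ and the henselian hypothesis, $(B_0, J)$ is a henselian pair; by Gabber's/Elkik-type lifting of smooth-group torsors along henselian pairs (the input cited in \cite{Wedhorn\_ExtendBundles}), restriction $\Bun_G(\Spec B_0) \to \Bun_G(\Spec R)$ is essentially surjective, and in fact full, for $G$ smooth affine. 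The remaining work is to compare $\Bun_G(\Xcal)$ with $\Bun_G(\Spec B_0)$, and likewise for the attractor and repeller stacks.

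Next I would prove the key comparison: for $\Xcal = [\GG_{m,A}\backslash X]$ with $B$ henselian, pullback $\Bun_G(\Xcal) \to \Bun_G(\Xcal^0) = \Bun_G(\B{\GG_{m,R}})$ is full and essentially surjective. Essential surjectivity: given a $G$-bundle on $\B{\GG_{m,R}}$, i.e.\ a $\GG_m$-equivariant $G$-torsor on $\Spec R$, first spread it to $\Spec B_0$ by the henselian lifting above, then pull back along $\Spec B_0 \to \Xcal$ is not available — instead one extends the $\GG_m$-equivariant structure. Concretely, over the positively-graded part $B_{\geq 0}$ the stack $[\GG_m\backslash \Spec B_{\geq 0}]$ retracts onto $\B{\GG_{m,R}}$, and $G$-bundles extend along the closed immersion $\B{\GG_{m,R}} \hookrightarrow [\GG_m\backslash\Spec B_{\geq0}]$ by deformation theory (smoothness of $G$ kills the obstructions in $H^2$ and makes $H^1$-torsors of liftings nonempty at each graded stage, using that the grading gives a complete filtration controlled by the henselian/Jacobson condition). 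One then glues the extension over $B_{\geq 0}$ with the analogous one over $B_{\leq 0}$ along $\Spec B_0$-equivariant data to land on all of $\Xcal$; the same argument run only on $B_{\geq0}$ (resp.\ $B_{\leq0}$) gives the statements for $\Xcal^{+}$ (resp.\ $\Xcal^{-}$). Fullness is proved by the identical deformation-theoretic argument applied to the $G$-bundle $\Isomline(\Escr_1,\Escr_2)$-torsor of isomorphisms between two given bundles.

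For the final "in particular" I would simply note that fullness plus essential surjectivity of a morphism of groupoids induces a bijection on isomorphism classes, hence on pointed sets $H^1$, for each of the three restriction maps $\Bun_G(\Xcal)\to\Bun_G(\Xcal^{\pm})\to\Bun_G(\B{\GG_{m,R}})$ and the direct map $\Bun_G(\Xcal)\to\Bun_G(\B{\GG_{m,R}})$; compatibility of the diagram is formal since all maps are pullbacks along the evident inclusions $\Xcal^0 \hookrightarrow \Xcal^{\pm}\hookrightarrow \Xcal$.

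The main obstacle I expect is the deformation-theoretic extension step along the graded/henselian filtration: one must show that $G$-bundles (and isomorphisms between them) extend from $\B{\GG_{m,R}}$ across the "thickening" $[\GG_m\backslash\Spec B_{\geq0}]$, which is not a nilpotent thickening but only a henselian/complete one in the grading direction. This requires either an Artin–style approximation/algebraization argument or a direct limit argument over the graded pieces together with smoothness of $G$ to ensure surjectivity at each stage and the vanishing of the relevant obstruction classes; controlling the passage to the limit is exactly where the hypothesis that $J = \sum_{i>0}B_iB_{-i}$ makes $(B_0,J)$ henselian — equivalently that the positive part is, after inverting nothing, "topologically nilpotent" against $B_0$ — gets used. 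Everything else is formal manipulation of good moduli spaces and descent along the faithfully flat $\GG_m$-torsor $X \to \Xcal$.
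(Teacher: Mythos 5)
First, note that the paper does not prove this statement at all: it is imported verbatim from \cite[2.6]{Wedhorn_ExtendBundles}, and the closest the present paper comes to such arguments is Proposition~\ref{EFunctorialSurjective}, where surjectivity of this kind is obtained from the Alper--Hall--Rydh machinery of henselian pairs of stacks with adequate moduli spaces (\cite[3.6, 7.9]{AHR_EtaleLocal}). Measured against that, your proposal identifies the right inputs (the henselian pair $(B_0,J)$, smoothness of $G$, good moduli spaces) but the central step is not actually carried out, and as written it would fail. The closed immersion $\B{\GG_{m,R}}\hookrightarrow[\GG_m\backslash\Spec B_{\geq 0}]$ (and likewise $\Xcal^0\hookrightarrow\Xcal$) is cut out by an ideal that is neither nilpotent nor one along which anything is complete: the grading does not give a ``complete filtration controlled by the henselian condition''. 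Henselian is strictly weaker than complete, so a degreewise square-zero deformation argument has no limit to pass to (take $B=A[t,u]/(tu-z)$ with $(A,z)$ henselian but not $z$-adically complete). The only ways to convert henselianness of $(B_0,J)$ into the needed extension of bundles and of isomorphisms are algebraization/Artin-approximation type results or the stacky henselian-pair lifting theorems cited above; naming these as ``either\dots or\dots'' is exactly the content of \cite[2.6]{Wedhorn_ExtendBundles} and is the part your sketch leaves unproved. Note also that fullness, not essential surjectivity, is where this bites hardest: it amounts to lifting sections of the smooth affine $\Xcal$-scheme $\Isomline(\Escr_1,\Escr_2)$ along the pair $(\Xcal,\Xcal^0)$, and your ``identical deformation-theoretic argument'' inherits the same gap.

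A second structural problem is the proposed gluing. The rings $B_{\geq0}$ and $B_{\leq0}$ are subrings, not quotients, of $B$, so $[\GG_m\backslash\Spec B_{\geq0}]$ is not a closed (or open) substack of $\Xcal$, and $\Xcal$ is not glued from the two halves along $\Spec B_0$: that would require $B\cong B_{\geq0}\times_{B_0}B_{\leq0}$, i.e.\ $B_{>0}B_{<0}=0$, whereas the ideal $J=\sum_{i>0}B_iB_{-i}$ measuring this interaction is precisely what makes the theorem nontrivial (the attractor and repeller are the quotients $B^{\pm}=B/I^{\mp}$, cf.\ Example~\ref{DescribeAttractorRepellerAffine}). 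What can be salvaged from your outline is essential surjectivity: lift a bundle together with its cocharacter from $\B{\GG_{m,R}}$ to $\B{\GG_{m,B_0}}$ using the henselian pair $(B_0,J)$ and smoothness of $\Autline(\Escr)$ and of its cocharacter space, then pull back along the canonical map $\Xcal\to\B{\GG_{m,B_0}}$ induced by $B_0\subset B$ (this is essentially how normal decompositions arise, cf.\ Remark~\ref{DescribeVBReAv}). But fullness, and the corresponding statements for $\Xcal^{\pm}$ sitting between $\Xcal^0$ and $\Xcal$, still require the genuine lifting theorem over the stacky henselian pair, which your proposal postpones rather than proves.
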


\begin{corollary}\label{LiftingGBundlesReesStack}
Let $(A, (\Fil^j))$ be a filtered ring (Definition~\ref{DefFilteredRing}) such that $(A,t(\Fil^1))$ is a henselian pair. Set $R := A/t(\Fil^1)$. Then for every smooth affine group scheme $G$ over $A$, one has bijections
\begin{equation}\label{EqClassifyBundlesGStack}
H^1(\Re(A),G) \cong H^1(\Re(A)^+,G) \cong H^1(\Re(A)^-,G) \cong H^1(\B{\GG_{m,R}}).
\end{equation}
\end{corollary}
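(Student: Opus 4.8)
The plan is to deduce this from Theorem~\ref{LiftGBundles} by checking that its hypotheses hold for $B := \bigoplus_{j\in\ZZ}\Fil^j$, the $\ZZ$-graded $A$-algebra attached to the filtered ring $(A,(\Fil^j))$, so that $\Re(A) = [\GG_{m,A}\backslash\Spec B]$. First I would verify that $B$ is henselian in the sense of Definition~\ref{DefHenselianGradedRing}. By definition, $J := \sum_{i>0}B_iB_{-i} \subseteq B_0$; since $B_0 = A$ and $B_{-i} = A$ with the structure maps being isomorphisms for negative degrees (Definition~\ref{DefFilteredRing}), the product $B_iB_{-i}$ is just the image of $B_i = \Fil^i$ in $A$ under the multiplication, which, unwinding the identifications $B_{-i}\cong A$, is precisely $t^i(\Fil^i)$. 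For $i \geq 1$ we have $t^i(\Fil^i) \subseteq t(\Fil^1)$, and for $i = 1$ equality, so $J = t(\Fil^1)$. Hence the hypothesis that $(A, t(\Fil^1))$ is a henselian pair is exactly the statement that $(B_0, J)$ is a henselian pair, i.e.\ that $B$ is henselian.

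Next I would match up the three stacks appearing in Theorem~\ref{LiftGBundles} with those in the statement. By Remark~\ref{ReesAttractor} (specifically \eqref{EqFixRees} and \eqref{EqFilRingReppellorFix}) we have $B^0 = A/t(\Fil^1) = R$, so $\Xcal^0 = \B{\GG_{m,R}}$; and the attractor and repeller quotient stacks $\Xcal^+ = \Re(B)^+$, $\Xcal^- = \Re(B)^-$ in the notation there coincide with $\Re(A)^+$, $\Re(A)^-$ by definition. Finally, $R = B_0/\sum_{i>0}B_iB_{-i} = A/t(\Fil^1)$ agrees with the $R$ in the corollary's statement. Then Theorem~\ref{LiftGBundles}, applied to the smooth affine group scheme $G$ over $A$, gives that pullback along the inclusions $\Re(A)^? \to \Re(A)$ induces full and essentially surjective morphisms of groupoids, hence in particular bijections on isomorphism classes
\[
H^1(\Re(A),G) \cong H^1(\Re(A)^+,G) \cong H^1(\Re(A)^-,G) \cong H^1(\B{\GG_{m,R}},G),
\]
which is exactly \eqref{EqClassifyBundlesGStack}.

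The only real point requiring care is the identification $J = t(\Fil^1)$ together with the bookkeeping of the negative-degree identifications $\Fil^{-i}\cong A$; everything else is a direct citation. In particular one should be slightly careful that the multiplication $B_i \times B_{-i} \to B_0$ really does land in $t(\Fil^1)$ and not merely in some a priori larger ideal: this follows because, under $B_{-i}\xrightarrow{\sim} A$, the multiplication map $\Fil^i \otimes_A A \to A$ factors as $\Fil^i \xrightarrow{t^i} \Fil^0 = A$ by the compatibility $t(xy) = t(x)y$ in a filtered algebra (Section~\ref{Subsec:FilteredRing}), and $t^i(\Fil^i)\subseteq t(\Fil^1)$ since the maps $\Fil^i \xrightarrow{t} \Fil^{i-1}$ compose. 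Thus $J = t(\Fil^1)$ exactly, and there is no loss. Note also that, since $B$ has $B_i = 0$ for no $i$ (the negative graded pieces are all $A$), we genuinely need the henselian-pair hypothesis and cannot invoke the remark that non-negatively graded rings are automatically henselian.
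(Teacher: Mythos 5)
Your proof is correct and is essentially the paper's intended argument: the corollary is deduced from Theorem~\ref{LiftGBundles} by observing that for the Rees algebra $B=\bigoplus_j\Fil^j$ one has $J=\sum_{i>0}B_iB_{-i}=t(\Fil^1)$ (a computation already recorded in \eqref{EqIdeaFixRees}/\eqref{EqFixRees} of Remark~\ref{ReesAttractor}), so the hypothesis that $(A,t(\Fil^1))$ is henselian says exactly that $B$ is henselian in the sense of Definition~\ref{DefHenselianGradedRing}, and $\Xcal^\pm=\Re(A)^\pm$, $\Xcal^0=\B{\GG_{m,R}}$ with $R=A/t(\Fil^1)$. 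Your careful verification of $J=t(\Fil^1)$, including the bookkeeping in negative degrees and the remark that the non-negatively-graded shortcut does not apply here, is exactly the point that makes the citation legitimate.
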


%
%

\begin{theorem}(\cite[1.7, 1.13]{Wedhorn_ExtendBundles})\label{ExtendGBundle}
Let $B$ be a $\ZZ$-graded $A$-algebra, set $X := \Spec B$ and $\Xcal := [\GG_{m,A}\backslash X]$. Let $j\colon U \to X$ be the inclusion of an open quasi-compact $\GG_{m,A}$-invariant subscheme such that $\Oscr_X \iso j_*\Oscr_U$.
\begin{assertionlist}
\item\label{ExtendGBundle1}
For every affine flat group scheme $G$ over $A$, the pullback
\[
j^*\colon \Bun_G([\GG_m\backslash X]) \lto \Bun_G([\GG_m\backslash U])
\]
is fully faithful.
\item\label{ExtendGBundle2}
If $X$ is regular of dimension $2$ and $U$ contains every point of codimension $\leq 1$ of $X$ and if $G$ is a reductive group scheme over $A$, then $j^*$ is an equivalence of groupoids.
\end{assertionlist}
\end{theorem}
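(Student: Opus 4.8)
This is essentially \cite[1.7]{Wedhorn_ExtendBundles} (for~\ref{ExtendGBundle1}) together with \cite[1.13]{Wedhorn_ExtendBundles} (for~\ref{ExtendGBundle2}), and the plan is to indicate how the argument runs rather than to reprove it in full. First I would reduce everything to $\GG_m$-equivariant sheaves on $X$ and $U$: since $X \to \Xcal$ and $U \to \Ucal := [\GG_{m,A}\backslash U]$ are $\GG_m$-torsors, a $G$-bundle on $\Xcal$ (resp.\ on $\Ucal$) is the same as a $\GG_m$-equivariant $G$-bundle on $X$ (resp.\ on $U$); and since $j$ is $\GG_m$-equivariant and quasi-compact, $j_*$ commutes with the flat base change along $X \to \Xcal$, so the hypothesis $\Oscr_X \iso j_*\Oscr_U$ descends to $\Oscr_{\Xcal} \iso j_*\Oscr_{\Ucal}$, where I also write $j$ for the induced open immersion $\Ucal \to \Xcal$.

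For~\ref{ExtendGBundle1} I would argue as follows. Since all morphisms of $G$-bundles are isomorphisms, it suffices to show that for $G$-bundles $P, Q$ on $\Xcal$ the map $\operatorname{Isom}_{\Xcal}(P,Q) \to \operatorname{Isom}_{\Ucal}(j^*P,j^*Q)$ is bijective. The functor $\underline{\operatorname{Isom}}(P,Q)$ is representable by a scheme affine over $\Xcal$ --- locally in the fppf topology it is empty or a form of $G$, and affineness descends, using that $G$ is affine and flat --- so write $\underline{\operatorname{Isom}}(P,Q) = \Spec_{\Xcal}\Bscr$ for a quasi-coherent $\Oscr_{\Xcal}$-algebra $\Bscr$, whose formation commutes with the base change to $\Ucal$. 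A section over $\Ucal$ is then an $\Oscr_{\Ucal}$-algebra map $j^*\Bscr \to \Oscr_{\Ucal}$, which, since $j^* \dashv j_*$ is a monoidal adjunction, corresponds to an $\Oscr_{\Xcal}$-algebra map $\Bscr \to j_*\Oscr_{\Ucal} = \Oscr_{\Xcal}$, i.e.\ to a section over $\Xcal$; this gives the bijection.

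For~\ref{ExtendGBundle2} it remains to prove essential surjectivity, and I would do the case $G = \GL_n$ first and then bootstrap. For $\GL_n$ a bundle on $\Ucal$ is a vector bundle $\Escr$; its pushforward $j_*\Escr$ is coherent (using that $X$, hence $\Xcal$, is noetherian and $j$ is quasi-compact) and, because $\Ucal$ contains all points of codimension $\leq 1$, it is reflexive; as $X$, hence $\Xcal$, is regular of dimension $2$, reflexive sheaves are vector bundles, so $j_*\Escr$ is a vector bundle extending $\Escr$. For general reductive $G$ I would pick a closed embedding $G \hookrightarrow \GL_n$ with affine quotient $\GL_n/G$, which exists because $G$ is reductive (Matsushima's criterion), and present a $G$-bundle $P$ on $\Ucal$ as the associated $\GL_n$-bundle $P'$ together with a reduction of structure group, i.e.\ a section over $\Ucal$ of the associated bundle with fibre $\GL_n/G$. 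One extends $P'$ over $\Xcal$ by the $\GL_n$-case; the associated $(\GL_n/G)$-bundle for this extension is affine over $\Xcal$, so its section over $\Ucal$ extends uniquely exactly as in~\ref{ExtendGBundle1}, producing a $G$-bundle on $\Xcal$ that restricts to $P$ (and is unique up to isomorphism by full faithfulness).

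The main obstacle is the essential surjectivity for general reductive $G$: one has to invoke the existence of a closed embedding $G \hookrightarrow \GL_n$ with \emph{affine} quotient over the base ring $A$ (not merely over a field) and check that forming associated bundles is compatible with the $\GG_m$-quotient, and one has to be careful about the precise hypotheses on $X$ ensuring that ``$j_*$ of a coherent sheaf is coherent'' and ``reflexive $\Rightarrow$ locally free in dimension $2$'' apply on the stack $\Xcal$. For non-reductive $G$ the quotient $\GL_n/G$ need not be affine and essential surjectivity genuinely fails, which is why~\ref{ExtendGBundle2} is restricted to reductive groups.
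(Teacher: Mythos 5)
The paper gives no proof of this theorem: it is quoted directly from \cite[1.7, 1.13]{Wedhorn_ExtendBundles}, so the only argument to compare with is that of the cited reference, and your outline is correct and follows essentially the same route. Full faithfulness comes from affineness of the Isom-scheme together with $\Oscr_{[\GG_m\backslash X]} \iso j_*\Oscr_{[\GG_m\backslash U]}$ (descended from $\Oscr_X \iso j_*\Oscr_U$ by flat base change along the $\GG_m$-torsor $X \to \Xcal$), and essential surjectivity comes from extending vector bundles as reflexive sheaves on the regular two-dimensional $X$ and reducing general $G$ to $\GL_n$ via a closed embedding with affine quotient. The two external inputs you single out (a faithful representation of $G$ over the affine base $A$ and affineness of $\GL_n/G$) are exactly what is needed and do hold for (geometrically) reductive group schemes, cf.\ \cite[Section~9]{Alper_Adequate}, which matches the paper's remark immediately after the theorem that geometric reductivity suffices for \ref{ExtendGBundle2}.
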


In Theorem~\ref{ExtendGBundle}~\ref{ExtendGBundle2} it suffices that $G$ is geometrically reductive in the sense of \cite[Section~9]{Alper_Adequate}.


\subsection{The stack of types of $G$-bundles for a reductive group scheme $G$}\label{Sec:StackTypesGBundles}

Let $S$ be an algebraic space and let $G$ be a smooth affine group algebraic space over $S$.

The groupoid $\Bun_G(\B{\GG_{m,S}})$ can be described as the groupoid of pairs $(\Escr,\lambda)$, where $\Escr$ is a $G$-bundle on $S$ and where $\lambda\colon \GG_{m,S} \to \Autline(\Escr)$ is a cocharacter by \cite[A.36]{Wedhorn_ExtendBundles}. A morphism $(\Escr,\lambda) \to (\Escr',\lambda')$ in $\Bun_G(\B{\GG_{m,S}})$ is an isomorphism $\Escr \iso \Escr'$ of $G$-bundles intertwining $\lambda$ and $\lambda'$.

\begin{definition}\label{DefTypeStack}
The stack $\Bunline_G(\B{\GG_{m,S}})$ on $\Affrel{S}$ that sends an affine $S$-scheme $T$ to the groupoid $\Bun_G(\B{\GG_{m,T}})$ is called the \emph{stack of types of $G$-bundles}.
\end{definition}

Recall that we have the following alternative description of $\Bunline_G(\B{\GG_{m,S}})$.

\begin{void}
Let
\[
X_*(G) := \Hom_{\textup{$S$-Grp}}(\GG_{m,S},G), \qquad \Xline_*(G) := \Homline_{\textup{$S$-Grp}}(\GG_{m,S},G)
\]
be the set (resp.~sheaf) of cocharacters of $G$. By \cite[XV, 8.11]{SGA3II}, $\Xline_*(G)$ is representable by an algebraic space which is surjective, smooth, and separated over $S$. Since $G$ is affine over $S$, the algebraic space $\Xline_*(G)$ is ind-quasi-affine over $S$ by \cite[5.8]{Cotner_HomSchemes}. Moreover, $G$ acts on $\Xline_*(G)$ by conjugation from the left and sending a cocharacter $\mu$ of $G$ to the pair $(P_0,\mu)$, where $P_0$ is the trivial $G$-bundle, yields an isomorphism
\[
[G\backslash \Xline_*(G)] \liso \Bunline_G(\B{\GG_{m,S}})
\]
by \cite[Section A.7]{Wedhorn_ExtendBundles}. In particular, the stack of types $\Bunline_{G}(\B{\GG_{m,S}})$ is a smooth algebraic stack with affine diagonal.
\end{void}

\begin{void}[Coarse moduli space of $\Bunline_G(\B{\GG_{m,S}})$]\label{FppfConjClasses}
Let $\Ccal_G$ be the coarse fppf-sheaf of $[G\backslash \Xline_*(G)]$, i.e., the fppf-sheafification of the presheaf on $\Affrel{S}$ that attaches to an affine $S$-scheme $S'$ the set of isomorphism classes of $[G\backslash \Xline_*(G)](S')$. Since $G \to S$ and $\Xline_*(G) \to S$ are both fppf-covering, so is the inertia stack
\[
\Ical_{[G\backslash \Xline_*(G)]} = [G\backslash (G \times_S \Xline_*(G))] \lto [G\backslash \Xline_*(G)].
\]
Therefore $\Ccal_G$ is a smooth algebraic space over $S$ (and in particular an fpqc-sheaf by Gabber's theorem) and $[G \backslash \Xline_*(G)] \to \Ccal_G$ is smooth and a gerbe (lacking a better reference: apply \cite[A.1]{AOV_TamePosChar} to the rigidification obtained by ``removing'' the inertia subgroup stack itself). For every geometric point $\sbar \to S$ we have
\[
H^1(G_{\sbar},\B{\GG_{m,\sbar}}) = \Ccal_G(\sbar).
\]
\end{void}

%

\begin{defrem}\label{DefineGModMu}
Let $[\mu] \in [G\backslash \Xline_*(G)](S')$ be a conjugacy class of cocharacters defined over some $S$-scheme $S'$.
\begin{assertionlist}
\item\label{DefineGModMu1}
The corresponding $G$-bundle over $\B{\GG_{m,S'}}$ will be denoted by $\Escr_{[\mu]}$. If $[\mu]$ can be represented by some cocharacter $\mu$ of $G$, we also write $\Escr_{\mu}$ instead of $\Escr_{[\mu]}$.
\item\label{DefineGModMu2}
Also define the relatively affine auutomorphism group stack of $\Escr_{[\mu]}$ as
\begin{equation}\label{EqDescribeMuG}
[\mu]\backslash G_{S'} := \Autline(\Escr_{[\mu]}).
\end{equation}
Again, we write $\mu\backslash G_{S'}$ if $[\mu]$ is represented by a cocharacter $\mu$.

Then $[\mu]\backslash G_{S'}$ is a group stack over $\B{\GG_{m,S'}}$ sitting in a 2-cartesian diagram
\begin{equation}\label{EqGModMu}
\begin{aligned}\xymatrix{
G_{S'} \ar[r] \ar[d] & [\mu]\backslash G_{S'} \ar[d] \\
S' \ar[r] & \B{\GG_{m,S'}},
}\end{aligned}
\end{equation}
If $[\mu]$ can be represented by a cocharacter $\mu$ of $G_{S'}$, then $\mu\backslash G_{S'}$ is the quotient stack of $G_{S'}$ by the $\GG_m$-action via conjugation with $\mu$.

We may view $\Escr_{[\mu]}$ also as the trivial $[\mu]\backslash G_{S'}$-bundle via twisting \cite[(A.6.1)]{Wedhorn_ExtendBundles}.

\item\label{DefineGModMu3}
For every $S'$-scheme $T$ we define $\Bun_G^{[\mu]}(\B{\GG_{m,T}})$ to be the full subgroupoid of $\Bun_G(\B{\GG_{m,T}})$ of $G$-bundles that are \'etale locally on $T$ isomorphic to $\Escr_{[\mu]}\rstr{\B{\GG_{m,T}}}$. These $G$-bundles are said to be \emph{of type $[\mu]$}.

More generally, let $\Xcal$ be any stack over $\B{\GG_{m,S'}}$. Then a $G$-bundle $\Fscr$ on $\Xcal$ is said to be \emph{of type $[\mu]$} (or \emph{of type $\mu$} if a representing cocharacter $\mu$ of $[\mu]$ is chosen) if there exists an \'etale covering $\Xcal' \to \Xcal$ such that $\Fscr\rstr{\Xcal'} \cong \Escr_{[\mu]}\rstr{\Xcal'}$.
\item
Let $\Bunline^{[\mu]}_G(\B{\GG_{m,S'}})$ be the substack of $\Bunline_G(\B{\GG_{m,S'}})$ sending an $S'$-scheme $T$ to $\Bun^{[\mu]}_G(\B{\GG_{m,T}})$.
\end{assertionlist} 
\end{defrem}

\begin{remark}\label{MuGGeneral}
Occasionally, we will apply this construction of $[\mu]\backslash G$ to the more general case that $[\mu]$ is an $S'$-valued point of the quotient stack $[G\backslash \Homline(\GG_{m,S},\Autline(G))]$, where $G$ acts via conjugaction, i.e., $[\mu]$ is a $G$-conjugacy class of cocharacters of $\Autline(G)$. In this case we define $\Escr_{[\mu]}$ as the trivial $[\mu]\backslash G$-bundle over $\B{\GG_{m,S'}}$. 
\end{remark}

\begin{proposition}\label{BunlineMuStack}
With keep the notation as in Definition~\ref{DefineGModMu}. Then the following assertions hold
\begin{assertionlist}
\item
$\Bunline^{[\mu]}_G(\B{\GG_{m,S'}})$ is a neutral gerbe. More precisely, there exists a smooth affine group algebraic space $L$ over $S'$, an \'etale covering $S''$ of $S'$, a cocharacter $\mu$ of $G_{S''}$ representing $[\mu]$, and an isomorphism $L_{S''} \iso \Cent_{G_{S''}}(\mu)$ of group algebraic spaces over $S''$ such that
\begin{equation}\label{EqDescribeBunGMu}
\Bunline^{[\mu]}_G(\B{\GG_{m,S'}}) = \B{L}.
\end{equation}
\item
$\Bunline^{[\mu]}_G(\B{\GG_{m,S'}})$ is a smooth quasi-compact algebraic fpqc-stack with affine diagonal over $S'$.
\end{assertionlist}
\end{proposition}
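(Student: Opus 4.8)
The plan is to identify $\Bunline^{[\mu]}_G(\B{\GG_{m,S'}})$ with the classifying stack of the automorphism group of its distinguished object $\Escr_{[\mu]}$. Concretely, I will show that this stack is a neutral gerbe over $S'$ whose band $L$ is \'etale-locally a centralizer $\Cent_G(\mu)$, and then read off every assertion of part~(2) from the corresponding properties of $L$. To set up the band, let $L$ be the group functor on $\Affrel{S'}$ sending $T$ to the automorphism group of $\Escr_{[\mu]}\rstr{\B{\GG_{m,T}}}$ in $\Bun_G(\B{\GG_{m,T}})$. Recall that $\Xline_*(G) \to [G\backslash \Xline_*(G)]$ is a $G$-torsor and $G$ is smooth, so the class $[\mu] \in [G\backslash\Xline_*(G)](S')$ lifts \'etale-locally on $S'$ to a cocharacter; I fix an \'etale covering $S'' \to S'$ and a cocharacter $\mu$ of $G_{S''}$ representing $[\mu]\rstr{S''}$. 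Over $S''$ the object $\Escr_{[\mu]}$ is $\Escr_\mu$, whose automorphism group in $\Bun_G(\B{\GG_{m,S''}})$ is the group of global sections over $\B{\GG_{m,S''}}$ of the group stack $\Autline(\Escr_\mu) = \mu\backslash G_{S''}$, i.e.\ (by the last sentence of Definition~\ref{DefineGModMu}, the quotient of $G_{S''}$ by $\mu$-conjugation) the $\mu(\GG_m)$-fixed locus $\Cent_{G_{S''}}(\mu)$. Thus $L_{S''} \iso \Cent_{G_{S''}}(\mu)$, which by Remark~\ref{GGmActionGroupScheme} is a smooth affine group scheme of finite presentation over $S''$, using that $G$ is smooth and affine.

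Since representability, smoothness, affineness and finite presentation all descend along the \'etale (hence fpqc) covering $S'' \to S'$, the functor $L$ is a smooth affine group algebraic space of finite presentation over $S'$, which supplies all the auxiliary data claimed in part~(1). For the equivalence with $\B{L}$, I would note that $\Bunline^{[\mu]}_G(\B{\GG_{m,S'}})$ is a gerbe over $\Affrel{S'}$: it has the global section $\Escr_{[\mu]}$, and by the very definition of ``of type $[\mu]$'' any two of its objects over a base $T$ are \'etale-locally on $T$ isomorphic, each being \'etale-locally isomorphic to $\Escr_{[\mu]}$. A gerbe with a global section is neutral, and sending an $L$-torsor $P$ on $T$ to the twist of $\Escr_{[\mu]}\rstr{\B{\GG_{m,T}}}$ by $P$ along $L = \Autline(\Escr_{[\mu]})$ (the twisting formalism of \cite[(A.6.1)]{Wedhorn_ExtendBundles}) defines a morphism $\B{L} \to \Bunline^{[\mu]}_G(\B{\GG_{m,S'}})$ that is fully faithful, because $L$ is exactly the automorphism sheaf of $\Escr_{[\mu]}$, and essentially surjective, by the \'etale-local triviality just noted. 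Hence $\Bunline^{[\mu]}_G(\B{\GG_{m,S'}}) \iso \B{L}$, which is \eqref{EqDescribeBunGMu} and proves part~(1).

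Part~(2) then follows formally from the properties of $L$ over the scheme $S'$. Since $L \to S'$ is flat, affine and of finite presentation, $\B{L}$ is an algebraic stack; it is smooth over $S'$ because $L$ is; the canonical atlas $S' \to \B{L}$ (the trivial $L$-torsor) is affine, as $L \to S'$ is affine, and surjective, so $\B{L} \to S'$ is quasi-compact; and after the smooth surjective base change $S' \to \B{L}\times_{S'}\B{L}$ the diagonal of $\B{L}$ over $S'$ becomes the affine morphism $L \to S'$, so $\B{L}$ has affine diagonal over $S'$. Finally, an algebraic stack with affine (a fortiori quasi-affine) diagonal satisfies descent for the fpqc topology by Gabber's theorem, as invoked in \ref{FppfConjClasses}, so $\B{L}$ is an fpqc-stack.

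I expect the only genuine work to sit in the first two steps: producing the \'etale-local cocharacter $\mu$, correctly identifying the automorphism group of $\Escr_\mu$ with $\Cent_{G}(\mu)$ through the group stack $\mu\backslash G$, and checking that the structural properties of centralizers established in Remark~\ref{GGmActionGroupScheme} descend from $S''$ to $S'$. Once $L$ is in hand with these properties, the gerbe identification and the verification of part~(2) are routine.
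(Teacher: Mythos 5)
Your proposal is correct and follows essentially the same route as the paper: take $L$ to be the automorphism group (algebraic space) of the point $\Escr_{[\mu]}$, identify $L_{S''}$ with $\Cent_{G_{S''}}(\mu)$ after an \'etale cover where $[\mu]$ is represented by a cocharacter, descend smoothness/affineness, and identify the substack with $\B{L}$. The only difference is that you spell out the neutral-gerbe/twisting argument for $\B{L} \iso \Bunline^{[\mu]}_G(\B{\GG_{m,S'}})$ directly, where the paper cites \cite[A.5]{Wedhorn_ExtendBundles}; the rest of part~(2) is deduced from $\B{L}$ exactly as in the paper.
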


%
\begin{proof}
Set $\Xcal := \Bunline_G(\B{\GG_{m,S'}})$ and let $L := \Autline_{\Xcal}([\mu])$ be the group algebraic space of automorphisms of the $S'$-valued point $[\mu]$ of $\Xcal$. Then the natural monomorphism $\B{L} \to \Xcal$ induces by \cite[A.5]{Wedhorn_ExtendBundles} an isomorphism $\B{L} \iso \Bunline_G^{[\mu]}(\B{\GG_{m,S'}})$. Now choose an \'etale covering $S'' \to S'$ such that $[\mu]$ can be represented over $S''$ by a cocharacter $\mu$ of $G_{S''}$. Since $\Autline(G_{S''},\mu) = \Cent_{G_{S''}}(\mu)$ we find $L_{S''} \cong \Cent_{G_{S''}}(\mu)$. Since $\Cent_{G_{S''}}(\mu)$ is a smooth affine algebraic group space over $S''$ (Remark~\ref{GGmActionGroupScheme}), so is $L$ over $S'$ by descent. In particular $\Bunline_G^{\mu}(\B{\GG_{m,S'}}) = \B{L}$ is a smooth quasi-compact algebraic fpqc-stack with affine diagonal.
\end{proof}

%
%
%

There is the following alternative definition of $\Bunline^{[\mu]}_G(\B{\GG_{m,S}})$.

\begin{defrem}\label{AlternativeGBundleTypeMu}
Let $S'$ be an $S$-scheme and let $[\mu] \in [G\backslash \Xline_*(G)](S')$. Define a stack $B([\mu]\backslash G_{S'})$ over $\B{\GG_{m,S'}}$ as follows. If $T$ is a scheme and $f\colon T \to \B{\GG_{m,S'}}$ let $B([\mu]\backslash G_{S'})(T)$ be the groupoid of $f^*([\mu]\backslash G_{S'})$-bundles over $T$. By \eqref{EqGModMu} there is a 2-cartesian diagram
\[\xymatrix{
\Bunline^{[\mu]}_G(\B{\GG_{m,S'}}) \ar[r] \ar[d] & \B([\mu] \backslash G_{S'}) \ar[d] \\
S' \ar[r] & \B{\GG_{m,S'}}.
}\]
Since $\Bunline^{[\mu]}_G(\B{\GG_{m,S'}}) \to S'$ is a smooth algebraic gerbe with affine diagonal (Proposition~\ref{BunlineMuStack}), so is $B([\mu] \backslash G_{S'}) \to \B{\GG_{m,S'}}$.
\end{defrem}

%

From now on suppose that $G$ is reductive over $S$, i.e., $G \to S$ is affine and smooth with connected reductive geometric fibers.

\begin{remdef}\label{BasedRootDatum}
 Locally for the \'etale topology on $S$ one can choose a pinning of $G$ \cite[XXV, 1.1]{SGA3III} which in turn yields a based root datum $(X^*,\Phi,X_*,\Phi\vdual,\Delta)$. By descent, this allows to attach to $G$ globally a tuple of \'etale locally constant schemes over $S$ which we denote by $(\underline{X}^*,\underline{\Phi},\underline{X}_*,\underline{\Phi}\vdual,\underline{\Delta})$ and call it \emph{the} based root datum of $G$. By definition of a based root datum, this quintuple of locally constant schemes is endowed with additional structures: $\underline{X}^*$ and $\underline{X}_*$ are locally constant sheaves that are vector bundles over the constant sheaf of rings $\underline{\ZZ}$ and there is given a perfect pairing of $\underline{X}^*$ and $\underline{X}_*$ with values in $\underline{\ZZ}$. The schemes $\underline{\Phi}$, $\underline{\Phi}\vdual$, and $\underline{\Delta}$ are each locally constant finite $S$-schemes, i.e. finite \'etale $S$-schemes. There are given open and closed immersions $\underline{\Delta} \to \underline{\Phi} \to \underline{X}^*$ and $\underline{\Phi}\vdual \to \underline{X}_*$ and an isomorphism $\underline{\Phi} \iso \underline{\Phi}\vdual$ of locally constant sheaves denoted by $\alpha \sends \alpha\vdual$. All these data have to satisfy the conditions of a based root datum \cite[XXI, 1.1.1, 3.1.6]{SGA3III}.
If $S' \to S$ is an \'etale covering such that $G$ is split over $S'$, then the choice of a split Borel pair $T \subseteq B \subseteq G_{S'}$ yields a canonical isomorphism of $(\underline{X}^*(S'),\underline{\Phi}(S'),\underline{X}_*(S'),\underline{\Phi}\vdual(S'),\underline{\Delta}(S'))$ with the based root datum of $G$ defined by $(T,B)$.

To the based root datum of $G$ one attaches its Weyl group $\underline{W}_G$. It is a finite \'etale group scheme. It is equipped with an open and closed immersion of an $S$-scheme $\Iline\to \Wline_G$, the scheme of simple reflections in $\Wline_G$. The finite \'etale group scheme $\Wline_G$ acts on $\Xline^*$ and $\Xline_*$. The choice of a split Borel pair $(T,B)$ over an \'etale covering $S' \to S$ yields an isomorphism $(\underline{W}_G,\Iline) \times_S S' \cong (\Norm_{G_S'}(T)/T, I_{(B,T)})$ over $S'$ by \cite[XXII, 3.4]{SGA3III}, where $I_{(B,T)}$ is the set of simple reflections defined by $B$ in $\Norm_{G_S'}(T)/T$.
\end{remdef}

Attaching to $s \in S$ the based root datum $(\underline{X}^*(\sbar),\underline{\Phi}(\sbar),\underline{X}_*(\sbar),\underline{\Phi}\vdual(\sbar), \underline{\Delta}(\sbar))$, where $\sbar$ is some geometric point lying over $s$, defines a locally constant map from the underlying topological space of $S$ to the set of isomorphism classes of based root data.

To simplify our notation, we will in the sequel always assume that this map is constant.
We call the isomorphism class of $(\underline{X}^*(\sbar),\underline{\Phi}(\sbar),\underline{X}_*(\sbar),\underline{\Phi}\vdual(\sbar), \underline{\Delta}(\sbar))$ \emph{the geometric based root datum} of $G$ and usually denote it simply by $(X^*,\Phi,X_*,\Phi\vdual,\Delta)$. Its Weyl group is given by $W_G := \underline{W}_G(\sbar)$, and it is called \emph{the geometric Weyl group}.

\begin{void}
Every cocharacter $\GG_m \to G$ of the reductive group scheme $G$, possibly defined over some \'etale covering of $S$, factorizes locally for the \'etale topology through some maximal torus $T$. Hence we obtain for the sheaf of conjugacy classes of cocharacters an identification
\[
\Ccal_G = \underline{W}_G\backslash \Xline_*,
\]
where we denote by $\underline{W}_G\backslash \Xline_*$ the \'etale sheaf quotient (or, equivalently, the fppf-sheaf quotient) of the action of the finite group scheme $\Wline_G$ on the scheme $\Xline_*$. Since $\underline{W}_G$ and $\Xline_*$ are both locally for the \'etale topology constant, so is $\Ccal_G$. In particular, it is represented by an \'etale and ind-quasi-affine algebraic space over $S$.

Attaching to $s \in S$ the set $\Ccal_G(\sbar) = W_G(T)(\sbar)\backslash X_*(T_{\sbar})$, where $\sbar \to S$ is a geometric point lying over $s$, defines a locally constant map on $S$ which is indeed constant by our above assumption. Its value is $W_G\backslash X_*$.
\end{void}

\begin{definition}\label{DefTypeGBundleBGGm}
Now let $\Escr$ be a $G$-bundle on $\B{\GG_{m,S}}$. Then its isomorphism class defines a locally constant map
\begin{equation}\label{EqDefTypeGBundle}
\begin{aligned}
\type(\Escr)\colon S &\lto W_G\backslash X_*, \\
s &\lsends \Escr_{\sbar} \in \Bunline_G(\B{\GG_{m,S}})(\sbar) = \Ccal_G(\sbar) = W_G\backslash X_*,
\end{aligned}
\end{equation}
called the \emph{type of $\Escr$}.
\end{definition}

Let $S'\to S$ be an \'etale covering such that $\Ccal_G \times_S S'$ is constant, for instance an \'etale covering that splits $G$. Then the type morphism yields a decomposition of the stack of types of $G$-bundles into open and closed substacks
\begin{equation}\label{EqDecomposeBunGBGGm}
\Bunline_G(\B{\GG_{m,S}})_{S'} = \coprod_{[\mu] \in W_G\backslash X_*}\Bunline^{[\mu]}_G(\B{\GG_{m,S'}}).
\end{equation}
By Proposition~\ref{BunlineMuStack}, $\Bunline^{[\mu]}_G(\B{\GG_{m,S'}})$ is a smooth gerbe over $S'$ which is \'etale locally isomorphic to $\B{\Cent_G(\mu)}$ where $\mu$ is some cocharacter of $G$ representing $[\mu] \in W_G\backslash X_*$.

\begin{remark}\label{SplittingNormal}
Let $S = \Spec O$ for a normal local ring $O$ with field of fractions $K$. Then any reductive group $G$ over $O$ is split over a finite \'etale extension $O'$ which we may assume to be the integral closure of $O$ in an $O$-unramified extension $K'$ of $K$. Let $K$ be a separable closure of $K$ and let $K^{\textup{ur}}$ be the maximal $O$-unramified extension of $K$ in $K^{\sep}$. If $\bar\eta$ denotes the geometric point of $\Spec O$ given by $K^{\sep}$, then we find $\Gal(K^{\textup{ur}}/K) \cong \pi_1(\Spec O, \bar\eta)$.

Hence every conjugacy class $[\mu]$ is defined over the integral closure $O^{\textup{ur}}$ of $O$ in $K^{\textup{ur}}$ and has a ring of definition which is a finite \'etale $O$-algebra contained in $O^{\textup{ur}}$. we call this ring of definition $O_{[\mu]}$. Then $\Bunline^{[\mu]}_G(\B{\GG_{m}})$ is defined over $O_{[\mu]}$.

In general, there exists no cocharacter of $G$ representing $\mu$ over $O_{[\mu]}$. But after passage to a finite \'etale extension $O'$ of $O_{[\mu]}$, $[\mu]$ can be represented by a cocharacter $\mu$ of $G_{O'}$. Then $\Bunline^{[\mu]}_G(\B{\GG_{m}})_{O'} \cong \B{\Cent_{G_{O'}}(\mu)}$. If $G_{O_{[\mu]}}$ is quasi-split (e.g., $O$ is henselian with finite residue field), one can choose $O' = O_{[\mu]}$. 
\end{remark}

\begin{example}\label{GLnBundleBGGm}
Consider the special case $G = \GL_n$. Then we can identify
\[
W_G\backslash X_* = \ZZ^n_{+} := \set{(\mu_1,\dots,\mu_n) \in \ZZ^n}{\mu_1 \geq \cdots \geq \mu_n}.
\]
Let $S$ be a scheme. A $\GL_n$-bundle over $\B{\GG_{m,S}}$ is the same as a $\ZZ$-graded vector bundle $\Escr = \bigoplus_{i\in \ZZ}\Escr_i$ of rank $n$ on $S$. Then its type is constant if and only if $\rk(\Escr_i)$ is constant for all $i$. In this case, $\Escr$ has type $\mu = (\mu_1,\dots,\mu_n)$ if
\[
\rk(\Escr_i) = n_i := \#\set{j \in \{1,\dots,n\}}{\mu_j = i}.
\]
Then only finitely many of the $n_i$ are nonzero and $n = \sum_{i\in\ZZ}n_i$. Since $G$ is a split reductive group, every conjugacy class of cocharacters is already defined over $S$ and even has a representative over $S$. We have
\[
\Bun_G^{\mu}(\B{\GG_{m,S}}) \cong \B{\prod_{i\in \ZZ}\GL_{n_i,S}}.
\]
\end{example}

\begin{example}\label{TorusBundleBGGm}
Let $\kappa$ be a field and let $T$ be a torus over $\kappa$. Suppose for simplicity that there exists a quadratic Galois extension $\kappa'/\kappa$ that splits $T$ and set $\Gamma := \Gal(\kappa'/\kappa) = \{\id,\sigma\}$. Then the locally constant group scheme $\Xline_*(T)$ becomes constant after base change to $\kappa'$, and $\Xline_*(T)$ is given by the free abelian group $X_*(T)$ with its $\Gamma$-action. Since $T$ is abelian, $W_T = 1$ and the action of $T$ on $\Xline_*(T)$ is trivial.

For a cocharacter $\mu \in X_*(T)$ there are now two cases.
\begin{assertionlist}
\item
If $\sigma(\mu) = \mu$, then $\mu$ defines a $\kappa$-rational point of $\Xline_*(T)$ and $\Bun_G^{\mu}(\B{\GG_{m,\kappa}}) = \B{T}$.
\item
If $\sigma(\mu) \ne \mu$, then a field of definition of $\mu$ is $\kappa'$ and $\Bun_G^{\mu}(\B{\GG_{m,\kappa'}}) \cong \B{T \otimes_k k'}$.
\end{assertionlist}
\end{example}



\subsection{The display group attached to a functor of graded $R$-algebras}\label{EGENERAL}

In this section, we axiomatize and generalize results of Lau from \cite[5,6]{Lau_HigherFrames}. In particular, we define a more general version of the so-called display group of loc.~cit., which we also call the display group even though we do not use the notion of a display. Although the results in this section are much more general than in loc.~cit., most proofs are heavily inspired by Lau's proofs. 

We work in the following set-up. Let $O \to \kappa$ be a surjective map of rings. Let $R \sends \Bscr_R$ be a functor from the category of $\kappa$-algebras to the category of quasi-coherent algebras over $\B{\GG_{m,O}}$. We set $\Xcal_R := \Spec \Bscr_R$ and denote by $\Xcal$ the functor $R \sends \Xcal_R$ from the category of $\kappa$-algebras to the category of relatively affine schemes over $\B{\GG_{m,O}}$. For every $\kappa$-algebra $R$ one has a cartesian diagram
\[\xymatrix{
X_R \ar[r] \ar[d] & \Xcal_R \ar[d] \\
\Spec O \ar[r] & \B{\GG_{m,O}},
}\]
where $X_R = \Spec B_R$ is the spectrum of the $\ZZ$-graded $O$-algebra $B_R$ corresponding to $\Bscr_R$, and $\Xcal_R = [\GG_{m}\backslash X_R]$.

Let $G$ be a smooth affine group scheme over $O$, let $[\mu]$ be $G$-conjugacy class of coharacters of $\GG_m$ defined over $O$, let $\Escr_{[\mu]}$ be the corresponding $G$-bundle oover $\B{\GG_{m,O}}$ and let $[\mu]\backslash G$ the associated algebraic group quotient stack over $\B{\GG_{m,O}}$. Ty simplify the exposition, we assume that $[\mu]$ can be represented by a cocharacter $\mu$ of $G$ over $O$.

For every $\kappa$-algebra $R$ we set
\begin{equation}\label{EqDefEXcalMu}
E_{\Xcal}(G,\mu)(R) := E_{\Xcal}(G,[\mu])(R) := \Hom_{\B{\GG_{m,O}}}(\Xcal_R, [\mu]\backslash G).
\end{equation}
Then $E_{\Xcal}(G,[\mu])(R)$ is a group (and not a group objects in groupoids) since we can identify the underlying groupoids via our choice of $\mu$ as follows
\begin{equation}\label{EqAltDescribeEMu}
\begin{aligned}
E_{\Xcal}(G,[\mu])(R) &= \Hom^{\GG_m}(X_R,G) := \{\text{$\GG_m$-equivariant morphisms $X_R \to G$}\} \\
&= \{\text{degree $0$ maps $\Gamma(G,\Oscr_G) \to B_R$ of $\ZZ$-graded $O$-algebras}\}
\end{aligned}
\end{equation}
by Remark~\ref{AffineMapstoGGmQuotients}, and the right-hand side is a set. Hence we obtain a presheaf of groups on the category of $\kappa$-algebras.

\begin{definition}\label{DefDisplayGroup}
The presheaf of groups on $\kappa$-algebras $E_{\Xcal}(G,[\mu])$ is called the \emph{$(G,[\mu])$-display group of $\Xcal$}.
\end{definition}

In all cases of interest for us, we will be able to apply Proposition~\ref{EMuRepresentable} below to show that $E_{\Xcal}(G,[\mu])$ is representable by an affine group scheme.

\begin{remark}\label{FunctorialityDisplayGroupInGMu}
Let $G'$ be a second smooth affine group scheme over $O$, let $\varphi\colon G \to G'$ be a map of group schemes and let $[\mu'] = [\varphi \circ \mu]$ be the $G'$-conjugacy class of cocharacters of $G'$ obtained as the image of $[\mu]$ under $\varphi$. Then $\varphi$ induces a map of algebraic stacks $[\mu]\backslash G \to [\mu']\backslash G'$ and hence by composition a map of presheaves of groups
\begin{equation}\label{EqFunctorialInGDisplayGroup}
E_{\Xcal}(G,[\mu]) \lto E_{\Xcal}(G',[\mu']).
\end{equation}
If $G \to G'$ is a monomorphism, the description~\eqref{EqAltDescribeEMu} shows that \eqref{EqFunctorialInGDisplayGroup} is a monomorphism.
\end{remark}

Next we will give an alternative description of $E_{\Xcal}(G,[\mu])$.

\begin{remdef}\label{StandardGMuBundle}
Let $\Xcal_{\kappa} \to \B{\GG_{m,O}}$ be the structure map and denote by $\Escr_{\Xcal}(G,[\mu])$ the pullback of $\Escr_{[\mu]}$ (Remark~\ref{MuGGeneral}) to $\Xcal_{\kappa}$. It is called the \emph{standard $G$-bundle of type $[\mu]$ for $\Xcal$}.


We denote by $\Autline(\Escr_{\Xcal}(G,[\mu]))$ the functor that sends a $\kappa$-algebra $R$ to the group of automorphisms of the $G$-bundle $\Escr_{\Xcal}(G,[\mu])_R$ over $\Xcal_R$, where $\Escr_{\Xcal}(G,[\mu])_R$ denotes the pullback of $\Escr_{\Xcal}(G,[\mu])$ under $\Xcal_R \to \Xcal_{\kappa}$. 
\end{remdef}

Since an automorphism of $\Escr(G,[\mu])_R$ is given by a point $g \in G(X_R) = \Hom(X_R,G)$ that is $\GG_m$-equivariant, we obtain the following alternative description of the display group.

\begin{proposition}\label{AutDecribeDisplayGroup}
There is a functorial isomorphism of presheaves of groups on the category of $\kappa$-algebras
\[
E_{\Xcal}(G,[\mu]) \iso \Autline(\Escr_{\Xcal}(G,[\mu])).
\]
\end{proposition}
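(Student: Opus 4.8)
The plan is to unwind both sides to a concrete description in terms of graded $O$-algebra maps and then check that the identifications already appearing in the excerpt assemble into the claimed isomorphism functorially in $R$.

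First I would recall, following Remark~\ref{StandardGMuBundle} and the twisting remark \cite[(A.6.1)]{Wedhorn_ExtendBundles}, that $\Escr_{\Xcal}(G,[\mu])_R$ is obtained as the pullback of $\Escr_{[\mu]}$ along $\Xcal_R \to \B{\GG_{m,O}}$, and that $\Escr_{[\mu]}$ may be viewed as the trivial $[\mu]\backslash G$-bundle over $\B{\GG_{m,O}}$ (Remark~\ref{DefineGModMu}\ref{DefineGModMu2} and Remark~\ref{MuGGeneral}). Consequently $\Escr_{\Xcal}(G,[\mu])_R$ is the trivial $f_R^*([\mu]\backslash G)$-bundle over $\Xcal_R$, where $f_R\colon \Xcal_R \to \B{\GG_{m,O}}$ is the structure map. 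The automorphism group of a trivial $H$-bundle over a stack $\Ycal$ is $H(\Ycal) = \Hom_{\Ycal}(\Ycal, H)$; applying this with $\Ycal = \Xcal_R$ and $H = f_R^*([\mu]\backslash G)$ and using the adjunction between pullback along $f_R$ and $\Hom$ over $\B{\GG_{m,O}}$, I would identify $\Autline(\Escr_{\Xcal}(G,[\mu]))(R)$ with $\Hom_{\B{\GG_{m,O}}}(\Xcal_R, [\mu]\backslash G)$, which is exactly $E_{\Xcal}(G,[\mu])(R)$ by Definition~\eqref{EqDefEXcalMu}.

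The second, more hands-on route — and the one I expect to spell out in the proof — is via the comment already made right before the statement: an automorphism of $\Escr_{\Xcal}(G,[\mu])_R$ is given by a $\GG_m$-equivariant point $g \in G(X_R)$, where $X_R = \Spec B_R$ carries its $\GG_m$-action and $\GG_m$ acts on $G$ through conjugation by $\mu$. By Remark~\ref{AffineMapstoGGmQuotients} (and the reformulation in \eqref{EqAltDescribeEMu}) such $\GG_m$-equivariant morphisms $X_R \to G$ are the same as degree-$0$ maps of graded $O$-algebras $\Gamma(G,\Oscr_G) \to B_R$, and this is precisely the set $E_{\Xcal}(G,[\mu])(R)$. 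One then checks that the group structure on automorphisms (composition, i.e.\ pointwise multiplication in $G$) matches the group structure on $E_{\Xcal}(G,[\mu])(R)$ coming from the group law on $[\mu]\backslash G$ over $\B{\GG_{m,O}}$; this is immediate from the fact that both are read off from the multiplication $G \times G \to G$, which is $\GG_m$-equivariant for the conjugation action. Naturality in $R$ follows since all the identifications are induced by the functorial maps $B_{R} \to B_{R'}$ attached to a $\kappa$-algebra map $R \to R'$, and $X_{(-)}$, $\Xcal_{(-)}$ are functors.

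The main obstacle is purely bookkeeping: making sure the twisting convention used to present $\Escr_{[\mu]}$ as a \emph{trivial} $[\mu]\backslash G$-bundle is compatible with the convention under which $\GG_m$ acts on $G$ by conjugation via $\mu$, so that ``$\GG_m$-equivariant section of $G$'' genuinely computes the automorphism group rather than some twist of it. I would handle this by invoking the 2-cartesian square \eqref{EqGModMu} and \cite[(A.6.1)]{Wedhorn_ExtendBundles} directly: pulling back \eqref{EqGModMu} along $\Xcal_R \to \B{\GG_{m,O}}$ exhibits $G_{X_R}$ (with its conjugation $\GG_m$-action) as the fiber over the trivial $G$-bundle, and the group of $\GG_m$-equivariant sections of this is exactly $\Autline(\Escr_{\Xcal}(G,[\mu]))(R)$. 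Once this square is in hand, everything else is formal, and the proof is short.
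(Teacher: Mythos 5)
Your proposal is correct and follows essentially the same route as the paper: the paper's argument is exactly your second, hands-on identification, namely that an automorphism of $\Escr_{\Xcal}(G,[\mu])_R$ is a $\GG_m$-equivariant point $g \in G(X_R)$, which by \eqref{EqAltDescribeEMu} and Remark~\ref{AffineMapstoGGmQuotients} is the same datum as an element of $E_{\Xcal}(G,[\mu])(R)$, with compatibility of group laws and functoriality in $R$ being immediate. Your first, more abstract route via pulling back the square \eqref{EqGModMu} is just a repackaging of the same identification and adds nothing the paper doesn't already use.
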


\begin{example}\label{BunGMuAA1GGm}
Let $\mu\colon \GG_{m,O} \to G$ be a cocharacter.
\begin{assertionlist}
\item
Let $\Xcal_R = \B{\GG_{m,R}}$, viewed as a stack over $\B{\GG_{m,O}}$ via $O \to \kappa \to R$. Then $B_R = R$ with the trivial $\ZZ$-grading and we find
\[
E_{\Xcal}(\mu)(R) = \Hom^{\GG_m}(*,G) = \Cent_G(\mu)(R)
\]
Hence $E_{\Xcal}(\mu) = \Cent_G(\mu)$.
\item
Let $\Xcal_R = [\GG_{m,R}\backslash (\AA^1_R)^{\pm}]$, i.e., $B_R = R[t]$ endowed with the $\ZZ$-grading such that $\deg(t) = \pm1$. By \eqref{EqAltDescribeEMu} we find
\[
E_{\Xcal}(\mu)(R) = \Hom^{\GG_m}((\AA_R^1)^{\pm},G) = P^{\pm}(\mu)(R).
\]
Hence $E_{\Xcal}(\mu) = P^{\pm}(\mu)$.
\end{assertionlist}
\end{example}

\begin{example}\label{ETrivialGGmAction}
Suppose $\mu$ is the trivial cocharacter. Then
\[
E_{\Xcal}(G,\mu)(R) = \Hom_{\B{\GG_{m,O}}}(\Xcal_R, \B{\GG_{m,O}} \times G) = \Hom_{O}(\Xcal_R,G) = G((B_R)_0),
\]
where the last equality holds since $\Spec (B_R)_0$ is a good moduli space for $\Xcal_R$ and therefore $\Xcal_R \to \Spec (B_R)_0$ is initial in the category of morphisms from $\Xcal_R$ to algebraic spaces.
\end{example}

In Example~\ref{BunGMuAA1GGm}, $E_{\Xcal}(\mu)$ is representable by a group scheme. This fact is a special case of the following general principle.

\begin{proposition}\label{EMuRepresentable}
In the situation introduced in the beginning of this section
suppose that $R \sends (B_R)_i$ is representable for all $i \in \ZZ$ by a scheme
on the category of $\kappa$-algebras.

Let $Y = \Spec C$ be an affine $O$-scheme of finite type with $\GG_{m,O}$-action and set $\Ycal = [\GG_{m,O}\backslash Y]$. Then the set valued functor on $\kappa$-algebras
\begin{equation}\label{EqDefHomBGGmXY}
E(\Ycal)\colon R \sends \Hom_{\B{\GG_{m,O}}}(\Xcal_R, \Ycal)
\end{equation}
is representable by a scheme.
\begin{assertionlist}
\item\label{EMuRepresentable1}
If $R \sends (B_R)_i$ is representable for all $i \in \ZZ$ by an affine scheme (resp.~by an affine $\kappa$-scheme of finite type), then \eqref{EqDefHomBGGmXY} is representable by an affine scheme (resp.~by an affine scheme of finite type).
\item\label{EMuRepresentable2}
Suppose that the following conditions are satisfied.
\begin{definitionlist}
\item\label{EMuRepresentablea}
The functor $R \sends (B_R)_i$ is representable for all $i \in \ZZ$ by a smooth affine scheme $Z_i$ of finite type.
\item
$Y$ is smooth over $O$.
\item
If $S \to R$ is a surjective map of $\kappa$-algebras with square zero kernel, then $B_S \to B_R$ has nilpotent kernel (note that $B_S \to B_R$ is surjective by \ref{EMuRepresentablea}).
\end{definitionlist}
Then \eqref{EqDefHomBGGmXY} is representable by a smooth affine scheme.
\end{assertionlist}
\end{proposition}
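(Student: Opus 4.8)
The plan is to unwind $E(\Ycal)$ via Remark~\ref{AffineMapstoGGmQuotients} together with a $\GG_m$-equivariant presentation of $Y$, and then to exhibit it as a closed subscheme of a product of the representing schemes of the functors $R \sends (B_R)_i$. Concretely: by Remark~\ref{AffineMapstoGGmQuotients}, $E(\Ycal)(R) = \Hom_{\B{\GG_{m,O}}}(\Xcal_R,\Ycal)$ is the set of degree-$0$ homomorphisms of $\ZZ$-graded $O$-algebras $C \to B_R$. Since $Y = \Spec C$ is of finite type over $O$ and $C$ is graded, I would choose finitely many homogeneous generators $x_1,\dots,x_n$ of $C$, say of degrees $d_1,\dots,d_n$, obtaining a presentation $C = O[x_1,\dots,x_n]/I_C$ with $I_C$ a graded ideal (equivalently, a $\GG_m$-equivariant closed immersion of $Y$ into affine space, $\GG_m$ acting by the weights $d_k$). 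Then $E(\Ycal)(R)$ is the set of tuples $(b_k)_k \in \prod_k (B_R)_{d_k}$ with $f(b_1,\dots,b_n) = 0$ in $(B_R)_{\deg f}$ for every homogeneous $f \in I_C$. Writing $Z_i$ for the scheme representing $R \sends (B_R)_i$ and $W := \prod_{k=1}^n Z_{d_k}$ (fibre product over $\Spec\kappa$, so $W$ represents $R \sends \prod_k (B_R)_{d_k}$), the addition, $O$-scaling and multiplication maps on the graded pieces, being functorial in $R$, are natural transformations, hence morphisms of the $Z_i$ by Yoneda; so every homogeneous $f \in I_C$ of degree $e$ induces a morphism $F_f \colon W \to Z_e$, whose zero locus is a closed subscheme of $W$ because each $Z_i$ is a commutative group scheme over $\kappa$ and hence has a closed zero section. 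As an arbitrary sum of quasi-coherent ideal sheaves is quasi-coherent, $E(\Ycal)$, being the common zero locus of all the $F_f$, is a closed subscheme of $W$; in particular it is a scheme. If every $Z_i$ is affine (resp.\ affine of finite type over $\kappa$) then so is $W$, hence so is its closed subscheme $E(\Ycal)$, which gives \ref{EMuRepresentable1}.

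For \ref{EMuRepresentable2} it suffices, by \ref{EMuRepresentable1}, to show that the affine finite-type $\kappa$-scheme $E(\Ycal)$ is formally smooth over $\kappa$. So let $S \to R$ be a surjection of $\kappa$-algebras with square-zero kernel and $\xi \in E(\Ycal)(R)$, i.e.\ a $\GG_m$-equivariant $O$-morphism $\xi \colon X_R \to Y$. By \ref{EMuRepresentablea} the maps $(B_S)_i \to (B_R)_i$ are surjective, so $B_S \to B_R$ is surjective, and by the third hypothesis of \ref{EMuRepresentable2} its kernel $K$ is a nilpotent graded ideal; thus $X_R \hookrightarrow X_S$ is a $\GG_m$-equivariant thickening of affine $O$-schemes, which I would factor along the $K$-adic filtration into finitely many square-zero steps $\Spec(B_S/K^{j}) \hookrightarrow \Spec(B_S/K^{j+1})$. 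At each step, smoothness of $Y$ over $O$ and affineness of the source guarantee a lift as $O$-schemes, and the set of all such lifts is a torsor under a $\Hom$-module which is naturally a $\GG_m$-representation; since $\GG_m$ is linearly reductive this torsor has a $\GG_m$-fixed point, i.e.\ a $\GG_m$-equivariant lift exists (one may equivalently phrase this as: $[\GG_{m,O}\backslash Y] \to \B{\GG_{m,O}}$ is smooth and positive cohomology of quasi-coherent sheaves on the stacks $[\GG_m\backslash \Spec(-)]$ vanishes). Iterating over the finitely many steps produces a $\GG_m$-equivariant $O$-morphism $X_S \to Y$ lifting $\xi$, that is, a point of $E(\Ycal)(S)$ above $\xi$. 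Hence $E(\Ycal)$ is formally smooth over $\kappa$; being a closed subscheme of the smooth $\kappa$-scheme $W$, its conormal sheaf is then a direct summand of a finite locally free sheaf and so finite locally free, whence $E(\Ycal) \hookrightarrow W$ is Zariski-locally a regular immersion and $E(\Ycal)$ is smooth over $\kappa$.

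I expect the representability in the first part to be essentially formal once Remark~\ref{AffineMapstoGGmQuotients} is invoked; the only minor subtlety is that $I_C$ need not be finitely generated, which is harmless because an arbitrary sum of quasi-coherent ideals remains quasi-coherent. The real content is the smoothness statement, and its main obstacle is not the mere existence of a lift along a square-zero extension — that is immediate from smoothness of $Y$ and affineness of the source — but the need to pick that lift $\GG_m$-equivariantly. Linear reductivity of $\GG_m$ (equivalently, the vanishing of higher cohomology of quasi-coherent sheaves on the quotient stacks $[\GG_m\backslash \Spec(-)]$) is exactly the input that makes the equivariant lift available, and this is where I would spend the most care.
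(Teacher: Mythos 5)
Your proof is correct and follows essentially the same route as the paper: reduce to the case where $Y$ is a weighted affine space, where $E(\Ycal)(R) = \prod_k (B_R)_{d_k}$ is representable by hypothesis, and then cut out a general $Y$ inside such a space by homogeneous equations (the paper phrases this as realizing $Y$ as the equalizer of a pair of homogeneous maps $\AA^n_O \rightrightarrows \AA^m_O$). For the smoothness assertion the paper simply defers to the second half of the proof of \cite[5.3.8]{Lau_HigherFrames}, which is precisely the equivariant deformation argument via linear reductivity of $\GG_m$ that you spell out.
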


This proposition axiomatizes and generalizes \cite[5.3.1, 5.3.8]{Lau_HigherFrames} but the proof is essentially the same as in loc.~cit.

\begin{proof}
First assume that $Y = (\AA^1_O)^{(i)}$. Then
\[
E(\Ycal)(R) = \Hom^{\GG_m}(X_R,Y) = (B_R)_i
\]
and all assertions hold by assumption. Therefore the same holds also if $Y$ is a  product $\AA^n_O$, where $\GG_m$ acts on every coordinate with possibly different weights. In general, $Y$ is the equalizer of a pair of maps $\AA^n_O \rightrightarrows \AA^m_O$ where each coordinate is homegeneous of some degree. This shows all assertions except the last one about smoothness of $E(\Ycal)$ which is proved as in the second half of the proof of \cite[5.3.8]{Lau_HigherFrames}.
\end{proof}

\begin{example}\label{LaurentE}
Let $R \sends A_R$ be a functor from the category of $\kappa$-algebras to the category of $O$-algebras and define $\Xcal_R := \Spec A_R$ with structure map $\Xcal_R \to \Spec O \to \B{\GG_{m,O}}$. By \eqref{EqAltDescribeEMu} one has for every $\kappa$-algebra $R$
\[
E_{\Xcal}(G,\mu)(R) = \Hom^{\GG_{m,A_R}}(\GG_{m,A_R}, G_{A_R}) = G(A_R),
\]
where the last identity is induced by evaluation in $1$.

If $R \sends A_R$ is representable by an affine scheme, then $E_{\Xcal}(\mu)$ is representable by an affine group scheme by Proposition~\ref{EMuRepresentable}.
\end{example}

\begin{remark}\label{EFunctorialInX}
Let $R \sends \Xcal_R$ and $R \sends \tilde\Xcal_R$ be two functors as above and let $\alpha_R\colon \Xcal_R \to \tilde\Xcal_R$ be maps of stacks over $\B{\GG_{m,O}}$ functorial in $R$. By composition, the $\alpha_R$ induce a map of presheaves of groups
\[
E_{\alpha}\colon E_{\tilde\Xcal}(G,[\mu]) \lto E_{\Xcal}(G,[\mu]).
\]
\end{remark}

\begin{proposition}\label{EFunctorialSurjective}
With the notation of Remark~\ref{EFunctorialInX}, suppose that $\alpha_R$ is a closed immersion and that $((\Btilde_R)_0,\Ker((\alpha^*_R)_0))$ is a henselian pair for every $\kappa$-algebra $R$. Then the induced map
\[
E_{\alpha}\colon E_{\tilde\Xcal}(G,[\mu])(R) \lto E_{\Xcal}(G,[\mu])(R)
\]
is surjective for all $R$.
\end{proposition}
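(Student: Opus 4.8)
The plan is to unwind $E_\alpha$ via the explicit description \eqref{EqAltDescribeEMu} into a lifting problem for graded ring homomorphisms, and then to solve it weight by weight, using the henselian pair hypothesis only in degree $0$. Concretely: fixing the cocharacter $\mu$ representing $[\mu]$, the set $E_{\Xcal}(G,[\mu])(R)$ is the set of degree-$0$ homomorphisms $\Gamma(G,\Oscr_G)\to B_R$ of $\ZZ$-graded $O$-algebras (where $\Gamma(G,\Oscr_G)$ is graded by conjugation with $\mu$), and similarly $E_{\tilde\Xcal}(G,[\mu])(R)$ with $\Btilde_R$ in place of $B_R$. Since $\alpha_R$ is a closed immersion over $\B{\GG_{m,O}}$, the dictionary of Section~\ref{Sec:DescribeGGmQuotients} identifies it with a surjection $\alpha_R^{*}\colon\Btilde_R\to B_R$ of $\ZZ$-graded $O$-algebras with graded kernel $I_R$, and $E_\alpha$ is post-composition with $\alpha_R^{*}$. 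Choosing a $\GG_m$-equivariant closed embedding $G\hookrightarrow\AA^n_O$ with coordinate weights $d_1,\dots,d_n$ and homogeneous defining equations $P_1,\dots,P_m$ of degrees $e_1,\dots,e_m$ (exactly as in the proof of Proposition~\ref{EMuRepresentable}, where homogeneity of the $P_k$ is the key point), the claim becomes: every homogeneous tuple $\underline b=(b_j)$ with $b_j\in(B_R)_{d_j}$ and $P_k(\underline b)=0$ for all $k$ lifts to a homogeneous tuple $\underline{\tilde b}=(\tilde b_j)$ with $\tilde b_j\in(\Btilde_R)_{d_j}$ and $P_k(\underline{\tilde b})=0$.

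To produce the lift I would run a Hensel/Newton iteration along the graded ideal $I_R$. Degreewise surjectivity of $\alpha_R^{*}$ provides arbitrary homogeneous starting lifts $\tilde b_j^{(0)}$, whose defect satisfies $P_k(\underline{\tilde b}^{(0)})\in(I_R)_{e_k}$; smoothness of $G$ is then used to correct $\underline{\tilde b}^{(0)}$ by a homogeneous tuple in $\prod_j(I_R)_{d_j}$ killing all the $P_k$. The organizing tool is the decomposition of the $\GG_m$-action on $G$ via $\mu$ (Remark~\ref{GGmActionGroupScheme}): the corrections along the $U^{\pm}(\mu)$-directions occur in nonzero weights and are therefore automatic from the degreewise surjectivity of $\alpha_R^{*}$, while the corrections along $\Cent_G(\mu)$, on which $\GG_m$ acts trivially, take place entirely in weight $0$ and reduce to lifting points of the smooth affine scheme $\Cent_G(\mu)$ across the pair $((\Btilde_R)_0,(I_R)_0)=((\Btilde_R)_0,\Ker((\alpha_R^{*})_0))$, which is henselian by hypothesis (this is where smoothness of $G$ really enters). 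Assembling these corrections through $\GG_m$-stable affine charts on $G$ — big cells $U^-(\mu)\Cent_G(\mu)U^+(\mu)$ and their $\GG_m$-translates — and iterating yields $\underline{\tilde b}$. This is modeled on Lau's arguments in \cite[\S\S5,6]{Lau_HigherFrames} and on the proof of Theorem~\ref{LiftGBundles}.

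The hard part will be keeping the whole iteration $\GG_m$-equivariant. One is tempted to apply the standard fact that for a henselian pair $(A,\mathfrak{a})$ and a smooth affine $A$-scheme $X$ the map $X(A)\to X(A/\mathfrak{a})$ is surjective directly to $(\Btilde_R,I_R)$, but this pair need not be henselian even when its degree-$0$ part is: already $\Btilde_R=\kappa[t]$ with $\deg t=1$ and $I_R=(t)$ is a counterexample (there $1+t$ is not a unit), whereas $((\Btilde_R)_0,(I_R)_0)=(\kappa,0)$ is trivially henselian — compare Example~\ref{BunGMuAA1GGm}, where $E(\mu)=P^{\pm}(\mu)$ for $[\GG_{m}\backslash(\AA^1)^{\pm}]$ always surjects onto $E(\mu)=\Cent_G(\mu)$ for $\B{\GG_m}$, for every smooth $G$ and with no henselianity needed. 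So the grading must be carried through every step and henselianity may only be invoked in weight $0$, i.e.\ on the $\Cent_G(\mu)$-part; making the Hensel iteration respect the grading and converge is the technical heart of the argument, and is exactly why the hypothesis is phrased in terms of the degree-$0$ ring rather than $(\Btilde_R,I_R)$ itself.
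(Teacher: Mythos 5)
Your reduction of the statement to a graded lifting problem is correct, and your observation that the full pair $(\Btilde_R,\Ker(\alpha^*_R))$ need not be henselian (with the example $\kappa[t]$, $\deg t=1$) is genuinely relevant. But the proof stops exactly where the work begins, and the mechanism you sketch for the remaining step does not go through as stated. First, a ``Hensel/Newton iteration along $I_R$'' has no way to terminate: the hypothesis gives henselianity only for the degree-zero pair, not $I_R$-adic completeness nor henselianity of $(\Btilde_R,I_R)$, so after improving the defect from $I_R$ to $I_R^2$ you have no limit to pass to --- henselian pairs are precisely the setting in which one must lift in a single step via the smooth/\'etale lifting property rather than iterate. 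Second, the claim that the corrections along the $U^{\pm}(\mu)$-directions are ``automatic from the degreewise surjectivity of $\alpha^*_R$'' conflates lifting elements with solving equations: surjectivity of $(\Btilde_R)_d\to(B_R)_d$ lets you choose homogeneous preimages, but it does not let you solve the homogeneous equations $P_k(\underline{\tilde b})=0$ in nonzero weights, and these equations couple the $U^{\pm}(\mu)$- and $\Cent_G(\mu)$-coordinates. Third, the big cell $U^-(\mu)\Cent_G(\mu)U^+(\mu)$ is only an open subscheme of $G$ and is itself $\GG_m$-stable, so its ``$\GG_m$-translates'' do not produce a covering of $G$; and even given a covering by $\GG_m$-stable affine charts you would face a nontrivial gluing problem for the local lifts, which the proposal does not address. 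In short, the ``technical heart'' you flag is not an implementation detail but the entire content of the proposition.

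For comparison, the paper's proof is a soft two-line argument that packages all of these difficulties at once: $\Spec(B_R)_0$ and $\Spec(\Btilde_R)_0$ are the adequate (in fact good) moduli spaces of $\Xcal_R$ and $\tilde\Xcal_R$ by \cite[9.1.4]{Alper_Adequate}; the hypothesis says exactly that the induced pair of moduli spaces is henselian, whence $(\tilde\Xcal_R,\Xcal_R)$ is a henselian pair of algebraic stacks by \cite[3.6]{AHR_EtaleLocal}; and since $[\mu]\backslash G$ is smooth and affine over $\B{\GG_{m,O}}$, its sections over $\Xcal_R$ lift to $\tilde\Xcal_R$ by \cite[7.9]{AHR_EtaleLocal}. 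This is precisely the lifting statement your weight-by-weight analysis attempts to reprove by hand; an elementary route would essentially require redoing the proof of that theorem in the graded setting, which is the part your proposal leaves open.
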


\begin{proof}
The adequate moduli spaces (in this case even good moduli spaces) of $\Xcal_R$ and $\tilde\Xcal_R$ (in the sense of \cite[5.1.1]{Alper_Adequate}) are $\Spec (B_R)_0$ and $\Spec (\Btilde_R)_0$, respectively, by \cite[9.1.4]{Alper_Adequate}. The hypothesis implies that the pair of algebraic stacks $(\tilde\Xcal_R,\Xcal_R)$ is henselian by \cite[3.6]{AHR_EtaleLocal} and $[\mu \backslash G]$ is smooth. Hence we conclude by \cite[7.9]{AHR_EtaleLocal}.
\end{proof}

\begin{example}\label{RepellingE}
Suppose that $\alpha$ in Remark~\ref{EFunctorialInX} is an isomorphism in degree $\leq 0$ and that the $\GG_m$-action on $G$ is repelling (i.e., the $\ZZ$-grading on $\Gamma(G,\Oscr_G)$ is concentrated in degree $\leq 0$). Then by \eqref{EqAltDescribeEMu}, $E_{\alpha}$ induces an isomorphism
\[
E_{\tilde\Xcal}(G,[\mu]) \liso E_{\Xcal}(G,[\mu]).
\]
\end{example}


\subsection{Display gerbes}\label{Sec:DisplayGerbes}

We now change our setting from Section~\ref{EGENERAL} slightly. Again we are given a surjective ring homomorphism $O \to \kappa$. Suppose that we are given a functor $R \sends (\Xcal_R,\alpha_R)$ from the category of $\kappa$-algebras to the category of pairs $(\Xcal,\alpha)$ consisting of an algebraic stack $\Xcal$ over $\B{\GG_{m,O}}$ such that $\Xcal \to \B{\GG_{m,O}}$ is affine. Let $B$ be the corresponding $\ZZ$-graded $O$-algebra (Section~\ref{Sec:DescribeGGmQuotients}) and let $\alpha$ be an isomorphism of $O$-algebras $\alpha\colon B_0/\sum_{i>0}B_iB_{-i} \iso R$.

As before, let $G$ be a smooth affine group scheme over $O$ and let $[\mu]$ be a conjugacy class of cocharacters of $G$ defined over $O$. We define a stack $\Bunline_G(\Xcal)$ over $\kappa$ as the fppf-stackification of the prestack given by
\[
R \sends \Bun_G(\Xcal_R), \qquad\qquad \text{$R$ a $\kappa$-algebra}.
\]

Via $\alpha_R$ we can identify $\B{\GG_{m,R}} = \Xcal_R^0$ and pullback via the closed immersion $\B{\GG_{m,R}} \lto \Xcal_R$ yields a morphism of stacks
\begin{equation}\label{EqTypeDisplayMap}
t\colon \Bunline_G(\Xcal) \to \Bunline_G(\B{\GG_m}).
\end{equation}
The preimage of $\Bunline_G^{[\mu]}(\B{\GG_m})$ under $t$ is denoted by $\Bunline^{[\mu]}_G(\Xcal)$.


\begin{theorem}\label{GeneralBunlineGMu}
Suppose that the $\ZZ$-graded algebra $B_R$ is henselian (Definition~\ref{DefHenselianGradedRing}) for all $R$. Then one has $\Bunline_G^{[\mu]}(\Xcal) \cong \B{E_{\Xcal}(G,[\mu])}$.
\end{theorem}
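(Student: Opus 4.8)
The plan is to construct a canonical morphism of stacks $c\colon \B{E_{\Xcal}(G,[\mu])} \to \Bunline_G^{[\mu]}(\Xcal)$ and then prove it is an equivalence by establishing full faithfulness and essential surjectivity separately. For the construction, recall from Proposition~\ref{AutDecribeDisplayGroup} the canonical identification $E_{\Xcal}(G,[\mu]) \iso \Autline(\Escr_{\Xcal}(G,[\mu]))$, where $\Escr_{\Xcal}(G,[\mu])$ is the standard $G$-bundle of type $[\mu]$ from Remark~\ref{StandardGMuBundle}; note $E_{\Xcal}(G,[\mu])$ is then an fppf-sheaf of groups, being an automorphism sheaf. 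Giving a morphism $\B{H} \to \Bunline_G(\Xcal)$ out of the classifying stack of an fppf-sheaf of groups $H$ amounts to giving an object together with a homomorphism from $H$ to its automorphism sheaf; the object $\Escr_{\Xcal}(G,[\mu])$ together with the identity homomorphism produces $c$. It factors through $\Bunline_G^{[\mu]}(\Xcal)$ because the restriction of $\Escr_{\Xcal}(G,[\mu])_R$ along the closed immersion $\Xcal^0_R = \B{\GG_{m,R}} \hookrightarrow \Xcal_R$ (using the identification via $\alpha_R$) is, unwinding the definition of $\Escr_{\Xcal}(G,[\mu])$ as the pullback of $\Escr_{[\mu]}$ along $\Xcal_\kappa \to \B{\GG_{m,O}}$, equal to $\Escr_{[\mu]}\rstr{\B{\GG_{m,R}}}$, which is by construction of type $[\mu]$; hence $t \circ c$ factors through $\Bunline_G^{[\mu]}(\B{\GG_m})$.

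Full faithfulness of $c$ does not use the henselian hypothesis. As in the proof of Proposition~\ref{BunlineMuStack}, which treats the analogous situation with $\Xcal = \B{\GG_m}$, it follows from \cite[A.5]{Wedhorn_ExtendBundles}: the morphism from the classifying stack of the automorphism sheaf of an object of a stack into that stack is a monomorphism, i.e.\ fully faithful, with essential image the full substack of objects that are fppf-locally isomorphic to the chosen object. Concretely, for two $E_{\Xcal}(G,[\mu])$-torsors $x,y$ over a $\kappa$-algebra $R$, the comparison map of fppf-sheaves over $\Spec R$ between isomorphisms $x \to y$ and isomorphisms $c(x) \to c(y)$ may be checked after an fppf base change trivializing $x$ and $y$, where it becomes the identity of $E_{\Xcal}(G,[\mu]) = \Autline(\Escr_{\Xcal}(G,[\mu]))$ of Proposition~\ref{AutDecribeDisplayGroup}.

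It remains to prove essential surjectivity, which is the heart of the argument and where the hypothesis enters. Combining the two previous paragraphs, the claim is equivalent to showing that every $G$-bundle $\Fscr$ on $\Xcal_R$ of type $[\mu]$ — that is, one whose pullback to $\Xcal^0_R = \B{\GG_{m,R}}$ is of type $[\mu]$ — is fppf-locally on $\Spec R$ isomorphic to $\Escr_{\Xcal}(G,[\mu])_R$ (the reverse containment of essential images having been checked above). After an étale cover of $\Spec R$ we may assume, by Definition~\ref{DefineGModMu}, that $\Fscr\rstr{\Xcal^0_R} \iso \Escr_{[\mu]}\rstr{\B{\GG_{m,R}}} = \Escr_{\Xcal}(G,[\mu])_R\rstr{\Xcal^0_R}$. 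Now $B_R$ is a henselian $\ZZ$-graded $O$-algebra by hypothesis, with fixed-point ring $R$ via $\alpha_R$, so Theorem~\ref{LiftGBundles} applies; in particular the pullback functor $\Bun_G(\Xcal_R) \to \Bun_G(\Xcal^0_R)$ is \emph{full}. Applying fullness to the chosen isomorphism $\Fscr\rstr{\Xcal^0_R} \iso \Escr_{\Xcal}(G,[\mu])_R\rstr{\Xcal^0_R}$ yields an isomorphism $\Fscr \iso \Escr_{\Xcal}(G,[\mu])_R$ over $\Xcal_R$, so $\Fscr$ lies in the essential image of $c$ after this localization. Hence $c$ is essentially surjective, and therefore an equivalence, which proves the theorem. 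I expect the main obstacle to be not a single deep step but the bookkeeping: keeping the identifications $\Xcal^0_R = \B{\GG_{m,R}}$ (via $\alpha_R$), $E_{\Xcal}(G,[\mu]) = \Autline(\Escr_{\Xcal}(G,[\mu]))$ (Proposition~\ref{AutDecribeDisplayGroup}), and $\Escr_{\Xcal}(G,[\mu])\rstr{\Xcal^0} = \Escr_{[\mu]}$ mutually compatible, and making sure the passage through the fppf-stackification of $R \sends \Bun_G(\Xcal_R)$ does not obscure the argument; the genuine input is the extension-and-fullness statement of \cite{Wedhorn_ExtendBundles} recalled in Theorem~\ref{LiftGBundles}.
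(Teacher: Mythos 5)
Your proof is correct and is essentially the paper's argument with the gerbe formalism unwound: the paper shows the type map $\Bunline^{[\mu]}_G(\Xcal) \to \Bunline^{[\mu]}_G(\B{\GG_m})$ is a gerbe using Theorem~\ref{LiftGBundles} (the henselian hypothesis) and then neutralizes the resulting gerbe over $\kappa$ by the standard bundle together with Proposition~\ref{AutDecribeDisplayGroup} --- exactly the two inputs you use for local essential surjectivity and for full faithfulness. Apart from presentation (a direct check of fully faithful plus locally essentially surjective instead of invoking the gerbe machinery), the route is the same.
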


\begin{proof}
The restriction of \eqref{EqTypeDisplayMap} to a map $\Bunline^{[\mu]}_G(\Xcal) \to \Bunline^{[\mu]}_G(\B{\GG_m})$ is a gerbe for the fppf-topology because $B_R$ is henselian for all $R$ by Theorem~\ref{LiftGBundles}. By Proposition~\ref{BunlineMuStack}, $\Bunline_G^{[\mu]}(\B{\GG_m})$ is a gerbe over $\kappa$. Therefore $\Bunline_G^{[\mu]}(\Xcal)$ is a gerbe over $\kappa$. Hence it suffices to find a $G$-bundle $\Escr$ of type $[\mu]$ on $\Xcal_{\kappa}$ such that $\Autline(\Escr) \cong E_{\Xcal}([\mu])$. For this we may choose the standard $G$-bundle of type $\mu$ over $\Xcal_{\kappa}$ by Proposition~\ref{AutDecribeDisplayGroup}.
%
\end{proof}


\section{Frames and frame stacks}

\subsection{Frames}

\begin{definition}\label{DefFrame}
Let $A$ be a ring and let $\Bscr$ be a filtered $A$-algebra, given by $(B_{j+1} \ltoover{t} B_j)$ (Definition~\ref{DefFilteredAlg}), let $B_{-\infty}$ be its underlying ring, and let $\Re(B)$ be the associated Rees stack.
\begin{assertionlist}
\item
Let $\sigma$ be a morphism
\[
\sigma\colon \Spec B_{-\infty} \lto \Re(B).
\]
We will call such a pair $((B_{j+1} \ltoover{t} B_j), \sigma)$ a \emph{quasi-frame}. The composition
\[
\Spec B_0 \to \Spec B_{-\infty} \ltoover{\sigma} \Re(B) \lto \Spec B_0
\]
corresponds to a ring endomorphism of $B_0$, called the \emph{endomorphism of $B_0$ induced by $\sigma$}, which we always denote by $\phi$.
%
\item
A quasi-frame $((B_{j+1} \ltoover{t} B_j), \sigma)$ is called \emph{frame} if the following conditions are satisfied.
\begin{definitionlist}
\item
The generalized filtered $A$-algebra $(B_{j+1} \ltoover{t} B_j)$ is a filtered ring $(A,(\Fil^i_A))$.
\item
The map $\sigma\colon \Spec B_{\infty} = \Spec A \to \Re(A,(\Fil^i_A))$ factors through the open substack $\Re(A, (\Fil^i_A))^{\ne -}$.
\end{definitionlist}
\end{assertionlist}
\end{definition}

Let us make more concrete how to define a map $\sigma\colon \Spec A \to \Re(A)^{\ne-}$ for a filtered ring as above.

\begin{lemma}\label{ConstructFrame}
Let $(A,(\Fil^i))$ be a filtered ring and let $\phi\colon A \to A$ be a ring endomorphism. Then there is a bijection between the set of maps $\sigma\colon \Spec A \lto \Re(A)^{\ne-}$ inducing $\phi$ and pairs $(L,\sigma^*)$, where $L \in \Pic(A)$ and where $\sigma^*$ is a map of graded $A$-algebras
\[
\sigma^*\colon \phi^*\Rees(A) \to T(L) = \bigoplus_{i\in \ZZ}L^{\otimes i}
\]
such the graded ideal generated by the image of
\[
I^+ \otimes_{A,\phi} A \lto \phi^*\Rees(A) \ltoover{\sigma^*} T(L)\tag{*}
\]
contains $T(L)_i = L^{\otimes i}$ for some $i$.
\end{lemma}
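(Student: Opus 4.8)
The plan is to translate the datum of $\sigma$ into linear‑algebra data by the description of maps to a quotient stack by $\GG_m$, and then recast the condition ``$\sigma$ factors through $\Re(A)^{\ne-}$'' as the statement that a certain ideal is the unit ideal.

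Write $\Rees(A)=B=\bigoplus_{j\in\ZZ}\Fil^j$, so $\Re(A)=[\GG_{m,A}\backslash\Spec B]$ and the structure map $\Re(A)\to\Spec A$ is the one induced by the inclusion $B_0=A\hookrightarrow B$. First I would unwind the data of $\sigma$. A morphism $\sigma\colon\Spec A\to\Re(A)$ inducing the endomorphism $\phi$ of $A=B_0$ (in the sense of Definition~\ref{DefFrame}) is the same as a morphism $\Spec A\to\Re(A)$ over $\Spec A$ once the source $\Spec A$ is regarded as an $A$‑algebra via $\phi$. Applying Remark~\ref{MapsToQuotientbyGGm} with $C$ equal to $A$ with this $A$‑algebra structure, such morphisms are classified by pairs $(L,\sigma^*)$ with $L\in\Pic(A)$ a line bundle and $\sigma^*\colon B\to T(L)$ a map of graded $A$‑algebras, where $T(L)=\bigoplus_{i\in\ZZ}L^{\otimes i}$ is made into an $A$‑algebra through $\phi$ and $A=T(L)_0\hookrightarrow T(L)$. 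Unwinding what ``graded $A$‑algebra map'' means in the presence of the twist by $\phi$, this is precisely a graded $A$‑algebra map $\phi^*\Rees(A)\to T(L)$. This establishes the stated correspondence before any condition is imposed, and on the level of torsors it exhibits $\sigma$ as the pair $(P,\beta)$, where $P:=\Spec T(L)\to\Spec A$ is the $\GG_{m,A}$‑torsor of $L$ and $\beta\colon P\to\Spec B$ is the $\GG_{m,A}$‑equivariant morphism given by the ring map $B\to\phi^*\Rees(A)\ltoover{\sigma^*}T(L)$.

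Next I would account for the open substack. By Remark~\ref{ReesAttractor}, see \eqref{EqIdeaFixRees}, the repeller $\Re(A)^-$ is the closed substack $[\GG_{m,A}\backslash\Spec(B/I^+)]$ of $\Re(A)$ cut out by the graded ideal $I^+\subseteq B$ generated by the part of positive degree; in particular the pullback of $\Re(A)^-$ along the atlas $\Spec B\to\Re(A)$ is $V(I^+)\subseteq\Spec B$. Pulling the closed substack $\Re(A)^-\hookrightarrow\Re(A)$ back along $\sigma$ and then along the faithfully flat map $P\to\Spec A$, and using the description of $\sigma$ as $(P,\beta)$, one obtains $P\times_{\Spec A}\sigma^{-1}(\Re(A)^-)=\beta^{-1}(V(I^+))=V(J)$ inside $P=\Spec T(L)$, where $J\subseteq T(L)$ is the ideal generated by the image $\beta(I^+)$. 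Since $\beta$ factors through $\sigma^*$, this $J$ is exactly the graded ideal generated by the image of $I^+\otimes_{A,\phi}A\lto\phi^*\Rees(A)\ltoover{\sigma^*}T(L)$ from the statement. As $P\to\Spec A$ is surjective, $\sigma$ factors through $\Re(A)^{\ne-}=\Re(A)\setminus\Re(A)^-$ if and only if $V(J)=\emptyset$, i.e. if and only if $J=T(L)$. Finally, $J=T(L)$ is equivalent to the condition ``$J$ contains $T(L)_i=L^{\otimes i}$ for some $i$'': one direction is trivial (take $i=0$), and conversely if $L^{\otimes i}\subseteq J$ then, since $J$ is an ideal and the multiplication of $T(L)$ restricts to the canonical isomorphism $L^{\otimes i}\otimes_A L^{\otimes -i}\iso L^{\otimes 0}=A$, the product $T(L)_i\cdot T(L)_{-i}$ already fills $A=T(L)_0$, so $1\in J$. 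Combining the three steps gives the lemma.

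I expect the main obstacle to be purely bookkeeping in the middle step: identifying ``morphisms $\sigma$ inducing $\phi$'' with the $\phi$‑semilinear data $\sigma^*\colon\phi^*\Rees(A)\to T(L)$, and verifying that the ideal $J$ produced by pulling the repeller back to the $\GG_m$‑torsor $\Spec T(L)$ coincides, up to the graded ideal it generates, with the one named in the statement. Once this dictionary between $\sigma$, the pair $(P,\beta)$, and the factorization through $\phi^*\Rees(A)$ is set up carefully, the remaining equivalence $J=T(L)\Leftrightarrow L^{\otimes i}\subseteq J$ is formal.
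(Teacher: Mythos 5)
Your proposal is correct and follows essentially the same route as the paper's proof: identify $\sigma$ inducing $\phi$ with a pair $(L,\sigma^*)$ via Remark~\ref{MapsToQuotientbyGGm} applied to the base change along $\phi$, translate factoring through $\Re(A)^{\ne-}$ into the condition that the graded ideal $J$ generated by the image of (*) is the unit ideal, and then show $J=T(L)$ is equivalent to $J$ containing some graded piece. The only (harmless) variations are that you check the unit-ideal criterion on the $\GG_m$-torsor $\Spec T(L)$ rather than quoting the affine criterion directly, and that you conclude $1\in J$ from the pairing $L^{\otimes i}\otimes_A L^{\otimes -i}\iso A$ instead of the paper's Zariski-local reduction to $T(L)=A[t,t^{-1}]$.
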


\begin{proof}
A map $\sigma$ defining a frame and inducing $\phi$ is the same as a section of $\Re(A)^{\ne-} \otimes_{A,\phi} A$ over $\Spec A$. A section of $\Re(A) \otimes_{A,\phi} A$ is given by a map of graded $A$-algebras $\sigma^*\colon \phi^*\Rees(A) \to T(L) := \bigoplus_{i\in \ZZ}L^{\otimes i}$, where $L$ is a line bundle on $A$ (Remark~\ref{MapsToQuotientbyGGm}). The section factors through $\Re(A)^{\ne-} \otimes_{A,\phi} A$ if and only if the graded ideal generated by the image of
\[
I^+ \otimes_{A,\phi} A \lto \phi^*\Rees(A) \ltoover{\sigma^*} T(L)\tag{*}
\]
is all of $T(L)$. Indeed, more generally, if $\varphi\colon A \to B$ is a map of rings, $f\colon \Spec B \to \Spec A$ the corresponding map of affine schemes, and $I \subseteq A$ an ideal, then $f$ factors through the open subscheme $\Spec A \setminus V(I)$ if and only if $B = \varphi(I)B$.

Now if $J \subseteq T(L)$ is any graded ideal such that $J_i = T(L)_i$, then $J = T(L)$. Indeed, to see that the inclusion $J \to T(L)$ is surjective, we can work locally in $\Spec A$. Hence we can assume that $L = A$ and hence $T(L) = A[t,t^{-1}]$. Then multiplying by integral powers of $t$ shows that $J = A[t,t^{-1}]$. This shows that the graded ideal generated by the image of (*) is $T(L)$ if and only if it contains $T(L)_i$ for some $i$.
\end{proof}

\begin{defrem}\label{CategoryFrames}
Let $A$ be a ring. A \emph{morphism of quasi-frames over $A$} $((B_{j+1} \ltoover{t} B_j),\sigma_B) \to ((C_{j+1} \ltoover{t} C_j),\sigma_C)$ is a map $\psi\colon (B_{j+1} \ltoover{t} B_j) \to (C_{j+1} \ltoover{t} C_j)$ of filtered $A$-algebras (Definition~\ref{DefMapFilteredRings}) such that the following diagram commutes
\[\xymatrix{
\Spec B_{-\infty} \ar[r]^{\sigma_A} \ar[d] & \Re(B) \ar[d] \\
\Spec C_{-\infty} \ar[r]^{\sigma_B} & \Re(C),
}\]
where the vertical maps are induced by $\psi$. We obtain the category of quasi-frames and the full subcategory of frames.
\end{defrem}

\begin{remark}\label{FrameLiterature}
Our definition of frame generlizes the notion of frame defined by Lau in \cite{Lau_HigherFrames}.
\end{remark}

\begin{remdef}\label{ExtensionOfFrames}
Let $(C,\phi_C)$ be a ring together with an endomorphism. Let $(A, (\Fil_A^i), \sigma_A)$ be a frame yielding a ring endomorphism $\phi_A$ of $A$, and let $\psi\colon (A,\phi_A) \to (C,\phi_C)$ be a ring homomorphism with $\psi \circ \phi_A = \phi_C \circ \psi$. Set $\Fil^i_C = \Fil^i_A \otimes_A C$. By Remark~\ref{ExtensionFilteredRing} we have $\Re(C)^{\ne -} = C \otimes_A \Re(A)^{\ne -}$ and hence we obtain a 2-cartesian diagram
\[\xymatrix{
C \otimes_{\phi_C,C} \Re(C)^{\ne -} \ar[r] \ar[d] & A \otimes_{\phi_A,A} \Re(A)^{\ne -}  \ar[d] \\
\Spec C \ar[r] & \Spec A.
}\]
Therefore the section $\sigma_A$ of $A \otimes_{\phi_A,A} \Re(A)^{\ne -} \to \Spec A$ induces by base change along $\psi$ a map $\sigma_C$ that makes $(C, (\Fil^i_C), \sigma_C)$ into a frame. Moreover $\psi$ defines then a morphism of frames $(C, (\Fil^i_C), \sigma_C) \to (A, (\Fil_A^i), \sigma_A)$.

In particular, we can endow this construction if $C = A/I$ for a $\phi_A$-invariant ideal. In his case, we call $(C, (\Fil^i_C), \sigma_C)$ the \emph{quotient frame} of $(A, (\Fil_A^i), \sigma_A)$ by $I$.
\end{remdef}

%
%


\subsection{The frame stack}

Let $(A,(\Fil^i)_i,\sigma)$ be a frame. By \eqref{EqReesStackNePlus} we have the open quasi-compact immersion
\begin{equation}\label{EqTau}
\tau\colon \Spec A \liso \Re(A)^{\ne+} \lto \Re(A)
\end{equation}
identifying $\Spec A$ with the locus of $\Re(A)$ where $t$ is invertible.

\begin{definition}\label{DefFrameStack}
The coequalizer
\[
\Fcal(A) := \Fcal(A, (\Fil^i)_i, \sigma) := \colim (\xymatrix{\Spec A
\ar@<.5ex>[r]^-(.4){\tau}\ar@<-.5ex>[r]_-(.4){\sigma} & \Re(A)}).
\]
is called the \emph{frame stack} of the frame. Since both $\sigma$ and $\tau$ factorize both over $\Re(A)^{\ne0} = \Re(A)^{\ne+} \cup \Re(A)^{\ne-}$, we can also define
\[
\Fcal(A)^{\ne0} := \Fcal(A, (\Fil^i)_i, \sigma)^{\ne0} := \colim (\xymatrix{\Spec A
\ar@<.5ex>[r]^-(.4){\tau}\ar@<-.5ex>[r]_-(.4){\sigma} & \Re(A)^{\ne0}}).
\]
In both cases we take the coequalizer in the 2-category of Adams stacks, see Appendix~\ref{APPADAMS}. Note that $\Re(A)$ and $\Re(A)^{\ne0}$ are both Adams stacks by Proposition~\ref{QuotientStackAdam}.
\end{definition}

\begin{remark}\label{VecFrameStack}
Let $G$ be a group scheme over $A$ such that $\B{G}$ is an Adams stack. It follows that the category of vector bundles (resp.~the groupoid of $G$-bundles) on $\Fcal(A)$ is equivalent to the category (resp.~groupoid) of pairs $(\Escr,\Phi)$ consisting of a vector bundle (resp.~$G$-bundle) $\Escr$ on $\Re(A)$ and an isomorphism of vector bundles (resp.~$G$-bundles) $\Phi\colon \sigma^*\Escr \to \tau^*\Escr$ over $\Spec A$.

A similar description holds for vector bundles and $G$-bundles over $\Fcal(A)^{\ne 0}$.
\end{remark}

\begin{remark}\label{DescribeCoequalizer}
With the notation above, we have a 2-commutative diagram of Adams stacks
\[\xymatrix{
\Spec A \amalg \Spec A \ar[r]^-{\tau \amalg \sigma} \ar[d]_{\textup{can}} & \Re(A)^{\ne0} \ar[d] \ar[r] & \Re(A) \ar[d] \\
\Spec A \ar[r] & \Fcal(A)^{\ne0} \ar[r] & \Fcal(A),
}\]
where the left square and the combined rectangle are pushout squares by definition. Therefore the right square is a pushout square.
\end{remark}

If $\Xcal$ is any Adams stack, then we obtain a 2-cartesian square of groupoids
\[\xymatrix{
\Hom(\Re(A)^{\ne0},\Xcal) & \Hom(\Re(A),\Xcal) \ar[l] \\
\Hom(\Fcal(A)^{\ne0},\Xcal) \ar[u] & \Hom(\Fcal(A),\Xcal). \ar[u] \ar[l]
}\]
In particular, if the restriction induces an equivalence (resp.~a fully faithful functor) $\Hom(\Re(A),\Xcal) \to \Hom(\Re(A)^{\ne0},\Xcal)$, then it follows formally that composition with $\Fcal(A)^{\ne0} \to \Fcal(A)$ also induces an equivalence (resp.~a fully faithful functor) $\Hom(\Fcal(A),\Xcal) \to \Hom(\Fcal(A)^{\ne0},\Xcal)$. In particular we have the following result.

\begin{lemma}\label{GBundlesFrameStack}
Suppose that $\Vec(\Re(A)) \to \Vec(\Re(A)^{\ne0})$ is an equivalence (resp.~fully faithful). Then $\Vec(\Fcal(A)) \to \Vec(\Fcal(A)^{\ne0})$ is an equivalence (resp.~fully faithful). Similarly, let $G$ be a group scheme over $A$ such that $\Bun_G(\Re(A)) \to \Bun_G(\Re(A)^{\ne0})$ is an equivalence (resp.~fully faithful). Then $\Bun_G(\Fcal(A)) \to \Bun_G(\Fcal(A)^{\ne0})$ is an equivalence (resp.~fully faithful).
\end{lemma}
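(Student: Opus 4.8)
The plan is to deduce the lemma from the explicit gluing description of Remark~\ref{VecFrameStack}, together with the elementary stability of ``equivalence'' and ``fully faithful'' under such gluing. First recall from \eqref{EqTau} that $\tau$ identifies $\Spec A$ with the open substack $\Re(A)^{\ne+}\subseteq\Re(A)^{\ne0}$, and that $\sigma$ factors through $\Re(A)^{\ne-}\subseteq\Re(A)^{\ne0}$ by the very definition of a frame; hence both $\tau$ and $\sigma$ factor through the open immersion $\Re(A)^{\ne0}\lto\Re(A)$. Consequently the pullback functors $\sigma^*$ and $\tau^*$ on $\Vec(\Re(A))$ (resp.\ on $\Bun_G(\Re(A))$) factor through restriction to $\Vec(\Re(A)^{\ne0})$ (resp.\ $\Bun_G(\Re(A)^{\ne0})$). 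By Remark~\ref{VecFrameStack}, $\Vec(\Fcal(A))$ is the category of pairs $(\Escr,\Phi)$ with $\Escr\in\Vec(\Re(A))$ and $\Phi\colon\sigma^*\Escr\iso\tau^*\Escr$, while $\Vec(\Fcal(A)^{\ne0})$ is the analogous category built from $\Vec(\Re(A)^{\ne0})$; under these identifications the restriction functor $\Vec(\Fcal(A))\to\Vec(\Fcal(A)^{\ne0})$ is just $(\Escr,\Phi)\sends(\Escr\rstr{\Re(A)^{\ne0}},\Phi)$, the isomorphism $\Phi$ being unchanged since $\sigma^*$ and $\tau^*$ already factor through $\Re(A)^{\ne0}$. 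The same holds verbatim with $\Bun_G$ in place of $\Vec$.

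Thus, writing $\rho$ for the restriction functor $\Vec(\Re(A))\to\Vec(\Re(A)^{\ne0})$ (resp.\ $\Bun_G(\Re(A))\to\Bun_G(\Re(A)^{\ne0})$) occurring in the hypothesis, the functor to be studied is obtained from $\rho$ by the purely formal gluing construction $\Ccal\sends\{(c,\varphi\colon\sigma^*c\iso\tau^*c)\}$, and one only has to check that this construction carries fully faithful functors to fully faithful functors and equivalences to equivalences. For full faithfulness: a morphism $(c,\varphi)\to(c',\varphi')$ of glued objects is a morphism $f\colon c\to c'$ of the underlying category satisfying $\tau^*(f)\circ\varphi=\varphi'\circ\sigma^*(f)$, and since $\rho$ is compatible with $\sigma^*$ and $\tau^*$, the set of such $f$ is carried bijectively by $\rho$ onto the corresponding set downstairs exactly when $\rho$ is fully faithful. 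For essential surjectivity: if in addition $\rho$ is essentially surjective, one lifts a given object $c'$ downstairs to some $c$ upstairs and transports the isomorphism $\varphi'\colon\sigma^*c'\iso\tau^*c'$ along $\rho$ to an isomorphism $\sigma^*c\iso\tau^*c$, producing a preimage of $(c',\varphi')$. This proves both statements of the lemma, for vector bundles and for $G$-bundles simultaneously.

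An equivalent, perhaps more conceptual, formulation --- the one indicated in the discussion preceding the lemma --- is to apply $\Hom(-,\Xcal)$ to the pushout square of Adams stacks in Remark~\ref{DescribeCoequalizer}, obtaining the $2$-cartesian square of mapping groupoids displayed there, and then to invoke the standard fact that the base change of an equivalence (resp.\ of a fully faithful functor) of groupoids along a $2$-cartesian square is again an equivalence (resp.\ fully faithful); here one takes $\Xcal=\B{G}$, which is an Adams stack, for the $G$-bundle assertion. For vector bundles this route would in addition require realizing $\Vec$ as $\Hom$ into an Adams stack, and there the gluing description of Remark~\ref{VecFrameStack} is the cleaner input. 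I do not anticipate a genuine obstacle: the statement is entirely formal once Remark~\ref{VecFrameStack} (equivalently, the pushout square of Remark~\ref{DescribeCoequalizer}) is in hand, and the only point deserving a moment's attention is that ``fully faithful'', as opposed to ``equivalence'', is preserved under these operations --- which is exactly the elementary verification carried out above.
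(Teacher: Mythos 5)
Your proposal is correct and follows essentially the paper's own route: the paper deduces the lemma from the pushout square of Remark~\ref{DescribeCoequalizer} by applying $\Hom(-,\Xcal)$ to get the 2-cartesian square of groupoids and invoking formal stability of equivalences and fully faithful functors under such base change, and your explicit argument with pairs $(\Escr,\Phi\colon\sigma^*\Escr\iso\tau^*\Escr)$ is just the concrete unpacking of that fiber-product description (Remark~\ref{VecFrameStack}), with the key point --- that $\sigma$ and $\tau$ both factor through $\Re(A)^{\ne0}$ --- handled correctly. Your closing remark already identifies the paper's formal alternative, so there is nothing to add.
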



\subsection{Classifying $G$-bundles on frame stacks}

We fix the following general setting. Let $O$ be a Dedekind domain and let $O \to \kappa$ be a surjective homomorphism of rings. Let $A\colon R \sends (A_R, (\Fil_{A_R}^i), \sigma_R, \alpha_R)$ be a presheaf on the category of $\kappa$-algebras with values in the category of tuples consisting of a frame over $O$ and an isomorphism $\alpha_R\colon A_R/t(\Fil_{A_R}^1) \iso R$ of $O$-algebras.

Let $\Re(A_R)$ be the Rees stack attached to the filtered ring $(A_R, (\Fil^i_{A_R}))$. Via the composition
\[
\Re(A_R) \to A_R^{\Fil} \to O^{\Fil} \to \B{\GG_{m,O}}
\]
we can consider $R \sends (\Re(A_R),\alpha_R)$ as a functor as described in the beginning of Section~\ref{Sec:DisplayGerbes}. In particular we obtain for every smooth affine group scheme $G$ and every conjugacy class $[\mu]$ of a cocharacter $\mu$ of $G$ the display group $E_{A}(G,[\mu])$ (Section~\ref{EGENERAL}). We denote by $\Bunline_G^{[\mu]}(A)$ the prestack $R \sends \Bun_G^{[\mu]}(\Re(A_R))$.

For every prestack $\Xcal$ over $O$ we define a prestack over $\kappa$ by
\[
L^A\Xcal(R) := \Xcal(A_R), \qquad\qquad \text{for every $\kappa$-algebra $R$}.
\]

We make the following assumptions.

\begin{assumption}\label{ConditionFrameFunctor}
\begin{definitionlist}
\item\label{ConditionFrameFunctora}
The functor $R \sends \Fil^i_{A_R}$ is representable for all $i \geq 0$ by an affine scheme.
\item\label{ConditionFrameFunctorb}
The pair $(A_R, t(\Fil^1_{A_R}))$ is henselian for every $\kappa$-algebra $R$.
\item\label{ConditionFrameFunctorc}
The canonical map of stacks $\B{(L^AG)} \to L^A(\B{G})$ is an equivalence.
\end{definitionlist}
Condition~\ref{ConditionFrameFunctora} ensures that the display group $E_{\Re(A)}(G,[\mu])$ is representable by an affine group scheme over $\kappa$ by Proposition~\ref{EMuRepresentable}. Using Hypothesis \ref{ConditionFrameFunctorb}, Theorem~\ref{GeneralBunlineGMu} yields an equivalence of stacks
\begin{equation}\label{EqDescribeBunGMuGenHecke}
\Bunline_G^{[\mu]}(A) \cong \B{E_{A}(G,[\mu])}.
\end{equation}
\end{assumption}

\begin{remark}\label{PropertiesLAG}
Let us look more closely at the functor of groups $L^AG$. As in the proof of Proposition~\ref{EMuRepresentable}, Condition~\ref{ConditionFrameFunctora} implies that $L^AG$ is representable by an affine group scheme, which is of finite type if $R \sends A_R$ is representable by a $\kappa$-scheme of finite type. Moreover, $L^AG$ is a smooth group scheme if $R \sends A_R$ is representable by a smooth affine scheme and $A_S \to A_R$ has nilpotent kernel for every surjective map of $\kappa$-algebras $S \to R$ with square zero kernel.
\end{remark}

Let $\Fcal(A_R)$ be the frame stack attached to the frame $(A_R, (\Fil_{A_R}^i), \sigma_R)$. Since $O$ is a Dedekind domain, $\B{G}$ is an Adams stack by Example~\ref{BGAdams} and hence we have by Remark~\ref{VecFrameStack} 
\begin{equation}\label{EqBunGFrame}
\Bun_G(\Fcal(A_R)) = \lim (\Bun_G(\Re(A_R)) \rightrightarrows \Bun_G(A_R)),
\end{equation}
where the two arrows are given via pullback with $\tau$ and $\sigma$. We set
\begin{equation}\label{EqBunGMuFrame}
\begin{aligned}
\Bun_G^{[\mu]}(\Fcal(A_R)) &:= \lim (\Bun^{[\mu]}_G(\Re(A_R)) \rightrightarrows \Bun_G(A_R)),\\
\Bunline_G^{[\mu]}(\Fcal(A))(R) &:= \Bun_G^{[\mu]}(\Fcal(A_R)).
\end{aligned}
\end{equation}
This defines a prestack $\Bunline_G^{[\mu]}(\Fcal(A))$ over $\kappa$.

We now come to our main general classification result. Consider the display group attached to the functor $R \sends A_R$ as in Example~\ref{LaurentE}. It is given by the affine group scheme $L^AG$. By Remark~\ref{EFunctorialInX}, the two functorial maps $\tau, \sigma\colon \Spec A_R \to \Re(A_R)$ induce maps of affine group schemes over $\kappa$
\begin{equation}\label{EqGenActionEonG}
E_{A}(G,[\mu]) \rightrightarrows L^AG,
\end{equation}
again denoted by $\tau$ and $\sigma$. This defines an action of the group scheme $E_A(G,[\mu])$ on the $\kappa$-scheme $L^AG$ and we can form the fpqc quotient $[E_A(G,[\mu])\backslash L^AG]$.

\begin{theorem}\label{DescribeBunlineFramStack}
There is a functorial equivalence of fpqc-stacks
\begin{equation}\label{EqDescribeGBundlesFrame}
\Bunline_G^{[\mu]}(\Fcal(A)) \cong [E_A(G,[\mu])\backslash L^AG].
\end{equation}
\end{theorem}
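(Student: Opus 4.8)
The plan is to unwind both sides to groupoid-valued functors and to build the equivalence from the coequalizer description of the frame stack together with the classification \eqref{EqDescribeBunGMuGenHecke} of type-$[\mu]$ bundles on the Rees stack. Since $O$ is a Dedekind domain, $\B{G}$ is an Adams stack (Example~\ref{BGAdams}) and $\Fcal(A_R)$ is by definition the coequalizer of $\tau,\sigma\colon\Spec A_R\rightrightarrows\Re(A_R)$ in Adams stacks; hence by Remark~\ref{VecFrameStack} and \eqref{EqBunGFrame}, $\Bun_G^{[\mu]}(\Fcal(A_R))$ is the groupoid of pairs $(\Escr,\Phi)$ with $\Escr$ a $G$-bundle of type $[\mu]$ on $\Re(A_R)$ and $\Phi\colon\sigma^*\Escr\iso\tau^*\Escr$ an isomorphism of $G$-bundles over $\Spec A_R$. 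Equivalently, $\Bunline_G^{[\mu]}(\Fcal(A))$ is the equalizer, in fpqc-stacks, of the two pullback maps $\tau^*,\sigma^*\colon\Bunline_G^{[\mu]}(A)\rightrightarrows\Bunline_G(\Spec A)$, where $\Bunline_G(\Spec A)$ denotes the stack $R\sends\Bun_G(\Spec A_R)$.

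The heart of the matter is to compute the pullbacks of the standard bundle. By \eqref{EqDescribeBunGMuGenHecke} one has $\Bunline_G^{[\mu]}(A)\cong\B{E_A(G,[\mu])}$, with distinguished object the standard $G$-bundle $\Escr_0$ of type $[\mu]$ on $\Re(A_\kappa)$ (\ref{StandardGMuBundle}), whose automorphism functor is $E_A(G,[\mu])$ by Proposition~\ref{AutDecribeDisplayGroup}. The key point is that $\tau^*\Escr_0$ and $\sigma^*\Escr_0$ are both isomorphic to the trivial $G$-bundle $G_{A_R}$ over $\Spec A_R$. For $\tau$ this is immediate: by \eqref{EqReesStackNePlus} the map $\tau$ factors through $\Re(A_R)^{\ne+}\cong\Spec A_R$, whose image in $A_R^{\Fil}$ is the open stratum \eqref{EqGenericTheta}, so the composite $\Spec A_R\xrightarrow{\tau}\Re(A_R)\to\B{\GG_{m,O}}$ classifies the trivial $\GG_m$-torsor and $\tau^*\Escr_0$ is the underlying $G$-bundle of $\Escr_{[\mu]}$, which is trivial (\ref{DefineGModMu}). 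For $\sigma$ one uses Lemma~\ref{ConstructFrame} to write $\sigma$ in terms of a line bundle $L$ on $A_R$ together with a graded map $\phi^*\Rees(A_R)\to T(L)$, and checks that $\sigma^*\Escr_0$ is the twist of $\Escr_{[\mu]}$ by the $\GG_m$-torsor attached to $L$; the point $\sigma$ being part of a frame (so factoring through $\Re(A_R)^{\ne-}$) forces this torsor to be trivial. Granting this, the sheaf $\mathrm{Isom}_{\Spec A_R}(\sigma^*\Escr_0,\tau^*\Escr_0)$ is identified with $L^AG$, and under this identification the two pullback homomorphisms $\tau^*,\sigma^*\colon E_A(G,[\mu])=\Autline(\Escr_0)\to\Autline(\tau^*\Escr_0)=L^AG$ are precisely the maps $\tau,\sigma$ of \eqref{EqGenActionEonG}, by Remark~\ref{EFunctorialInX} and Example~\ref{LaurentE}.

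It remains to assemble the equalizer. Since every type-$[\mu]$ bundle on $\Re(A_R)$ is fppf-locally on $R$ isomorphic to $\Escr_0$, the $G$-bundles $\sigma^*\Escr$ and $\tau^*\Escr$ occurring above are fppf-locally trivial, so both maps $\tau^*,\sigma^*$ factor through the monomorphism $\B{L^AG}\hookrightarrow\Bunline_G(\Spec A)$ via the maps $\B\tau,\B\sigma\colon\B{E_A(G,[\mu])}\to\B{L^AG}$ attached to the group homomorphisms of \eqref{EqGenActionEonG}, and the equalizer is not changed by passing to this substack. Finally I would invoke the standard fact that the equalizer of two maps $\B a,\B b\colon\B H\rightrightarrows\B K$ induced by group homomorphisms $a,b\colon H\to K$ is the quotient stack $[H\backslash K]$ for the left action $h\cdot k=a(h)\,k\,b(h)^{-1}$ --- a point over $R$ being a pair $(P,\iota)$ with $P$ an $H$-torsor and $\iota$ an isomorphism $P\times^{H,a}K\iso P\times^{H,b}K$, which locally on $R$ is an element of $K$ well-defined up to twisted conjugation. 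Taking $H=E_A(G,[\mu])$, $K=L^AG$, $a=\tau$, $b=\sigma$ yields the asserted equivalence $\Bunline_G^{[\mu]}(\Fcal(A))\cong[E_A(G,[\mu])\backslash L^AG]$ with the action \eqref{EqGenActionEonG}.

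The main obstacle is the middle paragraph, and within it the $\sigma$-case of the pullback computation: one must verify that the $\GG_m$-torsor (equivalently the line bundle $L$) attached to $\sigma$ by Lemma~\ref{ConstructFrame} is trivial, so that $\sigma^*\Escr_0$ really is trivial and $\mathrm{Isom}_{\Spec A_R}(\sigma^*\Escr_0,\tau^*\Escr_0)$ is canonically $L^AG$, and --- more delicately --- that the two induced maps on automorphism groups coincide with the homomorphisms of \eqref{EqGenActionEonG} on the nose, keeping track of how the chosen cocharacter $\mu$ enters the trivializations, so that the resulting action is genuine twisted conjugation via $\tau$ and $\sigma$ rather than an inner twist of it. The remaining ingredients --- the coequalizer/limit bookkeeping, the identification $\mathrm{eq}(\B H\rightrightarrows\B K)\cong[H\backslash K]$, and fpqc descent --- are routine.
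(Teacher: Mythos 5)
Your overall skeleton is the paper's: by definition $\Bunline_G^{[\mu]}(\Fcal(A))$ is the equalizer of $\tau^*,\sigma^*\colon \Bunline_G^{[\mu]}(A)\rightrightarrows L^A\B{G}$, one replaces the source by $\B{E_A(G,[\mu])}$ via \eqref{EqDescribeBunGMuGenHecke}, the target by $\B{L^AG}$, and concludes with the identification of the equalizer of $\B{H}\rightrightarrows\B{K}$ with $[H\backslash K]$ (your ``standard fact'' is exactly Lemma~\ref{EqualizerClassifyingStack}, which the paper proves). However, your middle step has two concrete gaps. First, the claim that $\sigma$ factoring through $\Re(A_R)^{\ne-}$ forces the associated $\GG_m$-torsor to be trivial is false: Lemma~\ref{ConstructFrame} parametrizes precisely such maps by pairs $(L,\sigma^*)$ with an \emph{arbitrary} line bundle $L\in\Pic(A_R)$, so no triviality is forced. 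What is actually needed is that $\sigma$ (like $\tau$, via \eqref{EqReesStackNePlus}) is a morphism over $\B{\GG_{m,O}}$ for the trivial-torsor structure on $\Spec A_R$; this is implicitly built into \eqref{EqGenActionEonG} via Remark~\ref{EFunctorialInX} and Example~\ref{LaurentE} and holds in all the paper's examples (where $L$ comes trivialized), but it is a structural input, not a consequence of the frame axiom, and your proposed deduction of it would fail.

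Second, your passage from $\Bunline_G(\Spec A)=L^A\B{G}$ to $\B{L^AG}$ rests on the unproven assertion that $\B{L^AG}\to L^A\B{G}$ is a monomorphism, so that the equalizer is unchanged. This is exactly the point where the paper invokes Assumption~\ref{ConditionFrameFunctor}~\ref{ConditionFrameFunctorc} (which even gives an equivalence), and you never use it; it is not ``routine fpqc descent'' --- it encodes descent of $G$-bundles along $R\mapsto A_R$ (e.g.\ along Witt vectors), which in the applications is a nontrivial theorem of Zink/Lau/Daniels. One can in fact repair your route with less than the full assumption: full faithfulness of $\B{L^AG}\to L^A\B{G}$ on the relevant objects follows from the representability of $R\mapsto A_R$ (Assumption~\ref{ConditionFrameFunctor}~\ref{ConditionFrameFunctora}), since isomorphisms of $G$-bundles over $A_R$ are points of an affine $A_R$-scheme and hence glue; together with the local triviality of $\tau^*\Escr,\sigma^*\Escr$ coming from Theorem~\ref{GeneralBunlineGMu} this would close the gap without the essential-surjectivity part of \ref{ConditionFrameFunctorc}. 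But as written the step is unjustified, and note that once you do invoke Assumption~\ref{ConditionFrameFunctor}~\ref{ConditionFrameFunctorc}, your entire hands-on middle paragraph becomes unnecessary: the argument collapses to the paper's chain of identifications.
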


\begin{proof}
We have functorial identities
\begin{align*}
\Bunline_G^{[\mu]}(\Fcal(A)) &= \lim(\Bunline_G^{[\mu]}(A) \rightrightarrows L^{(N)}L^A(\B{G})) \\
&= \lim(\B{E_{A}(G,[\mu])} \rightrightarrows L^A(\B{G})) \\
&= \lim(\B{E_{A}(G,[\mu])} \rightrightarrows \B{L^AG}) \\
&= [E_A(G,[\mu]) \backslash L^A(G)].
\end{align*}
The first identity holds by definition, the second by \eqref{EqDescribeBunGMuGenHecke}, the third identity holds by Assumption~\ref{ConditionFrameFunctorc} above, and the last identity follows from Lemma~\ref{EqualizerClassifyingStack} below.

The quotient stack $[E_A(G,[\mu]) \backslash L^A(G)]$ has an affine diagonal and in particular is an fpqc-stack.
\end{proof}

In the proof of the theorem we have used the following general lemma.

\begin{lemma}\label{EqualizerClassifyingStack}
Let $\Ccal$ be a site, let $E$ and $H$ be sheaves of groups on $\Ccal$, and let $\tau,\sigma \colon E \to H$ be homomorphisms of sheaves of groups. Denote $\tgbar, \sgbar\colon \B{E} \to \B{H}$ be the induced maps on classifying spaces. Define a left action of $E$ one the underlying sheaf of sets of $H$ by
\[
E \times H \lto H, \qquad (e,h) \sends \tau(e)h\sigma(e^{-1}).
\]
Then one has a functorial equivalence of stacks
\[
\lim (\xymatrix{\B{E} \ar@<0.5ex>[r]^{\tgbar} \ar@<-0.5ex>[r]_{\sgbar} & \B{H}}) \cong [E \backslash H].
\]
\end{lemma}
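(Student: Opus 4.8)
The plan is to compute the limit $\lim(\B{E}\rightrightarrows\B{H})$ objectwise over the site $\Ccal$, or rather over a fixed object $U\in\Ccal$ together with all covers of $U$, and to exhibit a natural functor in both directions whose compositions are canonically isomorphic to the identity. Recall that for a sheaf of groups $H$ the groupoid $\B{H}(U)$ is the groupoid of $H$-torsors on $U$ (for the topology of $\Ccal$), and that an object of the $2$-limit $\lim(\tgbar,\sgbar)$ over $U$ consists of a triple $(P,Q,\beta)$ where $P\in\B{E}(U)$, $Q\in\B{H}(U)$, together with an isomorphism of $H$-torsors $\beta\colon \tgbar(P)\xrightarrow{\ \sim\ }\sgbar(P)$ — wait, more precisely an object is a pair consisting of $P\in\B{E}(U)$ and an isomorphism $\beta\colon \tgbar_* P\xrightarrow{\sim}\sgbar_* P$ in $\B{H}(U)$, where $\tgbar_* P = P\times^{E,\tau}H$ is the pushout torsor. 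A morphism $(P,\beta)\to(P',\beta')$ is an isomorphism $P\to P'$ of $E$-torsors compatible with $\beta,\beta'$ after pushing forward.

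First I would construct the functor $[E\backslash H]\to\lim(\B{E}\rightrightarrows\B{H})$. An object of $[E\backslash H](U)$ is (locally) an element $h\in H(U)$, with two such $h,h'$ identified by an $e\in E(U)$ with $h' = \tau(e)h\sigma(e)^{-1}$; globally it is an $E$-torsor $P$ together with an $E$-equivariant map $P\to H$, equivalently a trivialization of the pushout torsor $\sgbar_* P$ together with the datum that identifies $\tgbar_* P$ with it via the element. Concretely: to $h\in H(U)$ (the local picture) assign the triple $(P_0,\beta)$ where $P_0$ is the trivial $E$-torsor and $\beta\colon \tgbar_* P_0 = H\to H = \sgbar_* P_0$ is left multiplication by $h$; one checks that replacing $h$ by $\tau(e)h\sigma(e)^{-1}$ changes $(P_0,\beta)$ by the isomorphism given by $e$, so the assignment descends to a functor, and it sheafifies to a map out of $[E\backslash H]$ since both sides are stackifications of their evident prestack versions. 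Conversely, given $(P,\beta)$ in the limit: over a cover trivializing $P$, $\beta$ becomes an automorphism of the trivial $H$-torsor, i.e.\ left multiplication by some $h\in H$; the $E$-torsor structure on $P$ together with the compatibility makes this $h$ transform exactly by $e\mapsto \tau(e)h\sigma(e)^{-1}$ under change of trivialization, so it glues to a well-defined object of $[E\backslash H](U)$. These two constructions are visibly quasi-inverse, and everything is natural in $U$, giving the equivalence of stacks; the identification of diagonals (hence the fpqc-stack claim quoted in the proof of Theorem~\ref{DescribeBunlineFramStack}) follows from the explicit description of morphisms.

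The main obstacle is purely bookkeeping: being careful that an object of the $2$-limit is a \emph{pair} $(P,\beta)$ and not a triple $(P,Q,\beta)$ — the torsor $Q$ is forced to be $\tgbar_* P$ up to the chosen iso — and tracking the left/right placement of $\tau(e)$ and $\sigma(e^{-1})$ so that the $E$-action $(e,h)\mapsto \tau(e)h\sigma(e^{-1})$ comes out with the correct variance. Once the conventions are pinned down (torsors are right $H$-torsors, pushforward is $P\times^E H$, $\B{E}(U)$ has isomorphisms as its morphisms), the verification that the two functors are mutually inverse is a direct unwinding with no genuine difficulty, and sheafification handles the passage from the local description in terms of elements $h\in H(U)$ to the global one.
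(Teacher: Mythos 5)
Your proposal is correct and is essentially the paper's argument: the paper reduces to the prestack level (trivial topology), where the limit has objects the elements $h \in H(R)$ and morphisms the $e \in E(R)$ with $\tau(e)h = h'\sigma(e)$, and then stackifies — which is exactly the local computation you perform after trivializing the torsors, followed by gluing. The one convention to pin down (as you anticipated) is the direction of the comparison isomorphism: taking it from the $\sigma$-pushforward to the $\tau$-pushforward yields the stated action $(e,h)\mapsto \tau(e)h\sigma(e)^{-1}$, whereas your choice of direction gives $h' = \sigma(e)h\tau(e)^{-1}$ and would have to be corrected by inverting.
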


\begin{proof}
It suffices to show the equivalence for the corresponding prestacks and then stackify, i.e., we may assume that $\Ccal$ is endowed with the trivial topology (where only isomorphisms define sieves). In this case this a straight forward calculation:

For $R \in \Ccal$, the objects of $\lim (\B{E} \rightrightarrows \B{H})(R)$ are given by pairs $(\eps,h)$, where $\eps \in \Ob(\B{E}(R)) = \{*\}$ and $h \in \Mor(\B{H}(R)) = H(R)$ is an isomorphism $\sgbar(\eps) \iso \tgbar(\eps)$. Hence the set of objects are the elements of $H(R)$. The morphisms $h \to h'$ are isomorphisms in $\B{E}(R)$, i.e., elements of $E(R)$, such that $\tau(e) h = h' \sigma(e)$. But this is the description of the prestack $[H \backslash E]$.
\end{proof}

%


\subsection{Colimits of frame stacks}

In this section, we denote by  $(A_N, (\Fil_{A_N}^i), \sigma_N)_{N \geq 1}$ be an $\NN^{\opp}$-diagram of frames. We suppose that the following hypotheses are satisfied.
\begin{assertionlist}
\item
The maps $\Fil_{A_{N+1}}^i \to \Fil_{A_N}^i$ are surjective for all $N \geq 1$ and all $i \in \ZZ$, in particular $A_{N+1} \to A_N$ is surjective for all $N$.
\item
The kernel of $A_{N+1} \to A_N$ is contained in the Jacobson radical of $A_{N+1}$ for all $N \geq 1$.
\item
The $A_N$-modules $\Fil^i_{A_N}$ are finite projective for all $i$.
\end{assertionlist}
We obtain closed immersions of the attached Rees stacks $\Re(A_{N}) \to \Re(A_{N+1})$ for all $N \geq 1$. We set
\[
A := \lim_N A_N, \qquad \Fil_A^i := \lim_N \Fil_{A_N}^i
\]
and obtain a filtered ring $(A,(\Fil^i_A))$ with attached Rees stack $\Re(A)$. By Proposition~\ref{ColimitQuotientStack}, applied to $\ZZ$-graded $A[t]$-modules, we obtain an equivalence of Adams stacks
\begin{equation}\label{EqReesColimit}
\colim_N\Re(A_N) \liso \Re(A).
\end{equation}
Since we also have $\Spec A = \colim_N \Spec A_N$ by Example~\ref{AdicColimitAdam}, we also obtain formally
\begin{equation}\label{EqFrameColimit}
\colim_N\Fcal(A_N) \liso \Fcal(A)
\end{equation}
since coequalizers commute with colimits.

\section{$G$-bundles over the Rees stack attached to the $v$-adic filtration}\label{Sec:ReesVAdic}

In this chapter, we will study a type of Rees stack that will be used for (truncated) local shtukas and for prisms. Throughout the whole section let $A$ be a ring, let $L$ be an invertible $A$-module, and let $v\colon L \to A$ be an $A$-linear map. The pair $(L,v)$ corresponds to a section of $A^{\Fil} \to \Spec A$ which is also denoted by $v\colon \Spec A \to A^{\Fil}$.


Define a filtration on $A$ by $\Fil^j := L^{\otimes j}$ for $j \geq 0$ and $\Fil^j := A$ for $j < 0$ and define $t\colon \Fil^{j+1} \to \Fil^{j}$ to be the multiplication with $v$ for $j \geq 0$ and the identity for $j < 0$. We obtain a filtered ring $(A,(L^{\otimes j})_j)$ with associated $\ZZ$-graded $A[t]$-algebra
\[
B := \Rees(A,v) := \bigoplus_{j \in \ZZ}\Fil^j
\]
and we denote by
\[
\Re(A,v) := [\GG_{m,A}\backslash \Spec B]
\]
the attached Rees stack.

\begin{remark}\label{VAdicSpecialCases}
There are the following two special cases.
\begin{assertionlist}
\item\label{VAdicSpecialCases1}
If $v$ is injective, then $L$ is identified via $v$ with an invertible ideal $I$ of $A$ and we also write $\Re(A,I)$ instead of $\Re(A,v)$.
\item\label{VAdicSpecialCases2}
If $L = A$, then $v$ is the multiplication with an element $z \in A$ and we also write $\Re(A,z)$ instead of $\Re(A,v)$. In this case we set $u := 1 \in \Fil^1$. Then $tu = z$ and we obtain a presentation of the underlying $\ZZ$-graded $A$-algebra of $B$ as
\begin{equation}\label{EqReesZAdic}
B = A[t,u]/(tu - z), \qquad \deg(t) = -1,\ \deg(u) = 1.
\end{equation}
Zariski locally on $A$, we can always choose an isomorphism $L \cong A$, i.e., Zariski locally on $A$, the $A[t]$-algebra $B$ is isomorphic to $A[t,u]/(tu-z)$ for some $z \in A$.
\end{assertionlist}
\end{remark}


\subsection{The Rees stack attached to the $v$-adic filtration}\label{Sec:ReesZAdic}

Let us describe the attractor, reppeller, and the fixed point locus of $\Re(A,v)$.

\begin{remark}\label{AttractorReAv}
By Remark~\ref{ReesAttractor} we have
\begin{equation}\label{EqAttractorLv}
\begin{aligned}
R := B^0 &= A/v(L),\\
B^- &= R[t],\\
B^+ &= B/tB = \bigoplus_{j \geq 0}L^{\otimes j} \otimes_A R = \Sym_R(L \otimes_A R).
\end{aligned}
\end{equation}
Hence the cartesian diagram~\eqref{EqCartesianAttractorRepellor} has the form
\begin{equation}\label{EqCartesianSpecialAttractor}
\begin{aligned}\xymatrix{
 & \Re(A,v)^{+} = [\GG_{m,R}\backslash \Spec \Sym_R(L \otimes_A R)] \ar[rd] \\
\Re(A,v)^0 = \B{\GG_{m,R}} \ar[ru] \ar[rd] & & \Re(A,v). \\
 & \Re(A,v)^- = R^{\Fil} \ar[ru]
}\end{aligned}
\end{equation}
The composition $\Re(A,v)^- = R^{\Fil} \to \Re(A,v)$ with the Rees map $\Re(A,v) \to A^{\Fil}$ is the natural closed embedding $R^{\Fil} \to A^{\Fil}$.

We have already seen in \eqref{EqReesStackNePlus} that the inclusion $\Re(A,v)^{\ne +} \to \Re(A,v)$ can be identified with
\[
\tau\colon \Re(A,v)^{\ne +} \cong \Spec A \lto \Re(A,v).
\]
Since $B^- = B/I^+$ and $I^+$ is the ideal generated by $L = B_1$, we find $\Re(A,v)^{\ne -} = [\GG_{m,A}\backslash B^{\ne -}]$ with $B^{\ne -} = B[L^{-1}] = T(L)$ (Remark~\ref{MapsToQuotientbyGGm}) which shows that
\[
\Re(A,v)^{\ne -} = [\GG_{m,A}\backslash T(L)] = \Spec A.
\]
Hence we find that
\[
\Re(A,v)^{\ne -} \cap \Re(A,v)^{\ne +} = \Spec A[1/v],
\]
with $A[1/v]$ as defined in Example~\ref{ExampleReesInvertible}. It follows that the algebraic stack
\[
\Re(A,v)^{\ne0} = \Re(A,v)^{\ne -} \cup \Re(A,v)^{\ne +}
\]
is the scheme obtained by gluing two copies of $\Spec A$ along $\Spec A[1/v]$. 
\end{remark}

Let us collect some properties of the scheme $\Re(A,v)^{\ne 0}$.

\begin{remark}\label{ReAne0ResolutionProp}
\begin{assertionlist}
\item
The scheme $\Re(A,v)^{\ne 0}$ is separated if and only if $\Spec A[1/v]$ is also a closed subscheme of $\Spec A$, for instance if $v$ is nilpotent and hence $A[1/v] = 0$.
\item
As the inclusion $\Spec A[1/v] \to \Spec A$ is affine, $\Re(A,v)^{\ne0}$ always has an affine diagonal.
\item
Let $U_1$ and $U_2$ be the two copies of $\Spec A$ in $\Re(A,v)^{\ne 0}$. For $n \in \ZZ$ define a line bundle $\Lscr_n$ on $\Re(A,v)^{\ne0}$ by gluing the line bundle defined by $L$ on $U_1$ and the the trivial line bundle on $U_2$ along $U_1 \cap U_2 = \Spec A[1/v]$ by the isomorphism $v^n$. We claim that $(\Lscr_1,\Lscr_{-1})$ is an ample pair of line bundles.

Indeed, global sections of $\Lscr_n$ are pairs $(b,a)$ with $b \in L^{\otimes n}$ and $a \in A$ such that $v^n(b) = a$ in $A[1/v]$. Choose $s_1,\dots,s_r \in L$ such that the non-vanishing loci of the $s_i$ are affine and cover $\Spec A$. Then the non-vanishing loci of the sections $(s_i,v(s_i))$ of $\Lscr_1$ are the open affine subscheme $(U_1)_{s_i}$, and these subschemes cover $U_1$. The non-vanishing locus of the global section $(v(1),1)$ of $\Lscr_{-1}$ is $U_2$. Hence $(\Lscr_1,\Lscr_{-1})$ is an ample pair.
\item
In particular, $\Re(A)^{\ne0}$ has the resolution property, i.e. every finite type quasi-coherent module over $\Re(A)^{\ne0}$ is the quotient of a vector bundle, even of a finite direct sum of tensor powers of $\Lscr_{-1}$ and $\Lscr_1$. Hence $\Re(A)^{\ne0}$ is an Adams stack.
\end{assertionlist}
\end{remark}

The construction of $\Re(A,v)$ is functorial in $(A,v)$ as follows.

\begin{remark}\label{FunctorialReesAv}
Let $A \to A'$ be a map of rings, let $L'$ be an invertible $A'$-module and $v'\colon L' \to A'$ be an $A'$-linear map. Suppose we are given a ring-homomorphism $f\colon A \to A'$ and an $A$-linear map $u\colon L \to L'$ such that $v' \circ u = f \circ v$. Then we obtain an induced map of algebraic stacks
\[
\Re(A,v) \lto \Re(A',v').
\]
\end{remark}

\begin{remark}\label{QCohSpecialRees}
Let $A$ and $v\colon L \to A$ be as above. Then the category of filtered modules over $(A, (L^{\otimes j})_{j\geq0})$, i.e. of quasi-coherent modules over $\Re(A,v)$, is equivalent to the category of $\ZZ$-graded $A$-modules $M = \bigoplus_j M_j$ together with $A$-linear maps
\[
t\colon M_{j+1} \to M_{j}, \qquad u\colon M_{j} \otimes_A L \to M_{j+1}
\]
such that $t \circ u$ is the multiplication with $v$ and $(u \otimes \id_{L^{\otimes-1}}) \circ t\colon M_{j+1} \to M_{j+1} \otimes L^{\otimes-1}$ is the multiplication with $v\vdual\colon A \to L\vdual = L ^{\otimes-1}$. 

For instance, for the free module $B$ of rank $1$, the maps $u\colon \Fil^j \otimes L \to \Fil^{j+1}$ are given by $v$ for $j < 0$ and by $\id_{L^{\otimes (j+1)}}$ for $j \geq 0$.

If $L = A$ and hence $v$ is given by multiplying with an element $z \in A$, then quasi-coherent modules over $\Re(A,z) = [\GG_{m,A}\backslash \Spec A[t,u]/(tu-z)]$ are graded $A$-modules $M = \bigoplus_j M_j$ together with $A$-linear maps $t\colon M_{j+1} \to M_{j}$ and $u\colon M_{j} \to M_{j+1}$ such that $t \circ u$ and $u \circ t$ is the multiplication with $z$.
\end{remark}

If $L = A$ and $z = p$ is a prime number, then a quasi-coherent module over $\Re(A,p)$ is nothing but a $p$-gauge in the sense of Fontaine and Jannsen \cite{FontaineJannsen_FrobeniusGauges}.

\begin{remark}\label{PullbackQCohReesSpecial}
As before let $R := A/v(L)$ and set $L_R := L \otimes_A R$. Let us describe the pullbacks of filtered modules over $(A, (L^{\otimes j}))$ given by $M = \bigoplus M_j$ as in Remark~\ref{QCohSpecialRees} to the various substacks. Since later on we are only interested in pullbacks of vector bundles, we describe only the non-derived pullback.
\begin{assertionlist}
\item\label{VectorbundlesSpecialRees1}
The pullback to the closed substack $\Re(A,v)^+ = \Sym_R(L_R)$ is given by the graded $R$-module $M/tM = \bigoplus_j M_j/t(M_{j+1})$ together with $R$-linear maps $u\colon M_{j-1}/tM_j \otimes_R L_R \to M_j/t(M_{j+1})$ (cf.~Example~\ref{MapsToAA1GGm}).

The pullback to $\Re(A,z)^- = R^{\Fil}$ is $M/u(M \otimes L) = \bigoplus_j M_j/u(M_{j-1} \otimes_A L)$ together with $R$-linear maps $t\colon M_{j+1}/u(M_j \otimes L) \to M_j/u(M_{j-1} \otimes L)$.

The pullback to the closed substack $\Re(A,z)^0 = \B{\GG_{m,R}} = \Re(A,v)^+ \cap \Re(A,v)^-$ is given by the graded $R$-module $\bigoplus_j M_j/(u(M_{j-1} \otimes L) + t(M_{j+1}))$.
\item\label{VectorbundlesSpecialRees2}
The pullback to the open substack $\Re(A,v)^{\ne+} = \Spec A$ is
\[
M_{-\infty} := \colim(\cdots \ltoover{t} M_{j+1} \ltoover{t} M_{j} \ltoover{t} M_{j-1} \ltoover{t} \cdots).
\]
The pullback to $\Re(A,v)^{\ne-} = \Spec A$ is 
\[
M_{\infty} := \colim(\cdots \ltoover{u} M_{j-1} \otimes L^{\otimes (1-j)} \ltoover{u} M_j \otimes L^{\otimes -j} \ltoover{u} M_{j+1} \otimes L^{-j-1} \ltoover{u} \cdots).
\]
Since the open substack $\Re(A,v)^{\ne0}$ is the scheme obtained by gluing two copies of $\Spec A$ along $\Spec A[1/v]$, quasi-coherent modules over $\Re(A,v)^{\ne0}$ are given by triples $(E,E',\Phi)$, where $E$ and $E'$ are $A$-modules and where $\Phi$ is an isomorphism $E[1/v] \iso E'[1/v]$. Then the pullback of $M$ to $\Re(A,v)^{\ne0}$ is given by the triple $(M_{-\infty}, M_{\infty}, \Phi)$, where $\Phi\colon M_{-\infty}[1/v] \iso M_{\infty}[1/v]$ is the canonical isomorphism induced by all isomorphisms $M_{j}[1/v] \iso M_{j+1}[1/v]$.
\end{assertionlist}
\end{remark}

\begin{example}\label{RestrictionTwistedLineBundle}
Set $\Xcal := \Re(A,v)$. Then the structure sheaf $\Oscr_{\Xcal}$ corresponds via Remark~\ref{QCohSpecialRees} to the $\ZZ$-graded $A[t]$-module $B = \bigoplus_{j\in \ZZ}\Fil^j$. The map $u\colon \Fil^j \otimes_A L \to \Fil^{j+1}$ is the multiplication with $v$ for $j < 0$ and the identity of $L^{j+1}$ for $j \geq 0$.

More generally, let $\Oscr_{\Xcal}(e)$ be the twisted line bundle for $e \in \ZZ$ (Section~\ref{QCOHGGmSTACK}). It corresponds to $M = \bigoplus M_j$ with $M_j = \Fil^{j+e}$.

Its restriction to $\Re(A,v)^{\ne 0}$ is given by $M_{-\infty} = A$, $M_{\infty} = L^{\otimes e}$ and $\Phi\colon A[1/v] \to L^{\otimes e}[1/v]$ the multiplication with $v^{-e}$.
\end{example}

We can also give the following description of vector bundles on $\Re(A,v)$.

\begin{proposition}\label{DescribeVBReesStack}
Let $A$, $v\colon L \to A$, and $R = A/v(L)$ as above with attached Rees algebra $B = \Rees(A,v)$ and attached Rees stack $\Re(A,v)$. Let $\Mscr$ be a quasi-coherent module over $\Re(A,v)$ corresponding to $M = \bigoplus_j M_j$ as in Remark~\ref{QCohSpecialRees}. Consider the following conditions on $M$.
\begin{definitionlist}
\item\label{DescribeVBReesStackb}
The $A$-modules $M_j$ are of finite presentation for all $j \in \ZZ$ and the maps $t\colon M_j \to M_{j-1}$ are isomorphisms for $j \ll 0$, and the maps $u\colon M_{j-1} \otimes L \to M_j$ are isomorphisms for $j \gg 0$.
\item\label{DescribeVBReesStacka}
The maps $t\colon M_{j+1}/u(M_{j} \otimes L) \to M_j/u(M_{j-1} \otimes L)$ and $u\colon M_{j-1}/tM_j \otimes_R L_R \to M_j/t(M_{j+1})$ are injective maps of finite projective $R$-modules with projective cokernels for all $j \in \ZZ$.
\item\label{DescribeVBReesStackc}
The $A$-modules $M_j$ are finite projective for all $j \in \ZZ$.
\item\label{DescribeVBReesStackd}
The maps $t\colon M_j \to M_{j-1}$ and $u\colon M_{j-1} \otimes L \to M_j$ are injective for all $j \in \ZZ$.
\end{definitionlist}
Then one has the following assertions.
\begin{assertionlist}
\item\label{DescribeVBReesStack1}
$M$ defines a module of finite presentation over $\Re(A,v)$ if and only if  \ref{DescribeVBReesStackb} holds.
\item\label{DescribeVBReesStack2}
Suppose that $M$ defines a vector bundle over $\Re(A,v)$. Then \ref{DescribeVBReesStacka}, \ref{DescribeVBReesStackb}, and \ref{DescribeVBReesStackc} hold. If $v$ is injective, then also \ref{DescribeVBReesStackd} holds.
\item\label{DescribeVBReesStack4}
Suppose that $v$ is injective. Then the following assertions are equivalent.
\begin{equivlist}
\item\label{DescribeVBReesStacki}
$M$ defines a vector bundle over $\Re(A,v)$.
\item\label{DescribeVBReesStackii}
All conditions \ref{DescribeVBReesStackb} -- \ref{DescribeVBReesStackd} hold.
\item\label{DescribeVBReesStackiii}
Condition~\ref{DescribeVBReesStackb} holds, the maps $u\colon M_{j-1}/tM_j \otimes_R L_R \to M_j/t(M_{j+1})$ are injective maps of finite projective $R$-modules with projective cokernels for all $j \in \ZZ$, $t\colon M_j \to M_{j-1}$ is injective for all $j$, and $M_{-\infty}$ is a finite projective $A$-module.
\item\label{DescribeVBReesStackiv}
Condition~\ref{DescribeVBReesStackb} holds, the maps $t\colon M_{j+1}/u(M_{j} \otimes L) \to M_j/u(M_{j-1} \otimes L)$ are injective maps of finite projective $R$-modules with projective cokernels for all $j \in \ZZ$, $u\colon M_j \otimes L \to M_{j+1}$ is injective for all $j$, and $M_{\infty}$ is a finite projective $A$-module.
\item\label{DescribeVBReesStackv}
Conditions~\ref{DescribeVBReesStackb}, \ref{DescribeVBReesStacka}, and \ref{DescribeVBReesStackc} hold, and $M_{-\infty}[1/v]$ (isomorphic to $M_{\infty}[1/v]$) is a finite projective $A[1/v]$-module.
\end{equivlist}
\item\label{DescribeVBReesStack5}
Suppose that $v(L)$ is contained in the Jacobson radical of $A$. Then $M$ defines a vector bundle over $\Re(A,z)$ if and only if Conditions~\ref{DescribeVBReesStackb}, \ref{DescribeVBReesStacka}, and \ref{DescribeVBReesStackc} hold.
\item\label{DescribeVBReesStack3}
Suppose that $v = 0$, then $M$ defines a vector bundle over $\Re(A,v)$ if and only if \ref{DescribeVBReesStacka} holds.
\end{assertionlist}
\end{proposition}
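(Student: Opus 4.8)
The plan is to translate every assertion into a statement about the underlying $\ZZ$-graded module $M=\bigoplus_j M_j$ over $B=\Rees(A,v)$, using the dictionaries of Section~\ref{QCOHGGmSTACK}: by Proposition~\ref{CharVBonQuotientByGGm} a quasi-coherent module on $\Re(A,v)$ is a vector bundle iff $M$ is a finite projective $B$-module, and by Proposition~\ref{CharModFP} it is of finite presentation iff $M$ admits a finite graded free presentation. In addition I will use the explicit description of $M$ and of its restrictions to the attractor, repeller and fixed locus from Remark~\ref{QCohSpecialRees} and Remark~\ref{PullbackQCohReesSpecial}, together with Proposition~\ref{CharFilteredVB} and Remark~\ref{VBSymGGm} for vector bundles over $R^{\Fil}$ and over $[\GG_{m,R}\backslash\Sym_R(L\otimes_A R)]$.

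For assertion (1), necessity of (b) holds because each twisted bundle $\Oscr(e)$ satisfies (b) by Example~\ref{RestrictionTwistedLineBundle} and (b) is stable under finite direct sums and cokernels; for sufficiency one mimics the proof of Proposition~\ref{CharModFP}: assuming (b), pick $j_-\ll0$ and $j_+\gg0$ with $t$ an isomorphism below $j_-$ and $u$ an isomorphism above $j_+$, so that homogeneous generators of the finitely presented $A$-modules $M_j$ with $j_-\le j\le j_+$ generate $M$ over $B$ (below $j_-$ via powers of $t\in B_{-1}$, above $j_+$ via the $u$-action), and the kernel of the resulting surjection $\bigoplus_s\Oscr(e_s)\twoheadrightarrow M$ again satisfies (b), hence is finitely generated over $B$. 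For assertion (2): if $M$ is a vector bundle then (b) holds by (1), and (c) holds since in degree $j$ the module $M_j$ is a direct summand of the finite free $A$-module $\bigoplus_s\Fil^{j+e_s}$. Pulling back $\Mscr$ along the closed immersions $\Re(A,v)^+\to\Re(A,v)$ and $\Re(A,v)^-\to\Re(A,v)$ and using Remark~\ref{AttractorReAv}, the two pullbacks are a vector bundle with increasing $(L\otimes_A R)$-filtration and a filtered vector bundle over $R$; unwinding their graded pieces via Remark~\ref{PullbackQCohReesSpecial} gives exactly the two injectivity-with-finite-projective-cokernel clauses of (a). If moreover $v$ is injective, then on each $\Oscr(e)$ both $t$ and $u$ are injective (each is, up to a twist by a power of $L$, an identity or multiplication by $v$), hence so are their restrictions to the direct summand $M$, which is (d).

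The core is the sufficiency direction in (4)--(6), which I would prove by the same flatness criterion (Proposition~\ref{FlatnessCrit}) used for Proposition~\ref{CharFilteredVB}, applied with the ideal $(t)\subseteq B$; by (1), $M$ is already of finite presentation over $B$ once (b) holds, so only flatness is at issue. When $v$, hence $t$, is injective, (d) gives $\Tor_1^B(B/(t),M)=\bigoplus_j\Ker(t\colon M_{j+1}\to M_j)=0$, the reduction $M/tM$ is the pullback to $\Re(A,v)^+$ and is a vector bundle there by (a), and $M[1/t]=M_{-\infty}\otimes_A A[t,t^{-1}]$ is flat because $M_{-\infty}$ (which equals $M_{j_-}$ for $j_-\ll0$ by (b)) is finite projective over $A$ by (c); the criterion then yields flatness, giving the equivalence (i)$\Leftrightarrow$(ii) of assertion (4). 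The remaining equivalences (ii)$\Leftrightarrow$(iii)$\Leftrightarrow$(iv)$\Leftrightarrow$(v) are bookkeeping: using (b) one identifies $M_{-\infty}$ and $M_{\infty}$ with stabilized terms and recognizes, via Remark~\ref{PullbackQCohReesSpecial}, the various $t$- and $u$-quotients as the restrictions to the attractor, repeller and generic loci, so that in each case the same criterion applies with the subset of hypotheses at hand (for (iii) and (iv) the finite projectivity of all $M_j$ is then recovered a posteriori from (i) via assertion (2)).

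For assertion (5), where $v$ need not be injective, I would reduce modulo $v(L)$: condition (a) already concerns $R=A/v(L)$-modules, so it passes to $M\otimes_A R$; the already-established case $v=0$ (assertion (6)) shows $M\otimes_A R$ is a vector bundle over $\Re(R,0)$, in particular projective over $B\otimes_A R$, and one lifts the resulting splitting of a finite graded free presentation using that $v(L)$ lies in the Jacobson radical of $A$ and that $\Re(A,v)\to\Spec A$ is a good moduli space seeing all closed points of $\Spec A$. Finally, for assertion (6), $\Spec B$ is Zariski-locally $\Spec A[t,u]/(tu)$, so $\Re(A,0)$ is the pushout of $\Re(A,0)^+\leftarrow\Re(A,0)^0\to\Re(A,0)^-$ along closed immersions; Milnor patching identifies finite projective $B$-modules with compatible triples of finite projective graded modules on the two branches, which by Proposition~\ref{CharFilteredVB} and Remark~\ref{VBSymGGm} are exactly the data recorded by (a) --- and the injectivity clauses in (a) are precisely what force $M_j$ to be the fibre product of $M_j/tM_{j+1}$ and $M_j/u(M_{j-1}\otimes L)$ over $M_j/(tM_{j+1}+u(M_{j-1}\otimes L))$, hence finite projective, making the patching datum well defined. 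The step I expect to be the main obstacle is feeding the flatness criterion in (5) and in the weak implications (iii)$\Rightarrow$(i), (iv)$\Rightarrow$(i), (v)$\Rightarrow$(i) \emph{without} the injectivity of $v$ (or of $t$, $u$) available; this is exactly where the Jacobson-radical hypothesis together with the good moduli space, respectively the already-proved case $v=0$, do the work, and where the pullback formulas of Remark~\ref{PullbackQCohReesSpecial} must be applied carefully to check the hypotheses of the criterion.
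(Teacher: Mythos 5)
Your treatment of assertions \ref{DescribeVBReesStack1}, \ref{DescribeVBReesStack2}, the case $v=0$ (assertion \ref{DescribeVBReesStack3}) and the equivalences in \ref{DescribeVBReesStack4} is essentially the paper's argument: generation in a bounded range of degrees for finite presentation, pullback to attractor and repeller for the necessity of \ref{DescribeVBReesStacka}, Ferrand gluing over $A[t]\times_A A[u]$ when $v=0$, and Proposition~\ref{FlatnessCrit} applied to the ideals $(t)$, $(u)$, resp.\ $(v)$ for the sufficiency directions (the paper in fact proves \ref{DescribeVBReesStackiii}$\Rightarrow$\ref{DescribeVBReesStacki} directly rather than first \ref{DescribeVBReesStacki}$\Leftrightarrow$\ref{DescribeVBReesStackii}, but this is the same mechanism). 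One remark on your closing worry: in \ref{DescribeVBReesStack4} injectivity of $v$ is a standing hypothesis, so it is available in all of \ref{DescribeVBReesStackiii}--\ref{DescribeVBReesStackv}; in case \ref{DescribeVBReesStackv}, where \ref{DescribeVBReesStackd} is not assumed, the needed Tor--vanishing is $\Tor_1^B(B/v(L)B,M)=\Ker(v\colon M\to M)=0$, which follows from condition \ref{DescribeVBReesStackc} because each $M_j$ is projective over $A$ and a local generator of $v(L)$ is regular.

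The genuine gap is in assertion \ref{DescribeVBReesStack5}, exactly the step you flag as the main obstacle but do not resolve. Reducing modulo $v(L)$ and invoking the $v=0$ case correctly shows that $M/v(L)M$ is projective over $B/v(L)B$, but the proposed ``lifting of the splitting'' has no valid mechanism behind it. First, $v(L)B$ is \emph{not} contained in the Jacobson radical of $B$, even when $v(L)$ lies in the Jacobson radical of $A$: for $A=\ZZ_p$, $z=p$, the ring $B=\ZZ_p[t,u]/(tu-p)$ surjects onto $\QQ_p$ via $t\mapsto p^{-1}$, $u\mapsto p^2$, so Nakayama and idempotent lifting over $B$ are unavailable. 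The correct way to exploit the radical hypothesis (and your good-moduli-space intuition) is that for a graded finitely presented module flatness need only be checked at (graded) primes of $B$ lying over maximal ideals of $A$, all of which contain $v(L)$; this is what makes the flatness condition on the open locus in Proposition~\ref{FlatnessCrit}/Remark~\ref{RemFlatnessCrit} dischargeable. Second, and decisively, even locally at such a prime, ``finitely presented and projective modulo $v$'' does not imply projective (take $k\llbracket z\rrbracket$ and $M=k$): one must in addition prove $\Tor_1^B(B/v(L)B,M)=0$, and this is precisely where hypothesis \ref{DescribeVBReesStackc} has to enter -- since $M=\bigoplus_j M_j$ is projective as an $A$-module, flat base change along $A\to B$ (Lemma~\ref{TorFlatExtension}) gives $\Tor_1^B(B/v(L)B,M)=\Tor_1^A(R,M)=0$. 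Your sketch of \ref{DescribeVBReesStack5} never uses \ref{DescribeVBReesStackc} and supplies no substitute for this Tor--vanishing, so as written the argument fails; replacing the lifting step by the local flatness criterion as above (the paper's route) repairs it.
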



\begin{proof}
All assertions and conditions are Zariski locally on $A$, therefore we may assume $L =A$ and hence $B = A[t,u]/(tu-z)$ with $\deg(t) =-1$, $\deg(u) = 1$ and $z \in A$ by Remark~\ref{VAdicSpecialCases}. In this case $v$ is injective if and only if $z = v(1)$ is regular in $A$.

Let us show \ref{DescribeVBReesStack1}. Since $M$ is of finite presentation, it is the cokernel of a map of graded $B$-modules $F' \to F$ such that both $F'$ and $F$ are finite graded free. Since \ref{DescribeVBReesStackb} holds for $B(e)$, we deduce that it holds for $M$.
Conversely, suppose that \ref{DescribeVBReesStackb} holds. Let $a \leq b$ be integers such that $t\colon M_j \to M_{j-1}$ is an isomorphism for $j \leq a$ and such that $u\colon M_j \to M_{j+1}$ is an isomorphism for $j \geq b$. If $S$ is a homogeneous generating system for the $A$-module $M^a \oplus M^{a+1} \oplus \cdots \oplus M^b$, then $S$ is a generating system for the graded $B$-module $M$. It defines a surjection from a finite graded free module $F \to M$. Its kernel $N$ is a graded $B$-module such that still the maps $t\colon N_j \to N_{j-1}$ are isomorphisms for $j \ll 0$, and the maps $u\colon N_{j-1} \to N_j$ are isomorphisms for $j \gg 0$. Since $M_j$ is of finite presentation, $N_j$ is a finitely generated $A$-module. Now the same argument as before shows that we again find a surjection from a finite graded free module $F' \to N$. Therefore, we have constructed a finite presentation $F' \to F \to M \to 0$ by finite graded free modules. Hence $M$ is of finite presentation.

Next we show \ref{DescribeVBReesStack2}. Let $M$ define a finitely generated projective module. Then $M$ is of finite presentation and hence \ref{DescribeVBReesStackb} holds for $M$. Pulling back $M$, considered as a vector bundle over $\Re(A,z)$, to $\Re(A,z)^+$ and $\Re(A,z)^-$, it follows from Remark~\ref{PullbackQCohReesSpecial}~\ref{VectorbundlesSpecialRees1} and the description of vector bundles on $[\GG_m\backslash \AA^1]$ in Proposition~\ref{CharFilteredVB} that $M$ satisfies \ref{DescribeVBReesStacka}. Moreover, $M$ is a direct summand of a finite graded free $B$-module (Proposition~\ref{CharVBonQuotientByGGm}). Since each graded piece of $B$ is equal to $A$, it follows that $M_j$ is a finite projective $A$-module for all $j$. Finally, since $M$ is flat, the multiplication with $z$ on $M$ is injective. Since $z = tu = ut$, this implies that $t$ and $u$ are injective.

Let us show \ref{DescribeVBReesStack3}. In this case $B = A[t,u]/(tu)$ is the fiber product of graded rings $A[u] \times_A A[t]$, and therefore a quasi-coherent $\Re(A,0)$-module is a vector bundle if and only if its pullback to $\Re(A,0)^+$ and $\Re(A,0)^-$ is a vector bundle \cite[2.2]{Ferrand_Conducteur} which is equivalent to \ref{DescribeVBReesStacka}, again by Remark~\ref{PullbackQCohReesSpecial}~\ref{VectorbundlesSpecialRees1}. 

We now show \ref{DescribeVBReesStack4}, so we assume that $z$ is regular. We have already seen in \ref{DescribeVBReesStack2}, that \ref{DescribeVBReesStacki} implies \ref{DescribeVBReesStackii}. Clearly, \ref{DescribeVBReesStackii} implies \ref{DescribeVBReesStackiii}, \ref{DescribeVBReesStackiv}, and \ref{DescribeVBReesStackv}.

Let us show that \ref{DescribeVBReesStackiii} implies \ref{DescribeVBReesStacki}. We apply Proposition~\ref{FlatnessCrit} to $I = (t) \subseteq \Rees(A)$. Let $\Escr$ be the quasi-coherent module on $\Re(A,z)$ corresponding to $M$ such that $\Mtilde$ is the pullback of $\Escr$ to $\Spec B$. By \ref{DescribeVBReesStack1} we know that $M$ is a $B$-module of finite presentation. The restriction of $\Mtilde$ to $U = \Spec B \setminus V(t)$ is the pullback of $\Escr\rstr{\Re(A,z)^{\ne+}}$ which corresponds to the $A$-module $M_{-\infty}$ which by hypothesis is flat. The (non-dervied) restriction of $M$ to $V(t)$ is the pullback of the restriction of $\Escr$ to $\Re(A,z)^+ = [\GG_{m,R}\backslash \AA^1_R]$ which is a vector bundle by Proposition~\ref{CharFilteredVB}. It remain to show that $\Tor^B_1(B/tB,M) = 0$. Since $z = ut$, $t \in B$ is a regular element, i.e. one has $B/tB \cong (B \ltoover{t} B)$ in $D(B)$ and hence $\Tor^B_1(B/tB,M) = \Ker(M \ltoover{t} M) = 0$.

To see that \ref{DescribeVBReesStackiv} implies \ref{DescribeVBReesStacki} one argues in the same manner applying Proposition~\ref{FlatnessCrit} to $(u) \subseteq \Rees(A)$. Moreover, the same argument, now by applying Proposition~\ref{FlatnessCrit} to $(z) \subseteq B$ also shows that \ref{DescribeVBReesStackv} implies \ref{DescribeVBReesStacki}. Here we use that the flatness of $M/zM$ is equivalent to Condition~\ref{DescribeVBReesStacka}, which we have already proved, and that condition \ref{DescribeVBReesStackc} implies that $z\colon M_j \to M_j$ is injective for all $j$ and hence $\Tor_1^B(B/zB,M) = \Ker(z\colon M \to M) = 0$.
%
%
%

It remains to show \ref{DescribeVBReesStack5}. Again we apply Proposition~\ref{FlatnessCrit} to $I = (z) \subseteq B$. The open subset $U = \Spec A[1/z]$ does not contain any closed point because $z$ is contained in the Jacobson radical of $A$, so the condition that $\Escr\rstr{U}$ is flat is unnecessary by Remark~\ref{RemFlatnessCrit}. The module $M$ corresponds to a module of finite presentation by \ref{DescribeVBReesStackb} and its restriction to the vanishing locus of $z$ is flat by \ref{DescribeVBReesStacka} as we have seen in \ref{DescribeVBReesStack3}. It remains to see that
\[
\Tor_1^{B}(B/zB, M) = \Tor_1^A(A/z,M) = 0,\tag{*}
\]
where the equality holds by Lemma~\ref{TorFlatExtension} below since $B$ is a flat $A$-algebra (\cite[14.22]{GW1}). But as an $A$-module, $M$ is by \ref{DescribeVBReesStackc} the direct sum of the finite projective $A$-modules $M_j$ which implies (*).
%
\end{proof}

%
%

In the proof above we used to following easy fact.

\begin{lemma}\label{TorFlatExtension}
Let $A \to B$ and $B \to C$ maps of rings and suppose that $A \to B$ or $A \to C$ is flat\footnote{or, more generally, that $A \to B$ and $A \to C$ are tor-independent}. Let $M$ be a $B$-module. Then $\Tor^B_n(C \otimes_A B,M) = \Tor^A_n(C,M)$ for all $n$.
\end{lemma}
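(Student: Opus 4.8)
The plan is to reduce the statement to the standard base-change spectral sequence in homological algebra. First I would observe that in either of the two stated cases the maps $A \to B$ and $A \to C$ are \emph{tor-independent}, i.e.\ $\Tor^A_q(C,B) = 0$ for all $q > 0$: if $A \to C$ is flat this holds because $C \otimes_A -$ is exact, and if $A \to B$ is flat then likewise $\Tor^A_q(B,C) = 0$, which is the same as $\Tor^A_q(C,B) = 0$ by the symmetry of $\Tor$. Thus it suffices to prove the lemma under the hypothesis of tor-independence, which is the generality recorded in the footnote.

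Next I would invoke the Cartan--Eilenberg change-of-rings spectral sequence attached to the ring map $A \to B$, the $A$-module $C$, and the $B$-module $M$:
\[
E^2_{p,q} = \Tor^B_p\bigl(\Tor^A_q(C,B),\, M\bigr) \;\Longrightarrow\; \Tor^A_{p+q}(C,M).
\]
Tor-independence forces $E^2_{p,q} = 0$ for $q > 0$, so the spectral sequence degenerates onto the line $q = 0$, and the edge maps yield the desired isomorphisms $\Tor^B_n(C \otimes_A B, M) \cong \Tor^A_n(C,M)$ for all $n$.

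Alternatively, to keep the proof self-contained and avoid quoting the spectral sequence, I would argue directly in each case. If $A \to B$ is flat, I pick a resolution $P_\bullet \to C$ of $C$ by projective $A$-modules; since $- \otimes_A B$ is exact and sends projective $A$-modules to projective $B$-modules, $P_\bullet \otimes_A B \to C \otimes_A B$ is a resolution of $C \otimes_A B$ by projective $B$-modules, and the natural identification $(P_\bullet \otimes_A B) \otimes_B M \cong P_\bullet \otimes_A M$ of complexes computes both $\Tor^B_n(C \otimes_A B, M)$ and $\Tor^A_n(C,M)$ simultaneously. If instead $A \to C$ is flat, then $C \otimes_A B$ is flat over $B$ (base change of a flat module is flat), so both $\Tor^B_n(C \otimes_A B, M)$ and $\Tor^A_n(C,M)$ vanish for $n > 0$, while for $n = 0$ both are canonically $C \otimes_A M$.

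The argument is entirely routine homological algebra, so there is no real obstacle; the only point requiring a moment's care is checking that base change along the flat map $A \to B$ turns a projective $A$-resolution of $C$ into a projective $B$-resolution of $C \otimes_A B$ — exactness is preserved precisely because $B$ is $A$-flat — and that applying $- \otimes_B M$ to it gives back, naturally in all the differentials, the complex $P_\bullet \otimes_A M$ that computes $\Tor^A_\bullet(C,M)$.
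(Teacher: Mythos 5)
Your proof is correct. The substance is the same as the paper's — everything hinges on the observation that either flatness hypothesis forces $\Tor^A_q(C,B)=0$ for $q>0$ — but the packaging differs: the paper disposes of the lemma in one line in the derived category, writing $(C \otimes_A B) \Lotimes_B M = C \Lotimes_A B \Lotimes_B M = C \Lotimes_A M$, where the first identification uses tor-independence to replace $C\otimes_A B$ by the derived tensor product and the second is associativity of $\Lotimes$. You instead run the classical avatar of this argument: the Cartan--Eilenberg base-change spectral sequence $E^2_{p,q}=\Tor^B_p(\Tor^A_q(C,B),M)\Rightarrow \Tor^A_{p+q}(C,M)$, degenerate along $q=0$ by tor-independence, supplemented by the two hands-on arguments (transport of a projective $A$-resolution of $C$ along the flat map $A\to B$, respectively vanishing of both sides when $C$ is $A$-flat). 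Both routes are complete; the derived-tensor formulation is shorter and makes the associativity step transparent, while your spectral-sequence/resolution version is self-contained at the level of classical homological algebra and makes explicit exactly where each flatness hypothesis enters. One small remark: your resolution argument only covers the two flat cases, so the footnoted tor-independent generality rests on the spectral sequence (or on the paper's derived argument), which is fine since you do state and use it.
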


\begin{proof}
We have $(C \otimes_A B) \Lotimes_B M = C \Lotimes_A B \Lotimes_B M = C \Lotimes_A M$ since $A \to B$ and $A \to C$ are tor-independent.
\end{proof}

\subsection{Vector bundles and $G$-bundles on $\Re(A,v)$}

We keep the notation from Section~\ref{Sec:ReesZAdic} and we set $R := A/v(L)$ as before. The closed immersion $\B{\GG_{m,R}} = \Re(A,v)^0 \lto \Re(A,v)$ yields a map of groupoids
\begin{equation}\label{EqTypeMapPoints}
\Bun_G(\Re(A,v)) \lto \Bun_G(\B{\GG_{m,R}}).
\end{equation}

\begin{proposition}\label{LiftGBundlesReAv}
Let $G$ be a smooth and affine group scheme over $A$ and suppose that $(A,v(L))$ is a henselian pair. Then \eqref{EqTypeMapPoints} is full and essentially surjective and in particular we obtain an isomorphism
\[
H^1(G,\Re(A,v)) \iso H^1(G,\B{\GG_{m,R}}).
\]
\end{proposition}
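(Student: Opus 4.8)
The plan is to reduce the statement to the already-proved Corollary~\ref{LiftingGBundlesReesStack} (equivalently, to Theorem~\ref{LiftGBundles}) by identifying the Rees stack $\Re(A,v)$ attached to the $v$-adic filtration with a quotient stack of the form treated there. Recall from the construction at the beginning of Section~\ref{Sec:ReesVAdic} that $\Re(A,v) = [\GG_{m,A}\backslash \Spec B]$ with $B = \Rees(A,v) = \bigoplus_{j\in\ZZ}\Fil^j$, where $\Fil^j = L^{\otimes j}$ for $j \geq 0$ and $\Fil^j = A$ for $j < 0$, and the multiplication maps $t\colon \Fil^{j+1}\to\Fil^j$ are given by $v$ in nonnegative degrees. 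This is a $\ZZ$-graded $A$-algebra, and I first observe that it is henselian in the sense of Definition~\ref{DefHenselianGradedRing}: by Remark~\ref{AttractorReAv} (the computation of the fixed-point locus via Remark~\ref{ReesAttractor}), the ideal $J := \sum_{i>0}B_iB_{-i}\subseteq B_0 = A$ equals $t(\Fil^1) = v(L)$, so the pair $(B_0,J) = (A,v(L))$ is henselian precisely by the hypothesis of the proposition.

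Second, I would apply Theorem~\ref{LiftGBundles} directly to this $B$. That theorem requires $G$ to be an affine smooth group scheme over $A$ (which is assumed) and $B$ to be henselian (just checked). Its conclusion gives that pullback along the inclusion of the fixed-point locus induces a full and essentially surjective morphism of groupoids $\Bun_G(\Xcal) \to \Bun_G(\Xcal^0)$, where $\Xcal = [\GG_{m,A}\backslash\Spec B] = \Re(A,v)$ and $\Xcal^0 = [\GG_m\backslash X^0] = \B{\GG_{m,R}}$ with $R = B_0/J = A/v(L)$, exactly matching the $R$ in the statement. This is literally the map \eqref{EqTypeMapPoints}, since that map is defined as pullback along the closed immersion $\B{\GG_{m,R}} = \Re(A,v)^0 \to \Re(A,v)$. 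The ``in particular'' claim on $H^1$ then follows formally: a full and essentially surjective functor of groupoids induces a bijection on sets of isomorphism classes, which is exactly the assertion $H^1(G,\Re(A,v)) \iso H^1(G,\B{\GG_{m,R}})$.

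I expect there to be essentially no obstacle here; the only point requiring a small verification is the matching of the fixed-point locus of the $\GG_m$-scheme $\Spec B$ with $\B{\GG_{m,R}}$ and of the ideal $J$ with $v(L)$, which is already recorded in Remark~\ref{AttractorReAv} (and is an instance of the general Remark~\ref{ReesAttractor}, where $B^0 = A/t(\Fil^1)$). Thus the proof is a one-line citation: $\Re(A,v)$ is the quotient of the henselian $\ZZ$-graded ring $\Rees(A,v)$ by $\GG_m$, so Theorem~\ref{LiftGBundles} applies and gives fullness, essential surjectivity, and the bijection on $H^1$. If one prefers, one can instead invoke Corollary~\ref{LiftingGBundlesReesStack} with $(\Fil^j) = (L^{\otimes j})$, whose hypothesis $(A,t(\Fil^1))$ henselian is again just $(A,v(L))$ henselian; it yields the same bijections on $H^1$ and, combined with the surjectivity on automorphisms built into the proof of Theorem~\ref{LiftGBundles}, the full statement. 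The write-up will be short and will simply point to whichever of these two results is cited.
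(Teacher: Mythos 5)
Your proposal is correct and takes essentially the same route as the paper, which simply quotes the henselian lifting result of Wedhorn (loc.\ cit.\ 2.9) of which this is a special case; you instead deduce it from the version already recalled as Theorem~\ref{LiftGBundles}, and your key verification that $\sum_{i>0}B_iB_{-i} = v(L)$ for $B = \Rees(A,v)$, so that $B$ is henselian in the sense of Definition~\ref{DefHenselianGradedRing} exactly when $(A,v(L))$ is a henselian pair, is accurate and makes the reduction immediate.
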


\begin{proof}
This is a special case of \cite[2.9]{Wedhorn_ExtendBundles}.
\end{proof}

\begin{remark}\label{DescribeVBReAv}
If $(A,v(L))$ is a henselian pair, then every vector bundle $E$ over $R = A/v(L)$, lifts to a vector bundle $\Etilde$ over $A$, and $\Etilde$ is unique up to isomorphism. Moreover, every vector bundle over $A$ is of this form. Hence every vector bundle $M = \bigoplus_j M_j$ over $\Re(A,v)$ is isomorphic to a vector bundle of the form
\begin{equation}\label{EqDescribeVBReesStack}
\bigoplus_{j\in \ZZ}\Etilde_j \otimes_A B(j),
\end{equation}
where $(E_j)_{j\in \ZZ}$ is a family of finite projective $R$-modules which are almost all zero. Moreover the $E_j$ are uniquely determined up to isomorphism.

In particular, we see that Zariski locally on $\Spec R$ (not only on $\Spec A$), $M$ is a finite graded free $B$-module.
\end{remark}

\begin{proposition}\label{FullyFaithfulness0Complement}
Suppose that $v\colon L \to A$ is injective, set $I := v(L)$, which is an invertible ideal of $A$, and let $\Re(A,I)$ be the corresponding Rees stack.
\begin{assertionlist}
\item
The restriction functors
\begin{equation}\label{EqRestrictReNe0}
\begin{aligned}
\Vec(\Re(A,I)) &\lto \Vec(\Re(A,I)^{\ne 0}),\\
\Bun_G(\Re(A,I)) &\lto \Bun_G(\Re(A,I)^{\ne0})
\end{aligned}
\end{equation}
are fully faithful for every affine flat group scheme $G$ over $A$, .
\item
Suppose that $A$ is a Dedekind domain and that $I = \mfr_1\cdots\mfr_r$ for $r \geq 1$ pairwise different maximal ideals $\mfr_i$. Then the restrictions \ref{EqRestrictReNe0} are equivalences for every reductive group scheme $G$ over $A$.
\end{assertionlist}
\end{proposition}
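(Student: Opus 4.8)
The plan is to deduce both statements from the general extension results for $G$-bundles on $\GG_m$-quotient stacks recalled as Theorem~\ref{ExtendGBundle}, applied to a suitable affine chart of $\Re(A,I)$. First I would reduce to the local situation: since $v$ is injective, $I=v(L)$ is an invertible ideal, so Zariski-locally on $\Spec A$ we may choose $L\cong A$ and $I=(z)$ for a regular element $z\in A$, in which case $\Re(A,I)=[\GG_{m,A}\backslash\Spec B]$ with $B=A[t,u]/(tu-z)$, $\deg t=-1$, $\deg u=1$ (Remark~\ref{VAdicSpecialCases}). Set $X:=\Spec B$. The open substack $\Re(A,I)^{\ne0}$ corresponds to the open $\GG_m$-invariant subscheme $U:=X\setminus\{t=u=0\}$ (the complement of the fixed locus), which is covered by the two copies of $\Spec A$ glued along $\Spec A[1/z]$ as in Remark~\ref{AttractorReAv}. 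To apply Theorem~\ref{ExtendGBundle} I need that $\Oscr_X\iso j_*\Oscr_U$ where $j\colon U\to X$; since $X$ is the union of the two affine opens $\{t\text{ invertible}\}$ and $\{u\text{ invertible}\}$ away from the single closed point of codimension $2$ cut out by $(t,u)$, and $z$ is regular, $X$ satisfies $S_2$ along that point (indeed it is Cohen--Macaulay, being a complete intersection), so extension of sections across a codimension-$2$ point holds. This gives full faithfulness of $j^*\colon\Bun_G([\GG_m\backslash X])\to\Bun_G([\GG_m\backslash U])$ for all affine flat $G$ by Theorem~\ref{ExtendGBundle}\ref{ExtendGBundle1}, and the identical argument with $\Oscr_X$ replaced by the structure sheaf shows $\Vec$-full-faithfulness (a vector bundle is a $\GL_n$-bundle, or one argues directly). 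Full faithfulness is a local statement on $\Spec A$, so the Zariski-local reduction is harmless and the first assertion follows.

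For the second assertion, with $A$ a Dedekind domain and $I=\mfr_1\cdots\mfr_r$ a product of distinct maximal ideals, I would again work locally, but now localizing at each $\mfr_i$ reduces to the case $A$ a discrete valuation ring and $z$ a uniformizer. Then $X=\Spec A[t,u]/(tu-z)$ is a regular two-dimensional scheme: its only non-regular candidate point is the closed point $(t,u,\mfr)$, and the local ring there is $A[t,u]_{(t,u,\mfr)}/(tu-z)$ which is regular of dimension $2$ since $z$ lies in the maximal ideal of $A$ and $tu-z$ is part of a regular system of parameters. The open $U=\Re(A,I)^{\ne0}$ (pulled back to $\Spec B$) contains every point of codimension $\le1$: the locus it omits is exactly $\{t=u=0\}$, which maps to the closed point of $\Spec A$ and hence has codimension $2$ in $X$. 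Therefore Theorem~\ref{ExtendGBundle}\ref{ExtendGBundle2} applies and $j^*$ is an equivalence of groupoids for every reductive group scheme $G$ over $A$, and likewise for vector bundles. Since being an equivalence can be checked after the faithfully flat (even Zariski) localization to the $\mfr_i$'s and essential surjectivity was the only point not already covered by part (a), gluing the local equivalences back up — full faithfulness is already global — yields the equivalences in \eqref{EqRestrictReNe0}.

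The main obstacle I expect is the descent/gluing step at the end: full faithfulness descends trivially, but to conclude that $j^*$ is \emph{essentially surjective} globally from the local statements one must check that a $G$-bundle on $\Re(A,I)^{\ne0}$, which becomes extendable after localizing at each $\mfr_i$ and is automatically ``already extended'' away from the $\mfr_i$ (there $I$ is the unit ideal and $\Re(A,I)^{\ne0}\to\Re(A,I)$ is an isomorphism), glues to a global bundle on $\Re(A,I)$. This is a routine but slightly delicate patching argument using that the $\mfr_i$ are pairwise distinct, so the local extensions and the trivial ``extension'' over $\Spec A[1/I]$ agree on overlaps by the uniqueness coming from full faithfulness; I would phrase it via the open cover of $\Spec A$ by $\Spec A\setminus V(\mfr_i)$ together with the localizations $\Spec A_{\mfr_i}$ and invoke fpqc descent for $G$-bundles on the resulting cover of $\Re(A,I)$. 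A secondary technical point is verifying the $S_2$/regularity of $X=\Spec A[t,u]/(tu-z)$ at the codimension-$2$ point in the generality of part (a) (no Noetherian or regularity hypothesis on $A$, only $z$ regular); here one uses that $tu-z$ is a nonzerodivisor on $A[t,u]$ so $B$ is a hypersurface and Serre's criterion is available after reducing to the Noetherian case by a standard limit argument, or one invokes the precise form of Theorem~\ref{ExtendGBundle}\ref{ExtendGBundle1} which only needs $\Oscr_X\iso j_*\Oscr_U$.
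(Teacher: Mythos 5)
Your overall strategy coincides with the paper's: reduce Zariski-locally to $B = A[t,u]/(tu-z)$ with $z$ regular, and apply Theorem~\ref{ExtendGBundle}~\ref{ExtendGBundle1} (resp.~\ref{ExtendGBundle2}) to $X = \Spec B$ with $U = D(t)\cup D(u)$ the complement of the fixed locus. The genuine gap is in your justification of the key input $\Oscr_X \iso j_*\Oscr_U$ for part (a). You argue that $X$ is Cohen--Macaulay ``being a complete intersection'' and that sections extend across the codimension-$2$ locus $V(t,u)$. But in part (a) the ring $A$ is arbitrary: $A[t,u]/(tu-z)$ is a hypersurface \emph{over $A[t,u]$}, which is Cohen--Macaulay only if $A$ is, so the CM claim fails already for Noetherian non-CM $A$; moreover the ``depth $\geq 2$ along a codimension-$2$ closed set implies extension of sections'' argument (Serre's criterion, local cohomology) is a Noetherian statement, and the limit argument you gesture at as a fallback is not carried out and is not routine here. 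The paper avoids all of this by a direct computation: $\Gamma(U,\Oscr_X) = A[t,t^{-1}]\cap A[u,u^{-1}]$, the intersection taken inside $A[1/z][t,t^{-1}]$ (using that $A \to A[1/z]$ is injective because $z$ is regular), and this intersection is exactly $A[t,u]/(tu-z)$. Alternatively one can salvage your viewpoint without CM hypotheses by observing that $(t,u)$ is a regular sequence on $B$ for any $A$ (since $t$ acts injectively on the Rees algebra and $B/tB \cong (A/z)[u]$), and running the \v{C}ech computation with that; but as written your step does not go through in the stated generality.

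For part (b) your worry about gluing essential surjectivity is a self-imposed complication. The hypotheses of Theorem~\ref{ExtendGBundle}~\ref{ExtendGBundle2} --- regularity of $X$ of dimension $2$ and the fact that $U$ contains every point of codimension $\leq 1$ --- are Zariski-local conditions on $A$; one verifies them after shrinking so that $B = A[t,u]/(tu-p_1\cdots p_r)$ with the $p_i$ pairwise coprime primes, where $(t,u)$ is a regular system with $B/(t,u) = A/(p_1\cdots p_r)$ a product of fields, and then applies the theorem \emph{once, globally} to $\Re(A,I)$. This is what the paper does, and it makes the fpqc patching over the localizations $A_{\mfr_i}$ (which are not open subschemes, so your descent cover is genuinely non-Zariski) unnecessary. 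Your patching argument could probably be pushed through using full faithfulness to produce the gluing data, but it is considerably more delicate than simply checking the local hypotheses of the extension theorem.
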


\begin{proof}
Let $B$ be the underlying $\ZZ$-graded $A$-algebra of the filtered ring, set $X = \Spec B$ and let $U := X \setminus X^0$ the complement of the fixed point locus. To show (1), by Theorem~\ref{ExtendGBundle}~\ref{ExtendGBundle1} we have to show that $\Oscr_X \to j_*\Oscr_U$ is an isomorphism. This can be checked Zariski locally on $A$ and hence we can assume that $I = (z)$ for a regular element $z \in A$. Since $X$ is affine, it suffices to show that restriction yields an isomorphism $\Gamma(X,\Oscr_X) \iso \Gamma(U,\Oscr_X)$.

By Section~\ref{Sec:ReesZAdic} we have $B \cong A[t,u]/(tu-z)$ and $U$ is the open subscheme $D(t) \cup D(u)$ of $X = \Spec A[t,u]/(tu-z)$. Let $j\colon U \to X$ be the inclusion.

Using the open affine covering $U = D(t) \cup D(u)$ we see that $\Gamma(U,\Oscr_X) = A[t,t^{-1}] \cap A[u,u^{-1}]$ where the intersection takes place in $A[1/z][t,t^{-1}]$ which contains $A[u,u^{-1}]$ via $u \sends zt^{-1}$ (since $z$ is regular, $A \to A[1/z]$ is injective). But this intersection is equal to $A[t,u]/(tu-z)$, embedded into $A[1/z][t,t^{-1}]$ by $t \sends t$ and $u \sends zt^{-1}$.

To show Assertion~(2) we will use Theorem~\ref{ExtendGBundle}~\ref{ExtendGBundle2}. To show that $B$ is a regular ring of dimension $2$ we may again work locally on $A$ and hence we may assume that $B = A[t,u]/(tu-p_1\cdots p_r)$ for pairwise coprime prime elements $p_i \in A$. Then $(t,u)$ is a regular system in $B$ such that $B/(t,u) = A/(p_1\cdots p_r)$ is a product of fields. This shows that $B$ is regular of dimension $2$ and that $D(t) \cup D(u)$ contains every point of codimension $\leq 1$, allowing us to apply Theorem~\ref{ExtendGBundle}~\ref{ExtendGBundle2}.
\end{proof}

Let us describe the essential image of \eqref{EqRestrictReNe0} for arbitrary rings. We start with vector bundles. Recall that a vector bundle (resp~a quasi-coherent module) over $\Re(A,I)^{\ne 0}$ is given by a tripel $(E,E',\Phi)$, where $E$ and $E'$ are vector bundles (resp.~modules) over $A$ and where $\Phi\colon E[1/I] \iso E'[1/I]$ is an isomorphism over $A[1/I]$. Then
\begin{equation}\label{EqGloablSectionReAvne0}
\begin{aligned}
\Gamma(\Re(A,v)^{\ne 0},\Escr) &= \set{(e,e') \in E \times E'}{\Phi(e) = e'} \\
&= \set{e \in E}{\Phi(e) \in E'} = \Phi^{-1}(E') \cap E.
\end{aligned}
\end{equation}

\begin{lemma}\label{DirectImageVB}
Let $\iota\colon \Re(A,I)^{\ne 0} \to \Re(A,I)$ be the inclusion, which is a quasi-compact open immersion. Let $\Escr$ be a quasi-coherent module over $\Re(A,I)^{\ne 0}$ given by a triple $(E,E',\Phi)$ as above. Using the notation introduced in Remark~\ref{QCohSpecialRees}, the quasi-coherent module $\iota_*\Escr$ over $\Re(A,I)$ is given
\[
M_j = I^j\Phi^{-1}(E') \cap E, \qquad\text{intersection in $E[1/I]$},
\]
$t\colon M_j \to M_{j-1}$ the inclusion, $u\colon I \otimes A M_{j} \to M_{j+1}$ the multiplication.
\end{lemma}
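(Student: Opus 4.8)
The plan is to compute the pushforward $\iota_*\Escr$ degree by degree, using the fact that $\iota$ is a quasi-compact open immersion between quotient stacks by $\GG_m$, so that pushforward is exact on quasi-coherent modules and commutes with the formation of twists by $\Oscr(e)$. First I would recall from Section~\ref{QCOHGGmSTACK} that a quasi-coherent module $\Mscr$ on $\Re(A,I)$ corresponds to its graded pieces $M_j = \Gamma(\Re(A,I),\Mscr(j))$, and similarly a quasi-coherent module on $\Re(A,I)^{\ne0}$ is recovered from its global sections after twisting. Since $\iota^*\Oscr_{\Re(A,I)}(e) = \Oscr_{\Re(A,I)^{\ne0}}(e)$ and pushforward along an open immersion satisfies the projection formula for the invertible sheaves $\Oscr(e)$ (which are flat and pulled back from the base), we get $(\iota_*\Escr)(j) = \iota_*(\Escr(j))$ and hence
\[
M_j = \Gamma(\Re(A,I),(\iota_*\Escr)(j)) = \Gamma(\Re(A,I)^{\ne0},\Escr(j)).
\]

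Next I would identify $\Escr(j)$ explicitly. By Example~\ref{RestrictionTwistedLineBundle}, twisting by $\Oscr(j)$ on $\Re(A,I)^{\ne0}$ corresponds, in terms of the triple description $(E,E',\Phi)$, to replacing $\Phi$ by $v^{-j}\Phi$ (equivalently, to the triple $(E, E', v^{-j}\Phi)$, where on the overlap $\Spec A[1/I]$ the gluing isomorphism is multiplied by $v^{-j}$; note $v$ is invertible there). Then I would apply the formula~\eqref{EqGloablSectionReAvne0} for global sections over $\Re(A,I)^{\ne0}$ to the twisted module:
\[
\Gamma(\Re(A,I)^{\ne0},\Escr(j)) = \{\, e \in E \mid (v^{-j}\Phi)(e) \in E' \,\} = \{\, e \in E \mid \Phi(e) \in v^j E' \,\},
\]
which, viewing everything inside $E[1/I]$ via $\Phi$, is exactly $\Phi^{-1}(I^jE') \cap E = I^j\Phi^{-1}(E') \cap E$. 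This gives the stated description of the $M_j$. It remains to check that, under this identification, the structure maps $t$ and $u$ defined in Remark~\ref{QCohSpecialRees} are respectively the inclusion $M_j \hookrightarrow M_{j-1}$ (clear since $I^j\Phi^{-1}(E')\cap E \subseteq I^{j-1}\Phi^{-1}(E')\cap E$) and multiplication $I \otimes_A M_j \to M_{j+1}$; that $t\circ u$ and $u\circ t$ are multiplication by the local generator $z$ of $I$ is then immediate from these descriptions.

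The main obstacle I anticipate is bookkeeping rather than conceptual: making precise the compatibility between the "triple $(E,E',\Phi)$" picture of modules on the glued scheme $\Re(A,I)^{\ne0}$ and the "graded module $(M_j)$" picture on $\Re(A,I)$, and in particular tracking how the twist $\Oscr(j)$ and the gluing isomorphism $v$ interact (signs/exponents of $j$). This is most safely handled by reducing Zariski-locally on $\Spec A$ to the case $I = (z)$ with $z$ regular and $B = A[t,u]/(tu-z)$, exactly as in the proof of Proposition~\ref{DescribeVBReesStack}, where $\Re(A,I)^{\ne0} = D(t)\cup D(u) \subseteq \Spec B$ and the computation of $\Gamma(U,\Oscr(j))$ is the elementary intersection $t^{-j}A[t,t^{-1}] \cap A[u,u^{-1}]$ inside $A[1/z][t,t^{-1}]$ already carried out in Proposition~\ref{FullyFaithfulness0Complement}. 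One then checks the global description glues, which is automatic since all constructions involved are functorial in $(A,v)$ in the sense of Remark~\ref{FunctorialReesAv}.
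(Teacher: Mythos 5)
Your proposal is correct and follows essentially the same route as the paper: compute $M_j = \Gamma(\Re(A,I),(\iota_*\Escr)(j)) = \Gamma(\Re(A,I)^{\ne0},\Escr\otimes\iota^*\Oscr(j))$ via the projection formula, identify the twisted triple using Example~\ref{RestrictionTwistedLineBundle}, and apply \eqref{EqGloablSectionReAvne0} to get $I^j\Phi^{-1}(E')\cap E$, with $t$ induced by $\Oscr(j)\to\Oscr(j-1)$ and $u$ then determined. The suggested Zariski-local reduction to $I=(z)$ is unnecessary but harmless.
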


\begin{proof}
Set $\Xcal := \Re(A,v)$. Since we know that $\iota^*$ is fully faithful, we have $\iota_*\iota^* \cong \id$. By \eqref{EqGradingGlobalSection} we have
\begin{align*}
M_j &= \Gamma(\Xcal,\iota_*(\Escr)(j)) \\
&= \Gamma(\Xcal, \iota_*(\Escr \otimes \iota^*\Oscr_{\Xcal}(j)) \\
&= \Gamma(\Re(A,v)^{\ne0}, \Escr \otimes \iota^*\Oscr_{\Xcal}(j)),
\end{align*}
where the second identity holds by the projection formula. By Example~\ref{RestrictionTwistedLineBundle}, $\Escr \otimes \iota^*\Oscr_{\Xcal}(j)$ corresponds to the triple $(E, E' \otimes I^j, \Phi \otimes v^{-j})$ and by \eqref{EqGloablSectionReAvne0} we therefore find
\[
M_j = I^j\Phi^{-1}(E') \cap E.
\]
For this note that if we identify $L$ with $I$, then $v$ is just the inclusion and hence $v^{-j}\colon I^{\otimes-j}[1/I] \cong A[1/I] \to A[1/I]$ is just the identity.

The map $t\colon M_{j} = \Gamma(\Xcal,\iota_*(\Escr)(j)) \to M_{j-1}$ is induced by the inclusion $\Oscr_{\Xcal}(j) \to \Oscr_{\Xcal}(j-1)$, given by $\Fil^d \to \Fil^{d-1}$ for $d \in \ZZ$, which shows that $t$ is indeed just the inclusion. Then $u$ has also the stated form since it is determined by $t$.
\end{proof}

\begin{proposition}\label{VBEssentialImage}
Suppose that $v\colon L \to A$ is injective and set $I := v(L)$. For a vector bundle on $\Re(A,v)^{\ne0}$ given by $(E,E',\Phi)$ the following assertions are equivalent.
\begin{equivlist}
\item\label{VBEssentialImage1}
The vector bundle $(E,E',\Phi)$ is in the essential image of \eqref{EqRestrictReNe0}.
\item\label{VBEssentialImage2}
The following conditions are satisfied.
\begin{definitionlist}
\item\label{VBEssentialImagea}
$M_j := I^j\Phi^{-1}(E') \cap E$ (intersection in $E[1/I]$) is a finite projective $A$-module.
\item\label{VBEssentialImageb}
$M_j = M_{j-1}$ for $j \ll 0$ and $M_j = IM_{j+1}$ for $j \gg 0$.
\item\label{VBEssentialImagec}
$M_j/(M_{j+1} + IM_{j-1})$ is a finite projective $R$-module for all $j \in \ZZ$.
\end{definitionlist}
\item\label{VBEssentialImage3}
The vector bundle $(E,E',\Phi)$ is Zariski locally on $\Spec A$ isomorphic to a triple of the form $(A^n, A^n, \Phi)$, where $\Phi$ is given by a diagonal matrix $\textup{diag}(z^{e_1},\dots,z^{e_n})$ for a generator $z$ of $I$, which exists Zariski locally, and $(e_1,\dots,e_n) \in \ZZ_+^n$, i.e. $e_i \in \ZZ$ with $e_1 \geq \dots \geq e_n$.
\end{equivlist}
\end{proposition}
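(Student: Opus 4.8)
The plan is to reduce the essential-image question to the question whether a single quasi-coherent module on the Rees stack $\Re(A,v) = \Re(A,I)$ is a vector bundle, to settle \ref{VBEssentialImage1}$\Leftrightarrow$\ref{VBEssentialImage2} by combining Lemma~\ref{DirectImageVB} with the characterisation of vector bundles on $\Re(A,v)$ in Proposition~\ref{DescribeVBReesStack}, and to obtain \ref{VBEssentialImage3} by a local normal form argument.

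Write $\Xcal := \Re(A,v)$ and let $\iota\colon \Xcal^{\ne0} \to \Xcal$ be the quasi-compact open immersion. First I would observe that $\Escr = (E,E',\Phi)$ lies in the essential image of \eqref{EqRestrictReNe0} if and only if $\iota_*\Escr$ is a vector bundle on $\Xcal$, and that then $\Escr \cong \iota^*(\iota_*\Escr)$. Indeed, $\iota^*$ is fully faithful by Proposition~\ref{FullyFaithfulness0Complement}, so the unit $\Fscr \to \iota_*\iota^*\Fscr$ is an isomorphism for every vector bundle $\Fscr$ on $\Xcal$, while the counit $\iota^*\iota_*\Escr \to \Escr$ is always an isomorphism because $\iota$ is an open immersion. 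By Lemma~\ref{DirectImageVB}, $\iota_*\Escr$ is the module on $\Xcal$ given, in the notation of Remark~\ref{QCohSpecialRees}, by the graded module $M = \bigoplus_j M_j$ with $M_j := I^j\Phi^{-1}(E') \cap E$ (intersections in $E[1/I]$), the maps $t\colon M_j \to M_{j-1}$ being the inclusions and $u$ the multiplications. Here $t$ is injective since it is an inclusion, $u$ is injective since Zariski-locally on $\Spec A$ it is multiplication by a regular element of the finite projective module $M_j$, and $M_{-\infty} = E$, $M_{\infty} \cong E'$ are finite projective over $A$ because $E$ and $E'$ are vector bundles; so these conditions need not be verified separately below.

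For \ref{VBEssentialImage1}$\Rightarrow$\ref{VBEssentialImage2}: if $\Escr$ lies in the essential image then $M \cong \iota_*\Escr$ is a vector bundle on $\Xcal$, so conditions \ref{DescribeVBReesStackb}, \ref{DescribeVBReesStacka} and \ref{DescribeVBReesStackc} of Proposition~\ref{DescribeVBReesStack} hold for $M$ by part~\ref{DescribeVBReesStack2} of that proposition. Since $t(M_{j+1}) = M_{j+1}$ and $u(M_j \otimes_A L) = IM_j$ for this $M$, condition \ref{DescribeVBReesStackc} is precisely \ref{VBEssentialImagea}, condition \ref{DescribeVBReesStackb} yields \ref{VBEssentialImageb}, and the pullback of $M$ to the fixed locus $\Xcal^0 = \B{\GG_{m,R}}$, which by Remark~\ref{PullbackQCohReesSpecial}~\ref{VectorbundlesSpecialRees1} is $\bigoplus_j M_j/(M_{j+1}+IM_{j-1})$, is a vector bundle because $M$ is, which is \ref{VBEssentialImagec}. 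The implication \ref{VBEssentialImage3}$\Rightarrow$\ref{VBEssentialImage1} is immediate: by Example~\ref{RestrictionTwistedLineBundle} the triple $(A^n, A^n, \textup{diag}(z^{e_1},\dots,z^{e_n}))$ is the restriction to $\Xcal^{\ne0}$ of $\bigoplus_i \Oscr_{\Xcal}(-e_i)$, so \ref{VBEssentialImage3} says that $\iota_*\Escr$ is, Zariski-locally on $\Spec A$, a finite graded free module; since being a vector bundle is a Zariski-local condition on $\Spec A$ and $\iota_*$ commutes with the flat base change $\Spec A_f \to \Spec A$ (Remark~\ref{FunctorialReesAv}), $\iota_*\Escr$ is a vector bundle on $\Xcal$, hence $\Escr$ lies in the essential image.

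What remains, and what I expect to be the main obstacle, is \ref{VBEssentialImage2}$\Rightarrow$\ref{VBEssentialImage3}. Being Zariski-local on $\Spec A$, one may assume that $A$ is local, that $E = E' = A^n$, and that $I = zA$ for a regular element $z$ of $A$; discarding the trivial case where $z$ is a unit, $z$ lies in the maximal ideal. By \ref{VBEssentialImagea} the $M_j$ are then free $A$-submodules of $A[1/z]^n$ of rank $n$; one always has $zM_j \subseteq M_{j+1} \subseteq M_j$; by \ref{VBEssentialImageb} the chain $(M_j)$ is stationary equal to $A^n$ for $j \ll 0$ and satisfies $M_{j+1} = zM_j$ for $j \gg 0$; and \ref{VBEssentialImagec} says that each $M_j/(M_{j+1}+zM_{j-1})$ is free over $R = A/z$. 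The plan is to construct inductively in $j$ a single basis $f_1,\dots,f_n$ of $A^n$ and integers $a^{(j)}_i \ge 0$ with $M_j = \bigoplus_i z^{a^{(j)}_i} A f_i$ for all $j$. The role of \ref{VBEssentialImagec} is that the freeness over $R$ of the graded pieces is exactly what makes the successive short exact sequences of free $R$-modules that govern the basis change split, so that one can choose a basis adapted to the whole chain; this is the analogue, over the possibly non-regular and non-Noetherian ring $A$, of the elementary divisor theorem for the pair of lattices $E$ and $\Phi^{-1}(E')$, and it genuinely fails without an input like \ref{VBEssentialImagec}. Once such a basis is found, transporting it through $\Phi$ and permuting so that the exponents $e_i := j_1 - a^{(j_1)}_i$ (for $j_1 \gg 0$) satisfy $e_1 \ge \dots \ge e_n$ produces bases of $E$ and of $E'$ in which $\Phi = \textup{diag}(z^{e_1},\dots,z^{e_n})$, which is \ref{VBEssentialImage3}.
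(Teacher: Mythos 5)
Your overall framework is sound and close to the paper's: you reduce the question to whether $\iota_*\Escr$ is a vector bundle, use Lemma~\ref{DirectImageVB} to identify $\iota_*\Escr$ with the graded module $\bigoplus_j M_j$, obtain \ref{VBEssentialImage1}$\Rightarrow$\ref{VBEssentialImage2} from Proposition~\ref{DescribeVBReesStack}~\ref{DescribeVBReesStack2} and the restriction to the fixed locus, and get \ref{VBEssentialImage3}$\Rightarrow$\ref{VBEssentialImage1} from Example~\ref{RestrictionTwistedLineBundle}. One small caveat: full faithfulness of $\iota^*$ on vector bundles does not by itself imply that the unit $\Fscr \to \iota_*\iota^*\Fscr$ is an isomorphism; what you need is $\Oscr_{\Re(A,v)} \iso \iota_*\Oscr_{\Re(A,v)^{\ne0}}$ (verified in the proof of Proposition~\ref{FullyFaithfulness0Complement}) combined with the projection formula, or the reference the paper invokes for this equivalence.

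The genuine gap is the implication \ref{VBEssentialImage2}$\Rightarrow$\ref{VBEssentialImage3}, which you yourself identify as the main obstacle and then only sketch. The inductive construction of a single basis $f_1,\dots,f_n$ of $E$ with $M_j = \bigoplus_i z^{a^{(j)}_i}Af_i$ for all $j$ simultaneously is exactly the nontrivial content of the proposition: adapting a basis to one inclusion $M_{j+1}\subseteq M_j$ can destroy adaptedness to the earlier steps, and the assertion that the $R$-freeness in \ref{VBEssentialImagec} makes ``the successive short exact sequences split, so that one can choose a basis adapted to the whole chain'' is a restatement of what has to be proved, not an argument; over a general (non-noetherian, non-valuation) ring $A$ this simultaneous elementary-divisor statement is precisely what the rest of the machinery is there to deliver. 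The paper avoids this entirely: it proves \ref{VBEssentialImage2}$\Rightarrow$\ref{VBEssentialImage1} by applying the flatness criterion (Proposition~\ref{FlatnessCrit}) to the ideal $(t,u)\subseteq \Rees(A,v)$ — conditions \ref{VBEssentialImagea} and \ref{VBEssentialImageb} give finite presentation, injectivity of $t$ and $u$ together with the regularity of the sequence $(t,u)$ gives $\Tor^B_1(B/(t,u)B,M)=0$, condition \ref{VBEssentialImagec} gives flatness of $M/(t,u)M$ over $R$, and the hypothesis that $(E,E',\Phi)$ is a vector bundle gives flatness on the complement of $V(t,u)$ — so that $\iota_*\Escr$ is a vector bundle; then \ref{VBEssentialImage3} follows formally from Proposition~\ref{CharVBonQuotientByGGm} (vector bundles on the Rees stack are Zariski locally finite graded free) together with Example~\ref{RestrictionTwistedLineBundle}. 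To complete your proof you should either carry out your adapted-basis induction in full (including why the exponents can be chosen compatibly across the whole chain), or replace that step by the flatness-criterion argument just described.
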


\begin{proof}
Let $\iota$ be as in Lemma~\ref{DirectImageVB}. By \cite[24.67]{GW2}, which easily generalizes to stacks, a vector bundle $\Escr$ on $\Re(A,v)^{\ne0}$ is in the essential image of \eqref{EqRestrictReNe0} if and only if $\iota_*(\Escr)$ is a vector bundle. In Lemma~\ref{DirectImageVB} we have seen a description of $\iota_*(\Escr)$. To see that the conditions in \ref{VBEssentialImage2} just mean that $\iota_*(\Escr)$ is a vector bundle we apply Proposition~\ref{FlatnessCrit} with $I = (u,t)$. Conditions~\ref{VBEssentialImagea} and~\ref{VBEssentialImageb} imply that $\iota_*\Escr$ is of finite presentation. As the $M_j$ are finite projective, the maps $t$ and $u$ are injective which implies as in the proof of Proposition~\ref{DescribeVBReesStack} that $\Tor^B_1(B/IB,M) = 0$, since $(t,u)$ is a regular system. By hypothesis the restriction of $\iota_*\Escr$ to the non-vanishing locus $\Re(A,v)^{\ne 0}$ of $I$ is locally free. Hence Proposition~\ref{FlatnessCrit} shows that $\iota_*\Escr$ is a vector bundle. Hence~\ref{VBEssentialImage1} and \ref{VBEssentialImage2} are equivalent.

By Proposition~\ref{CharVBonQuotientByGGm}, every vector bundle on $\Xcal := \Re(A,v)$ is Zariski locally isomorphic to a direct sum of twisted line bundles $\Oscr_{\Xcal}(e)$. By Example~\ref{RestrictionTwistedLineBundle} we see therefore that \ref{VBEssentialImage1} and \ref{VBEssentialImage3} are equivalent.
\end{proof}

\begin{remark}\label{DescribeTrivialHeckeStack}
The vector bundle $(A^n,A^n,\Phi)$ in \ref{VBEssentialImage3} of Proposition~\ref{VBEssentialImage} has the following more conceptual description which also generalizes to $G$-bundles for more general smooth affine group schemes $G$: Let $P_0$ be the trivial $\GL_n$-bundle on $\Spec \Rees(A,v)$, let $(e_1,\dots,e_n) \in \ZZ^n_+$, and let $\mu\colon \GG_m \to GL_n$ by the corresponding cocharacter $t \sends \textup{diag}(t^{e_1},\dots,t^{e_n})$. Then $\mu$ defines a $\GG_m$-equivariant structure on $P_0$ and hence a $\GL_n$-bundle on $\Re(A,v)$. In other words, it is the trivial $\mu \backslash \GL_n$-bundle $\Escr_{\mu}$ on $\Re(A,v)$, see Definition~\ref{DefineGModMu}. It corresponds to the vector bundle $\bigoplus_{i=1}^n\Oscr_{\Re(A,v)}(e_i)$ whose restriction is the vector bundle $(A^n,A^n,\Phi)$ in \ref{VBEssentialImage3} of Proposition~\ref{VBEssentialImage}.
\end{remark}

As $\iota^*$ is monoidal and exact, the full subcategory of vector bundles satisfying the equivalent conditions of Proposition~\ref{VBEssentialImage} form an exact symmetric monoidal subcategory of the symmetric monoidal category of all vector bundles on $\Re(A,v)^{\ne 0}$. We denote this subcategory by $\Vec(\Re(a,v)^{\ne0})^0$.

Now assume that $A$ is an $O$-algebra, where $O$ is a Dedekind domain, and let $G$ be an affine smooth group scheme over $O$.

\begin{corollary}\label{RestrictGBunReAvne0}
For a $G$-bundle $\Escr$ on $\Re(A,v)^{\ne0}$ the following assertions are equivalent.
\begin{equivlist}
\item
There exists a $G$-bundle $\tilde\Escr$ on $\Re(A,v)$ such that $\Escr \cong \iota^*\tilde\Escr$.
\item
There exists an \'etale surjective map $\Spec O' \to \Spec O$ and a conjugacy class of cocharacters $[\mu]$ of $G$ defined over $O'$ such that $\Escr$ is \'etale locally isomorphic to the $G$-bundle $\Escr_{[\mu]}$.
\item
The exact monoidal functor $\Rep(G) \to \Vec(\Re(a,v)^{\ne0})$ corresponding to $\Escr$ factors through $\Vec(\Re(a,v)^{\ne0})^0$.
\end{equivlist}
\end{corollary}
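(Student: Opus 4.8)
The plan is to derive the statement from the vector-bundle result of Proposition~\ref{VBEssentialImage} together with the Tannakian description of $G$-bundles and the lifting theorem of \cite{Wedhorn_ExtendBundles}. Here $v$ is injective, so $\Re(A,v) = [\GG_{m,A}\backslash\Spec\Rees(A,v)]$ is a $\GG_m$-quotient of an affine scheme and hence has the resolution property, and $\Re(A,v)^{\ne0}$ has it as well by Remark~\ref{ReAne0ResolutionProp}. As $G$ is flat affine over $O$, $G$-bundles on each of these stacks are the same as exact symmetric monoidal functors from $\Rep(G)$ to the corresponding category of vector bundles by \cite[Appendix~A]{Wedhorn_ExtendBundles}; this is exactly what makes condition \ref{RestrictGBunReAvne0} iii meaningful. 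Moreover, by Proposition~\ref{FullyFaithfulness0Complement} the restriction $\iota^*\colon\Vec(\Re(A,v))\to\Vec(\Re(A,v)^{\ne0})$ is fully faithful, and by the very definition of $\Vec(\Re(A,v)^{\ne0})^0$ together with Proposition~\ref{VBEssentialImage} its essential image is $\Vec(\Re(A,v)^{\ne0})^0$; hence $\iota^*$ is an equivalence of exact symmetric monoidal categories onto $\Vec(\Re(A,v)^{\ne0})^0$.

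Granting this, the equivalence of \ref{RestrictGBunReAvne0} i and \ref{RestrictGBunReAvne0} iii is formal. If $\Escr\cong\iota^*\tilde\Escr$, then for every $V\in\Rep(G)$ the vector bundle associated with $\Escr$ and $V$ is the pullback along $\iota$ of the vector bundle associated with $\tilde\Escr$ and $V$, hence lies in $\Vec(\Re(A,v)^{\ne0})^0$, so the exact monoidal functor corresponding to $\Escr$ factors through that subcategory. Conversely, if the exact monoidal functor corresponding to $\Escr$ factors through $\Vec(\Re(A,v)^{\ne0})^0$, composing it with a quasi-inverse of the equivalence above yields an exact symmetric monoidal functor $\Rep(G)\to\Vec(\Re(A,v))$, hence a $G$-bundle $\tilde\Escr$ on $\Re(A,v)$; since $\iota^*$ composed with the quasi-inverse is naturally isomorphic to the identity, $\iota^*\tilde\Escr$ and $\Escr$ have the same associated functor and are therefore isomorphic.

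For \ref{RestrictGBunReAvne0} i $\Rightarrow$ \ref{RestrictGBunReAvne0} ii, given $\tilde\Escr$ on $\Re(A,v)$ with $\iota^*\tilde\Escr\cong\Escr$, restrict $\tilde\Escr$ along the closed immersion $\B{\GG_{m,R}}=\Re(A,v)^0\hookrightarrow\Re(A,v)$ of Remark~\ref{AttractorReAv}. The resulting $G$-bundle on $\B{\GG_{m,R}}$ has a type (Definition~\ref{DefTypeGBundleBGGm}), and after decomposing $\Spec R$ into connected components and passing, as in Remark~\ref{SplittingNormal}, to an \'etale cover $\Spec O'\to\Spec O$ over which the corresponding conjugacy class $[\mu]$ is represented by a cocharacter $\mu$ of $G_{O'}$, this type is a single class $[\mu]$. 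It then remains to check that $\tilde\Escr$ is \'etale locally on $\Spec A$ isomorphic to the standard bundle $\Escr_{[\mu]}$ of Remark~\ref{DescribeTrivialHeckeStack}, which we verify pointwise on $\Spec A$. Over $\Spec A\setminus V(v(L))=\Spec A[1/v]$ the stack $\Re(A,v)$ is, locally, just $\Spec A[1/v]$ by Example~\ref{ExampleReesInvertible}, and both $\tilde\Escr$ and $\Escr_{[\mu]}$ restrict there to $G$-bundles which, $G$ being smooth, are \'etale locally trivial, hence \'etale locally isomorphic. At a point $\mathfrak q\in V(v(L))$, the henselization $A^h$ of $A_{\mathfrak q}$ gives a henselian pair $(A^h,v(L)A^h)$ since $v(L)\subseteq\mathfrak q$, so Corollary~\ref{LiftingGBundlesReesStack} applies: $\tilde\Escr_{A^h}$ and $\Escr_{[\mu],A^h}$ have restrictions to $\B{\GG_{m,A^h/v(L)A^h}}$ that are both of type $[\mu]$, hence \'etale locally isomorphic, hence $\tilde\Escr_{A^h}$ and $\Escr_{[\mu],A^h}$ are \'etale locally isomorphic over $\Re(A^h,v)$, and spreading out (using that $G$ is of finite presentation) this isomorphism is defined over an \'etale neighbourhood of $\mathfrak q$ in $\Spec A$.

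Finally, \ref{RestrictGBunReAvne0} ii $\Rightarrow$ \ref{RestrictGBunReAvne0} iii is a locality argument: $\Escr_{[\mu]}$ extends to $\Re(A,v)$ by Remark~\ref{DescribeTrivialHeckeStack}, so for each $V\in\Rep(G)$ the associated vector bundle lies in $\Vec(\Re(A,v)^{\ne0})^0$; membership in this subcategory is fppf-local on $\Re(A,v)^{\ne0}$ and stable under base change (\cite[24.67]{GW2} and Remark~\ref{FunctorialReesAv}, as already used in the proof of Proposition~\ref{VBEssentialImage}), so since the vector bundle associated with $\Escr$ and $V$ is \'etale locally isomorphic to the one associated with $\Escr_{[\mu]}$ and $V$, it too lies in $\Vec(\Re(A,v)^{\ne0})^0$; hence the exact monoidal functor corresponding to $\Escr$ factors through $\Vec(\Re(A,v)^{\ne0})^0$. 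This closes the circle. I expect the main obstacle to be \ref{RestrictGBunReAvne0} i $\Rightarrow$ \ref{RestrictGBunReAvne0} ii: over a base that is not henselian the restriction $\Bun_G(\Re(A,v))\to\Bun_G(\B{\GG_{m,R}})$ need not be full, so the matching of types on the fixed locus cannot be upgraded directly to an isomorphism of bundles, and the pointwise localization–henselization–spreading-out argument above is needed instead.
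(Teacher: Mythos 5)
The paper states this corollary without proof, so your write-up is supplying an argument the authors leave implicit; the route you take (Tannaka duality plus the identification $\iota^*\colon \Vec(\Re(A,v))\iso \Vec(\Re(A,v)^{\ne0})^0$ for (i)$\Leftrightarrow$(iii), and henselization at points of $V(v(L))$ plus spreading out for (i)$\Rightarrow$(ii)) is exactly the one the surrounding material is set up for, and it is correct in substance. Three points deserve tightening. First, in (i)$\Rightarrow$(ii) you should pass to the \emph{strict} henselization $A^{sh}$ at $\mathfrak q$ rather than the henselization: the $H^1$-bijection of Corollary~\ref{LiftingGBundlesReesStack} compares isomorphism classes, not étale-local-isomorphism classes, so you first need the two restrictions to $\B{\GG_{m,R}}$ to become actually isomorphic; over $R^{sh}$ (strictly henselian local) this holds because $\Bunline_G^{[\mu]}(\B{\GG_m})$ is a gerbe banded by a smooth affine group, whereas over $R^h$ it need not. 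Second, in (ii)$\Rightarrow$(iii) your descent of membership in $\Vec(\Re(A,v)^{\ne0})^0$ is only valid for covers pulled back from $\Spec A$: the criterion is that $\iota_*$ of the associated vector bundle is a vector bundle, and $\iota_*$ commutes with the flat base change $\Re(A',v)\to\Re(A,v)$ for $\Spec A'\to\Spec A$ étale, but not with restriction along an arbitrary étale cover of the open substack $\Re(A,v)^{\ne0}$ alone. So you should read (ii) as ``étale locally on $\Spec A$'' (consistent with Propositions~\ref{BunGHckbar} and~\ref{HeckeMu}, and with what your proof of (i)$\Rightarrow$(ii) actually produces), and say so. Third, for the Tannakian step (iii)$\Rightarrow$(i) you implicitly need that $\iota^*\colon\Vec(\Re(A,v))\to\Vec(\Re(A,v)^{\ne0})^0$ is an equivalence of \emph{exact} monoidal categories, i.e.\ that a sequence in $\Vec(\Re(A,v))$ is exact as soon as its restriction to $\Re(A,v)^{\ne0}$ is; this follows from flatness of the terms (the cokernel of a map of vector bundles whose restriction is surjective is a finitely presented module killed on $\Re(A,v)^{\ne0}$, and one checks it vanishes using Condition~\ref{DescribeVBReesStackb} of Proposition~\ref{DescribeVBReesStack} and Nakayama on the fixed locus), but it is worth a sentence since the exact structure on $\Vec(\Re(A,v)^{\ne0})^0$ is what makes ``factors through'' meaningful.
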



\subsection{Colimits of truncations of $\Re(A,v)$}\label{COLIMITREES}

In this section let $A$ be a ring, $L$ a line bundle, and $v\colon L \to A$ be an $A$-linear map. We obtain a filtered ring $(A,(L^{\otimes j})_{j\geq0})$ as in Section~\ref{Sec:ReesZAdic} and the attached Rees stack $\Re(A,v)$. We fix also an ideal $J \subseteq A$ such that $A$ is $J$-adically complete.

For an integer $N \geq 1$ let $\pi_N\colon A \to A/J^N$ be the canonical map. Let $\pi_N^*(A,(L^{\otimes j})) = (A/J^N, (L^{\otimes j} \otimes_A A/J^N)_j)$ be its base change (Remark~\ref{ExtensionFilteredRing}). We obtain the attached graded Rees stack
\[
\Re(A/J^N,v) = \Re(A,v) \otimes_A A/J^N.
\]
We also set $A/J^{\infty} := A$.
%
For $1 \leq N \leq N' \leq \infty$, the canonical maps $A/J^{N'} \to A/J^N$ yield a closed immersion of algebraic stacks
\begin{equation}\label{EqTruncation}
\Re(A/J^N,v) \to \Re(A/J^{N'},v).
\end{equation}

By Proposition~\ref{ColimitQuotientStack} we have the following result. 

\begin{proposition}\label{InftyHeckeAsColimit}
The natural map of stacks $\colim_{N<\infty}\Re(A/J^N,v) \lto \Re(A,v)$, where we form the colimit in the 2-category of Adams stacks, is an isomorphism.
\end{proposition}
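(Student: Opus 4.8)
The plan is to reduce the statement to a direct application of Proposition~\ref{ColimitQuotientStack}. First I would fix a local situation: since all the stacks involved are quotients of affine schemes by $\GG_{m,A}$, and the claim is about an isomorphism of Adams stacks, I would set up the relevant $\ZZ$-graded $A[t]$-algebras. Namely, let $B = \Rees(A,v) = \bigoplus_{j\in\ZZ}\Fil^j$ be the graded algebra attached to $(A,(L^{\otimes j})_j)$ as in Section~\ref{Sec:ReesZAdic}, so that $\Re(A,v) = [\GG_{m,A}\backslash \Spec B]$. For each finite $N$, the base change $\pi_N^*(A,(L^{\otimes j}))$ has associated graded algebra $B_N := B \otimes_A A/J^N = \bigoplus_{j\in\ZZ}(\Fil^j \otimes_A A/J^N)$, and $\Re(A/J^N,v) = [\GG_{m,A}\backslash \Spec B_N]$. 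Since $A$ is $J$-adically complete we have $A = \lim_N A/J^N$, and more generally, because each $\Fil^j = L^{\otimes j}$ is finite projective (hence flat), the natural map $L^{\otimes j} \to \lim_N (L^{\otimes j}\otimes_A A/J^N)$ is an isomorphism; so the degree-$j$ piece $B_j$ of $B$ is the inverse limit of the degree-$j$ pieces $(B_N)_j$. This shows $B$ is exactly the graded algebra built from the $\NN^{\opp}$-diagram $N \sends B_N$ in the manner required by the hypotheses preceding Proposition~\ref{ColimitQuotientStack}, and that the target $\Xcal$ of loc.\ cit.\ is $\Re(A,v)$.

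Next I would verify the three hypotheses \ref{ColimitQuotientStacka}--\ref{ColimitQuotientStackc} of Proposition~\ref{ColimitQuotientStack} for this diagram. Hypothesis \ref{ColimitQuotientStacka}: the maps $(B_{N+1})_i \to (B_N)_i$ are surjective for all $i$ and all $N$; this is immediate since for $i \geq 0$ it is the surjection $L^{\otimes i}\otimes_A A/J^{N+1} \to L^{\otimes i}\otimes_A A/J^N$ coming from $A/J^{N+1}\twoheadrightarrow A/J^N$, and for $i < 0$ the graded piece is $A/J^{N}[t]$-shaped, again a quotient. Hypothesis \ref{ColimitQuotientStackb}: the kernel of $(B_{N+1})_0 \to (B_N)_0$, i.e.\ of $A/J^{N+1}\to A/J^N$, is $J^N/J^{N+1}$, which is nilpotent in $A/J^{N+1}$ (indeed killed by $J$), hence contained in the Jacobson radical. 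Hypothesis \ref{ColimitQuotientStackc}: the $(B_N)_0 = A/J^N$-modules $(B_N)_i = L^{\otimes i}\otimes_A A/J^N$ are finite projective for $i \geq 0$ since $L$ is invertible, and for $i < 0$ they are free of rank one. So all three conditions hold.

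With the hypotheses checked, Proposition~\ref{ColimitQuotientStack} gives that the natural map of Adams stacks $\colim_N [\GG_{m,A}\backslash \Spec B_N] \to [\GG_{m,A}\backslash \Spec B]$ is an isomorphism, which is precisely the assertion $\colim_{N<\infty}\Re(A/J^N,v) \liso \Re(A,v)$. I should also remark briefly that the transition maps in this colimit are the closed immersions \eqref{EqTruncation} (for finite $N \leq N'$), which is how the diagram is organized, and that each $\Re(A/J^N,v)$ and $\Re(A,v)$ is indeed an Adams stack by Proposition~\ref{QuotientStackAdam}, so that forming the colimit in the $2$-category of Adams stacks makes sense. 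This is the entire proof.

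I do not anticipate a serious obstacle here: the result is essentially a packaging of Proposition~\ref{ColimitQuotientStack}, and the only mildly delicate point is the identification of $B_j$ with $\lim_N (B_N)_j$, which rests on flatness (finite projectivity) of the line bundle powers $L^{\otimes j}$ together with $J$-adic completeness of $A$ — and this is a routine commutative-algebra fact about completions of finite projective modules. Everything else is bookkeeping. If anything, the one thing worth stating carefully is that the hypotheses of Proposition~\ref{ColimitQuotientStack} are stated for a general $\NN^{\opp}$-diagram of graded algebras and one must match the notation ($B_n$ there versus $B_N$ here, and the degree-$i$ pieces) so as not to create confusion with the underlying ring $A$.
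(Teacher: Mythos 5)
Your proposal is correct and matches the paper's argument: the paper proves Proposition~\ref{InftyHeckeAsColimit} by directly invoking Proposition~\ref{ColimitQuotientStack}, and your write-up simply supplies the (routine) verification of hypotheses \ref{ColimitQuotientStacka}--\ref{ColimitQuotientStackc} and the identification $B_j \cong \lim_N (B_N)_j$ via finite projectivity of $L^{\otimes j}$ and $J$-adic completeness, which the paper leaves implicit.
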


%
%

To apply these results to $G$-bundles on $\Re(A,v)$, we now assume that $A$ is an $O$-algebra for some noetherian regular ring $O$ of dimension $\leq 1$ and that $G$ is a smooth affine group scheme over $O$. For $1 \leq N \leq N' \leq \infty$ the closed immersion $\Re(A/J^N,v) \to \Re(A/J^{N'},v)$ yields by pullback a map of groupoids
\begin{equation}\label{EqReduceBunG}
\Bun_G(\Re(A/J^{N'},v)) \lto \Bun_G(\Re(A/J^N,v)).
\end{equation}

\begin{theorem}\label{LimitBunGTruncationofReesStacks}
\begin{assertionlist}
\item\label{LimitBunGTruncationofReesStacks1}
The maps \eqref{EqReduceBunG} are full and essentially surjective for all $1 \leq N \leq N' \leq \infty$. In particular they induce isomorphisms
\[
H^1(G,\Re(A/J^{N'},v)) \iso H^1(G,\Re(A/J^{N},v)).
\]
\item\label{LimitBunGTruncationofReesStacks2}
The maps \eqref{EqReduceBunG} yield an equivalence
\[
\Bun_G(\Re(A,v)) \liso \lim_{N<\infty}\Bun_G(\Re(A/J^N,v)).
\]
\end{assertionlist}
\end{theorem}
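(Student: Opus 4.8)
The plan is to deduce both parts from material already in place. Part~(2) will follow by the argument of Corollary~\ref{GeneralLimitGBundle}: by Proposition~\ref{InftyHeckeAsColimit} the tower $\bigl(\Re(A/J^N,v)\bigr)_{N<\infty}$ has colimit $\Re(A,v)$ in the $2$-category of Adams stacks, and $\B{G}$ is an Adams stack by Example~\ref{BGAdams} (over a noetherian regular base of dimension $\le 1$ one reduces to the case of a Dedekind domain by decomposing $\Spec O$ into connected components and using that Adams stacks are closed under finite coproducts). The universal property of the colimit then identifies $\Bun_G(\Re(A,v))=\Hom_O(\Re(A,v),\B{G})$ with $\lim_{N<\infty}\Hom_O(\Re(A/J^N,v),\B{G})=\lim_{N<\infty}\Bun_G(\Re(A/J^N,v))$, which is~(2).

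For part~(1), fix $1\le N\le N'\le\infty$, writing $A/J^{\infty}:=A$. First I will show that $\bigl(\Re(A/J^{N'},v),\Re(A/J^N,v)\bigr)$ is a henselian pair of algebraic stacks. Recall from Section~\ref{COLIMITREES} that $\Re(A/J^N,v)=\Re(A,v)\otimes_A A/J^N$; by transitivity of base change this equals $\Re(A/J^{N'},v)\otimes_{A/J^{N'}}A/J^N$, and since the good moduli space of $[\GG_m\backslash\Spec B]$ is $\Spec B_0$, the closed substack $\Re(A/J^N,v)$ is exactly the base change of $\Xcal':=\Re(A/J^{N'},v)$ along the closed immersion $\Spec A/J^N\hookrightarrow\Spec A/J^{N'}$ of good moduli spaces. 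Moreover $(A/J^{N'},J^N/J^{N'})$ is a henselian pair — the ideal is nilpotent when $N'<\infty$, and for $N'=\infty$ this is the pair $(A,J^N)$, which is henselian since $A$ is $J$-adically, hence $J^N$-adically, complete. Hence \cite[3.6]{AHR_EtaleLocal}, as already invoked in the proof of Proposition~\ref{EFunctorialSurjective}, yields that $(\Xcal',\Re(A/J^N,v))$ is a henselian pair.

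Since $G$ is smooth, $\B{G}$ is smooth over $O$, so the lifting result \cite[7.9]{AHR_EtaleLocal} for maps into smooth stacks over henselian pairs shows that the pullback functor $\Bun_G(\Xcal')\to\Bun_G(\Re(A/J^N,v))$ is essentially surjective. For fullness I will, given $G$-bundles $P,P'$ on $\Xcal'$ and an isomorphism $\bar f$ of their pullbacks to $\Re(A/J^N,v)$, apply the same lifting statement to the smooth locus over $\Xcal'$ of the affine $\Xcal'$-scheme $Q$ of isomorphisms $P\iso P'$ — a pseudo-torsor under the smooth affine group scheme $\Autline(P)$, an inner form of $G$ — so as to lift the section of $Q$ determined by $\bar f$ to an isomorphism $P\iso P'$ over $\Xcal'$. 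This proves the first assertion of~(1); the $H^1$-bijections then follow formally, a full and essentially surjective functor being surjective and injective on isomorphism classes of objects.

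I expect the work to be bookkeeping rather than a single hard step: the genuine points are the identification of $\Re(A/J^N,v)$ as the base change of $\Re(A/J^{N'},v)$ along the morphism of good moduli spaces — which is what makes \cite[3.6]{AHR_EtaleLocal} applicable — and the extraction of fullness, not merely essential surjectivity, from the lifting theorem via the pseudo-torsor $Q$. As a variant avoiding \cite{AHR_EtaleLocal} in the case $N'=\infty$, one can instead prove~(1) for $N\le N'<\infty$ directly (the closed immersion is then a nilpotent thickening and, $\GG_m$ being linearly reductive, $\Re(A/J^{N'},v)$ is cohomologically affine over its affine good moduli space, so the deformation-theoretic obstruction in $H^2$ and the automorphisms in $H^1$ of the twisted Lie algebra both vanish), and then deduce the case $N'=\infty$ from~(2) together with the observation that a countable inverse limit of groupoids along full and essentially surjective transition functors admits a full and essentially surjective projection onto every term.
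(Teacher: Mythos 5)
Your proposal is correct, and part~(2) coincides with the paper's argument: the paper also deduces it from Corollary~\ref{GeneralLimitGBundle}, i.e.\ from the identification $\colim_{N<\infty}\Re(A/J^N,v)\liso\Re(A,v)$ of Proposition~\ref{InftyHeckeAsColimit} together with $\B{G}$ being an Adams stack; your aside about reducing from the regular one-dimensional base $O$ to its connected (Dedekind or field) components is a point the paper silently glosses over.

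For part~(1) you take a genuinely different route. The paper disposes of it in one line by citing \cite[2.6]{Wedhorn_ExtendBundles} (the source of Theorem~\ref{LiftGBundles}), a lifting statement for $G$-bundles on $\GG_m$-quotients of affine schemes along henselian graded surjections, applied to $\Rees(A/J^{N'},v)\to\Rees(A/J^N,v)$ using that $(A,J)$, hence $(A/J^{N'},J^N/J^{N'})$, is henselian. You instead re-derive this from the Alper--Hall--Rydh formalism: identify $\Re(A/J^N,v)$ as the preimage of the closed subscheme $\Spec A/J^N$ of the good moduli space $\Spec A/J^{N'}$ of $\Re(A/J^{N'},v)$, invoke \cite[3.6]{AHR_EtaleLocal} to get a henselian pair of stacks, obtain essential surjectivity from \cite[7.9]{AHR_EtaleLocal} applied to the smooth target $\B{G}$, and extract fullness by lifting sections of the smooth affine $\Xcal'$-scheme $\Isomline(P,P')$ (a form of $G$, so smooth everywhere -- no need to pass to a smooth locus). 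This is exactly the machinery the paper itself uses in Proposition~\ref{EFunctorialSurjective}, so your use of it is consistent with the paper's conventions, including the non-noetherian setting; what the paper's chosen reference buys is that fullness is packaged into the cited theorem, whereas you must (and correctly do) supply it via the torsor argument. Your alternative for $N'=\infty$ -- deformation theory along nilpotent thickenings using cohomological affineness of $\GG_m$-quotients of affines, followed by the formal fact that a sequential $2$-limit of groupoids along full and essentially surjective functors projects fully and essentially surjectively onto each term -- is also sound and has the merit of avoiding \cite{AHR_EtaleLocal} entirely, at the cost of an extra dévissage to square-zero extensions.
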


\begin{proof}
The fact that \eqref{EqTypeMapPoints} is full and essentially surjective is a special case of Theorem~\ref{LiftGBundles}. Since $A$ is $J$-adically complete, $(A,J)$ is a henselian pair and therefore $\Bun_G(\Re(A_{N'}) \lto \Bun_G(\Re(A_N))$ is full and essentially surjective by \cite[2.6]{Wedhorn_ExtendBundles}. This shows \ref{LimitBunGTruncationofReesStacks1}. 

Assertion~\ref{LimitBunGTruncationofReesStacks2} follows from Corollary~\ref{GeneralLimitGBundle}.
\end{proof}


\subsection{The frame of $(A,v,\phi)$}\label{Sec:FrameAvphi}

We keep the notation $A$, $v\colon L \to A$, as before. Let $\phi\colon A \to A$ be a ring endomorphism. Then $\phi$ induces a map
\begin{equation}\label{EqDefSigmaVAdicFrame}
\sigma\colon \Spec A \ltoover{\phi} \Spec A \cong \Re(A,v)^{\ne 0} \mono \Re(A,v)
\end{equation}
and we obtain a frame attached to $(A,v,\phi)$ and hence the frame stack $\Fcal(A,v,\phi)$ and its open substack $\Fcal(A,v,\phi)^{\ne0}$ (Definition~\ref{DefFrameStack}).

%
%

\begin{corollary}\label{GBundlesZAdicFRameStack}
Let $(A,v, \phi)$ be as above. Suppose that $v$ is injective. Let $A$ be an $O$-algebra, where $O$ is a Dedekind domain, and let $G$ be a smooth affine group scheme over $O$.
\begin{assertionlist}
\item\label{GBundlesZAdicFRameStack1}
Pullback via $\Fcal(A,v,\phi)^{\ne0} \to \Fcal(A,v,\phi)$ yield fully faithful functors
\begin{equation}\label{EqFcalNe0}
\begin{aligned}
\Vec{\Fcal(A,v,\phi)} &\lto \Vec{\Fcal(A,v,\phi)^{\ne0}} \\
\Bun_G(\Fcal(A,v,\phi)) &\lto \Bun_G(\Fcal(A,v,\phi)^{\ne0}).
\end{aligned}
\end{equation}
\item\label{GBundlesZAdicFRameStack2}
A $G$-bundle on $\Fcal(A,v,\phi)^{\ne0}$ is in the essential image of \eqref{EqFcalNe0} if and only if it is \'etale locally isomorphic to $\Escr_{\mu}$.
\item\label{GBundlesZAdicFRameStack3}
Suppose that $A$ is a Dedekind domain, $v(L)$ a product of pairwise different maximal ideals, and $G$ is reductive. Then the functors \eqref{EqFcalNe0} are equivalences of categories.
\end{assertionlist} 
\end{corollary}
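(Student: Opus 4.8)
The plan is to deduce Corollary~\ref{GBundlesZAdicFRameStack} from the results already established about the Rees stack $\Re(A,v)$ together with the formal ``descent along coequalizers'' mechanism encoded in Lemma~\ref{GBundlesFrameStack}. The key observation is that the frame stack $\Fcal(A,v,\phi)$ is, by Definition~\ref{DefFrameStack}, the coequalizer of $\tau,\sigma\colon \Spec A \rightrightarrows \Re(A,v)$ in Adams stacks, and $\Fcal(A,v,\phi)^{\ne0}$ is the coequalizer of the same pair of maps landing in $\Re(A,v)^{\ne0}$. Both $\tau$ and $\sigma$ factor through $\Re(A,v)^{\ne0} = \Re(A,v)^{\ne+}\cup\Re(A,v)^{\ne-}$ (by \eqref{EqReesStackNePlus} and \eqref{EqDefSigmaVAdicFrame}), so the hypotheses of Lemma~\ref{GBundlesFrameStack} are exactly of the right shape: whatever property the restriction $\Bun_G(\Re(A,v))\to\Bun_G(\Re(A,v)^{\ne0})$ enjoys is inherited by $\Bun_G(\Fcal(A,v,\phi))\to\Bun_G(\Fcal(A,v,\phi)^{\ne0})$, and likewise for $\Vec(-)$.

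First I would prove \ref{GBundlesZAdicFRameStack1}: since $v$ is injective, Proposition~\ref{FullyFaithfulness0Complement}(1) gives that $\Vec(\Re(A,I))\to\Vec(\Re(A,I)^{\ne0})$ and $\Bun_G(\Re(A,I))\to\Bun_G(\Re(A,I)^{\ne0})$ are fully faithful for every affine flat group scheme $G$ over $A$ (and $G_O$ base-changes to such a $G$ over $A$). Then Lemma~\ref{GBundlesFrameStack} immediately yields the fully faithfulness of both functors in \eqref{EqFcalNe0}. For \ref{GBundlesZAdicFRameStack3}, under the stronger hypotheses that $A$ is a Dedekind domain, $v(L)=\mfr_1\cdots\mfr_r$, and $G$ reductive, Proposition~\ref{FullyFaithfulness0Complement}(2) upgrades the restriction functors at the Rees-stack level to equivalences; applying Lemma~\ref{GBundlesFrameStack} again (the ``equivalence'' clause) shows \eqref{EqFcalNe0} are equivalences.

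The remaining and most substantial point is \ref{GBundlesZAdicFRameStack2}, the description of the essential image. Here I would argue as follows. A $G$-bundle on $\Fcal(A,v,\phi)^{\ne0}$ is, by Remark~\ref{VecFrameStack}, a pair $(\Escr,\Phi)$ with $\Escr\in\Bun_G(\Re(A,v)^{\ne0})$ and $\Phi\colon\sigma^*\Escr\iso\tau^*\Escr$; by the fully faithfulness just established, this pair descends to $\Fcal(A,v,\phi)$ if and only if $\Escr$ lies in the essential image of $\Bun_G(\Re(A,v))\to\Bun_G(\Re(A,v)^{\ne0})$ (the isomorphism $\Phi$ then automatically comes along, since $\sigma^*$ and $\tau^*$ on the extended bundles restrict to the given ones and fully faithfulness identifies the Hom-groupoids). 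Now Corollary~\ref{RestrictGBunReAvne0} characterizes exactly that essential image: $\Escr$ extends to $\Re(A,v)$ if and only if $\Escr$ is \'etale locally isomorphic to $\Escr_{[\mu]}$ for some conjugacy class of cocharacters $[\mu]$ of $G$ defined \'etale locally on $\Spec O$. Combined with the identification $\Fcal(A,v,\phi)^{\ne0}\to\Fcal(A,v,\phi)$ and Remark~\ref{DescribeTrivialHeckeStack} (which identifies $\Escr_\mu$ concretely), this gives precisely the stated criterion. I expect the main obstacle to be purely bookkeeping: carefully checking that an isomorphism $\Phi$ over $\Spec A$ between the restrictions of two \emph{extended} bundles is the same datum as an isomorphism between the original bundles on $\Re(A,v)^{\ne0}$, i.e.\ that the square
\[
\begin{aligned}\xymatrix{
\Bun_G(\Re(A,v)) \ar[r] \ar[d] & \Bun_G(\Re(A,v)^{\ne0}) \ar[d] \\
\Bun_G(\Spec A) \ar@{=}[r] & \Bun_G(\Spec A)
}\end{aligned}
\]
interacts correctly with both $\sigma$ and $\tau$ — but this is exactly the content of the 2-cartesian square in Remark~\ref{DescribeCoequalizer} applied to $\Xcal=\B{G}$, so it is formal once that diagram is invoked.
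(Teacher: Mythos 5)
Your proposal is correct and follows essentially the same route as the paper, whose proof is precisely the combination of Proposition~\ref{FullyFaithfulness0Complement}, Corollary~\ref{RestrictGBunReAvne0}, and Lemma~\ref{GBundlesFrameStack}; your extra bookkeeping for part~\ref{GBundlesZAdicFRameStack2} (that the datum $\Phi\colon\sigma^*\Escr\iso\tau^*\Escr$ transports to any extension of $\Escr$ because $\sigma$ and $\tau$ factor through $\Re(A,v)^{\ne0}$) is exactly the formal point the paper leaves implicit.
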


\begin{proof}
This follows by combining Proposition~\ref{FullyFaithfulness0Complement}, Corollary~\ref{RestrictGBunReAvne0}, and Lemma~\ref{GBundlesFrameStack}.
\end{proof}

Now fix as in Section~\ref{COLIMITREES} a $\phi$-invariant ideal $J \subseteq A$ such that $A$ is $J$-adically complete. Then we obtain for all $N \geq 1$ an induced frame on $A/J^N$ and its frame stack $\Fcal(A/J^N,v,\phi)$. Since colimits commute with coequalizers we obtain from Proposition~\ref{InftyHeckeAsColimit}, Example~\ref{AdicColimitAdam} and Theorem~\ref{LimitBunGTruncationofReesStacks} the following result, where $G$ is as in Section~\ref{COLIMITREES}.

\begin{corollary}\label{LimitBunGTruncFrame}
We have a natural isomorphism of Adams stacks
\[
\colim_{N<\infty}\Fcal(A/J^N,v,\phi) \liso \Fcal(A,v,\phi).
\]
In particular, one has a natural equivalence
\[
\Bun_G(\Fcal(A,v,\phi)) \liso \lim_{N<\infty}\Bun_G(\Fcal(A/J^N,v,\phi))
\]
for every smooth affine group scheme $G$ over $O$.
\end{corollary}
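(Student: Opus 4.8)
The plan is to derive both claims formally, by commuting the colimit over $N$ with the coequalizer that defines the frame stack. Recall from Definition~\ref{DefFrameStack} that $\Fcal(A,v,\phi)$ is the coequalizer of the pair $\tau,\sigma\colon\Spec A\rightrightarrows\Re(A,v)$ in the $2$-category of Adams stacks, and likewise each $\Fcal(A/J^N,v,\phi)$ is the coequalizer of $\tau_N,\sigma_N\colon\Spec A/J^N\rightrightarrows\Re(A/J^N,v)$. Since $J$ is $\phi$-invariant, $\phi$ descends to every $A/J^N$, so by the quotient-frame construction of Remark~\ref{ExtensionOfFrames} these frames fit into an $\NN^{\opp}$-diagram, and the maps $\tau_N$, $\sigma_N$ are compatible with the closed immersions $\Re(A/J^N,v)\to\Re(A/J^{N+1},v)$ of \eqref{EqTruncation} and with the canonical maps $\Spec A/J^N\to\Spec A/J^{N+1}$. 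Thus we obtain an $\NN^{\opp}$-indexed diagram of coequalizer diagrams.

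Now I would feed in the two colimit identifications that are already available. Proposition~\ref{InftyHeckeAsColimit} says that the canonical map $\colim_{N<\infty}\Re(A/J^N,v)\to\Re(A,v)$ is an isomorphism of Adams stacks, and Example~\ref{AdicColimitAdam}, applied to the $J$-adically complete ring $A$ with the ideals $J^N$, says that $\colim_{N<\infty}\Spec A/J^N\to\Spec A$ is an isomorphism. Under these identifications the colimit of the $\tau_N$ is $\tau$ (immediate from the description of $\tau$ as the open immersion $\Re(A,v)^{\ne +}\cong\Spec A$) and the colimit of the $\sigma_N$ is $\sigma$ (from \eqref{EqDefSigmaVAdicFrame} and the $\phi$-invariance of $J$). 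Since the $2$-category of Adams stacks is cocomplete (Theorem~\ref{AdamsStackCocomplete}), colimits commute with colimits; in particular the filtered colimit over $N$ commutes with the coequalizer, so $\colim_{N<\infty}\Fcal(A/J^N,v,\phi)\cong\Fcal(A,v,\phi)$, which is the first assertion.

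For the statement on $G$-bundles, observe that $\B{G}$ is an Adams stack by Example~\ref{BGAdams}, since $O$ is a Dedekind domain and $G$ is smooth affine over $O$. Mapping the isomorphism just obtained into $\B{G}$ and using that $\Hom_O(-,\B{G})$ turns colimits of Adams stacks into limits yields
\begin{align*}
\Bun_G(\Fcal(A,v,\phi))
&=\Hom_O(\Fcal(A,v,\phi),\B{G})\\
&\cong\lim_{N<\infty}\Hom_O(\Fcal(A/J^N,v,\phi),\B{G})\\
&=\lim_{N<\infty}\Bun_G(\Fcal(A/J^N,v,\phi)),
\end{align*}
as claimed; alternatively one can argue directly from the equalizer description \eqref{EqBunGFrame} of $\Bun_G(\Fcal(A_R))$ together with Theorem~\ref{LimitBunGTruncationofReesStacks}~\ref{LimitBunGTruncationofReesStacks2} and Corollary~\ref{GeneralLimitGBundle}, limits commuting with limits. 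The only point needing genuine care is the bookkeeping in the first two paragraphs — checking that the truncated coequalizer diagrams assemble into a single $\NN^{\opp}$-diagram and that passing to the colimit recovers precisely $\tau$ and $\sigma$; once this is in place, everything else is a formal consequence of the cocompleteness of the $2$-category of Adams stacks and the cited results.
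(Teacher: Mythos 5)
Your proposal is correct and follows essentially the same route as the paper: the paper deduces the first isomorphism from Proposition~\ref{InftyHeckeAsColimit} and Example~\ref{AdicColimitAdam} by commuting the colimit over $N$ with the coequalizer defining the frame stack, and obtains the statement on $G$-bundles formally from Theorem~\ref{LimitBunGTruncationofReesStacks} and the fact that $\B{G}$ is an Adams stack. Your extra bookkeeping (assembling the truncated coequalizer diagrams into an $\NN^{\opp}$-diagram and identifying the colimits of $\tau_N$, $\sigma_N$ with $\tau$, $\sigma$) only makes explicit what the paper leaves implicit.
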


%
%


\subsection{The case $v = 0$}\label{Sec:v0}

If $v = 0$, then $R = A$ and
\[
B = \Rees(A,0) = \Sym_A(L) \times_A A[t] = \set{(f,g) \in \Sym_A(L) \times A[t]}{f_0 = g(0)}.
\]
Hence $\Re(A,0)$ is the algebraic stack glued together from $\Re(A,0)^- = A^{\Fil}$ and $\Re(A,0)^+ = [\GG_{m,A}\backslash \Spec \Sym_A(L)]$ along the closed substack $\Re(A,0)^0 = \B{\GG_{m,A}}$.

Therefore a vector bundle over $\Re(A,0)$ is a triple $(M,N,\alpha)$ consisting of a vector bundle $M$ on $A$ with a decreasing filtration (Proposition~\ref{CharFilteredVB}), a vector bundle $N$ with an increasing $L$-filtration (see Remark~\ref{VBSymGGm}), and an isomorphism $\alpha\colon \gr(M) \iso \gr(N)$.

Now suppose that we are given a ring endomorphism $\phi\colon A \to A$ yielding a frame structure and an attached frame stack $\Fcal(A,0,\phi)$ as in Section~\ref{Sec:FrameAvphi}. In this case we obtain $\Fcal(A,0,\phi)$ by gluing the open substacks
\begin{align*}
\Re(A,0)^- \setminus \Re(A,0)^0 &= [\GG_{m,A}\backslash \Spec A[t,t^{-1}]] \cong \Spec A, \\
\Re(A,0)^+ \setminus \Re(A,0)^0 &= [\GG_{m,A}\backslash \Spec \Sym_A(L)] \cong \Spec A
\end{align*}
of $\Re(A,0)$ via $\phi$.

Therefore a vector bundle on $\Fcal(A,0,\phi)$ is given by a vector bundle $N$ with an increasing $L$-filtration $D$, a decreasing vector bundle filtration $C$ on $\phi^*N$ and an isomorphism $\alpha\colon \gr_C(\phi^*N) \iso \gr^D(N)$.

We can also consider the following variant of $\Fcal(A,0,\phi)$. Let $A^{\textup{$\phi$-BiZip}}$ the stack defined by the pushout square
\[\xymatrix{
\B{\GG_{m,A}} \ar[r] \ar[d]_{\phi} & A^{\Fil} \ar[d] \\
[\GG_{m,A}\backslash \Spec \Sym_A(\phi^*L)] \ar[r] & A^{\textup{$\phi$-BiZip}},
}\]
where the left vertical map is the composition of the map $\B{\GG_{m,A}} \to \B{\GG_{m,A}}$ induced by $\phi$ and the zero section $\B{\GG_{m,A}} \to [\GG_{m,A}\backslash \Spec \Sym_A(\phi^*L)]$. In $A^{\textup{$\phi$-BiZip}}$ there are two open substacks $\Spec A \subseteq A^{\Fil}$ and $\Spec A \subseteq [\GG_{m,A}\backslash \Spec \Sym_A(\phi^*L)]$ and we form the coequalizer of these two open immersions to obtain an Adams stack $A^{\textup{$\phi$-Zip}}$.

Then the canonical $\phi$-linear map $L \to \phi^*L$ induces a map $A^{\textup{$\phi$-BiZip}} \to \Re(A,0)$ and then by taking coequalizers a map
\begin{equation}\label{EqZipTrunc1}
A^{\textup{$\phi$-Zip}} \lto \Fcal(A,0,\phi).
\end{equation}
A vector bundle on $A^{\textup{$\phi$-Zip}}$ is given by a vector bundle $N$ endowed with an increasing $L$-filtration $D$ and a decreasing vector bundle filtration $C$ and an isomorphism $\alpha\colon \phi^*\gr_C(N) \iso \gr(N)$. Pullback under \eqref{EqZipTrunc1} sends $(N,C,D,\alpha)$ to $(N,\phi^*C,D,\alpha)$.

\section{Moduli of truncated equi-characteristic shtukas}\label{TRUNCHECKE}

In this section, we apply the above general result to study moduli spaces of truncated local shtukas. We fix the following notation. Let $\kappa$ be a field. From Section~\ref{Sec:TruncatedShtukaStack} on, $\kappa$ will be a finite field. We also fix a reductive group scheme $G$ over $O := \kappa\psz$. If $R$ is any $\kappa\psz$-algebra we denote by $G_R$ its base change to $R$. Note that $G$ is obtained via base change from a unique (up to isomorphism) reductive group over $\kappa$\footnote{Let $G' = G_{\kappa} \otimes_{\kappa} \kappa\psz$. Then $G$ and $G'$ have the same geometric based root datum therefore $G'$ is a form of $G$ for the \'etale topology. These forms of $G$ are classified by $H^1(\kappa\psz,\Autline(G))$, where $\Autline(G)$ is the group scheme of automorphisms of $G$. Since $\Autline(G)$ is ind-quasi-affine and smooth over $\kappa\psz$, we know that $H^1(\kappa\psz,\Autline(G)) \to H^1(\kappa,\Autline(G))$ is injective by \cite[2.1.7]{BC_Torsors} which shows $G' \cong G$.} which we also call $G$.

Let $N$ denote an element of $\ZZ_{\geq 0} \cup \{\infty\}$, which we endow with its usual total order. For every $\kappa$-algebra $R$ we set
\[
R_N = R\psz/(z^N) \quad\text{for $N < \infty$}, \qquad R_{\infty} := R\psz.
\]
Hence $R_0$ is the zero ring. For all $1 \leq N \leq \infty$ we endow $R_N$ with the $z$-adic filtration (Example~\ref{VAdicSpecialCases}~\ref{VAdicSpecialCases2}).

Define the positive loop group, resp.~the loop group as usually by
\begin{equation}\label{EqDefLoop}
L^+G(R) = G(R\psz), \qquad\qquad LG(R) = G(R\lsz).
\end{equation}
We have for their truncations
\[
L^{(N)}G(R) := G(R_N),
\]
i.e., $L^{(\infty)}G = L^+G$ and $L^{(N)}(G) = \Res_{\kappa_N/\kappa}(G)$ for $N < \infty$. Then
\[
L^+G = \lim_{N<\infty}L^{(N)}(G).
\]
We also set
\[
L_N(G) = \Ker(L^+G \to L^{(N)}(G)),
\]
i.e., $L_NG(R) = \set{g \in G(R\psz)}{g \sends 1 \in G(R_N)}$. In particular $L_{0}G := L^+G$. Hence we find $L^+G/L_N(G) = L^{(N)}(G)$.


\subsection{The $N$-truncated Hecke stack}

Let $R$ be a $\kappa$-algebra. We now apply the results of Section~\ref{Sec:ReesVAdic} to $v\colon R_N \ltoover{z} R_N$.

\begin{definition}\label{DefNTruncHecke}
For $1 \leq N \leq \infty$ we call
\[
R^{\Hck,N} := \Re(R_N, z) = [\GG_{m}\backslash \Spec (R_N[t,u]/(tu - z))]
\]
the \emph{$N$-truncated Hecke stack} over $R$. For $N = \infty$ we also set
\[
R^{\Hck} := \Re(R\psz,z)^{\ne0} \subseteq R^{\Hck,\infty}
\]
and call it the \emph{Hecke scheme} over $R$.
\end{definition}

\begin{remark}\label{DescribeGBunHck}
By Remark~\ref{AttractorReAv}, the scheme $R^{\Hck}$ is obtained by gluing two copies of $\Spec R\psz$ along $\Spec R\lsz$. Therefore the groupoid $\Bun_G(R^{\Hck})$ is equivalent to the groupoid of triples $(\Escr_1,\Escr_2,\alpha)$, where $\Escr_1$ and $\Escr_2$ are $G$-bundles on $R\psz$ and where $\Phi\colon \Escr_2[1/z] \iso \Escr_1[1/z]$ is an isomorphism of $G$-bundles over $R\lsz$. One has a similar description for the exact category of vector bundles on $R^{\Hck}$. This explains the terminology.

By Remark~\ref{ReAne0ResolutionProp}, $R^{\Hck}$ is semi-separated (but not separated) and it has an ample pair of line bundles. In particular, it satisfies the resolution property.
\end{remark}

Proposition~\ref{FullyFaithfulness0Complement} and Corollary~\ref{RestrictGBunReAvne0} yield the following result which will be essential for us in the sequel.

\begin{proposition}\label{BunGHckbar}
Restriction yields fully faithful functors
\begin{equation}\label{EqRestrictToHecke}
\begin{aligned}
\Vec(R^{\Hck,\infty}) &\lto \Vec(R^{\Hck}), \\
\Bun_G(R^{\Hck,\infty}) &\lto \Bun_G(R^{\Hck})
\end{aligned}
\end{equation}
which is an equivalence if $R$ is a field. The essential image for $G$-bundles of \eqref{EqRestrictToHecke} are those $G$-bundles $\Escr$ on $\Bun_G(R^{\Hck})$ for which there exists a finite separable extension $\kappa'$ of $\kappa$ and a conjugacy class of cocharacter $[\mu]$ of $G$ defined over $\kappa'\psz$ such that $\Escr$ is \'etale locally on $R$ isomorphic to $\Escr_{[\mu]}$ (Definition~\ref{DefineGModMu}).
\end{proposition}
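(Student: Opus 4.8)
The plan is to deduce the statement directly from the general machinery already developed in Section~\ref{Sec:ReesVAdic}, specialized to the $v$-adic filtration with $A = R\psz$, $L = A$, and $v = $ multiplication by $z$. In this case $I = v(L) = (z)$ is an invertible ideal of $A$ (since $z$ is a regular element of $R\psz$ for any ring $R$ — indeed $R\psz$ is $z$-torsion free), so $v$ is injective and Proposition~\ref{FullyFaithfulness0Complement}(1) applies verbatim. Concretely, $\Re(R\psz,z) = R^{\Hck,\infty}$ by Definition~\ref{DefNTruncHecke}, and $\Re(R\psz,z)^{\ne 0} = R^{\Hck}$ by the very definition of $R^{\Hck}$ together with Remark~\ref{AttractorReAv}, which identifies $\Re(A,v)^{\ne 0}$ as the gluing of two copies of $\Spec A$ along $\Spec A[1/z] = \Spec R\lsz$. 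Thus the first assertion — full faithfulness of the restriction functors \eqref{EqRestrictToHecke} for vector bundles and for $G$-bundles — is exactly Proposition~\ref{FullyFaithfulness0Complement}(1), using that $G$ is affine and smooth, hence flat.

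Next I would treat the claim that \eqref{EqRestrictToHecke} is an equivalence when $R$ is a field. Here $A = R\psz$ is a complete discrete valuation ring with maximal ideal $(z)$, so $I = (z) = \mathfrak{m}$ is a single maximal ideal and $A$ is in particular a Dedekind domain. Since $G$ is reductive over $O = \kappa\psz$, its base change $G_R = G_{R\psz}$ is reductive over $R\psz$, so Proposition~\ref{FullyFaithfulness0Complement}(2) gives that the restriction functors are equivalences. (For the vector bundle statement one takes $G = \GL_n$ and lets $n$ vary, or simply quotes the first displayed equivalence in \eqref{EqRestrictReNe0}.) This settles the ``equivalence if $R$ is a field'' clause.

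Finally, for the description of the essential image over a general $\kappa$-algebra $R$: I would invoke Corollary~\ref{RestrictGBunReAvne0}, which characterizes the $G$-bundles $\Escr$ on $\Re(A,v)^{\ne 0}$ lying in the essential image of $\iota^*$ as precisely those that become, after an \'etale surjective base change $\Spec O' \to \Spec O$, isomorphic to $\Escr_{[\mu]}$ for some conjugacy class $[\mu]$ of cocharacters of $G$ defined over $O'$. In our situation $O = \kappa\psz$, and an \'etale cover of $\Spec \kappa\psz$ is of the form $\Spec \kappa'\psz$ for $\kappa'/\kappa$ finite separable (using that $\kappa\psz$ is a complete local ring, so its \'etale site is equivalent to that of its residue field $\kappa$, cf.\ Remark~\ref{SplittingNormal}); thus ``$[\mu]$ defined over $O'$'' means ``defined over $\kappa'\psz$ for some finite separable $\kappa'/\kappa$'', and ``\'etale locally isomorphic to $\Escr_{[\mu]}$'' on $\Re(R_\infty,z)^{\ne 0} = R^{\Hck}$ unwinds to ``\'etale locally on $R$ isomorphic to $\Escr_{[\mu]}$'' since the $\mu$-part of the data only depends on the fixed-point stratum $\B{\GG_{m,R}}$. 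This reproduces exactly the stated essential image. I expect the only genuinely delicate point to be the translation of ``\'etale locally on $\Re(A,v)^{\ne 0}$'' in Corollary~\ref{RestrictGBunReAvne0} into ``\'etale locally on $\Spec R$'' as stated here — one must check that the relevant \'etale covers can be taken to be pulled back from $\Spec R$, which follows because the type of a $G$-bundle of this shape is determined on the fixed locus $\B{\GG_{m,R}}$ and the type-stratification argument of Section~\ref{Sec:StackTypesGBundles} is already \'etale-local on the base — everything else is a direct citation of the results in Section~\ref{Sec:ReesVAdic}.
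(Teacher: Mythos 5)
Your proposal is correct and takes essentially the same route as the paper, whose proof consists precisely of citing Proposition~\ref{FullyFaithfulness0Complement} and Corollary~\ref{RestrictGBunReAvne0}; the specializations you spell out (injectivity of $v$ since $z$ is regular in $R\psz$, the fact that $R\psz$ is a Dedekind domain with maximal ideal $(z)$ when $R$ is a field, and the identification of \'etale covers of $\Spec\kappa\psz$ with $\Spec\kappa'\psz$ for $\kappa'/\kappa$ finite separable) are exactly the verifications left implicit there. One small imprecision: the type is read off on the fixed locus $\B{\GG_{m,R}}$ of $R^{\Hck,\infty}$ after extending the bundle (via Theorem~\ref{LiftGBundles}, i.e.\ the henselian pair $(R\psz,(z))$), not on a fixed-point stratum of $R^{\Hck}$ itself, which has none -- but this does not affect the argument.
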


See also Proposition~\ref{VBEssentialImage} for a description of the essential image of \eqref{EqRestrictToHecke} for vector bundles.

By Theorem~\ref{LimitBunGTruncationofReesStacks} we have
\begin{equation}\label{EqLimNBunGSht}
\Bun_G(R^{\Hck,\infty}) = \lim_{N<\infty} \Bun_G(R^{\Hck,N})
\end{equation}

If $R \to R'$ is a map of $\kappa$-algebras, it induces a map $R^{\prime\Hck,N} \to R^{\Hck,N}$ for all $1\leq N \leq \infty$ and we obtain a prestack over $\kappa$
\begin{equation}\label{EqDefBunlineHckbar}
\Bunline_G(\Hck,N)\colon \Alg{\kappa} \lto \Grpd, \qquad R \sends \Bun_G(R^{\Hck,N}).
\end{equation}

\begin{remark}\label{AlternativeBunline}
For $N < \infty$ we have $R_N \otimes_R R' = R'_N$ and hence $R^{\Hck,N} \otimes_R R' = R^{\prime\Hck,N}$. Therefore we find
\[
\Bunline_G(\Hck,N) = \Homline(\kappa^{\Hck,N},\B{G})
\]
in this case.
\end{remark}

Similarly, we obtain the \emph{(local) $G$-Hecke stack}
\[
\Bunline_G(\Hck)\colon \Alg{\kappa} \lto \Grpd, \qquad R \sends \Bun_G(R^{\Hck}).
\]
In fact, it is well known is a stack for the fpqc topology. This will also follow from Proposition~\ref{BunGHeckeAlgebraic}, where we will see that $\Bunline_G(\Hck,N)$ is a stack for the fpqc topology for all $N$.

The functors \eqref{EqRestrictToHecke} are functorial in $R$ and define a map of stacks
\begin{equation}\label{EqBunGHecke}
\Bunline_G({\Hck,\infty}) \lto \Bunline_G(\Hck).
\end{equation}
which is fully faithful on $R$-valued points. It is an equivalence of categories on $k$-valued points for every field extension $k$ of $\kappa$.

\begin{proposition}\label{BunGHeckeAlgebraic}
For $N < \infty$, the prestacks $\Bunline_G({\Hck,N})$ are algebraic stacks with affine diagonal. Moreover, $\Bunline_G({\Hck,N})$ are stacks for the fpqc topology for all $N \leq \infty$.
\end{proposition}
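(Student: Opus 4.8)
The plan is to prove the two assertions separately, each by reduction to a statement about quotient stacks by $\GG_m$ that is already available in the paper. For the algebraicity statement with $N < \infty$, I would first observe that $R_N = R\psz/(z^N)$ for $N < \infty$ depends on $R$ in a very simple way: the functor $R \sends R_N$ is representable by an affine scheme over $\kappa$ (indeed $R_N = R \otimes_\kappa \kappa_N$ as $\kappa$-algebras, so $R \sends R_N$ is $\Res_{\kappa_N/\kappa}$ of the identity, i.e. essentially $\AA^N$ as a $\kappa$-scheme). Hence the graded $R_N$-algebra $B_R := \Rees(R_N,z) = R_N[t,u]/(tu-z)$ has each graded piece $(B_R)_i$ representable by an affine scheme of finite type over $\kappa$ (each piece is a finite free $R_N$-module). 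Therefore the conditions of Section~\ref{EGENERAL} apply, and in particular, by Proposition~\ref{EMuRepresentable} together with the decomposition into types, $\Bunline_G(\Hck,N)$ is built from the classifying stacks of the affine group schemes $E_{\Re(R_N,z)}(G,[\mu])$ and the stack of types $\Bunline_G(\B{\GG_m})$, which is a smooth algebraic stack with affine diagonal (Section~\ref{Sec:StackTypesGBundles}).

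More concretely, I would invoke Theorem~\ref{GeneralBunlineGMu}: the $\ZZ$-graded algebra $B_R$ is henselian (it is concentrated in nonnegative... in fact in all degrees, but $\sum_{i>0}(B_R)_i(B_R)_{-i} = (z)$, and $(R_N, (z))$ is a henselian pair for $N<\infty$ since $z$ is nilpotent there; for $N=\infty$ it is $z$-adically complete), so $\Bunline_G^{[\mu]}(\Hck,N) \cong \B{E_{\Re(R_N,z)}(G,[\mu])}$, and $\Bunline_G(\Hck,N) = \coprod_{[\mu]}\Bunline_G^{[\mu]}(\Hck,N)$ after passing to an \'etale cover splitting $G$ (using the type decomposition \eqref{EqDecomposeBunGBGGm}). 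Since $E_{\Re(R_N,z)}(G,[\mu])$ is an affine group scheme over $\kappa$ by Proposition~\ref{EMuRepresentable}\ref{EMuRepresentable1} (applicable since $R \sends (B_R)_i$ is representable by an affine $\kappa$-scheme of finite type for $N<\infty$), its classifying stack $\B{E}$ is an algebraic stack with affine diagonal; the coproduct over the finitely-relevant-at-a-time $[\mu]$ inherits this, and one descends along the splitting cover. This gives algebraicity and affine diagonal for $N < \infty$.

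For the fpqc-descent statement for all $N \le \infty$: I would note that $R \sends R^{\Hck,N}$ is itself an fpqc sheaf of stacks — for $N<\infty$ because $R^{\Hck,N} = \kappa^{\Hck,N}\otimes_\kappa R$ (Remark~\ref{AlternativeBunline}), so $\Bunline_G(\Hck,N) = \Homline(\kappa^{\Hck,N},\B G)$ is a stack by the general principle in the Notation section (that $\Homline_S(\Xcal,\Ycal)$ is a stack when $\Xcal$ is a sheaf); and for $N=\infty$ because $R\sends R\psz$ and hence $R \sends R^{\Hck,\infty}=\Re(R\psz,z)$ is an fpqc sheaf (the relevant graded pieces are $R\psz$, which commutes with flat base change and satisfies descent). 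Alternatively and more uniformly, the case $N=\infty$ follows from the finite cases via the identity $\Bunline_G(\Hck,\infty) = \lim_{N<\infty}\Bunline_G(\Hck,N)$ of \eqref{EqLimNBunGSht} (applied objectwise in $R$, which is legitimate since this comes from $\colim_N R^{\Hck,N}\iso R^{\Hck,\infty}$ in Adams stacks, Proposition~\ref{InftyHeckeAsColimit}), because a limit of fpqc stacks is an fpqc stack.

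The main obstacle I anticipate is bookkeeping around the type decomposition: conjugacy classes $[\mu]$ of cocharacters of $G$ need not be defined over $\kappa\psz$ itself, so the clean statement $\Bunline_G(\Hck,N)=\coprod_{[\mu]}\B E$ only holds after an \'etale base change splitting $G$ (or making all relevant $[\mu]$ rational), and one must then descend the properties "algebraic with affine diagonal" back down — which is harmless since these are fpqc-local on the base, but requires care to state. A secondary point is checking that the hypotheses of Proposition~\ref{EMuRepresentable}\ref{EMuRepresentable1} are literally met, i.e. that $R \sends (B_R)_i$ really is representable by an affine $\kappa$-scheme of finite type for each $i$ when $N<\infty$; this is immediate from $(B_R)_i$ being a finite free $R_N$-module and $R\sends R_N$ being representable by $\AA^N_\kappa$, but should be spelled out. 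Everything else is a formal consequence of results already established in Sections~\ref{EGENERAL}--\ref{Sec:DisplayGerbes} and~\ref{Sec:ReesVAdic}.
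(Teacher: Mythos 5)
Your route is correct in substance, but it is genuinely different from the paper's own proof. The paper proves algebraicity for $N<\infty$ by viewing $\Bunline_G(\Hck,N)=\Homline(\kappa^{\Hck,N},\B{G})$ as a mapping stack and citing Halpern--Leistner--Preygel: $\kappa^{\Hck,N}$ is cohomologically projective, hence formally proper, over $\kappa$, so the mapping stack into $\B{G}$ is algebraic with affine diagonal; fpqc descent then follows from the affine diagonal, and the case $N=\infty$ from $\Bunline_G(\Hck,\infty)=\lim_{N<\infty}\Bunline_G(\Hck,N)$. You instead build the algebraicity from the explicit classification: type decomposition over a splitting cover, henselianity of $\Rees(R_N,z)$, Theorem~\ref{GeneralBunlineGMu}, and representability of the display group via Proposition~\ref{EMuRepresentable}, giving $\coprod_{[\mu]}\B{E_N(G,[\mu])}$ \'etale locally on the base. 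This is exactly the ``different proof'' the paper announces right after the proposition (via \eqref{EqDescribeBunGMuHckbar}), so it is a legitimate alternative; the trade-off is that the paper's argument is short but rests on heavy external machinery and gives no presentation, whereas yours produces the explicit quotient/classifying-stack description at the cost of the bookkeeping you identify ($[\mu]$ rationality, descent of algebraicity and affine diagonal along the splitting cover) plus one point you should make explicit: Theorem~\ref{GeneralBunlineGMu} is stated for the fppf-stackification of $R\mapsto\Bun_G(\Xcal_R)$, so you must first know the prestack $R\mapsto\Bun_G(R^{\Hck,N})$ is itself a stack (easy, since $R^{\Hck,N}$ is compatible with base change for $N<\infty$ and $G$-bundles for affine $G$ descend) before identifying it with $\coprod_{[\mu]}\B{E_N(G,[\mu])}$.

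Two smaller corrections. First, your parenthetical justification for treating $N=\infty$ directly is wrong: $R\psz$ does \emph{not} commute with flat base change ($R\psz\otimes_R R'\to R'\psz$ is not an isomorphism in general), so one cannot argue that $R\mapsto R^{\Hck,\infty}$ is an fpqc sheaf in the naive way; your alternative via \eqref{EqLimNBunGSht}, i.e. $N=\infty$ as a limit of the finite truncations, is the correct argument (and is the paper's). Second, the ``general principle'' from the Notation section only gives descent for the fppf topology, since that is how the paper defines ``stack''; to get the asserted fpqc descent for $N<\infty$ you should either invoke fpqc descent of $G$-bundles directly (possible because $G$ is affine and $R^{\Hck,N}\otimes_R R'=R^{\prime\Hck,N}$), or, as the paper does, deduce it from the affine diagonal once algebraicity is established.
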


\begin{proof}
Let $N < \infty$. Recall that in this case $\Bunline_G({\Hck,N}) = \Homline(\kappa^{\Hck,N},\B{G})$. Now $\kappa^{\Hck,N}$ is an algebraic stack that is cohomologically projective over $\kappa$ by \cite[4.2.5]{HLP_MappingStacks} and hence formally proper over $\kappa$ by \cite[4.2.1]{HLP_MappingStacks}. Therefore \cite[5.1.1]{HLP_MappingStacks} implies that $\Bunline_G({\Hck,N})$ is an algebraic with affine diagonal. Since its diagonal is affine, it is a stack for the fpqc topology.

By \eqref{EqLimNBunGSht} we have $\Bunline_G({\Hck,\infty}) = \lim_{N<\infty} \Bunline_G({\Hck,N})$ which implies that $\Bunline_G({\Hck,\infty})$ is also a stack for the fpqc topology.
\end{proof}

Below in \eqref{EqDescribeBunGMuHckbar}, we will describe $\Bunline_G({\Hck,N})$ explicity as a disjoint sum of quotient stacks. This will give a different proof of Proposition~\ref{BunGHeckeAlgebraic}.


\subsection{$G$-bundles of type $[\mu]$ over the $R^{\Hck,N}$}\label{TYPEHECKE}

For $1 \leq N \leq \infty$, denote by $i\colon (R^{\Hck,N})^0 = \B{\GG_{m,R}} \to R^{\Hck,N}$ the closed embedding of the fixed point locus. Pullback with $i$ defines a map of groupoids $\Bun_G(R^{\Hck,N}) \to \Bun_G(\B{\GG_{m,R}})$ that is functorial in $R$. Hence we obtain a map of stacks over $\kappa$
\begin{equation}\label{EqTypeHecke}
\type\colon \Bunline_G({\Hck,N}) \lto \Bunline_G(\B{\GG_{m,\kappa}}).
\end{equation}
For every $\kappa$-algebra $R$ the induced map of groupoids
\[
\Bunline_G({\Hck,N})(R) \lto \Bunline_G(\B{\GG_{m,\kappa}})(R)
\]
is essentially surjective and full, in particular, it induces a bijection on isomorphism classes (Theorem~\ref{LimitBunGTruncationofReesStacks}). We obtain for $1 \leq N \leq N' \leq \infty$ a commutative diagram of fpqc stacks
\begin{equation}\label{EqTypeGerbe}
\begin{aligned}\xymatrix{
\Bunline_G(\Hck,N') \ar[rr] \ar[rd]_{\type} & & \Bunline_G({\Hck,N}) \ar[dl]^{\type} \\
& \Bunline_G(\B{\GG_{m,\kappa}}).
}\end{aligned}
\end{equation}

Let $\kgbar$ be an algebraic closure of $\kappa$, let $(X^*,\Phi,X_*,\Phi\vdual,\Delta)$ be the geometric based root datum of $G$, and let $W_G$ be its Weyl group (Section~\ref{Sec:StackTypesGBundles}). Then the decomposition \eqref{EqDecomposeBunGBGGm} of $\Bunline_G(\B{\GG_{m,\kappa}})$ yields via taking inverse images under the type morphism a decomposition into open and closed substacks
\[
\Bunline_G({\Hck,N})_{\kappa'} = \coprod_{[\mu] \in X_*/W_G}\Bunline^{[\mu]}_G({\Hck,N})
\]
over some finite separable extension $\kappa'$ of $\kappa$, for instance a splitting field of $G$. More precisely, each open and closed substack $\Bunline^{[\mu]}_G({\Hck,N})$ is defined over the field of definition $\kappa_{[\mu]} \subseteq \kgbar$ of the conjugacy class $[\mu]$.

Let us give a description of $\Bunline^{[\mu]}_G({\Hck,N})$ as a moduli problem. For this recall the standard $G$-bundle $\Escr_{[\mu]}$ of type $[\mu]$ on $\B{\GG_{m,\kappa_{[\mu]}}}$ defined in Definition~\ref{DefineGModMu}. By definition $\Bunline^{[\mu]}_G(\B{\GG_{m,\kappa_{[\mu]}}})$ consists of those $G$-bundles that are \'etale locally isomorphic to $\Escr_{[\mu]}$.
Hence Theorem~\ref{LimitBunGTruncationofReesStacks} implies the following result.

\begin{proposition}\label{HeckeMu}
Let $R$ be a $\kappa_{[\mu]}$-algebra. Then a $G$-bundle $\Escr$ over $R^{\Hck,N}$ defines an $R$-valued point of $\Bunline^{[\mu]}_G({\Hck,N})$ if and only if it is \'etale locally on $\Spec R$ isomorphic to the pullback of $\Escr_{[\mu]}$ to $R^{\Hck,N}$ under the canonical map $R^{\Hck,N} \to \B{\GG_{m,R}} \to \B{\GG_{m,\kappa_{[\mu]}}}$.
\end{proposition}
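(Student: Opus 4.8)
The statement to be proven is Proposition~\ref{HeckeMu}, characterizing the $R$-points of $\Bunline^{[\mu]}_G(\Hck,N)$ among all $G$-bundles on $R^{\Hck,N}$. The plan is to reduce this to the definition of $\Bunline^{[\mu]}_G(\B{\GG_{m,\kappa}})$ as a substack of $\Bunline_G(\B{\GG_{m,\kappa}})$ together with the fact, recorded in Theorem~\ref{LimitBunGTruncationofReesStacks}~\ref{LimitBunGTruncationofReesStacks1}, that the type morphism $\type\colon \Bun_G(R^{\Hck,N}) \to \Bun_G(\B{\GG_{m,R}})$ is full and essentially surjective (this being a special case of Theorem~\ref{LiftGBundles}, applied to the $\ZZ$-graded ring $R_N[t,u]/(tu-z)$, which is henselian since it has no negative degrees).

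First I would unwind the definitions. By construction, $\Bunline^{[\mu]}_G(\Hck,N)$ is the preimage under $\type$ of $\Bunline^{[\mu]}_G(\B{\GG_{m,\kappa_{[\mu]}}})$, so a $G$-bundle $\Escr$ on $R^{\Hck,N}$ defines an $R$-point of $\Bunline^{[\mu]}_G(\Hck,N)$ if and only if $i^*\Escr$ is a $G$-bundle of type $[\mu]$ on $\B{\GG_{m,R}}$, i.e.\ \'etale locally on $\Spec R$ isomorphic to $\Escr_{[\mu]}\rstr{\B{\GG_{m,R}}}$ (Definition~\ref{DefineGModMu}~\ref{DefineGModMu3}). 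So the content is the equivalence between: (a) the pullback $i^*\Escr$ is of type $[\mu]$ on $\B{\GG_{m,R}}$; and (b) $\Escr$ itself is, \'etale locally on $\Spec R$, isomorphic to the pullback of $\Escr_{[\mu]}$ via $R^{\Hck,N} \to \B{\GG_{m,R}} \to \B{\GG_{m,\kappa_{[\mu]}}}$.

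The implication (b) $\Rightarrow$ (a) is immediate: pulling the isomorphism back along $i$ and noting that the composition $(R^{\Hck,N})^0 = \B{\GG_{m,R}} \hookrightarrow R^{\Hck,N} \to \B{\GG_{m,R}}$ is the identity (the fixed-point locus is a section of the structure map), one gets an \'etale-local isomorphism $i^*\Escr \cong \Escr_{[\mu]}\rstr{\B{\GG_{m,R}}}$. For (a) $\Rightarrow$ (b), the key point is essential surjectivity and fullness of $\type$ over a ring where $[\mu]$ is representable and the type is realized by an actual pullback. Working \'etale locally on $\Spec R$, we may assume $i^*\Escr \cong \Escr_{[\mu]}\rstr{\B{\GG_{m,R}}}$. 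Now $\Escr_{[\mu]}\rstr{\B{\GG_{m,R}}}$ is itself the restriction to the fixed locus of the pullback $\Escr'$ of $\Escr_{[\mu]}$ along $R^{\Hck,N} \to \B{\GG_{m,R}} \to \B{\GG_{m,\kappa_{[\mu]}}}$. Thus both $\Escr$ and $\Escr'$ are $G$-bundles on $R^{\Hck,N}$ with isomorphic pullbacks under $i$; since $\type = i^*$ is \emph{full}, the isomorphism $i^*\Escr \cong i^*\Escr'$ lifts to an isomorphism $\Escr \cong \Escr'$ over $R^{\Hck,N}$, which is exactly assertion (b).

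The main obstacle — though it is already resolved by the machinery set up earlier — is verifying that Theorem~\ref{LiftGBundles} genuinely applies: one must check that $R_N[t,u]/(tu-z)$ with $\deg t = -1$, $\deg u = 1$ is a henselian graded ring in the sense of Definition~\ref{DefHenselianGradedRing}, which holds because its graded pieces in negative degree are nonzero but the relevant ideal $J = \sum_{i>0} B_i B_{-i}$ sits inside $B_0 = R_N$ and $(B_0, J)$ is a henselian pair (in fact $J = (z)$ and $(R_N, (z))$ is henselian). Once this is in place, the argument is purely formal: it is the statement that a full, essentially surjective functor detects a property of objects that is defined by pullback through it, combined with the observation that the ``trivial'' object $\Escr_{[\mu]}$ on $\B{\GG_{m,\kappa_{[\mu]}}}$ already comes from $R^{\Hck,N}$ by pullback. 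I would therefore present the proof as a short deduction from Theorem~\ref{LimitBunGTruncationofReesStacks} and Definition~\ref{DefineGModMu}, spelling out the two implications as above.
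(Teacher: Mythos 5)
Your proof is correct and is essentially the paper's own argument: the paper derives Proposition~\ref{HeckeMu} directly from the fullness and essential surjectivity of the type map (Theorem~\ref{LimitBunGTruncationofReesStacks}, itself resting on Theorem~\ref{LiftGBundles} via henselianity of $(R_N,(z))$), exactly as you do, with the same use of fullness to lift the isomorphism $i^*\Escr \cong i^*\Escr'$ and the observation that restricting the pullback of $\Escr_{[\mu]}$ back to the fixed locus recovers $\Escr_{[\mu]}\rstr{\B{\GG_{m,R}}}$.
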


\begin{remark}\label{SchubertCell}
We can also use the above formalism to give a moduli theoretic description of Schubert cells in the affine Grassmannian of $G$:

The monomorphism $\Bunline_G(\Hck,\infty) \lto \Bunline_G(\Hck)$ induces an isomorphism of $\Bunline^{[\mu]}_G(\Hck,\infty)$ with a locally closed substack $\Bunline^{[\mu]}_G(\Hck)$ whose $R$-valued points can be described as those $G$-bundles on $R^{\Hck}$ that are \'etale locally isomorphic to $\Escr_{[\mu]}$.

Then $\Bunline_G(\Hck)$ can also be described as the \'etale quotient stack of the loop group
\[
\Bunline_G(\Hck) = [L^+G\backslash LG/L^+G].
\]
The affine Grassmannian of $G$ is defined as the quotient
\[
\Gr_G := LG/L^+G.
\]
It parametrizes $G$-bundles $\Escr$ over $R\psz$ together with a trivialization of $\Escr$ over $R\lsz$. Equivalently, it parametrizes  $G$-bundles over $R^{\Hck}$, described as triples $(\Escr_0,\Escr,\alpha)$, where $\Escr_0$ is the trivial $G$-bundle.

The projection $\Gr_G \to \Bunline_G(\Hck)$ makes $\Gr_G$ into an $L^+G$-torsor over $\Bunline_G(\Hck)$ and we denote the preimage of $\Bunline^{[\mu]}_G(\Hck)$ under this projection by $\Gr_G^{[\mu]}$. It para\-metrizes $G$-bundles over $R^{\Hck}$ as above which are \'etale locally on $R$ isomorphic to $\Escr_{[\mu]}$. This gives a moduli theoretic description of the Schubert cell $\Gr_G^{[\mu]}$.
\end{remark}

\begin{remark}\label{DescribeHckGmuAlt}
Let $\kappa'$ be a finite separable extension of $\kappa$ such that the conjugacy class $[\mu]$ can be represented by a cocharacter $\mu$ of $G_{\kappa'\psz}$. For a $\kappa'$-algebra $R$, the groupoid $\Bun^{[\mu]}_G(\Hck)$ can then also be described as the full subgroupoids of triples $(\Escr_1,\Escr_2,\alpha) \in \Bun_G(R^{\Hck})$ in $\Bun_G(R^{\Hck})$ such that there exists a an \'etale
covering $\Spec R' \to \Spec R$ yielding $\pi\colon \Spec R'\psz \to \Spec R\psz$ such that $\pi^*\Escr_1$ and $\pi^*\Escr_2$ are trivial $G$-bundles and that via some (equivalently, any) trivializations $\pi^*\Escr_1 \cong G_{R'\psz} \cong \pi^*\Escr_2$, the isomorphism $\alpha_{R'\psz}$ is given by $g \sends hg$ for an element $h \in G({R'\psz})\mu(z)G({R'\psz}) \subseteq G(R'\lsz)$.
\end{remark}

We will now describe $\Bunline^{[\mu]}_G(\Hck,N)$ more explicitly as quotient stack. We use the constructions of Section~\ref{EGENERAL}. We define the display group, i.e., the group presheaf on the category of $\kappa_{[\mu]}$-algebras
\begin{equation}\label{EqDefENMuPreSheaf}
E_N(G,[\mu])\colon R \sends \Hom_{\B{\GG_{m}}}(R^{\Hck,N}, [\mu]\backslash G_{\kappa_{[\mu]}})
\end{equation}
see \eqref{EqDefEXcalMu}. 

\begin{remark}\label{EFunctorialN}
For $N \leq N' \leq \infty$, the closed immersions $R^{\Hck,N} \to R^{\Hck,N'}$ induces maps of group schemes
\begin{equation}\label{EqTruncationE}
E_{N'}(G,[\mu]) \lto E_N(G,[\mu]).
\end{equation}
\end{remark}


%

\begin{proposition}\label{ENMuRep}
\begin{assertionlist}
\item
For $N < \infty$, the presheaf $E_N(G,[\mu])$ is representable by a smooth affine group scheme over $\kappa_{[\mu]}$. 
\item
One has $E_{\infty}(G,[\mu]) = \lim_{N<\infty}E_N(G,[\mu])$, in particular $E_{\infty}(H,[\mu])$ is representable by an pro-smooth affine group scheme over $\kappa_{[\mu]}$.
\item
For all $N \leq \infty$ one has
\begin{equation}\label{EqDescribeBunGMuHckbar}
\Bunline^{[\mu]}_G(R^{\Hck,N}) \cong \B{E_N(G,[\mu])}
\end{equation}
\end{assertionlist}
\end{proposition}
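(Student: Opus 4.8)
The plan is to deduce all three assertions from the general results already established in Sections~\ref{EGENERAL} and \ref{Sec:ReesVAdic}, applied to the specific functor $R \sends R^{\Hck,N} = \Re(R_N,z)$. First I would record the input needed to apply Proposition~\ref{EMuRepresentable}: for $N < \infty$ the functor $R \sends (B_R)_i$ sending a $\kappa_{[\mu]}$-algebra $R$ to the degree-$i$ graded piece of $\Rees(R_N,z)$ is, by Remark~\ref{VAdicSpecialCases}\ref{VAdicSpecialCases2} and the presentation $\Rees(R_N,z) = R_N[t,u]/(tu-z)$, representable by an affine $\kappa_{[\mu]}$-scheme of finite type (each graded piece is a free $R_N$-module of rank one, hence a finite free $\kappa[z]/(z^N)$-module, functorially in $R$, so it is represented by an affine space over $\kappa_{[\mu]}$). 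Thus Hypothesis~\ref{EMuRepresentablea} of Proposition~\ref{EMuRepresentable}\ref{EMuRepresentable2} holds with smooth affine $Z_i$ of finite type; the third bullet there (nilpotence of kernels under square-zero extensions) is immediate since $R \sends R_N$ is representable by a smooth affine scheme and $R \sends R_N$ even preserves surjections with the same kernel. Since $[\mu]\backslash G_{\kappa_{[\mu]}}$ arises from the smooth affine $O$-scheme $G$ with a $\GG_m$-action, Proposition~\ref{EMuRepresentable}\ref{EMuRepresentable2} applies and gives that $E_N(G,[\mu])$ is representable by a smooth affine scheme over $\kappa_{[\mu]}$; it is a group scheme because $E_{\Xcal}(G,[\mu])$ is a presheaf of groups by the discussion around \eqref{EqAltDescribeEMu}. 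This proves (1).

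For (2), the closed immersions $R^{\Hck,N} \to R^{\Hck,N'}$ for $N \le N'$ (which for $N' = \infty$ is the immersion of Remark~\ref{FunctorialReesAv}, coming from $R\psz \to R\psz/(z^N)$) induce, by Remark~\ref{EFunctorialInX}, the transition maps \eqref{EqTruncationE}, and on the level of the description \eqref{EqAltDescribeEMu} of $E_N(G,[\mu])(R)$ as degree-zero graded algebra maps $\Gamma(G,\Oscr_G) \to \Rees(R_N,z)$, passing to the limit over $N$ amounts to the observation that $\Rees(R\psz,z)_i = \lim_N \Rees(R_N,z)_i$ in each degree $i$ (as $R\psz = \lim_N R_N$ and each graded piece is a free module of rank one on which this limit is computed degreewise). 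Hence $E_{\infty}(G,[\mu]) = \lim_{N<\infty} E_N(G,[\mu])$, and being a filtered limit of smooth affine group schemes along faithfully-flat(-in-the-relevant-sense) transition maps, it is a pro-smooth affine group scheme over $\kappa_{[\mu]}$.

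For (3), I would invoke Theorem~\ref{GeneralBunlineGMu}: it applies to the functor $R \sends (\Re(R_N,z),\alpha_R)$ over $\B{\GG_{m,O}}$ provided the $\ZZ$-graded algebra $\Rees(R_N,z)$ is henselian in the sense of Definition~\ref{DefHenselianGradedRing}; but $J = \sum_{i>0}(B_R)_i(B_R)_{-i} = z B_0 = zR_N$, and $(R_N, zR_N)$ is a henselian pair — for $N < \infty$ because $z$ is nilpotent, and for $N = \infty$ because $R\psz$ is $z$-adically complete. Moreover the identification of the fixed-point locus $(R^{\Hck,N})^0$ with $\B{\GG_{m,R}}$ via $R_N/zR_N \cong R$ furnishes the datum $\alpha_R$. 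Theorem~\ref{GeneralBunlineGMu} then yields $\Bunline^{[\mu]}_G(R^{\Hck,N}) \cong \B{E_{\Re(R_N,z)}(G,[\mu])}$, and by definition $E_{\Re(R_N,z)}(G,[\mu]) = E_N(G,[\mu])$, giving \eqref{EqDescribeBunGMuHckbar}. The only mildly delicate point — and the step I would be most careful about — is matching the abstract setup of Section~\ref{Sec:DisplayGerbes} (a functor valued in pairs $(\Xcal,\alpha)$ with $\Xcal \to \B{\GG_{m,O}}$ affine) with the present situation, i.e.\ checking that the composite $\Re(R_N,z) \to R_N^{\Fil} \to O^{\Fil} \to \B{\GG_{m,O}}$ is the structure morphism used there and that it is indeed affine, and that the henselian-graded-ring hypothesis is the correct one to verify (rather than the henselian-pair hypothesis of Corollary~\ref{LiftingGBundlesReesStack}); but both are routine given Remark~\ref{ReesAttractor} and the fact that $B_i$ is finite projective over $B_0$ here. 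No separate verification of Assumption~\ref{ConditionFrameFunctorc} is needed since we are only describing $\Bunline^{[\mu]}_G$ over the Rees stack, not over the frame stack.
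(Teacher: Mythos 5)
Your proposal is correct and follows essentially the same route as the paper: part (1) via Proposition~\ref{EMuRepresentable} with $(B_R)_i = R\psz/(z^N)$ represented by an affine space, and part (3) as a direct application of Theorem~\ref{GeneralBunlineGMu}, whose henselian-graded-ring hypothesis you rightly reduce to $(R_N, zR_N)$ being a henselian pair ($z$ nilpotent for $N<\infty$, $z$-adic completeness for $N=\infty$). The only deviation is in part (2), where the paper deduces $E_{\infty}(G,[\mu]) = \lim_{N<\infty} E_N(G,[\mu])$ from the Adams-stack colimit $R^{\Hck,\infty} = \colim_{N<\infty} R^{\Hck,N}$ (Proposition~\ref{InftyHeckeAsColimit}), whereas you verify it degreewise on graded algebra maps via \eqref{EqAltDescribeEMu}; both arguments are valid and yield the same conclusion.
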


\begin{proof}
We use Proposition~\ref{EMuRepresentable}. With the notation of loc.~cit.~one has $(B_R)_i = R\psz/(z^N)$ for all $i \in \ZZ$. The functor $R \sends R\psz/(z^N)$ is representable by a product of $N$-copies of $\AA_R^1$. Therefore $E_N(G,[\mu])$ is representable by an affine group scheme for all $N$ and it is of finite type for $N < \infty$ by Proposition~\ref{EMuRepresentable}~\ref{EMuRepresentable1}. Since $R^{\Hck,\infty} = \colim_{N<\infty}R^{\Hck,N}$ by Proposition~\ref{InftyHeckeAsColimit}, we have $E_{\infty}(G,[\mu]) = \lim_{N<\infty}E_N(G,[\mu])$. For $N < \infty$, $E_N(G,[\mu])$ is smooth by Proposition~\ref{EMuRepresentable}~\ref{EMuRepresentable2}. Finally, \eqref{EqDescribeBunGMuHckbar} is a special case of Proposition~\ref{GeneralBunlineGMu}.
\end{proof}

\begin{definition}\label{DefDisplayShtuka}
The smooth affine group scheme $E_N(G,[\mu])$ is called the \emph{$N$-truncated $(G,[\mu])$-display group}.
\end{definition}

Next we will describe the affine group schemes $E_N(G,[\mu])$ more explicitly. To simplify notation, we assume from now on that $[\mu]$ can be represented by a cocharacter $\mu$ of $G$ over the reflex field $\kappa_{[\mu]}$. This is always the case if $G$ is quasi-split, for instance if $\kappa$ is a finite field as will be the case starting with the next section anyway.

\begin{proposition}\label{DisplayGroupInfty}
\begin{assertionlist}
\item
For every $\kappa$-algebra $R$ one has
\begin{equation}\label{EqEInftyGMu}
E_{\infty}(G,\mu)(R) = \set{g \in G(R\psz)}{\mu(z)g\mu(z)^{-1} \in G(R\psz) \subseteq G(R\lsz)},
\end{equation}
i.e., $E_{\infty}(G,\mu) = L^+G \cap \mu^{-1}L^+G\mu \subseteq LG$.
\item
For $1 \leq N < \infty$ one has
\begin{equation}\label{EqENGMu}
E_N(G,\mu) = E_{\infty}(G,\mu)/(L_NG \cap \mu^{-1}L_NG\mu).
\end{equation}
\end{assertionlist}
\end{proposition}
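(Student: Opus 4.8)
The plan is to unwind the definition of the display group $E_N(G,\mu)(R) = \Hom_{\B{\GG_m}}(R^{\Hck,N}, \mu\backslash G)$ using the identification \eqref{EqAltDescribeEMu}, which says this is the set of $\GG_m$-equivariant morphisms $X_R \to G$, where $X_R = \Spec(R\psz/(z^N)[t,u]/(tu-z))$ carries the $\GG_m$-action with $\deg t = -1$, $\deg u = +1$. First I would treat the case $N = \infty$, where $X_R = \Spec(R\psz[t,u]/(tu-z))$. By Proposition~\ref{AutDecribeDisplayGroup}, $E_\infty(G,\mu)(R)$ is the automorphism group of the standard $G$-bundle $\Escr_\mu$ on $R^{\Hck,\infty}$; equivalently, such an automorphism is a $\GG_m$-equivariant map $g\colon X_R \to G$ where $\GG_m$ acts on $G$ through conjugation by $\mu$. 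The key computation is to restrict $g$ to the open subscheme $\Re(R\psz,z)^{\ne+} \cong \Spec R\psz$ (the locus $t$ invertible, i.e.\ $\tau$) and to $\Re(R\psz,z)^{\ne-} \cong \Spec R\psz$ (the locus $u$ invertible, i.e.\ $\sigma$); both restrictions land in $G(R\psz)$, and since $R^{\Hck} = \Re(R\psz,z)^{\ne0}$ is obtained by gluing these two copies of $\Spec R\psz$ along $\Spec R\lsz$ (Remark~\ref{AttractorReAv}), the pair of restrictions agrees after base change to $R\lsz$. The $\GG_m$-equivariance with respect to the $\mu$-conjugation action translates exactly into the condition that the element $g \in G(R\psz)$, viewed via one chart, is related to its image in the other chart by conjugation by $\mu(z)$: concretely, the two sections are $g$ and $\mu(z) g \mu(z)^{-1}$, and the gluing condition over $R\lsz$ forces $\mu(z)g\mu(z)^{-1}$ to already lie in $G(R\psz)$. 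This yields \eqref{EqEInftyGMu}; the compatibility with the loop-group description $E_\infty(G,\mu) = L^+G \cap \mu^{-1}L^+G\mu \subseteq LG$ is then just a rephrasing.

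For assertion (2), I would use the closed immersion $R^{\Hck,N} \to R^{\Hck,\infty}$, which by Remark~\ref{EFunctorialN} induces $E_\infty(G,\mu) \to E_N(G,\mu)$. I would show this map is surjective and identify its kernel. For surjectivity I can invoke Proposition~\ref{EFunctorialSurjective}: the map $R^{\Hck,N} \to R^{\Hck,\infty}$ is a closed immersion, and on degree-zero parts of the graded algebras the kernel is generated by $z^N$, which is topologically nilpotent, so the relevant pair is henselian (indeed $(R\psz, (z^N))$ is even $z$-adically complete in degree $0$ after noting $(B_R)_0 = R\psz$); alternatively I can observe directly that any $\GG_m$-equivariant map $X_{R,N} \to G$ lifts to $X_{R,\infty} \to G$ because the relevant obstruction and lifting groups vanish by smoothness of $G$ and completeness. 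For the kernel: an element of $E_\infty(G,\mu)(R)$ maps to the identity in $E_N(G,\mu)(R)$ iff the corresponding $g \in G(R\psz)$ reduces to $1$ in $G(R\psz/(z^N))$ and moreover $\mu(z)g\mu(z)^{-1}$ reduces to $1$ there too — that is, $g \in L_NG(R)$ and $g \in \mu^{-1}L_NG\mu(R)$. Hence the kernel is $L_NG \cap \mu^{-1}L_NG\mu$, giving \eqref{EqENGMu} as $E_N(G,\mu) = E_\infty(G,\mu)/(L_NG \cap \mu^{-1}L_NG\mu)$.

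I expect the main obstacle to be bookkeeping around the two charts and the $\GG_m$-action: one must be careful that the $\GG_m$-equivariance condition with respect to the $\mu$-conjugation $\GG_m$-action on $G$ (which, since $G$ is reductive and $\mu$ a cocharacter, is neither attracting nor repelling in general) correctly produces the asymmetric-looking condition $\mu(z)g\mu(z)^{-1} \in G(R\psz)$ rather than some symmetrized version. The cleanest way to handle this is probably to compute the grading on $\Gamma(G,\Oscr_G)$ induced by $\mu$-conjugation explicitly on matrix coefficients (or via the root-space decomposition $\Gamma(G,\Oscr_G) = \bigoplus_n \Gamma(G,\Oscr_G)_n$ where $n$ runs over pairings with $\mu$), and then note that a $\GG_m$-equivariant map $X_R \to G$ corresponds to compatible maps $\Gamma(G,\Oscr_G)_n \to (B_R)_n = z^{\max(n,0)}R\psz \cdot t^{-n} \text{ or } u^n$, which upon inverting $t$ (resp.\ $u$) and evaluating recovers $g$ (resp.\ $\mu(z)g\mu(z)^{-1}$) with values in $R\psz$. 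The surjectivity in (2) requires only that the henselian-pair hypothesis of Proposition~\ref{EFunctorialSurjective} is verified, which for the $z$-adically complete ring $R\psz$ is immediate, so no real difficulty there; the kernel computation is then formal.
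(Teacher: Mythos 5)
Your proposal is correct and follows essentially the same route as the paper: the paper also embeds $E_{\infty}(G,\mu)$ into $LG$ via restriction along $\Spec R\lsz \to R^{\Hck} \to R^{\Hck,\infty}$ (using Example~\ref{LaurentE}) and identifies the image by the equivariance computation you carry out on the two charts, and it likewise deduces (2) from Proposition~\ref{EFunctorialSurjective}, with the kernel identification left as the same unwinding you spell out. Your explicit verification of the henselian hypothesis and of the kernel via the weight-space/graded-coordinate bookkeeping is exactly the ``straightforward calculation'' the paper alludes to.
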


\begin{proof}
Let $E'(G,\mu)$ be the display group associated to $R \sends \Spec R\lsz$. By Example~\ref{LaurentE} we have $E'(G,\mu) = LG$. The open immersions $\Spec R\lsz \to R^{\Hck} \to R^{\Hck,\infty}$ define by Remark~\ref{EFunctorialInX} a monomorphism $E_{\infty}(G,\mu) \to LG$. A straight forward calculation using the first equality of \eqref{EqAltDescribeEMu} shows that the image is precisely as described in \eqref{EqEInftyGMu}. The second assertion follows then from Proposition~\ref{EFunctorialSurjective}.
\end{proof}

There is also a different description of $E_1(G,\mu)$ using the groups $P^{\pm}(\mu)$ and $U^{\pm}(\mu)$ from Remark~\ref{GGmActionGroupScheme}. To ease notation, we simply write $\Cent_G(\mu)$ instead of $\Cent_{G_{\kappa_{[\mu]}}}(\mu)$.

\begin{proposition}\label{DescribeE1Mu}
The 1-truncated $(G,\mu)$-display group $E_1(G,\mu)$ is given by the smooth affine group scheme $\Cent_G(\mu) \rtimes (U^-(\mu) \times U^+(\mu))$. In particular, $E_1(G,\mu)$ is a smooth connected algebraic group over $\kappa_{[\mu]}$ of dimension $\dim G$.
\end{proposition}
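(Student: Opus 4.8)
I want to compute $E_1(G,\mu)(R)$ explicitly for each $\kappa_{[\mu]}$-algebra $R$ using the description from \eqref{EqAltDescribeEMu}, namely $E_1(G,\mu)(R) = \Hom^{\GG_m}(X_R, G)$ where $X_R = \Spec B_R$ and $B_R$ is the $\ZZ$-graded $O$-algebra underlying $R^{\Hck,1}$. By Definition~\ref{DefNTruncHecke} and Remark~\ref{VAdicSpecialCases}~\ref{VAdicSpecialCases2}, for $N=1$ we have $R_1 = R$ and the $\ZZ$-grading on $B_R = R[t,u]/(tu-z)$ with $z = 0$ in $R_1$, so $B_R = R[t,u]/(tu)$, with $\deg t = -1$, $\deg u = 1$. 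Since $R$ is a $\kappa_{[\mu]}$-algebra the cocharacter $\mu$ gives a $\GG_m$-action on $G$ by conjugation; I then want to identify $\GG_m$-equivariant maps $\Spec R[t,u]/(tu) \to G$, which by Remark~\ref{AffineMapstoGGmQuotients} are the same as degree-$0$ graded $\kappa_{[\mu]}$-algebra maps $\Gamma(G,\Oscr_G) \to R[t,u]/(tu)$, equivalently $\GG_m$-equivariant $R$-points of $G$ over the $\GG_m$-scheme $\Spec R[t,u]/(tu)$.

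\textbf{Main computation.} The key geometric input is that $\Spec R[t,u]/(tu)$ is the union of the two coordinate axes $\Spec R[t]$ (the repeller line $(\AA^1_R)^-$) and $\Spec R[u]$ (the attractor line $(\AA^1_R)^+$), glued along the origin $\Spec R$ (the fixed point $\B{\GG_m}$-locus base-changed to $R$). Since $G$ is affine and this is a Zariski (even closed) cover, a $\GG_m$-equivariant map out of it is the same as a pair of $\GG_m$-equivariant maps out of $(\AA^1_R)^-$ and $(\AA^1_R)^+$ agreeing on the fixed point. By the computation in Example~\ref{BunGMuAA1GGm}(ii), a $\GG_m$-equivariant map $(\AA^1_R)^\pm \to G$ is an $R$-point of $P^\pm(\mu)$, and the restriction to the origin is the induced $R$-point of $\Cent_G(\mu)$ (the zero section of $(\AA^1_R)^\pm$ factors through the fixed locus, cf.\ Section~\ref{Sec:Attractor}). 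Hence
\[
E_1(G,\mu)(R) = P^-(\mu)(R) \times_{\Cent_G(\mu)(R)} P^+(\mu)(R).
\]
Now I use the semidirect product decompositions $P^\pm(\mu) \cong \Cent_G(\mu) \ltimes U^\pm(\mu)$ from Remark~\ref{GGmActionGroupScheme}: writing $p^- = c^- u^-$ and $p^+ = c^+ u^+$ with $c^\pm \in \Cent_G(\mu)$, $u^\pm \in U^\pm(\mu)$, the fibre product condition is exactly $c^- = c^+$. So the fibre product is canonically $\Cent_G(\mu) \times U^-(\mu) \times U^+(\mu)$ as a scheme; unwinding the group law shows it is the semidirect product $\Cent_G(\mu) \ltimes (U^-(\mu) \times U^+(\mu))$, where $\Cent_G(\mu)$ acts on $U^-(\mu) \times U^+(\mu)$ diagonally by conjugation (using that both $U^\pm(\mu)$ are normalized by $\Cent_G(\mu)$ inside $P^\pm(\mu)$). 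This is functorial in $R$, hence an isomorphism of group schemes.

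\textbf{The dimension count and connectedness.} Since $G$ is reductive over $O = \kappa\psz$ and $\mu$ is a cocharacter, Remark~\ref{GGmActionGroupScheme} tells us $P^\pm(\mu)$ are parabolic subgroups of $G$ with common Levi $\Cent_G(\mu)$ and unipotent radicals $U^\pm(\mu)$, all smooth, and $U^-(\mu) \times U^+(\mu)$ is geometrically an affine space whose dimension is the number of roots not in the Levi. Thus $\dim E_1(G,\mu) = \dim \Cent_G(\mu) + \dim U^-(\mu) + \dim U^+(\mu) = \dim G$. Connectedness: $\Cent_G(\mu)$ is connected (it is a Levi of the reductive group $G$), $U^\pm(\mu)$ are connected (unipotent radicals of parabolics, geometrically affine spaces), and an extension/semidirect product of connected group schemes is connected; smoothness is likewise inherited from Remark~\ref{GGmActionGroupScheme}. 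Alternatively one can note $E_1(G,\mu) = E_\infty(G,\mu)/(L_1 G \cap \mu^{-1}L_1 G\mu)$ by Proposition~\ref{DisplayGroupInfty}(ii), but the direct identification above is cleaner.

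\textbf{Expected obstacle.} The only point requiring care is checking that the evident set-theoretic bijection $P^-(\mu) \times_{\Cent_G(\mu)} P^+(\mu) \xrightarrow{\sim} \Cent_G(\mu) \ltimes (U^-(\mu)\times U^+(\mu))$ is an isomorphism of \emph{group} schemes — i.e.\ bookkeeping the group law through the semidirect product decompositions and confirming the $\Cent_G(\mu)$-action is the diagonal conjugation action. This is routine but must be done; everything else (the Zariski-cover description of $\GG_m$-equivariant maps out of $\Spec R[t,u]/(tu)$, the identifications with $P^\pm(\mu)$, the dimension and connectedness statements) follows directly from results recalled earlier in the paper.
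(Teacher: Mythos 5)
Your argument is correct, and it reaches the same fibre product $P^-(\mu)\times_{\Cent_G(\mu)}P^+(\mu)$ as the paper, but by a somewhat more elementary route. The paper works at the level of moduli stacks of bundles: it uses the gluing of $R^{\Hck,1}$ from $[\GG_{m,R}\backslash(\AA^1_R)^-]$ and $[\GG_{m,R}\backslash(\AA^1_R)^+]$ along $\B{\GG_{m,R}}$ (Section~\ref{Sec:v0}) to write $\Bunline_G^{\mu}(\Hckbar{1})$ as $\B{P^-(\mu)}\times_{\B{\Cent_G(\mu)}}\B{P^+(\mu)}$, and then needs Proposition~\ref{LimitClassifying} (surjectivity of $P^{\pm}(\mu)\to\Cent_G(\mu)$) to commute the fibre product past $\B{(-)}$ before invoking the gerbe identification $\Bunline^{[\mu]}_G(\Hck,1)\cong\B{E_1(G,[\mu])}$. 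You instead compute the display group functor directly from \eqref{EqAltDescribeEMu}: since $R[t,u]/(tu)$ is the fibre product of graded rings $R[t]\times_R R[u]$, and $G$ is affine, $\Hom^{\GG_m}(\Spec R[t,u]/(tu),G)$ is the fibre product of $\Hom^{\GG_m}((\AA^1_R)^{\mp},G)=P^{\mp}(\mu)(R)$ over $\Cent_G(\mu)(R)$, bypassing both Proposition~\ref{LimitClassifying} and the passage through $\Bunline_G^{[\mu]}$. That buys a shorter, purely functor-of-points argument; the paper's stackier formulation has the advantage of directly exhibiting the moduli stack $\Bunline_G^{\mu}(\Hckbar{1})$ itself as a fibre product of classifying stacks, which is the form used elsewhere. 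One small caveat in your write-up: the two coordinate axes do not form a Zariski cover of $\Spec R[t,u]/(tu)$, so "Zariski (even closed) cover" is not the right justification; what makes the gluing work is precisely that the coordinate ring is the fibre product $R[t]\times_R R[u]$ (a limit of graded rings), so that maps into the affine scheme $G$ — equivalently degree-zero graded algebra maps out of $\Gamma(G,\Oscr_G)$ — convert it into the asserted fibre product of point sets; this is the same mechanism the paper uses implicitly via the gluing description in Section~\ref{Sec:v0}. The remaining bookkeeping (identifying $P^-(\mu)\times_{\Cent_G(\mu)}P^+(\mu)$ with $\Cent_G(\mu)\rtimes(U^-(\mu)\times U^+(\mu))$, smoothness, connectedness, and the dimension count) matches the paper and is fine.
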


This description can be deduced from \eqref{EqENGMu}, but we will give a direct proof here.

\begin{proof}
The stack $R^{\Hck,1}$ is obtained by glueing $[\GG_{m,R}\backslash (\AA_R^1)^-]$ and $[\GG_{m,R}\backslash (\AA_R^1)^+]$ along $\B{\GG_{m,R}} = [\GG_{m,R}\backslash\{0\}]$ (Section~\ref{Sec:v0}). Therefore we find by Example~\ref{BunGMuAA1GGm}
\[
\Bunline_G^{\mu}(\Hckbar{1}) = \B{P^-(\mu)} \times_{\B{\Cent_G(\mu)}} \B{P^+(\mu)} = \B{P^-(\mu) \times_{\Cent_G(\mu)} P^+(\mu)},
\]
where the last equality holds by Proposition~\ref{LimitClassifying} below. Since $P^{\pm}(\mu) = \Cent_G(\mu) \rtimes U^{\pm}(\mu)$, this implies the claim.
\end{proof}

\begin{proposition}\label{LimitClassifying}
Let $\Ccal$ be a site, let $\Ical$ be a finite category, and let $i \sends G_i$ be an $\Ical$-diagram of sheaves of groups in $\Ccal$ such that for every arrow $i \to j$ the map $G_i \to G_j$ is surjective. Set $G := \lim_i G_i$. Then the canonical map of stacks in groupoids
\[
\Psi\colon \B{G} \lto \lim_i \B{G_i}
\]
is an equivalence.
\end{proposition}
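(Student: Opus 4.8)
The plan is to build an explicit quasi-inverse to $\Psi$ by hand, using that a morphism of stacks between classifying stacks of sheaves of groups is determined by its effect on torsors, and that the surjectivity hypothesis lets us patch compatible families of torsors. First I would unwind the target: an object of $\lim_i \B{G_i}$ over $U \in \Ccal$ is a compatible family $(P_i, \theta_{i\to j})$, where $P_i$ is a $G_i$-torsor over $U$ and, for each arrow $i \to j$ in $\Ical$, $\theta_{i\to j}$ is an isomorphism $P_i \times^{G_i} G_j \iso P_j$ of $G_j$-torsors, subject to the cocycle conditions coming from composition in $\Ical$. The functor $\Psi$ sends a $G$-torsor $P$ to the family $(P \times^G G_i)$ with the evident transition isomorphisms. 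I would construct the quasi-inverse $\Phi$ by sending $(P_i,\theta_{i\to j})$ to $P := \lim_i P_i$, the limit taken in sheaves over $U$, with the induced $G$-action; the point to check is that $P$ is a $G$-torsor, i.e.\ that it is locally non-empty and that $G$ acts simply transitively.

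The key step is the local non-emptiness of $P := \lim_i P_i$. Working locally on $U$ we may assume each $P_i$ is the trivial torsor, so choose a section $s_i \in P_i(U)$ for each $i$; the obstruction to compatibility is that $\theta_{i\to j}(s_i)$ and $s_j$ differ by an element $g_{i\to j} \in G_j(U)$. Using a total ordering refinement (or an induction on a presentation of the finite category $\Ical$) together with the surjectivity $G_i \to G_j$, I would modify the $s_i$ successively — replacing $s_i$ by $g\cdot s_i$ for suitable $g \in G_i(U)$, lifted from the relevant $G_j$ — so as to kill all the $g_{i\to j}$ simultaneously. Here finiteness of $\Ical$ is essential: it guarantees the induction terminates and that only finitely many compatibility conditions must be arranged, and surjectivity of all transition maps is exactly what allows each correction to be lifted. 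Once a compatible family of sections exists, it defines a section of $\lim_i P_i$, so $P$ is locally trivial; that $G = \lim_i G_i$ acts simply transitively on $P = \lim_i P_i$ is then formal, since limits commute with the simply-transitive action fibrewise.

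It remains to check that $\Phi$ and $\Psi$ are mutually quasi-inverse, which I expect to be routine: the natural map $(\lim_i P_i)\times^G G_j \to P_j$ is an isomorphism of $G_j$-torsors because both sides are torsors and the map is compatible with the $G_j$-actions and with a choice of local section, hence $\Psi \circ \Phi \iso \id$; and for a $G$-torsor $P$ the canonical map $P \to \lim_i(P\times^G G_i)$ is an isomorphism, again checked after passing to a trivializing cover, so $\Phi \circ \Psi \iso \id$. Both natural isomorphisms are visibly functorial in $U$, so they assemble to equivalences of stacks. The main obstacle is entirely concentrated in the patching argument of the second paragraph — arranging all the sections $s_i$ to be compatible at once — and this is where the two hypotheses (finiteness of $\Ical$ and surjectivity of every transition map) are used; everything else is diagram-chasing that I would not spell out in full.
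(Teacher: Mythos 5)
Your strategy is the same as the paper's: full faithfulness is formal (the paper quotes a result of Bieker for it), and everything is reduced to showing that the $G$-pseudo-torsor $\lim_i P_i$ has sections locally, which you attack by choosing local sections $s_i$ of the $P_i$ and correcting them using surjectivity of the transition maps. The problem is that the one step you do not spell out --- ``modify the $s_i$ successively \dots so as to kill all the $g_{i\to j}$ simultaneously'' --- is exactly the crux, and no induction of the kind you describe can exist at the stated level of generality. Take $\Ical$ to be the category with two objects $1,2$ and two parallel arrows $a,b\colon 1\to 2$, and $G_1=G_2=H$ with both transition maps the identity (certainly surjective). Then $G=\lim_i G_i=H$, while an object of $\lim_i\B{G_i}$ is a pair of $H$-torsors together with two isomorphisms between them; sending such a datum $(P_1,P_2,\phi_a,\phi_b)$ to $(P_1,\phi_a^{-1}\phi_b)$ identifies $\lim_i\B{G_i}$ with the inertia stack $[H/H]$ of $\B{H}$, and $\Psi$ is fully faithful but not essentially surjective as soon as $H\neq 1$. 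Concretely, in your notation the two discrepancies $g_a,g_b\in G_2(U)$ attached to $a$ and $b$ are changed by a modification $s_1\mapsto h_1\cdot s_1$, $s_2\mapsto h_2\cdot s_2$ into elements that both vanish only if $h_2=g_ah_1$ and $h_2=g_bh_1$, i.e.\ only if $g_a=g_b$ --- a condition that neither surjectivity nor refining the cover can produce; correspondingly $\lim_i P_i$ may simply be empty. (A poset version with the same defect: four objects $a,b<c,d$ and no further relations.) So your patching argument, and with it the proposition in the stated generality, requires a hypothesis on the shape of $\Ical$ beyond finiteness, roughly that its nerve be simply connected.

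Two mitigating remarks. First, your induction does work in the case the paper actually uses, namely the cospan $\bullet\to\bullet\leftarrow\bullet$ of Proposition~\ref{DescribeE1Mu}, and more generally whenever the arrows can be ordered so that each new arrow meets the already-treated part of the diagram in at most one object (or the offending cycle is bounded by compositions, so that the cocycle conditions force consistency, as for a commutative square): then at each step only one new constraint on one new section appears and surjectivity provides the lift after a refinement. Second, the paper's own proof is no more detailed at exactly this point --- it asserts that finiteness of the set of morphisms plus surjectivity yields, after refinement, an element of $\lim_i E_i(U_\alpha)$ --- so you have in effect reproduced the paper's argument, gap included. Still, as a self-contained proof of the statement as given, the simultaneous-killing step is a genuine gap: you should either restrict the shape of $\Ical$ (the fiber-product case suffices for the application in the paper) or supply an argument handling parallel arrows and undirected cycles not bounded by commutativity relations --- and the example above shows that no such argument can exist.
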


\begin{proof}
By \cite[2.1]{Bieker_IntegralModelsDeepBT}, $\Psi$ is fully faithful. Let $(E_i)_{i\in I}$ be in $\lim_i \B{G_i}$. By loc.~cit.~it suffices to show that the $G$-pseudo-torsor $\lim_i E_i$ has locally a section. But this follows easily from the hypothesis that all transition maps are epimorphisms: Let $U$ be an object in $\Ccal$. Since the number of objects in $\Ical$ is finite, we find a covering $(U_{\alpha} \to U)_{\alpha}$ such that $E_i(U_{\alpha}) \ne \emptyset$ for all $i$. Since the number of morphisms in $\Ical$ is finite and since all transition maps are epimorphisms, we find, after passing to a refinement of $(U_{\alpha} \to U)$, even an element in $E(U_{\alpha}) = \lim_i E_i(U_{\alpha})$ for all $\alpha$.
\end{proof}

%
%
%
%
%

Let $\Escr$ be any $G$-bundle of type $[\mu]$ on $R^{\Hck,N}$. By Proposition~\ref{GeneralBunlineGMu} we see that $E_N(G,[\mu])$ is a strong inner form of $\Autline(\Escr)$. In particular $E_N(G,[\mu]) \cong \Autline(\Escr)$ if $R$ is a separably closed field or a finite field.

\begin{example}\label{EXGmuViaAutomorphism}
For $G = \GL_n$ the calculations in \cite[3.4]{Lau_HigherFrames} carry over verbatim: In this case $[\mu]$ is given by a tuple $(\mu_1,\dots,\mu_n) \in \ZZ^n$ with $\mu_1 \geq \mu_2 \geq \cdots \geq \mu_n$. Then for every local $\kappa$-algebra $R$ any $\GL_n$-bundle $\Escr$ of type $[\mu]$ on $R^{\Hck,N}$ is of the form
\[
\Escr = \bigoplus_i\Oscr_{R^{\Hck,N}}(-\mu_i)
\]
and $\Endline(\Escr)(R)$ can be identified with the ring of matrices $A = (a_{ij})$ with
\[
a_{ij} \in \Hom_{R^{\Hck,N}}(\Oscr(-\mu_j),\Oscr(-\mu_i)) \cong \Rees(R_N,z)_{\mu_j-\mu_i} \cong R\psz/(z^N).
\]
The entries with $\mu_i = \mu_j$ form a sequence of square matrices $A_1,\dots,A_r$ along the diagonal, say of size $n_1,\dots,n_r$ with entries in $\Rees(R_N,z)_0 = R_N$ and an endomorphism given by a matrix $A$ is an automorphism if and only if $A_i \in M_{n_i}(R_N)$ is invertible (or, equivalently, the image of $A_i$ in $M_{n_i}(R)$ is invertible).

This shows in particular that $E_{N+1}(\GL_n,[\mu]) \to E_{N}(\GL_n,[\mu]))$ is surjective and that its kernel is a vector group of dimension $n^2 = \dim(\GL_n)$.
\end{example}

The last assertion holds in general.

\begin{proposition}\label{KernelTruncationE}
For all $1 \leq N < \infty$, the truncation map $E_{N+1}(G,[\mu]) \to E_N(G,[\mu])$ is a surjective homomorphism of smooth algebraic groups. Its kernel is a vector group of dimension $\dim(G)$.
\end{proposition}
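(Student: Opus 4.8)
The plan is to reduce the statement to a concrete computation with the group schemes $E_N(G,[\mu])$, for which we already have the explicit description $E_N(G,\mu) = E_\infty(G,\mu)/(L_NG \cap \mu^{-1}L_NG\mu)$ from Proposition~\ref{DisplayGroupInfty}. First I would establish surjectivity: by Proposition~\ref{EFunctorialSurjective} applied to the closed immersion $R^{\Hck,N} \to R^{\Hck,N+1}$, it suffices to note that the relevant pair of graded rings in degree $0$ is $(R_{N+1}, \ker(R_{N+1}\to R_N)) = (R\psz/(z^{N+1}), (z^N)/(z^{N+1}))$, which is a henselian pair since $(z^N)/(z^{N+1})$ is nilpotent. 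This gives that $E_{N+1}(G,[\mu])(R) \to E_N(G,[\mu])(R)$ is surjective for all $R$, hence a surjection of fppf sheaves, and since both sides are smooth affine group schemes of finite type (Proposition~\ref{ENMuRep}), the map is a smooth surjective homomorphism with smooth kernel.

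Next I would identify the kernel. Using $\eqref{EqENGMu}$, the kernel of $E_{N+1}(G,\mu) \to E_N(G,\mu)$ is
\[
K_N := \bigl(L_NG \cap \mu^{-1}L_NG\mu\bigr)\big/\bigl(L_{N+1}G \cap \mu^{-1}L_{N+1}G\mu\bigr),
\]
viewed inside $E_{N+1}(G,\mu)$. The plan is to compute this Lie-theoretically. Since $G$ is smooth over $\kappa\psz$, choosing a $\GG_m$-stable decomposition of $\Lie G$ under the cocharacter $\mu$ into weight spaces $\mathfrak{g} = \bigoplus_{i\in\ZZ} \mathfrak{g}_i$, the points of $L_NG(R)$ infinitesimally near the identity in $R\psz/(z^{N+1})$ are governed by $z^N\,(\mathfrak{g}\otimes_\kappa R)$, and the extra condition $\mu^{-1}(-)\mu \in L_NG$ shears the $z^N$-coefficient in weight $i$ by $z^{i}$, i.e. it survives the quotient precisely when $N + i \ge N$, that is $i \ge 0$. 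More precisely, conjugation by $\mu(z)$ sends the component of $g$ in $\mathfrak{g}_i$ with a factor $z^N$ to one with factor $z^{N+i}$; for this to still lie in $L_NG$ when reduced mod $z^{N+1}$ one needs $N + i \ge N$, but note that the quotient by $L_{N+1}G$-type terms already kills everything divisible by $z^{N+1}$, so the condition becomes: either $i \ge 0$ (the whole weight space contributes), giving a copy of $\mathfrak{g}_i \otimes R$, or $i < 0$ in which case $z^{N+i} \cdot \mathfrak g_i$ contributes nothing mod $z^{N+1}$ coming from a $z^N$-term. Summing over $i$ and being careful about exactly which weight spaces contribute from which side of the intersection — I expect the net answer to be $\bigoplus_{i} \mathfrak g_i \otimes R = \mathfrak g \otimes R$ as a $\kappa$-vector space, hence $\dim K_N = \dim G$. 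This also shows $K_N$ is a vector group (abelian, unipotent, with the obvious $\GG_a$-module structure), since commutators land in higher $z$-order terms which vanish mod $z^{N+1}$.

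The main obstacle will be the bookkeeping in the preceding paragraph: one must be careful that $K_N$ is the quotient of the \emph{intersection} $L_NG \cap \mu^{-1}L_NG\mu$, not of $L_NG$ itself, so the naive count of contributing weight spaces has to be checked against both conditions simultaneously, and one must verify that the map $\mathfrak g \otimes R \to K_N(R)$ built from the exponential/truncation is actually an isomorphism of group \emph{schemes} and not merely on $\bar\kappa$-points — this is where smoothness of $G$ and of the $E_N(G,\mu)$ enters, together with flatness of the relevant $\Fil$-functors from Assumption~\ref{ConditionFrameFunctora}. A cleaner alternative, which I would pursue if the direct computation gets unwieldy, is to use Example~\ref{EXGmuViaAutomorphism} together with Proposition~\ref{GeneralBunlineGMu}: $E_N(G,[\mu])$ is a strong inner form of $\Autline(\Escr_{[\mu]})$ over $R^{\Hck,N}$, so the kernel of the truncation map is the kernel of $\Autline(\Escr)$ over $R^{\Hck,N+1}$ restricting to $R^{\Hck,N}$, which by the description of $\Endline(\Escr)$ as matrices with entries in $\Rees(R_N,z)_{\mu_j-\mu_i} \cong R\psz/(z^N)$ in the $\GL_n$-case, and by embedding $G \hookrightarrow \GL_n$ $\GG_m$-equivariantly (via a faithful representation respecting $\mu$) in general, reduces the dimension count to the $\GL_n$-computation already carried out in Example~\ref{EXGmuViaAutomorphism}, where the kernel is manifestly a vector group of dimension $\dim\Aut = \dim G$ cut out by the equations defining $G$ inside $\GL_n$. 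Either way the conclusion is that the truncation map is a surjection of smooth algebraic groups with vector-group kernel of dimension $\dim G$.
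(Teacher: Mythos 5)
Your overall route coincides with the paper's: surjectivity on $R$-valued points from Proposition~\ref{EFunctorialSurjective} (and you rightly check its henselian hypothesis, which holds since the kernel of $R_{N+1}\to R_N$ is nilpotent), followed by the identification of the kernel via \eqref{EqENGMu} as $(L_NG \cap \mu^{-1}L_NG\mu)/(L_{N+1}G \cap \mu^{-1}L_{N+1}G\mu)$. The paper then simply asserts that this quotient is a vector group of dimension $\dim(G)$; that assertion is exactly the step you try to flesh out, and it is there that your bookkeeping needs a correction.

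As you state it (``for $i\geq 0$ the whole weight space contributes; for $i<0$ it contributes nothing mod $z^{N+1}$ coming from a $z^N$-term''), the count would give $\dim P^+(\mu)$, not $\dim(G)$. The correct accounting is that the weight-$i$ component of an element of $L_NG\cap\mu^{-1}L_NG\mu$ is forced to vanish to order $\max(N,\,N-i)$, while in $L_{N+1}G\cap\mu^{-1}L_{N+1}G\mu$ it must vanish to order $\max(N+1,\,N+1-i)$; hence \emph{every} weight space contributes exactly one coefficient to the quotient, namely the coefficient of $z^{N}$ when $i\geq 0$ and of $z^{N-i}$ when $i<0$, and the total is $\sum_i\dim\mathfrak g_i=\dim(G)$. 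With this shifted parametrization the group law becomes coordinatewise addition modulo the smaller subgroup (products and commutators of the deviations vanish to strictly higher binding order), which is what makes the kernel a vector group. Your fallback via a $\GG_m$-equivariant embedding $G\hookrightarrow\GL_n$ does not by itself close this gap: the kernel for $G$ is only a closed subgroup scheme of the vector group computed in Example~\ref{EXGmuViaAutomorphism}, and in characteristic $p$ a closed subgroup scheme of a vector group need not be smooth, nor a vector subgroup of the expected dimension, so one still needs the weight-space identification with $\mathfrak g\otimes R$ (and one cannot instead invoke Corollary~\ref{DimE} for the dimension, since its proof relies on the present proposition).
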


\begin{proof}
The truncation map $E_{N+1}(G,[\mu]) \to E_N(G,[\mu])$ is surjective on $R$-valued points for every $\kappa$-algebra $R$ by Proposition~\ref{EFunctorialSurjective}. By \eqref{EqENGMu}, its kernel is given by
\[
(L_NG \cap \mu^{-1}L_NG\mu)/(L_{N+1}G \cap \mu^{-1}L_{N+1}G\mu),
\]
which is a vector group of dimension $\dim(G)$.
\end{proof}

\begin{corollary}\label{DimE}
\begin{assertionlist}
\item\label{DimE1}
For all $1 \leq N < \infty$, the display group $E_N(G,[\mu])$ is a smooth connected affine group scheme of dimension $N\dim(G)$. Its maximal reductive quotient is isomorphic to $\Cent_{G}(\mu)$ and its unipotent radical is a successive extension of vector groups. 
\item\label{DimE2}
For all $1 \leq N \leq \infty$, every $E_N(G,[\mu])$-bundles for the fpqc topology is already \'etale locally trivial.
\end{assertionlist}
\end{corollary}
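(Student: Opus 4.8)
The plan is to deduce \ref{DimE1} by induction on $N$ along the tower of truncation maps $E_{N+1}(G,[\mu]) \to E_N(G,[\mu])$. The base case $N = 1$ is Proposition~\ref{DescribeE1Mu}, which identifies $E_1(G,\mu)$ with $\Cent_G(\mu) \rtimes (U^-(\mu) \times U^+(\mu))$: this is smooth, connected and affine of dimension $\dim G$, its maximal reductive quotient is the Levi subgroup $\Cent_G(\mu)$, and its unipotent radical $U^-(\mu) \times U^+(\mu)$ is a successive extension of vector groups since each $U^\pm(\mu)$, being the unipotent radical of a parabolic subgroup of the reductive group $G$, is split unipotent. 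For the inductive step I would invoke Proposition~\ref{KernelTruncationE}: $E_{N+1}(G,[\mu]) \to E_N(G,[\mu])$ is surjective with kernel $V_N$ a vector group of dimension $\dim G$. An extension of a smooth connected affine group scheme by a vector group is again smooth, connected and affine, with dimension larger by $\dim G$, so $E_N(G,[\mu])$ is smooth connected affine of dimension $N\dim G$. Composing truncations gives a surjection $E_N(G,[\mu]) \to \Cent_G(\mu)$ whose kernel is an iterated extension of the vector groups $V_1,\dots,V_{N-1}$ and of $U^-(\mu) \times U^+(\mu)$, hence a smooth connected split unipotent normal subgroup; since $\kappa_{[\mu]}$ is a finite (so perfect) field and $\Cent_G(\mu)$ is reductive, this kernel is forced to be the unipotent radical of $E_N(G,[\mu])$, which settles all structural claims.

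For \ref{DimE2} with $N < \infty$ the point is that $E_N(G,[\mu])$ is smooth, affine and of finite type over $\kappa_{[\mu]}$ by \ref{DimE1}. Given an $E_N(G,[\mu])$-bundle $P$ over a $\kappa_{[\mu]}$-algebra $R$ for the fpqc topology, fpqc descent of affine morphisms shows that $P$ is representable by an affine $R$-scheme, and since faithful flatness, finite presentation and smoothness of $P \to \Spec R$ are fpqc-local on the target, they are inherited from $E_N(G,[\mu])$; a faithfully flat smooth morphism admits sections étale locally, so $P$ is étale locally trivial.

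The case $N = \infty$ needs more care, and this is where I expect the only real obstacle. Here $E_\infty(G,[\mu]) = \lim_{N<\infty} E_N(G,[\mu])$ by Proposition~\ref{ENMuRep}, with transition maps surjective and smooth and with vector-group kernels $V_N$. For an $E_\infty(G,[\mu])$-bundle $P$ over $\Spec R$, set $P_N := P \times^{E_\infty(G,[\mu])} E_N(G,[\mu])$, so that $P = \lim_N P_N$ as fpqc sheaves (true fpqc-locally, hence globally) and each $P_{N+1} \to P_N$ is a $V_N$-torsor. The plan is: using the finite case, choose an étale cover $\Spec R' \to \Spec R$ trivializing $P_1$; then show by induction on $N$ that $P_N$ is trivial over $\Spec R'$, using that once $P_N$ is trivial it is affine over $\Spec R'$, so $H^1(P_N, V_N) = 0$ and the $V_N$-torsor $P_{N+1} \to P_N$ splits, which combined with a section of $P_N \to \Spec R'$ yields a section of $P_{N+1} \to \Spec R'$; the same vanishing makes $P_{N+1}(\Spec R') \to P_N(\Spec R')$ surjective, so $P(\Spec R') = \lim_N P_N(\Spec R')$ is nonempty and $P$ is étale locally trivial. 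The subtle point is precisely that a \emph{single} étale cover --- the one trivializing the $\Cent_G(\mu)$-bundle underlying $P_1$ --- suffices for the whole tower, with no successive refinement needed; this is exactly what the vanishing of $H^1$ of vector groups over affine schemes provides. Everything else reduces to standard structure theory of linear algebraic groups over a perfect field together with the quoted propositions.
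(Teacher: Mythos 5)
Your proof of \ref{DimE1} and of \ref{DimE2} for $N<\infty$ is exactly the paper's argument: base case via Proposition~\ref{DescribeE1Mu}, induction along Proposition~\ref{KernelTruncationE}, and smoothness of $E_N(G,[\mu])$ giving \'etale-local triviality of fppf/fpqc torsors. The genuine difference is the case $N=\infty$ of \ref{DimE2}: the paper simply cites \cite[A.4.8]{RS_IntersectionMotive} for pro-smooth affine groups with smooth surjective transition maps and vector-group kernels, whereas you reprove this directly by pushing the torsor out to the tower $P_N = P \times^{E_\infty(G,[\mu])} E_N(G,[\mu])$, trivializing $P_1$ \'etale-locally once, and then lifting sections up the tower using vanishing of $H^1$ of vector groups on affine schemes; since the tower is indexed by $\NN$ and the transition maps on $R'$-points become surjective, the limit of sections is nonempty. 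This is essentially the content of the cited result, so your route buys self-containedness at the cost of having to be careful about one point you elide: $P_{N+1}\to P_N$ is a torsor not under $V_N$ itself but under its inner twist by $P$ (via the conjugation action of $E_{N+1}(G,[\mu])$ on its normal subgroup $V_N$); the vanishing argument still goes through because this action is linear, so the twist is the vector group attached to a quasi-coherent module and its fppf $H^1$ over an affine scheme vanishes, but this should be said. A second small slip: at this point of the paper $\kappa$ is an arbitrary field (finiteness is only imposed from Section~\ref{Sec:TruncatedShtukaStack} on), so you cannot invoke perfectness of $\kappa_{[\mu]}$; fortunately your identification of the unipotent radical does not need it, since a smooth connected normal unipotent subgroup with reductive quotient is automatically the (geometric) unipotent radical over any field.
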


\begin{proof}
In Proposition~\ref{DescribeE1Mu} we saw the results for $N=1$ and we conclude for $N < \infty$ by Proposition~\ref{KernelTruncationE}. The last assertion of \ref{DimE1} follows from Proposition~\ref{ENMuRep}.

Assertion~\ref{DimE2} holds for $N < \infty$ since $E_N(G,[\mu])$ is smooth. It holds then for $N = \infty$ by \cite[A.4.8]{RS_IntersectionMotive} using \ref{DimE1}.
\end{proof}

\subsection{The truncated shtuka stack}\label{Sec:TruncatedShtukaStack}

From now on let $p$ be a prime number, and let $\kappa = \FF_q$ be a finite field of characteristic $p$ with $q$ elements.

For every $\FF_q$-algebra $R$ and for all $1\leq N \leq \infty$ we define
\[
\phi\colon R_N \to R_N, \qquad \sum_{i\geq0}a_iz^i \sends \sum_{i\geq0}a_i^qz^i
\]
and let
\[
\sigma\colon \Spec R_N \to R^{\Hck,N}
\]
be the induced map as in \eqref{EqDefSigmaVAdicFrame}.

We obtain a frame $(R_N, z, \sigma)$ and morphisms
\begin{align*}
\tau\colon &\Spec R_N \iso (R^{\Hck,N})^{\ne+} \mono R^{\Hck,N}, \\
\sigma\colon &\Spec R_N \ltoover{\phi} \Spec R_N \iso (R^{\Hck,N})^{\ne-} \mono R^{\Hck,N}.
\end{align*}

\begin{definition}\label{DefTruncatedShtukaStack}
Let $R$ be an $\FF_q$-algebra and $1 \leq N \leq \infty$. The frame stack attached to the frame $(R_N,z,\sigma)$ is called the \emph{$N$-truncated shtuka stack over $R$}. It is denoted by $R^{\Sht,N}$, i.e.,
\[
R^{\Sht,N} = \colim (\xymatrix{\Spec R_N
\ar@<.5ex>[r]^-(.4){\tau}\ar@<-.5ex>[r]_-(.4){\sigma} & R^{\Hck,N}})
\]
We also set
\[
R^{\Sht} := (R^{\Sht,\infty})^{\ne 0} = \colim(\xymatrix{\Spec R\psz \ar@<0.5ex>[r]^{\tau} \ar@<-0.5ex>[r]_{\sigma} & R^{\Hck}}).
\]
and call it the \emph{shtuka stack over $R$}.
\end{definition}

\begin{remdef}\label{VecTruncShtukaStack}
By definition a vector bundle over $R^{\Sht,N}$ is the same as a vector bundle $\Escr$ over $R^{\Hck,N}$ together with an isomorphism $\sigma^*\Escr \iso \tau^*\Escr$ over $\Spec R_N$. A similar description holds for $G$-bundles, i.e., one has
\begin{equation}\label{EqBunGTrSht}
\Bun_G(R^{\Sht,N}) = \lim (\xymatrix{\Bun_G(R^{\Hck,N}) \ar@<0.5ex>[r]^{\tau^*} \ar@<-0.5ex>[r]_{\sigma^*} & \Bun_G(\Spec R_N)}).
\end{equation}
\end{remdef}

This yields indeed a geometrization (in the sense of the introduction) for local shtukas:

\begin{proposition}\label{VecShtukaStack}
The category of vector bundles on $\Sht_R$ is equivalent to the category of pairs $(\Escr,\Phi)$ consisting of a vector bundle $\Escr$ on $R\psz$ and an isomorphism of vector bundles $\Phi\colon \phi^*(\Escr)[1/z] \iso \Escr[1/z]$ over $R\lsz$.

One has an analogous description for the groupoid of $G$-bundles on $\Sht_R$. In other words, $\Bun_G(R^{\Sht})$ is the groupoid of local $G$-shtukas over $R$.
\end{proposition}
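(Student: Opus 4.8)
\textbf{Proposal for the proof of Proposition~\ref{VecShtukaStack}.}
By Definition~\ref{DefTruncatedShtukaStack}, $R^{\Sht} = \Fcal(R\psz,z,\phi)^{\ne0}$ is the coequalizer, in the $2$-category of Adams stacks, of the two maps $\tau,\sigma\colon \Spec R\psz \rightrightarrows R^{\Hck}$. The plan is to unwind this coequalizer and then substitute the explicit description of $R^{\Hck}$. First I would invoke the $\Fcal(A)^{\ne0}$-version of Remark~\ref{VecFrameStack}: since $R^{\Hck} = \Re(R\psz,z)^{\ne0}$ is an Adams stack (Remark~\ref{ReAne0ResolutionProp}) and $\B{G}$ is an Adams stack (Example~\ref{BGAdams}, as $\kappa\psz$ is a Dedekind domain), the universal property of the coequalizer identifies $\Vec(R^{\Sht})$ with the category of pairs $(\Fscr,\Psi)$ consisting of $\Fscr \in \Vec(R^{\Hck})$ and an isomorphism $\Psi\colon \sigma^*\Fscr \iso \tau^*\Fscr$ of vector bundles over $\Spec R\psz$, and $\Bun_G(R^{\Sht})$ with the analogous groupoid in which $\Fscr$ is a $G$-bundle on $R^{\Hck}$.

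Second I would feed in the description of vector and $G$-bundles on $R^{\Hck}$ from Remark~\ref{DescribeGBunHck}: as $R^{\Hck}$ is obtained by gluing the two copies $\Re(R\psz,z)^{\ne+}$ and $\Re(R\psz,z)^{\ne-}$ of $\Spec R\psz$ along $\Spec R\lsz$, a vector bundle (resp.\ $G$-bundle) on $R^{\Hck}$ is a triple $(\Escr_1,\Escr_2,\alpha)$ with $\Escr_1,\Escr_2$ vector bundles (resp.\ $G$-bundles) on $R\psz$ and $\alpha\colon \Escr_1[1/z] \iso \Escr_2[1/z]$ an isomorphism over $R\lsz$. Via the identifications $\tau\colon \Spec R\psz \iso \Re(R\psz,z)^{\ne+} \mono R^{\Hck}$ and $\sigma\colon \Spec R\psz \ltoover{\phi} \Spec R\psz \iso \Re(R\psz,z)^{\ne-} \mono R^{\Hck}$ fixed in Section~\ref{Sec:FrameAvphi}, one reads off from Remark~\ref{PullbackQCohReesSpecial} that $\tau^*(\Escr_1,\Escr_2,\alpha) = \Escr_1$ and $\sigma^*(\Escr_1,\Escr_2,\alpha) = \phi^*\Escr_2$. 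Combining with the previous step, a vector bundle on $R^{\Sht}$ becomes a quadruple $(\Escr_1,\Escr_2,\alpha,\Psi)$ with $\Escr_1,\Escr_2$ vector bundles on $R\psz$, $\alpha\colon \Escr_1[1/z]\iso\Escr_2[1/z]$ over $R\lsz$, and $\Psi\colon \phi^*\Escr_2 \iso \Escr_1$ over $R\psz$; the same holds for $G$-bundles.

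Third I would eliminate the datum $\Psi$ by a standard ``contractible choices'' argument. The functor sending $(\Escr_1,\Escr_2,\alpha,\Psi)$ to $(\Escr,\Phi) := (\Escr_2,\ \alpha\circ\Psi[1/z])$ lands in the category of pairs $(\Escr,\Phi)$ with $\Escr$ a vector bundle on $R\psz$ and $\Phi\colon \phi^*\Escr[1/z]\iso\Escr[1/z]$ over $R\lsz$; a quasi-inverse is $(\Escr,\Phi)\mapsto(\phi^*\Escr,\Escr,\Phi,\id_{\phi^*\Escr})$. One checks directly that these functors are mutually quasi-inverse equivalences; the only thing to notice is that a morphism of quadruples is a pair of morphisms $\Escr_i \to \Escr_i'$ compatible with $\alpha$ and $\Psi$, so that $\Psi$ forces the component on $\Escr_1$ to be $\phi^*$ of the component on $\Escr_2$. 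The identical computation with $\Vec$ replaced by $\Bun_G$ throughout yields the description of $\Bun_G(R^{\Sht})$, and the resulting groupoid is by definition the groupoid of local $G$-shtukas over $R$ in the sense recalled around \eqref{EqIntroPhiShtuka}.

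No step presents a genuine obstacle; this is a bookkeeping proposition. The only point needing care is the second step — correctly matching the two copies of $\Spec R\psz$ inside $R^{\Hck}$ with the maps $\tau$ and $\sigma$ and tracking the directions of $\alpha$ and $\Psi$ — together with keeping the base equal to $R^{\Hck} = \Re(R\psz,z)^{\ne0}$ (rather than $R^{\Hck,\infty}$), so that the coequalizer being unwound is exactly the one defining $(R^{\Sht,\infty})^{\ne0}$.
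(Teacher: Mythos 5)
Your proposal is correct and follows essentially the same route as the paper's proof: unwind the coequalizer via (the $\Fcal(A)^{\ne0}$-version of) Remark~\ref{VecFrameStack}, plug in the triple description of bundles on $R^{\Hck}$ from Remark~\ref{DescribeGBunHck}, identify $\tau^*$ and $\sigma^*$ as $\Escr_1$ and $\phi^*\Escr_2$, and eliminate the extra isomorphism by the evident mutually quasi-inverse functors. The only difference is your (harmless) reversal of the direction convention for $\alpha$ relative to Remark~\ref{DescribeGBunHck}, and your slightly more detailed verification of the quasi-inverse.
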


\begin{proof}
A vector bundle on $\Sht_R$ is the same as a vector bundle $\Escr'$ on $R^{\Hck}$ together with an isomorphism $\alpha\colon \sigma^*\Escr' \iso \tau^*\Escr'$. By Remark~\ref{DescribeGBunHck}, $\Escr'$ itself corresponds to a triple $(\Escr_1,\Escr_2,\Phi)$, where $\Escr_1$ and $\Escr_2$ are vector bundles on $\Spec R\psz$ and $\Phi$ is an isomorphism $\Escr_2[1/z] \iso \Escr_1[1/z]$. Then $\alpha$ is an isomorphism $\phi^*\Escr_2 \iso \Escr_1$ and sending $(\Escr,\Phi)$ to the tuple $(\phi^*\Escr_2,\Escr_2,\Phi,\id)$ defines the desired equivalence.

The proof for $G$-bundles is the same.
\end{proof}

For $1 \leq N \leq N' \leq \infty$ one has a commutative diagram
\[\xymatrix{
\Spec R_N \ar[r] \ar[d] & R^{\Hck,N}_R \ar[d] \\
\Spec R_{N'} \ar[r] & R^{\Hck,N'},
}\]
where the horizontal maps are either the maps $\sigma$ or the maps $\tau$ and where the vertical maps are the canonical closed immersions. Hence one obtains a map
\begin{equation}\label{EqTransitionShtuka}
R^{\Sht,N} \lto R^{\Sht,N'}.
\end{equation}

As a special case of Corollary~\ref{GBundlesZAdicFRameStack} and Corollary~\ref{LimitBunGTruncFrame} we obtain the following result.

\begin{theorem}\label{ColimitNTruncShtuka}
Let $R$ be a $\kappa$-algebra.
\begin{assertionlist}
\item\label{ColimitNTruncShtuka1}
In the 2-category of Adams stacks the maps \eqref{EqTransitionShtuka} induce an isomorphism
\[
\colim_{N<\infty}R^{\Sht,N} \liso R^{\Sht,\infty}.
\]
In particular we have
\[
\Bun_G(R^{\Sht,\infty}) \liso \lim_{N<\infty}\Bun_G(R^{\Sht,N})
\]
\item\label{ColimitNTruncShtuka2}
Pullback via the inclusion $R^{\Sht} \mono R^{\Sht,\infty}$ yields a fully faithful functor
\begin{equation}\label{EqGShtukasInfty}
\Bun_G(R^{\Sht,\infty}) \lto \Bun_G(R^{\Sht}).
\end{equation}
Its essential image consists of those $G$-bundles that are \'etale locally on $R$ isomorphic to the $G$-bundle $\Escr_{[\mu]}$ attached to the conjugacy class of $\mu$.

The functor \eqref{EqGShtukasInfty} is an equivalence if $R$ is a field.
\end{assertionlist}
\end{theorem}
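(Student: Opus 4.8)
The plan is to obtain both assertions as specializations of the general results on frames of the form $(A,v,\phi)$ proved in Section~\ref{Sec:ReesVAdic}. First I would record the dictionary. Take $O := \kappa\psz$, which is a discrete valuation ring, hence a Dedekind domain and a noetherian regular ring of dimension $\leq 1$; take $A := R\psz$, regarded as an $O$-algebra; take $L := A$ with $v\colon L \to A$ multiplication by $z$, which is injective on $R\psz$; take $\phi$ the $q$-power Frobenius on $R\psz$ of Section~\ref{Sec:TruncatedShtukaStack}, which fixes $z$; and set $J := (z) \subseteq A$, so that $J$ is $\phi$-invariant, $A$ is $J$-adically complete, and $A/J^N = R_N$ for $1 \leq N < \infty$. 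Then Example~\ref{IntroExampleSht} (together with Definition~\ref{DefTruncatedShtukaStack}) identifies $\Fcal(A/J^N,v,\phi)$ with $R^{\Sht,N}$, $\Fcal(A,v,\phi)$ with $R^{\Sht,\infty}$, and $\Fcal(A,v,\phi)^{\ne0}$ with $R^{\Sht}$, compatibly with the transition maps \eqref{EqTransitionShtuka}; and since $G$ is reductive over $O$ it is in particular smooth and affine over $O$, so all hypotheses used below hold.

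For assertion~\ref{ColimitNTruncShtuka1} I would then simply invoke Corollary~\ref{LimitBunGTruncFrame} with this data. It gives the isomorphism of Adams stacks $\colim_{N<\infty}\Fcal(A/J^N,v,\phi) \liso \Fcal(A,v,\phi)$, i.e.\ $\colim_{N<\infty}R^{\Sht,N} \liso R^{\Sht,\infty}$, together with the induced equivalence $\Bun_G(R^{\Sht,\infty}) \liso \lim_{N<\infty}\Bun_G(R^{\Sht,N})$ --- the latter because $\B{G}$ is an Adams stack (Example~\ref{BGAdams}, as $O$ is Dedekind) and $\Hom(\colim(-),\B{G}) = \lim\Hom(-,\B{G})$.

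For assertion~\ref{ColimitNTruncShtuka2} I would apply Corollary~\ref{GBundlesZAdicFRameStack}. Since $v$ is injective and $G$ is smooth affine over $O$, part~\ref{GBundlesZAdicFRameStack1} of that corollary says pullback along $R^{\Sht} = \Fcal(A,v,\phi)^{\ne0} \mono \Fcal(A,v,\phi) = R^{\Sht,\infty}$ is fully faithful on $\Bun_G$; part~\ref{GBundlesZAdicFRameStack2}, combined with the description of the essential image on the Rees-stack level in Corollary~\ref{RestrictGBunReAvne0}, says the essential image consists of those $G$-bundles that become isomorphic, after an \'etale cover of $\Spec R$, to a standard bundle $\Escr_{[\mu]}$ for some conjugacy class $[\mu]$ of a cocharacter of $G$ (a priori defined only over a finite separable extension $\kappa'$ of $\kappa$, so to be formulated exactly as in Proposition~\ref{BunGHckbar}) --- this is the stated description. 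Finally, when $R = k$ is a field, $A = k\psz$ is itself a discrete valuation ring and $v(L) = (z)$ is its maximal ideal, so the hypotheses of part~\ref{GBundlesZAdicFRameStack3} are met (with $r = 1$) and \eqref{EqGShtukasInfty} is an equivalence; equivalently this is the coequalizer, via Lemma~\ref{GBundlesFrameStack}, of the Hecke-level equivalence of Proposition~\ref{BunGHckbar}.

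Since all of this is a specialization of statements already established, the only substantive work is bookkeeping: matching the abstract frame $(R\psz,z,\phi)$ and its reductions with the stacks $R^{\Sht,\infty}$, $R^{\Sht}$, $R^{\Sht,N}$ introduced in this section, compatibly with all structure maps, and checking that the ``\'etale locally $\cong \Escr_{[\mu]}$'' description produced by Corollary~\ref{GBundlesZAdicFRameStack} coincides, after passing to a cover that trivializes the type, with the conjugacy-class formulation in the statement. The one place meriting a little care --- but where I expect no genuine obstacle --- is the field-of-definition issue for $[\mu]$, which is why the essential image must be phrased relative to finite separable extensions of $\kappa$ rather than over $\kappa$ itself.
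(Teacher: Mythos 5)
Your proposal is correct and follows essentially the same route as the paper, which obtains the theorem precisely as a special case of Corollary~\ref{LimitBunGTruncFrame} (for the colimit and the limit of $\Bun_G$) and Corollary~\ref{GBundlesZAdicFRameStack} (for full faithfulness, the essential image, and the field case), applied to the frame $(R\psz,z,\phi)$ and its $z$-adic truncations. Your extra care about the field of definition of $[\mu]$ is consistent with how the paper phrases the essential image in Proposition~\ref{BunGHckbar}.
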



We can now define the moduli stack of truncated $G$-bundles as follows.

\begin{definition}\label{DefModuliTruncShtukas}
For all $1 \leq N \leq \infty$ and every $\kappa$-algebra $R$ we define
\[
\Bunline_G(\Sht,N)(R) := \Bun_G(R^{\Sht,N}), \qquad \Bunline_G(\Sht)(R) := \Bun_G(R^{\Sht}).
\]
\end{definition}

The prestack $\Bunline_G(\Sht,N)$ is an fpqc-stack by \eqref{EqBunGTrSht} and Proposition~\ref{BunGHeckeAlgebraic}. By Proposition~\ref{VecShtukaStack}, $\Bunline_G(\Hck)$ is the moduli stack of local $G$-shtukas and hence also an fpqc-stack.

Again, we have a decomposition according to the type over some splitting extension $\kappa'$ of $\kappa$
\begin{equation}\label{EqDecompositionModuliShtuka}
\Bunline_G(\Sht,N)_{\kappa'} \cong \coprod_{[\mu] \in X_*/W} \Bunline^{[\mu]}_G(\Sht,N)_{\kappa'},
\end{equation}
and each open and closed substack $\Bunline^{[\mu]}_G(\Sht,N)_{\kappa'}$ is defined over the reflex field $\kappa_{[\mu]}$ of the conjugacy class $[\mu]$.

Let us describe the stack $\Bunline^{[\mu]}_G(\Sht,N)$ as a quotient stack. As in \eqref{EqGenActionEonG} we obtain for all $1 \leq N \leq \infty$ maps of affine group schemes over $\kappa$
\[
\tau_N,\sigma_N\colon E_N(G,[\mu]) \lto L^{(N)}G.
\]
Using the description in Proposition~\ref{DisplayGroupInfty}, $\tau_{\infty}$ is given by the inclusion $L^+G \cap \mu^{-1}L^+G\mu \mono L^+G$ and $\sigma_{\infty}$ is given by
\[
\sigma_{\infty}(g) = \phi(\mu(z)g\mu(z)^{-1}).
\]
For $1 \leq N < \infty$, functoriality shows that $\tau_N$ is induced by
\[
L^+G \cap \mu^{-1}L^+G\mu \ltoover{\tau_{\infty}} L^+G \lto L^+G/L_NG = L^{(N)}(G),
\]
and $\sigma_N$ is induced by
\[
L^+G \cap \mu^{-1}L^+G\mu \ltoover{\sigma_{\infty}} L^+G \lto L^+G/L_NG = L^{(N)}(G.
\]
For $1 \leq N \leq \infty$, we obtain an action of the affine flat group scheme $E_N(G,[\mu])$ on the affine flat scheme $L^{(N)}(G)$ by
\[
(e,g) \sends \tau_N(e)g\sigma_N(e).
\]
By our general results about $G$-bundles on frame stacks, we obtain now immediately the following description of the moduli space of $N$-truncated shtukas.

\begin{theorem}\label{DescribeModSpaceTruncShtukas}
For all $1 \leq N \leq \infty$ we have an equivalence of fpqc-stacks
\[
\Bunline^{[\mu]}_G(\Sht,N) \cong [E_N(G,[\mu])\backslash L^{(N)}(G)].
\]
For $N < \infty$, $\Bunline^{[\mu]}_G(\Sht,N)$ is a smooth algebraic stack of relative dimension $0$ over $\kappa_{[\mu]}$ with geometric connected components.
\end{theorem}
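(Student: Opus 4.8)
The plan is to deduce Theorem~\ref{DescribeModSpaceTruncShtukas} from the general classification results of Section~\ref{Sec:DisplayGerbes} and Section~\ref{EGENERAL}, which requires two things: verifying that the hypotheses of those general results apply to the present situation, and identifying the resulting abstract quotient stack with the concrete one $[E_N(G,[\mu])\backslash L^{(N)}(G)]$.

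First I would set up the general framework: take $O = \kappa\psz$ with the quotient map $O \to \kappa$, and let $A$ be the functor $R \sends (R_N, (z^j R_N)_j, \sigma_R, \alpha_R)$ on $\kappa$-algebras, where $\alpha_R\colon R_N/zR_N \iso R$ is the canonical identification. This is precisely the situation of Example~\ref{IntroExampleSht}, so $\Re(A_R) = R^{\Hck,N}$ and $\Fcal(A_R) = R^{\Sht,N}$. I would then check the three assumptions of Assumption~\ref{ConditionFrameFunctor}. Assumption~\ref{ConditionFrameFunctora}: $R \sends \Fil^j_{R_N} = z^j R_N$ is, for $N < \infty$, representable by a product of affine lines (for $N = \infty$ one takes the limit), hence affine; this is exactly the computation in the proof of Proposition~\ref{ENMuRep}. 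Assumption~\ref{ConditionFrameFunctorb}: $(R_N, zR_N)$ is a henselian pair since $zR_N$ is nilpotent for $N < \infty$ and $z$-adically complete for $N = \infty$. Assumption~\ref{ConditionFrameFunctorc}: $\B{L^AG} \to L^A\B{G}$ is an equivalence since $L^AG(R) = G(R_N) = L^{(N)}G(R)$ and $G$-bundles on $\Spec R_N$ can be described by their trivializations (cf.~Example~\ref{LaurentE} and the argument in Proposition~\ref{ENMuRep}). Then Theorem~\ref{DescribeBunlineFramStack} immediately gives $\Bunline^{[\mu]}_G(\Fcal(A)) \cong [E_A(G,[\mu])\backslash L^AG]$, and under the identifications above the left side is $\Bunline^{[\mu]}_G(\Sht,N)$, the display group $E_A(G,[\mu])$ is $E_N(G,[\mu])$ of \eqref{EqDefENMuPreSheaf}, and $L^AG = L^{(N)}G$. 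One must also check that the abstract action in \eqref{EqGenActionEonG} coincides with the one written just before the theorem, i.e.\ $(e,g) \sends \tau_N(e) g \sigma_N(e)$; this is a matter of unwinding the definitions of $\tau$ and $\sigma$ in Example~\ref{IntroExampleSht} against Lemma~\ref{EqualizerClassifyingStack}, noting that the $(-)^{-1}$ discrepancy there is absorbed by the convention that $\sigma$ already incorporates $\phi$.

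For the smoothness, algebraicity, relative dimension, and connectedness claims for $N < \infty$, I would argue as follows. By Corollary~\ref{DimE}\ref{DimE1}, $E_N(G,[\mu])$ is a smooth connected affine group scheme over $\kappa_{[\mu]}$ of dimension $N \dim G$, and by Remark~\ref{PropertiesLAG} (or directly, $L^{(N)}G = \Res_{\kappa_N/\kappa}G$) the scheme $L^{(N)}G$ is smooth affine over $\kappa$ of dimension $N\dim G$. Hence the quotient stack $[E_N(G,[\mu])\backslash L^{(N)}G]$ is a smooth algebraic stack over $\kappa_{[\mu]}$ of relative dimension $\dim L^{(N)}G - \dim E_N(G,[\mu]) = 0$. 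For geometric connectedness of the components: over an algebraically closed field, $L^{(N)}G$ is connected (it is a smooth connected variety since $G$ is connected and restriction of scalars along a finite purely inseparable — actually trivial — extension preserves connectedness), and the quotient of a connected scheme by a group scheme action is connected, so each $\Bunline^{[\mu]}_G(\Sht,N)$ has geometrically connected generic fiber; alternatively one uses that $[E_N(G,[\mu])\backslash L^{(N)}G] \to \B{E_N(G,[\mu])}$ is surjective with connected source after base change. Finally, the decomposition into open and closed substacks indexed by $[\mu]$ is \eqref{EqDecompositionModuliShtuka}, which comes from the type morphism and Definition~\ref{DefTypeGBundleBGGm}.

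The main obstacle I anticipate is the careful bookkeeping in the case $N = \infty$: there $E_\infty(G,[\mu])$ is only pro-smooth (Proposition~\ref{ENMuRep}) and $L^{(\infty)}G = L^+G$ is only pro-smooth affine, so the quotient $[E_\infty(G,[\mu])\backslash L^+G]$ is an fpqc-stack but not an algebraic stack in the usual sense, and one must be content with the equivalence of fpqc-stacks without the finer geometric assertions — this is why the second sentence of the theorem restricts to $N < \infty$. A secondary subtlety is ensuring that the passage from the fppf-stackification $\Bunline_G(\Xcal)$ used in Section~\ref{Sec:DisplayGerbes} agrees with the naive prestack $R \sends \Bun_G(R^{\Sht,N})$; this follows from Corollary~\ref{DimE}\ref{DimE2} (every $E_N(G,[\mu])$-bundle is étale locally trivial) together with the observation that $\Bunline_G(\Sht,N)$ is already an fpqc-stack by \eqref{EqBunGTrSht} and Proposition~\ref{BunGHeckeAlgebraic}, so no further stackification is needed. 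Everything else is routine once the dictionary between the $v$-adic frame of Example~\ref{IntroExampleSht} and the abstract setup is fixed.
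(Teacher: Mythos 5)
Your proposal is correct and follows essentially the same route as the paper: the equivalence is the specialization of Theorem~\ref{DescribeBunlineFramStack} to the frame functor $R \sends (R_N,z,\sigma_R)$ (with Assumption~\ref{ConditionFrameFunctor} verified just as you do, and the identification $E_A(G,[\mu]) = E_N(G,[\mu])$, $L^AG = L^{(N)}G$), while the assertions for $N<\infty$ follow from Corollary~\ref{DimE} together with the fact that $L^{(N)}G$ is smooth, affine, geometrically connected of dimension $N\dim G$. The only inaccuracy is your aside that the $(-)^{-1}$ in Lemma~\ref{EqualizerClassifyingStack} is ``absorbed'' because $\sigma$ incorporates $\phi$ --- it is not; the formula displayed before the theorem should simply be read as $(e,g)\sends\tau_N(e)g\sigma_N(e)^{-1}$ as in that lemma --- but since the theorem only refers to the quotient by the action induced by $\tau$ and $\sigma$, this does not affect your argument.
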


\begin{proof}
The description of $\Bunline^{[\mu]}_G(\Sht,N)$ as a quotient stack is a special case of Theorem~\ref{DescribeBunlineFramStack}. For $N < \infty$, $L^{(N)}(G)$ is a smooth (geometrically) connected affine scheme of dimension $N\dim(G)$ over $\kappa$. The same holds for the group schemes $E_N(G,[\mu])$ by Corollary~\ref{DimE}. Therefore $\Bunline^{[\mu]}_G(\Sht,N)$ is a geometrically connected smooth algebraic stack of relative dimension $0$.
\end{proof}

\begin{corollary}\label{DescribePointsShtuka}
For all $1 \leq N \leq \infty$ and for every strictly henselian local $\kappa$-algebra $R$ we have a functorial bijection
\[
\Bunline^{[\mu]}_G(\Sht,N)(R) \cong [E_N(G,[\mu])(R)\backslash G(R_N)].
\]
\end{corollary}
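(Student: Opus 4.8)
The statement to prove is Corollary~\ref{DescribePointsShtuka}: for a strictly henselian local $\kappa$-algebra $R$ and any $1 \leq N \leq \infty$, there is a functorial bijection
\[
\Bunline^{[\mu]}_G(\Sht,N)(R) \cong [E_N(G,[\mu])(R)\backslash G(R_N)].
\]
The plan is to read this off from Theorem~\ref{DescribeModSpaceTruncShtukas}, which already identifies $\Bunline^{[\mu]}_G(\Sht,N)$ with the quotient stack $[E_N(G,[\mu])\backslash L^{(N)}(G)]$, together with the observation that over a strictly henselian local ring the relevant torsors are trivial.

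First I would spell out what $[E_N(G,[\mu])\backslash L^{(N)}(G)](R)$ is: by definition of a quotient stack it is the groupoid of pairs $(P,\beta)$, where $P$ is an $E_N(G,[\mu])$-bundle over $\Spec R$ for the fpqc topology and $\beta\colon P \to L^{(N)}(G)$ is an $E_N(G,[\mu])$-equivariant map. The set of isomorphism classes of such pairs, when $P$ is the trivial bundle, is exactly $E_N(G,[\mu])(R)\backslash L^{(N)}(G)(R) = E_N(G,[\mu])(R)\backslash G(R_N)$, using $L^{(N)}(G)(R) = G(R_N)$ from \eqref{EqDefLoop} and the following lines. So it remains to show that every $E_N(G,[\mu])$-bundle over a strictly henselian local $\kappa$-algebra $R$ is trivial, and that there are no nontrivial automorphisms contributing — but the latter is automatic since $\Bunline^{[\mu]}_G(\Sht,N)(R)$ in the statement is a set (the automorphism groups are absorbed into the quotient). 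For the triviality of bundles: by Corollary~\ref{DimE}\ref{DimE2}, every $E_N(G,[\mu])$-bundle for the fpqc topology is already étale locally trivial; and over a strictly henselian local ring every étale covering admits a section, so the bundle is trivial. Hence the groupoid $[E_N(G,[\mu])\backslash L^{(N)}(G)](R)$ is equivalent to the action groupoid of $E_N(G,[\mu])(R)$ acting on $G(R_N)$, whose set of isomorphism classes is $[E_N(G,[\mu])(R)\backslash G(R_N)]$.

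The main (and essentially only) technical point is the triviality of $E_N(G,[\mu])$-bundles over a strictly henselian local ring, which reduces to Corollary~\ref{DimE}\ref{DimE2}; for $N < \infty$ this follows from smoothness of $E_N(G,[\mu])$ (Corollary~\ref{DimE}\ref{DimE1}), and for $N = \infty$ it follows from the cited result on pro-smooth affine group schemes. Functoriality in $R$ is clear since every step — the identification of $\Bunline^{[\mu]}_G(\Sht,N)$ as a quotient stack in Theorem~\ref{DescribeModSpaceTruncShtukas}, the evaluation $L^{(N)}(G)(R) = G(R_N)$, and the triviality of torsors — is natural in $R$. I expect no real obstacle here; the corollary is a formal consequence of the structural theorem plus the elementary fact that étale-locally-trivial torsors are trivial over strictly henselian local rings.
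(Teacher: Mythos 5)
Your proposal is correct and follows essentially the same route as the paper: the paper's proof consists precisely of citing Theorem~\ref{DescribeModSpaceTruncShtukas} together with Corollary~\ref{DimE}~\ref{DimE2}, the point being exactly the one you spell out, namely that an \'etale-locally-trivial $E_N(G,[\mu])$-torsor over a strictly henselian local ring is trivial, so the $R$-points of the quotient stack reduce to the naive quotient $E_N(G,[\mu])(R)\backslash G(R_N)$. Your extra unpacking of the quotient-stack points as pairs $(P,\beta)$ and the remark on functoriality are fine and just make explicit what the paper leaves implicit.
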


\begin{proof}
This follows from Theorem~\ref{DescribeModSpaceTruncShtukas} and Proposition~\ref{DimE}~\ref{DimE2}.
\end{proof}

We next relate $R^{\Sht,1}$ to the zip stack $R^{\Zip}$ defined in \cite{Yaylali_DerivedFZips}, where Yaylali proved that the groupoid of $G$-bundles of $R^{\Zip}$ is equivalent to the groupoid of $G$-zips over $R$, see \cite{PWZ2} for the notion of $G$-zips. Let us recall the definition $R^{\Zip}$.

\begin{defrem}\label{DefFZipStack}
For every $\kappa$-algebra $R$ let $\Frob_R$ the $q$-Frobenius $a \sends a^q$ on $R$. Define an $\FF_q$-scheme $Z_R$ by the pushout diagram
\[\xymatrix{
\Spec R \sqcup \Spec R \ar[r]^-{0 \sqcup \infty} \ar[d]_{\id_R \sqcup \Frob_R} & \PP^1_R \ar[d] \\
\Spec R \ar[r]^i & Z_R.
}\]
By \cite[7.1]{Ferrand_Conducteur} the pushout exists in the category of schemes. If we let act $\GG_{m,\FF_q}$ over $\FF_q$ on $\Spec R$ trivially and on $\PP^1_R$ by the usual action
\[
\GG_{m,\FF_q} \times_{\FF_q} \PP^1_R = \GG_{m,R} \times_R \PP^1_R \to \PP^1_R,
\]
then $\id_R \sqcup \Frob_R$ and $0 \sqcup \infty$ are $\GG_{m,\FF_q}$-equivariant and we can form the quotient stack over $\FF_q$
\begin{equation}\label{EqDefRFZip1}
R^{\Zip} := [\GG_{m,\FF_q}\backslash Z_R].
\end{equation}
The map $i\colon \Spec R \to Z_R$ is a closed immersion corresponding to an $R$-valued point $z_0 \in Z_R(R)$.

We can describe $R^{\Zip}$ also as follows. Let
\begin{equation}\label{EqReesDisp1}
D_R = \set{(f,g) \in R[t] \times R[u]}{f(0)^q = g(0)}.
\end{equation}
This is a $\ZZ$-graded $\FF_q$-algebra with $\deg(t) = -1$ and $\deg(u) = 1$, as before. Let $\tau',\sigma'\colon \Spec R \to [\GG_{m,\FF_q}\backslash \Spec D_R]$ be the morphisms induced by the $\ZZ$-graded maps $(f,g) \sends f \in R[x,x^{-1}]$ and $(f,g) \sends g \in R[x,x^{-1}]$. Then
\begin{equation}\label{EqDefRFZip2}
R^{\Zip} = \colim (\xymatrix{\Spec R \ar@<0.5ex>[r]^-{\tau'} \ar@<-0.5ex>[r]_-{\sigma'} & [\GG_{m,\FF_q}\backslash \Spec D_R]}).
\end{equation}
\end{defrem}

\begin{proposition}\label{CompareZip}
There are a universal homeomorphisms
\begin{equation}\label{EqCompareZip}
R^{\Sht,1} \lto R^{\Zip} \lto R^{\Sht,1}
\end{equation}
of algebraic stacks over $\FF_q$, functorial in $R$.
\end{proposition}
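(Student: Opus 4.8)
The plan is to reduce to the case $N=1$, where everything becomes very explicit. Here $R_1 = R\psz/(z)=R$ with $z\equiv 0$, so by Remark~\ref{VAdicSpecialCases} and Section~\ref{Sec:v0} the $1$-truncated Hecke stack is $R^{\Hck,1}=\Re(R,0)=[\GG_{m,\FF_q}\backslash\Spec\Rees(R,0)]$ with $\Rees(R,0)=R[t,u]/(tu)$, $\deg t=-1$, $\deg u=1$, and the endomorphism $\phi$ is the $q$-Frobenius $\Frob_R$, so that $R^{\Sht,1}=\Fcal(R,0,\Frob_R)$. Thus both $R^{\Sht,1}$ and $R^{\Zip}$ are presented as coequalizers, taken in the $2$-category of Adams stacks, of a diagram $\Spec R\rightrightarrows\Ycal$ where $\Ycal$ is an affine $\GG_{m,\FF_q}$-quotient stack and the two maps are the open immersions identifying $\Spec R$ with the complement of the attractor, resp.\ of the repeller: for $R^{\Sht,1}$ one has $\Ycal=R^{\Hck,1}$ and the pair $(\tau,\sigma)$ with $\sigma$ precomposed by $\Spec\Frob_R$, while for $R^{\Zip}$ one has $\Ycal=[\GG_{m,\FF_q}\backslash\Spec D_R]$ with $D_R=\{(f,g)\in R[t]\times R[u]\mid f(0)^q=g(0)\}$ and the pair $(\tau',\sigma')$ without the Frobenius twist (Definition~\ref{DefFZipStack}). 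In both cases $\tau,\tau'$ are the loci where $t$ is invertible and $\sigma,\sigma'$ (before the twist) the loci where $u$ is invertible.

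Next I would build two graded $\FF_q$-algebra homomorphisms between the chart rings: $\psi\colon\Rees(R,0)\to D_R$ with $t\sends(t,0)$, $u\sends(0,u)$ and $c\sends(c,c^q)$ for $c\in R$; and $\chi\colon D_R\to\Rees(R,0)$, $(f,g)\sends(f^{(q)},g)$, where $f^{(q)}$ is obtained from $f\in R[t]$ by raising all coefficients to the $q$-th power and fixing $t$. Both are well defined (for $\psi$ because $(t,0)(0,u)=0$; for $\chi$ because $f^{(q)}(0)=f(0)^q=g(0)$). On the graded piece of degree $d$, $\psi$ is the identity of $R$ for $d\le 0$ and $\Frob_R$ for $d\ge 1$, whereas $\chi$ is $\Frob_R$ for $d\le 0$ and the identity for $d\ge 1$; in particular the kernels of $\psi$ and $\chi$ are contained in the respective nilradicals, both maps are integral, and $y^q$ lies in the image for every $y$ in the target. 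Consequently $\Spec\psi$ and $\Spec\chi$ are universal homeomorphisms, and so are the induced morphisms $\overline\psi\colon[\GG_{m,\FF_q}\backslash\Spec D_R]\to R^{\Hck,1}$ and $\overline\chi\colon R^{\Hck,1}\to[\GG_{m,\FF_q}\backslash\Spec D_R]$. A direct computation on the localizations away from the attractor and repeller shows that $\overline\psi$ restricts to the identity on the $t$-invertible locus and to $\Spec\Frob_R$ on the $u$-invertible locus, and symmetrically $\overline\chi$ restricts to $\Spec\Frob_R$ on the $t$-invertible locus and to the identity on the $u$-invertible locus. Combining this with the coequalizer relation on the two sides, one checks that $\overline\chi$ followed by $[\GG_{m,\FF_q}\backslash\Spec D_R]\to R^{\Zip}$ coequalizes $(\tau,\sigma)$ and that $\overline\psi$ followed by $R^{\Hck,1}\to R^{\Sht,1}$ coequalizes $(\tau',\sigma')$; by the universal property of the coequalizers this produces morphisms $b\colon R^{\Sht,1}\to R^{\Zip}$ and $a\colon R^{\Zip}\to R^{\Sht,1}$, functorial in $R$.

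Finally I would argue that $a$ and $b$ are universal homeomorphisms. The composite ring endomorphisms $\chi\circ\psi$ of $\Rees(R,0)$ and $\psi\circ\chi$ of $D_R$ are precisely the ``coefficient Frobenius'' maps, that is, the endomorphisms induced by $\Frob_R\colon R\to R$ under the functors $R\mapsto\Rees(R,0)$ and $R\mapsto D_R$; hence $\overline\psi\circ\overline\chi$ and $\overline\chi\circ\overline\psi$, being composites of universal homeomorphisms, are universal homeomorphisms, and passing to the coequalizers one finds that $a\circ b$ and $b\circ a$ are the endomorphisms of $R^{\Sht,1}$ and $R^{\Zip}$ induced by $\Frob_R$, which are universal homeomorphisms as well (after base change to a smooth presentation they are coefficient Frobenii, hence base changes of $\Spec\Frob_R$). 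A direct check is also possible: on smooth presentations $a$ and $b$ are representable, integral and radicial, and they induce homeomorphisms on underlying topological spaces, the last point being immediate since $\Spec\Frob_R$ is the identity on underlying spaces, so the two gluing patterns match under $\overline\psi$, resp.\ $\overline\chi$. Either way one concludes. The step I expect to demand the most care is the bookkeeping around the colimits in the $2$-category of Adams stacks: one has to make sure that forming these coequalizers is compatible with topological realization and that representability and integrality of $\overline\psi$ and $\overline\chi$ descend, so that $a$ and $b$ are genuine universal homeomorphisms of algebraic stacks rather than maps that only become such after further localization.
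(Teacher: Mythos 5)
Your proposal is correct and follows essentially the same route as the paper: both arguments come down to the two graded $\FF_q$-algebra homomorphisms between $C_R=R[t,u]/(tu)$ and $D_R$ whose composites are the coefficient Frobenii --- in the $(f,g)$-coordinates your $\psi$ and $\chi$ are $(f,g)\sends(f,g^{(q)})$ and $(f,g)\sends(f^{(q)},g)$, which are the well-defined versions of the two maps displayed in the paper's proof (whose printed formulas appear to have the two directions transposed). The paper leaves the compatibility with the coequalizer presentations of $R^{\Sht,1}$ and $R^{\Zip}$ and the universal-homeomorphism verification implicit; your write-up supplies exactly those checks.
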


\begin{proof}
Set
\[
C_R := R[t,u]/(tu) = \set{(f,g) \in R[t] \times R[u]}{f(0) = g(0)}.
\]
Then $R^{\Sht,1} = [\GG_m\backslash \Spec C_R]$ and the maps \eqref{EqCompareZip} are induced by the map of $\ZZ$-graded $\FF_q$-algebras
\begin{align*}
C_R &\lto D_R, \qquad (f,g) \sends (f^{(q)},g),\\
D_R &\lto C_R, \qquad (f,g) \sends (f,g^{(q)}),
\end{align*}
where $(\sum_{i\geq0}a_it^i)^{(q)} = \sum_{i\geq0}a_i^qt^i$, and similarly for $g^{(q)}$.
%
\end{proof}

Let $G{-}\textup{Zip}^{\mu}$ be the stack of $G$-zips of type $\mu$ \cite{PWZ2}. Since we have $G{-}\textup{Zip}^{\mu}(R) = \Bun^{[\mu]}_G(R^{\Zip})$ by \cite[A.5]{Yaylali_DerivedFZips} we obtain the following corollary.

\begin{corollary}\label{CompareGZip}
Pullbacks via \eqref{EqCompareZip} yield universal homeomorphism of algebraic stacks
\[
\Bunline_G^{[\mu]}(\Sht,1) \lto G{-}\textup{Zip}^{\mu} \lto \Bunline_G^{[\mu]}(\Sht,1).
\]
\end{corollary}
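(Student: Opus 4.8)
The plan is to push the two universal homeomorphisms of Proposition~\ref{CompareZip} through the functor ``$G$-bundles of type $[\mu]$'', and then to recognize the two resulting morphisms of moduli stacks --- via the explicit presentations available on both sides --- as morphisms of quotient stacks attached to universal homeomorphisms of the acting group schemes.

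First I would recall from the proof of Proposition~\ref{CompareZip} that the maps in \eqref{EqCompareZip} are induced by the $\ZZ$-graded $\FF_q$-algebra maps $C_R\to D_R$, $(f,g)\mapsto(f^{(q)},g)$, and $D_R\to C_R$, $(f,g)\mapsto(f,g^{(q)})$. These are $\GG_m$-equivariant, hence define morphisms over $\B{\GG_{m,\FF_q}}$, and they are functorial in the $\FF_q$-algebra $R$. Being morphisms over $\B{\GG_{m,\FF_q}}$, pullback of $G$-bundles along them commutes with the type morphism \eqref{EqTypeHecke} (which is computed by restriction to the fixed-point locus), so it preserves $G$-bundles of type $[\mu]$. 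Taking $G$-bundles, restricting to type $[\mu]$, using the identification $G{-}\textup{Zip}^{\mu}(R)=\Bun^{[\mu]}_G(R^{\Zip})$ of \cite[A.5]{Yaylali_DerivedFZips}, and fpqc-sheafifying in $R$, one obtains the two morphisms of algebraic stacks over $\kappa_{[\mu]}$ in the statement. I would also note that the composite $R^{\Sht,1}\to R^{\Zip}\to R^{\Sht,1}$, and likewise the composite in the other order, corresponds to the coefficient-wise $q$-power endomorphism $(f,g)\mapsto(f^{(q)},g^{(q)})$ of $C_R$ (resp.\ of $D_R$), which is again a universal homeomorphism by Proposition~\ref{CompareZip}.

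It remains to see that the two morphisms are universal homeomorphisms. Here I would use Theorem~\ref{DescribeModSpaceTruncShtukas} together with Proposition~\ref{DescribeE1Mu}, giving $\Bunline^{[\mu]}_G(\Sht,1)\cong[E_1(G,[\mu])\backslash L^{(1)}G]$ with $L^{(1)}G=G$ and $E_1(G,[\mu])\cong\Cent_G(\mu)\rtimes(U^-(\mu)\times U^+(\mu))$, and the analogous presentation $G{-}\textup{Zip}^{\mu}\cong[E_\mu\backslash G]$ from \cite{PWZ2}, \cite{Yaylali_DerivedFZips}, the zip group $E_\mu$ being abstractly again of the form $\Cent_G(\mu)\rtimes(U^-(\mu)\times U^+(\mu))$. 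Tracing the algebra maps $C_R\leftrightarrow D_R$ through the description \eqref{EqAltDescribeEMu} of the display group and through the quotient description of $G{-}\textup{Zip}^{\mu}$, one checks that the induced homomorphism of acting group schemes is the $q$-power on the $\Cent_G(\mu)\rtimes U^-(\mu)$-factor and the identity on $U^+(\mu)$ (resp.\ the identity on the first factor and the $q$-power on $U^+(\mu)$ in the other direction); this is a surjective homomorphism with infinitesimal kernel, hence a finite, purely inseparable, surjective morphism, i.e.\ a universal homeomorphism, and it intertwines the two actions on $G$. One then concludes with the general observation that a morphism of quotient stacks $[H\backslash G]\to[H'\backslash G]$ induced by a universal homeomorphism of smooth affine group schemes $H\to H'$ compatible with an isomorphism of the two copies of $G$ is itself a universal homeomorphism: this can be verified after base change along the smooth atlas $G\to[H'\backslash G]$, where the morphism becomes the pullback of $H\to H'$ composed with a gerbe banded by the (infinitesimal) kernel, and both of these are universal homeomorphisms. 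The symmetric argument handles the other direction.

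I expect the real work to lie entirely in this last step --- matching the quotient presentation of $G{-}\textup{Zip}^{\mu}$ from \cite{Yaylali_DerivedFZips}, \cite{PWZ2} with the frame-theoretic presentation $[E_1(G,[\mu])\backslash L^{(1)}G]$, and pinning down the comparison morphism of group schemes as the partial $q$-Frobenius described above; once this is in hand the universal-homeomorphism property is formal. An alternative that would bypass the group-scheme comparison is to prove, once and for all, that a $\B{\GG_m}$-linear universal homeomorphism of ``Hecke-type'' algebraic stacks that is functorial in $R$ induces a universal homeomorphism on the associated $[\mu]$-moduli stacks; but a proof of that statement appears to require essentially the same analysis of fixed-point loci and of automorphism group schemes.
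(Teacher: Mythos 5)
Your first paragraph reproduces how the paper obtains the two maps, and your remark that the two composites are induced by the coefficient-wise $q$-power maps is exactly the key point; the paper itself states the corollary as an immediate consequence of Proposition~\ref{CompareZip} together with the identification $G{-}\textup{Zip}^{\mu}(R)=\Bun^{[\mu]}_G(R^{\Zip})$, with no comparison of zip and display groups at all. The route you choose instead can probably be completed, but as written it has a concrete flaw. Writing out the maps of coequalizer diagrams behind \eqref{EqCompareZip}, the map induced by $(f,g)\mapsto(f,g^{(q)})$ is the identity on the $\Spec R$-level, while the one induced by $(f,g)\mapsto(f^{(q)},g)$ is the $q$-Frobenius there; consequently, when you trace the two pullbacks through the presentations $[E_1(G,[\mu])\backslash L^{(1)}G]$ and the zip-group presentation of $G{-}\textup{Zip}^{\mu}$ from \cite{PWZ2}, \cite{Yaylali_DerivedFZips}, the induced map on the $L^{(1)}G$-coordinate (the gluing isomorphism) is the identity for one direction but the $q$-Frobenius of $G$ for the other, not an isomorphism. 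Your auxiliary lemma, which requires the morphism of quotient stacks to be ``compatible with an isomorphism of the two copies of $G$,'' therefore does not apply to that direction; you need the variant in which both the homomorphism of acting groups and the equivariant map of the acted-on schemes are merely universal homeomorphisms (this is true, and your atlas argument proves it with the same effort). Beyond this, the factor-wise description of the group homomorphisms (complementary partial $q$-Frobenii with infinitesimal kernels) is essentially what comes out, but it is precisely the step you leave unverified, and it requires matching Yaylali's identification with the frame-theoretic presentation of Theorem~\ref{DescribeModSpaceTruncShtukas} on the level of quotient presentations --- none of which the paper needs.

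Note finally that the observation you make and then abandon already finishes the proof: by functoriality of $R\mapsto R^{\Sht,1}$ and $R\mapsto R^{\Zip}$ in $R$, each composite is the map induced by the $q$-Frobenius of $R$, hence is the absolute $q$-Frobenius endomorphism of $\Bunline^{[\mu]}_G(\Sht,1)$, resp.\ of $G{-}\textup{Zip}^{\mu}$; these are universal homeomorphisms (visibly so on the quotient presentations), and a morphism which, after arbitrary base change, acquires both a continuous left inverse and a continuous right inverse from these Frobenii is a homeomorphism on underlying topological spaces. This sandwich argument is the intended content of the paper's one-line derivation and avoids the comparison of zip and display groups entirely.
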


\begin{remark}\label{RelateToYan}
One can view the above results as a reformulation of Theorem~B and Theorem~C from Qijun Yan \cite{Yan_RelationZipStackModuliStack}.
\end{remark}

\subsection{Traverso bounds for truncated $G$-shtukas}

We keep the above notation.

\begin{proposition}\label{TruncationSurjective}
For all $\infty \geq N' > N \geq 1$ and for every $\kappa$-algebra $R$, the truncation map $\Bunline_G^{[\mu]}(\Sht,N')(R) \lto \Bunline_G^{[\mu]}(\Sht,N)(R)$ is locally for the \'etale topology on $R$ essentially surjective.
\end{proposition}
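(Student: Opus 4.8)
The plan is to run everything through the explicit quotient-stack description obtained earlier. By Theorem~\ref{DescribeModSpaceTruncShtukas} one has $\Bunline^{[\mu]}_G(\Sht,N) \cong [E_N(G,[\mu])\backslash L^{(N)}(G)]$ for all $1 \le N \le \infty$, and under these identifications the truncation morphism $\Bunline^{[\mu]}_G(\Sht,N') \to \Bunline^{[\mu]}_G(\Sht,N)$ is, by construction, the morphism of quotient stacks induced by the equivariant pair $E_{N'}(G,[\mu]) \to E_N(G,[\mu])$, $L^{(N')}(G) \to L^{(N)}(G)$ coming from the truncation $R_{N'} \twoheadrightarrow R_N$. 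So the first step is to record this compatibility; it is implicit in the proof of Theorem~\ref{DescribeBunlineFramStack} (the equivalences there are functorial in the frame, and \eqref{EqTransitionShtuka} is exactly the morphism of frames induced by $R_{N'}\twoheadrightarrow R_N$), and I would spell it out if a referee wants it.

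Next, fix a $\kappa$-algebra $R$ and an object $x \in \Bunline^{[\mu]}_G(\Sht,N)(R)$, i.e. an $E_N(G,[\mu])$-torsor $P$ over $R$ together with an $E_N(G,[\mu])$-equivariant map $P \to L^{(N)}(G)$. I would use Corollary~\ref{DimE}\,\ref{DimE2} to pass to an étale covering $\Spec R' \to \Spec R$ over which $P$ becomes trivial; then $x|_{R'}$ is the image of some $g' \in L^{(N)}(G)(R') = G(R'_N)$ under the projection $L^{(N)}(G) \to [E_N(G,[\mu])\backslash L^{(N)}(G)]$. It therefore suffices to lift $g'$ along the transition map $G(R'_{N'}) \to G(R'_N)$. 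For $N' < \infty$ this map is surjective because $R'_{N'}\twoheadrightarrow R'_N$ has nilpotent kernel $(z^N)/(z^{N'})$ and $G$ is smooth over $O$; for $N' = \infty$ one writes $G(R'\psz) = \lim_M G(R'_M)$ (using that $G$ is affine over $O$) and lifts $g'$ step by step through the smooth surjections $G(R'_{M+1})\twoheadrightarrow G(R'_M)$ to obtain a compatible system, hence an element $\tilde g' \in G(R'\psz) = L^{(\infty)}(G)(R')$ lifting $g'$. In either case the image $\tilde x'$ of $\tilde g'$ in $\Bunline^{[\mu]}_G(\Sht,N')(R')$ is, by equivariance of the truncation maps, sent by the truncation functor to the image of $g'$ in $[E_N(G,[\mu])\backslash L^{(N)}(G)](R')$, which is $x|_{R'}$. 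This is precisely étale-local essential surjectivity.

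I do not expect a genuine obstacle here; the two places needing a little care are (a) the identification of \eqref{EqTransitionShtuka} with the map of quotient stacks induced by $R_{N'}\twoheadrightarrow R_N$, and (b) the inverse-limit bookkeeping in the case $N' = \infty$. Alternatively, when $R$ is strictly henselian local the statement is immediate from Corollary~\ref{DescribePointsShtuka} together with the surjectivity of $G(R_{N'}) \to G(R_N)$ established above; a general $R$ reduces to this case after an étale cover, so I would likely present the argument in that streamlined form rather than unwinding torsors by hand.
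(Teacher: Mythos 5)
Your proposal is correct and follows essentially the same route as the paper: identify the stacks via Theorem~\ref{DescribeModSpaceTruncShtukas}, lift an $R$-point étale locally to $L^{(N)}(G)$ using smoothness of $E_N(G,[\mu])$ (Corollary~\ref{DimE}), and then lift through $G(R_{N'}) \to G(R_N)$ by smoothness of $G$. The only cosmetic difference is that the paper handles both cases $N'<\infty$ and $N'=\infty$ at once by invoking the henselian pair $(R_{N'},(z^N))$, whereas you do the $N'=\infty$ case by successive lifting through nilpotent thickenings and passing to the limit, which amounts to the same thing.
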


It is easy to give examples that the truncation map is usually not surjective on morphisms.

\begin{proof}
By Theorem~\ref{DescribeModSpaceTruncShtukas} we have $\Bunline_G^{[\mu]}(\Sht,N) = [E_N(G,\mu)\backslash L^{(N)}(G)]$, hence any $R$-valued point of $\Bunline_G^{[\mu]}(\Sht,N)$ lifts \'etale locally on $R$ to an $R$-valued point of $L^{(N)}(G)$ because $E_N(G,\mu)$ is a smooth group scheme by Corollary~\ref{DimE}. Since $G$ is smooth and $(R_{N'},(z^N))$ is henselian, any $R$-valued point of $L^{(N)}G$ lifts to an $R$-valued point of $L^{(N')}G$.
\end{proof}

\begin{remark}\label{TruncationSurjectivePrecise}
The proof of Proposition~\ref{TruncationSurjective} shows that the obstruction to lift an $R$-valued point of $\Bunline_G^{[\mu]}(\Sht,N)$ lies in $H^1(R,E_N(G,[\mu])) \cong H^1(R,\Cent_G(\mu))$, where the identity follows from Corollary~\ref{DimE}~\ref{DimE1}.
\end{remark}

\begin{theorem}\label{ExistTraverso}
There exists an $N_0$ such the truncation map $\Bunline_G^{[\mu]}(\Sht,N') \lto \Bunline_G^{[\mu]}(\Sht,N)$ is bijective on isomorphism classes of $k$-valued points for every $\infty \geq N'\geq N \geq N_0$ and every algebraically closed extension $k$ of $\kappa$.
\end{theorem}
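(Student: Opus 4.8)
The plan is to reduce the statement to a finiteness assertion about the action of the display group $E_\infty(G,[\mu])$ on the loop group $L^+G$, using the quotient stack description from Theorem~\ref{DescribeModSpaceTruncShtukas}. Over an algebraically closed field $k$, Proposition~\ref{DimE}~\ref{DimE2} and Corollary~\ref{DescribePointsShtuka} give $\Bunline^{[\mu]}_G(\Sht,N)(k) \cong E_N(G,[\mu])(k)\backslash G(k_N)$ as a set, and the truncation maps are compatible with these descriptions. So the claim becomes: there is an $N_0$ such that for all $N' \geq N \geq N_0$ the natural map of orbit sets $E_{N'}(G,[\mu])(k)\backslash G(k_{N'}) \to E_N(G,[\mu])(k)\backslash G(k_N)$ is a bijection. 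Surjectivity for all $N$ is already Proposition~\ref{TruncationSurjective} (over $k$ algebraically closed one can drop the ``\'etale locally''), so the content is injectivity for $N$ large, i.e.\ two elements of $G(k_{N'})$ that become $E_N(G,[\mu])(k)$-conjugate modulo $z^N$ are already $E_{N'}(G,[\mu])(k)$-conjugate modulo $z^{N'}$.

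First I would set up the relevant ind-scheme geometry: the loop group $LG$ over $k$ is an ind-affine group ind-scheme, and by Proposition~\ref{DisplayGroupInfty} the group $E_\infty(G,[\mu])$ sits inside $LG$ as $L^+G \cap \mu^{-1}L^+G\mu$ acting on $L^{(N)}G = L^+G/L_NG$ through $\tau_N$ and the $\phi$-twisted $\sigma_N$. The isomorphism classes we are counting are exactly the $\sigma$-conjugacy-type orbits; the key is that on a fixed Schubert-type piece the relevant orbits are locally closed and, crucially, that the ascending chain of ``strata'' defined by the $N$-truncation must stabilize. I would argue as follows. Fix an element $g \in G(k\psz)$ representing a $k$-point of $\Bunline^{[\mu]}_G(\Sht,\infty)(k) = \Bunline^{[\mu]}_G(\Sht)(k)$ (Theorem~\ref{ColimitNTruncShtuka}~\ref{ColimitNTruncShtuka2}). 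Its orbit under $E_\infty(G,[\mu])(k)$ acting on $G(k\psz)$ is a subset of a finite-dimensional affine variety once one bounds denominators (this is where one uses that everything lives inside a fixed Schubert cell $\Gr_G^{[\mu]}$, which is of finite type). For each $N$, let $O_N(g) \subseteq G(k_N)$ be the image of this orbit. Then $O_N(g)$ is a constructible subset of the finite-type $k$-scheme $L^{(N)}G$, being the image of a morphism from (a finite-type approximation of) $E_N(G,[\mu]) \times G(k\psz)/L_N$, and the preimages under $L^{(N')}G \to L^{(N)}G$ are nested. Now: two $k$-points of $\Bunline^{[\mu]}_G(\Sht,\infty)$ have the same $N$-truncation iff one lies in the preimage of $O_N$ of the other. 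So injectivity for all large $N$ is equivalent to: for every $g$, the decreasing chain of closed subschemes (or at least constructible sets) of $\Gr_G^{[\mu]}$ cut out by ``same $N$-truncation as $g$'' stabilizes; and one wants a single $N_0$ working for all $g$ simultaneously.

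The uniformity in $g$ is the main obstacle, and I would handle it by a Noetherian/generic-flatness argument applied to the incidence scheme. Consider $\Xcal := \Bunline^{[\mu]}_G(\Sht,\infty) \times_{\Bunline^{[\mu]}_G(\Sht,N)} \Bunline^{[\mu]}_G(\Sht,\infty)$ and similarly with $N$ replaced by $N'$; these are algebraic stacks (quotients of fiber products of finite-type schemes by smooth affine groups — one must truncate $E_\infty$ suitably, which is legitimate since $E_\infty = \lim E_N$ and the kernels act trivially on the relevant truncation). There is a descending chain of closed substacks $\Xcal_1 \supseteq \Xcal_2 \supseteq \cdots$ (the ``equal mod $z^N$'' loci) inside the fixed finite-type stack $\Bunline^{[\mu]}_G(\Sht,\infty)\times\Bunline^{[\mu]}_G(\Sht,\infty)$ restricted to a bounded Schubert piece; by Noetherianity it stabilizes at some $N_0$, and its stable value is the diagonal (since $\Bunline^{[\mu]}_G(\Sht,\infty) = \lim_N \Bunline^{[\mu]}_G(\Sht,N)$ by Theorem~\ref{ColimitNTruncShtuka}~\ref{ColimitNTruncShtuka1}, two points with all truncations equal are equal). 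Taking $k$-points then gives exactly the desired bijectivity for $N' \geq N \geq N_0$, and $N_0$ depends only on $G$ and $[\mu]$ because the Schubert bound and the whole construction are canonical. The delicate points to get right in a full write-up are: (i) making the boundedness precise — a $G$-shtuka of type $[\mu]$ over $k$ has Frobenius in a fixed double coset, so lies in a genuinely finite-type substack; (ii) checking that ``equal $N$-truncation'' defines a \emph{closed} condition on this finite-type stack, which follows because $\Bunline^{[\mu]}_G(\Sht,N)$ has affine diagonal (Proposition~\ref{BunGHeckeAlgebraic} via \eqref{EqBunGTrSht}) so the relevant fiber product is representable and the transition maps are affine; and (iii) the passage from stabilization of the chain of substacks to the statement about $k$-points, which is immediate once (i) and (ii) are in place.
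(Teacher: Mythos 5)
Your reduction to orbit sets via Corollary~\ref{DescribePointsShtuka} and your use of Proposition~\ref{TruncationSurjective} for surjectivity both match the paper. The injectivity argument, however, has a genuine gap: the Noetherian stabilization of the incidence chain does not go through.

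The core problem is that $\Bunline^{[\mu]}_G(\Sht,\infty) \cong [E_\infty(G,[\mu])\backslash L^+G]$ is not of finite type, and restricting to a Schubert piece does not fix this. The bound $\Gr_G^{[\mu]}$ controls only the Hecke datum, i.e.\ the double coset $K\mu(z)K$ modulo $K$ on one side; the isomorphism classes of shtukas are $\phi$-conjugacy-type orbits of the infinite-dimensional group $E_\infty(G,[\mu])(k)$ acting on the infinite-dimensional space $L^+G(k)$, and there is no finite-dimensional ambient variety on which this quotient lives. So there is no Noetherian object whose closed subsets you can use. Moreover, the ``equal $N$-truncation'' loci are not closed: the diagonal of a quotient stack $[H\backslash X]$ is affine but not a closed immersion (stabilizers jump), so $\Xcal\times_{\Bunline^{[\mu]}_G(\Sht,N)}\Xcal \to \Xcal\times\Xcal$ is affine but not closed, and descending chains of merely constructible sets need not stabilize even in a Noetherian ambient. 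Finally, even granting a descending chain of closed substacks, knowing that the intersection over all $N$ is the diagonal (which does follow from $\Bunline = \lim_N \Bunline_N$) does not force the chain to reach the diagonal at a finite stage without Noetherianity. In short, the uniformity in $g$ that you correctly identify as ``the main obstacle'' cannot be obtained by soft finiteness arguments; the theorem's entire content is that this stabilization happens, and that requires real input.

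The paper's proof supplies exactly that input. For injectivity it trivializes the torsors, writes the two shtukas as $(LG_k, b\phi)$ and $(LG_k, b'\phi)$ with $b' = h_1 b h_2$, $h_i \in K_N$, and then: (i) chooses a \emph{fundamental} representative $b_0$ of the $\sigma$-conjugacy class $[b]$ (Nie); (ii) uses the boundedness of the affine Deligne--Lusztig variety $X_{\leq\mu}(b_0)$ (Corollary~\ref{cor_bound_ADLV}, after Cornut--Nicole and Hamacher--Viehmann) to find a conjugating element $g$ in a \emph{fixed finite union} of double cosets $K\xi(z)K$, so that conjugation by $g$ costs only a bounded congruence depth $C$ as in \eqref{EqDefForIsomCutOff}; and (iii) invokes Lemma~\ref{lemfundalc}, a Lang-theorem-type surjectivity for $\phi$-conjugation by $I_n$ on $I_n x I_n$ for fundamental $x$, to absorb the remaining discrepancy and conclude $K$-$\phi$-conjugacy. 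Steps (i)--(iii) are precisely the mechanism that produces a single $N_0 = 2C+1$ working for all $b$ simultaneously, and they have no analogue in your proposal.
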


Since $\Bunline_G(\Sht,\infty)(k) = \Bunline_G(\Sht)(k)$ by Theorem~\ref{ColimitNTruncShtuka}~\ref{ColimitNTruncShtuka2} we obtain in particular:

\begin{corollary}\label{ExistTraversoCar}
There exists an $N_0$ such that for all $N \geq N_0$ and every algebraically closed extension $k$ of $\kappa$ truncation yields a bijection between the set of isomorphism classes of local $G$ shtukas of type $\mu$ over $k$ and the set of isomorphism classes of $N$-truncated local $G$-shtukas of type $\mu$ over $k$.
\end{corollary}

We first recall some ingredients for the proof of Theorem~\ref{ExistTraverso}. We choose a Borel pair $T\subset B\subset G$ of $G$ and a conjugacy class $[\mu]\in X_*/W_G$. Let $\mu\in X_*(T)$ be the dominant representative of $[\mu]$, defined over a finite extension of $\kappa$. Let $b\in LG(\bar \kappa)$. We consider the associated affine Deligne-Lusztig variety, that is the locally closed reduced subscheme of the affine Grassmannian of $G$ defined over $\bar \kappa$ whose $\bar \kappa$-valued points are given by

$$X_{\mu}(b)(\bar \kappa)=\{g\in LG(\bar \kappa)/L^+G(\bar \kappa)\mid g^{-1}b\phi(g)\in L^+G(\bar \kappa)\mu(z)L^+G(\bar \kappa)\}.$$

It is non-empty if and only if the $\phi$-conjugacy class $[b]$ of $b$ is contained in some finite subset $B(G,\mu)$ of the Kottwitz set $B(G)$ of $\phi$-conjugacy classes. The closed affine Deligne-Lusztig variety is defined as $X_{\leq\mu}(b)=\bigcup_{\mu'\leq\mu}X_{\mu'}(b)$. It is a closed subscheme of the affine Grassmannian. Let $J_b$ be the automorphism group of $b$, which is a reductive group over $\kappa\lsz$ with $J_b(\kappa\lsz)$ acting on $X_{\mu}(b)$ and $X_{\leq\mu}(b)$ for every $\mu$. We write $K=L^+G(\bar\kappa)$.
\begin{thm}[\cite{Cornut_Nicole} Prop. 8, \cite{HamacherViehmann_fin} Lemma 5.3]
Let $b\in K\mu(z)K$. Then there is a finite subset $M_0(b,\mu)\subset X_*(T)_{\dom}$ such that 
$$X_{\mu}(b)\subseteq J_b(\kappa\lsz)\cdot \coprod_{\xi\in M_0(b,\mu)}K\xi(z)K.$$
\end{thm}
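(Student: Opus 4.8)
The plan is to deduce the statement from the assertion that $X_{\mu}(b)$ is of \emph{finite type modulo} $J_b(\kappa\lsz)$, i.e. that there is a quasi-compact locally closed subscheme $Y\subseteq X_{\mu}(b)$ with $X_{\mu}(b)=J_b(\kappa\lsz)\cdot Y$. Granting this, $\Gr_G=LG/L^+G$ is covered by the projective Schubert varieties $\overline{K\nu(z)K/K}=\bigsqcup_{\nu'\le\nu}K\nu'(z)K/K$, so the quasi-compact $Y$ is contained in finitely many of them and hence meets only finitely many Cartan cells; taking $M_0(b,\mu)$ to be the finite set of those $\xi\in X_*(T)_{\dom}$ with $Y\cap K\xi(z)K/K\ne\emptyset$ gives $X_{\mu}(b)=J_b(\kappa\lsz)\cdot Y\subseteq J_b(\kappa\lsz)\cdot\coprod_{\xi\in M_0(b,\mu)}K\xi(z)K$. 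The hypothesis $b\in K\mu(z)K$ enters here only to know that $X_{\mu}(b)$ is non-empty (it contains the base point) and that $[b]\in B(G,\mu)$, so that $\nu_b\le\mu$.

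Thus the real task is to produce $Y$, and I would argue by induction on the semisimple rank of $G$. After replacing $b$ by a $\phi$-conjugate --- which replaces $(X_{\mu}(b),J_b)$ by a conjugate pair and so does not affect the claim --- I may assume, by Kottwitz's normal form, that $b$ is decent and lies in $M:=\Cent_G(\nu_b)$, basic there. If $\nu_b$ is not central, $M$ is a proper Levi with parabolic $P$, and the Iwasawa-type decomposition of $\Gr_G$ along $P$, combined with the Mantovan--G\"ortz--Haines--Kottwitz--Reuman ``semi-module'' stratification, writes $X^G_{\mu}(b)$ as a locally closed union of strata which, up to the action of $J_b(\kappa\lsz)=J^M_b(\kappa\lsz)$, form a finite list, each of them an iterated affine-space bundle of finite type over one of the finitely many $X^M_{\mu_M}(b)$, where $\mu_M$ runs over the $M$-dominant cocharacters refining $\mu$. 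By the inductive hypothesis applied to $M$ each $X^M_{\mu_M}(b)$ is of finite type modulo $J_b(\kappa\lsz)$, hence so is $X^G_{\mu}(b)$.

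The base case is $\nu_b$ central, i.e. $b$ basic in $G$, so that $J_b$ is an inner form of $G$ over $\kappa\lsz$; here producing $Y$ amounts to exhibiting a quasi-compact fundamental domain for the $J_b(\kappa\lsz)$-action on $X_{\mu}(b)$. For this I would use that $J_b(\kappa\lsz)$ contains the $\phi$-fixed central element $(s\nu)(z)$ (with $s$ a decency integer), that $J_b(\kappa\lsz)$ acts --- via Kottwitz's description of $\pi_0$ --- with finitely many orbits on the set of connected components of $\Gr_G$ met by $X_{\mu}(b)$, and that the intersection of $X_{\mu}(b)$ with any single such component is a quasi-compact, finite-dimensional piece bounded in terms of $\mu$; putting these together yields $Y$. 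A more computational alternative, which avoids this component bookkeeping, is to fix a closed embedding $G\hookrightarrow\GL_n$, reinterpret $X_{\mu}(b)$ in terms of $\bar\kappa\psz$-lattices, prove the finite-type-modulo-$J_b$ property for $\GL_n$ directly through the combinatorics of semi-modules --- where it becomes a Mazur-type convexity inequality bounding the occurring semi-modules modulo a period lattice --- and transport the bound back to $G$ using that $\Gr_G\hookrightarrow\Gr_{\GL_n}$ is a closed immersion.

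The entire difficulty is concentrated in the claim that $X_{\mu}(b)$ is of finite type modulo $J_b(\kappa\lsz)$; everything before and after is formal. Within that claim the two delicate points are: first, the Levi/semi-module reduction, where one must track the compatibility of the $J_b$-actions across the strata (in particular that $J_b=J^M_b$ and that the ``extra'' translations in the unipotent-radical directions of $P$ are accounted for); and second, in the base case, the quasi-compactness of $X_{\mu}(b)$ inside a single component of $\Gr_G$ together with the finiteness of $J_b(\kappa\lsz)$-orbits on components --- or, on the lattice side, the convexity estimate and the passage from $\GL_n$-lattice bounds to $G$-Schubert-cell bounds, which rests on Bruhat--Tits theory for $G$.
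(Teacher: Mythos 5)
Your opening reduction is fine: if $X_{\mu}(b)=J_b(\kappa\lsz)\cdot Y$ with $Y$ quasi-compact, then $Y$ lies in finitely many Schubert cells and the theorem follows formally. Note, however, that the paper does not prove this statement at all --- it quotes it from Cornut--Nicole and Hamacher--Viehmann, whose arguments are metric/building-theoretic (any point of the building whose distance to its $b\phi$-translate is bounded in terms of $\mu$ lies within a distance bounded in terms of $(b,\mu)$ of the fixed-point building of $J_b$), not the Levi induction you propose. So everything rests on your claim that $X_\mu(b)$ is of finite type modulo $J_b(\kappa\lsz)$, and there your induction has a genuine gap at its base case.

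Your base case is $\nu_b$ central, i.e.\ $b$ basic, and you assert that $X_{\mu}(b)$ meets each connected component of $\Gr_G$ in a quasi-compact piece ``bounded in terms of $\mu$''. This is false for basic, non-superbasic $b$. Take $G=\GL_2$, $b=1$ (basic, $\nu_b=0$), $\mu=(1,-1)$, and let $L_a$ be the $\bar\kappa\psz$-lattice spanned by $z^{-1}e_1$ and $az^{-2}e_1+ze_2$ with $a^q\neq a$; a direct computation gives $\mathrm{inv}(L_a,\phi L_a)=\mu$ and $L_a\in\Gr^0$. Since $J_b(\kappa\lsz)=\GL_2(\kappa\lsz)$, the elements $\textup{diag}(z^n,z^{-n})$ lie in $J_b(\kappa\lsz)$, preserve $\Gr^0$ and $X_\mu(1)$, and move $L_a$ into the cell of type $(n-1,1-n)$, so $X_\mu(1)\cap\Gr^0$ meets infinitely many Schubert cells and is not quasi-compact. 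Thus the induction cannot bottom out at ``basic'': the correct reduction of this kind terminates at \emph{superbasic} $b$, which forces you to pass to a proper Levi containing a representative of $[b]$ even when $\nu_b$ is central (your Levi $\Cent_G(\nu_b)$ is then all of $G$), and to control how $J_b$ interacts with that Levi --- none of which your sketch addresses. Your fallback via a closed embedding $G\hookrightarrow\GL_n$ has the same problem in another guise: knowing $X^{\GL_n}$ is covered by $J_b^{\GL_n}(\kappa\lsz)$-translates of finitely many cells does not descend to a covering by translates under the smaller group $J_b^{G}(\kappa\lsz)$, and this descent is exactly the difficulty the building-theoretic proofs of the cited references are designed to overcome. (In the inductive step, too, ``iterated affine-space bundle of finite type over $X^M_{\mu_M}(b)$'' needs to be replaced by a uniform boundedness statement for the fibers of the twisted Lang map on $LN$ over quasi-compact pieces of the base; this is true when $\langle\alpha,\nu_b\rangle>0$ for all roots $\alpha$ in $N$, but it is asserted rather than proved.)
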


Taking the union over the finitely many classes $[b]\in B(G,\mu)$ this implies the next corollary.  
\begin{cor}\label{cor_bound_ADLV}
For every $[b]\in B(G,\mu)$ choose some representative $b\in [b]$. Then there is a finite subset $S\subset X_*(T)_{\dom}$ such that for every $g\in K\mu(z)K$, there is an $h\in K\xi(z)K$ for some $\xi\in S$ such that $h^{-1}g\phi(h)=b$ for one of the finitely many elements $b$ chosen above.
\end{cor}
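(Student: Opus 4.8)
The plan is to deduce the corollary from the displayed theorem (\cite{Cornut_Nicole}, \cite{HamacherViehmann_fin}) by a finiteness argument over the Kottwitz set, combined with the elementary observation that membership in an affine Deligne--Lusztig variety is exactly the assertion that some translate of $g$ is $\phi$-conjugate to $b$. First I would fix, once and for all, a system of representatives $b_1,\dots,b_m \in LG(\bar\kappa)$, one for each of the finitely many classes $[b] \in B(G,\mu)$; this finiteness is the classical non-emptiness result recalled in the paragraph preceding the theorem. For each $b_i$ the theorem provides a finite set $M_0(b_i,\mu) \subset X_*(T)_{\dom}$ with $X_\mu(b_i) \subseteq J_{b_i}(\kappa\lsz) \cdot \coprod_{\xi \in M_0(b_i,\mu)} K\xi(z)K$. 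I would then set $S := \bigcup_{i=1}^m M_0(b_i,\mu)$, which is finite as a finite union of finite sets.

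The second step is to translate the set-theoretic statement. Given $g \in K\mu(z)K$, consider the element $\bar g := g L^+G(\bar\kappa)$ of the affine Grassmannian. Its image $g^{-1}b\phi(g)$ — wait, more precisely, one must check which of the $b_i$ has $\bar g$ lying in $X_\mu(b_i)$. By definition $\bar g \in X_\mu(b_i)(\bar\kappa)$ means $g^{-1}b_i\phi(g) \in K\mu(z)K$. Here I would invoke the fact (standard, and implicitly used throughout this section) that for $g \in K\mu(z)K$ the element $g^{-1}b_i\phi(g)$ ranges, as $b_i$ runs over the chosen representatives and one allows $\phi$-conjugation, over the classes in $B(G,\mu)$; concretely, the $\sigma$-conjugacy class of the ``$b$'' attached to $g$ (via $g \mapsto g^{-1}\cdot(\text{something})$) lies in $B(G,\mu)$, so $\bar g$ lies in $X_\mu(b_i)$ for exactly one $i$. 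Applying the theorem to that $b_i$, we get $\bar g = j \cdot h_0 L^+G(\bar\kappa)$ with $j \in J_{b_i}(\kappa\lsz)$ and $h_0 \in K\xi(z)K$ for some $\xi \in M_0(b_i,\mu) \subseteq S$. Because $j \in J_{b_i}(\kappa\lsz)$ means $j^{-1} b_i \phi(j) = b_i$, one checks that replacing $h := h_0$ and absorbing $j$ appropriately gives $h^{-1}g\phi(h) \in$ the $L^+G$-double-coset representative, i.e. after a further harmless modification of $h$ within its $L^+G$-coset one arrives at $h^{-1}g\phi(h) = b_i$ on the nose. This last normalization — passing from ``$h^{-1}g\phi(h) \in K b_i K$-type membership'' to an exact equality $h^{-1}g\phi(h)=b_i$ — uses that $K\mu(z)K = \{x : x = h^{-1}g\phi(h)\}$ is a single $\phi$-conjugacy orbit intersected with a bounded set, so we may adjust $h$ by an element of $L^+G(\bar\kappa)$.

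I expect the main obstacle to be the bookkeeping in this last normalization step, namely matching the conclusion of the quoted theorem (which is phrased as a containment of $\bar\kappa$-points of $X_\mu(b)$ in a union of $J_b$-translates of Schubert cells in $\Gr_G$) with the clean group-theoretic statement ``there exists $h \in K\xi(z)K$ with $h^{-1}g\phi(h) = b$''. One has to be careful that the $J_b$-factor can be absorbed without leaving the prescribed finite set $S$ of Schubert cells, and that choosing a coset representative $h$ inside $h_0 L^+G(\bar\kappa)$ does not change which cell $K\xi(z)K$ it lies in — both of which are true but require unwinding the definitions of $J_b$-action and of the affine Deligne--Lusztig stratification. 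Everything else is a routine reduction: the finiteness of $B(G,\mu)$ and the finiteness of each $M_0(b_i,\mu)$ do all the real work, and taking the union over $[b] \in B(G,\mu)$ is exactly the passage from the theorem to the corollary.
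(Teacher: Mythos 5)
Your global strategy coincides with the paper's: use the finiteness of $B(G,\mu)$, take $S$ to be the union of the finite sets $M_0(b_i,\cdot)$ over the finitely many chosen representatives, and apply the displayed theorem. But the way you feed $g$ into the theorem contains a genuine error, and it propagates into a final normalization step that cannot be repaired. The point of $X_\mu(b_i)$ that the theorem controls is \emph{not} $\bar g = gL^+G(\bar\kappa)$: by definition $gK\in X_\mu(b_i)$ would mean $g^{-1}b_i\phi(g)\in K\mu(z)K$, and there is no reason for this to hold for a given $g\in K\mu(z)K$; in general $\bar g$ lies in none of the $X_\mu(b_i)$. What is true is that $[g]\in B(G,\mu)$, so for the unique $i$ with $[b_i]=[g]$ there exists $h\in LG(\bar\kappa)$ with $h^{-1}b_i\phi(h)=g$ \emph{exactly}, and it is the coset $hK$ of this conjugator that lies in $X_\mu(b_i)$ (precisely because $h^{-1}b_i\phi(h)=g\in K\mu(z)K$). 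The theorem must be applied to $hK$, not to $\bar g$.

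Second, your plan to recover an exact equality ``after a further harmless modification of $h$ within its $L^+G$-coset'' fails. Replacing $h$ by $hk$ with $k\in K$ replaces $h^{-1}b_i\phi(h)$ by its $K$-$\phi$-conjugate $k^{-1}(\,\cdot\,)\phi(k)$, and the $K$-$\phi$-conjugacy orbit of an element is in general a proper subset of its $\phi$-conjugacy class intersected with $K\mu(z)K$; the gap between $K$-$\phi$-conjugacy and $\phi$-conjugacy is exactly what the isomorphism cutoff in the rest of this section measures, so this step is essentially circular. The correct mechanism needs no normalization at all: start from an $h$ satisfying the exact equality; the only modification the theorem forces is left multiplication by some $j\in J_{b_i}(\kappa\lsz)$, and this preserves the equality because $j^{-1}b_i\phi(j)=b_i$, so one may move $h$ into some $K\xi(z)K$ with $\xi\in S$ without disturbing $h^{-1}b_i\phi(h)=g$. (A side remark: this argument yields $h^{-1}b\phi(h)=g$ rather than the printed $h^{-1}g\phi(h)=b$; the two differ by replacing $h$ with $h^{-1}$, which moves $h$ into the cell of the dominant representative of $-\xi$, and it is the former form that is actually used in the proof of Theorem~\ref{ExistTraverso}.)
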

\begin{proof}
Let $S$ be the union over all $[b]\in B(G,\mu)$ of the sets $M_0(b,\mu_b)$ where $\mu_b$ is such that $b\in K\mu_b(z)K$. Let $g$ be as above, in particular $[g]\in B(G,\mu)$. Let $b$ be the fixed representative of $[g]$. There is an $h\in LG(\bar \kappa)$ with $h^{-1}b\phi(h)=g$, in particular, $h\in X_{\mu}(b)$. We may replace $h$ by another element of its $J_b(\kappa\lsz)$-orbit without changing these properties. Thus the theorem shows that we may choose $h\in K\xi(z)K$ for some $\xi\in S$.
\end{proof}

Let $\widetilde W=N_G(T)(\bar \kappa(\!(z)\!))/T(\bar \kappa[\![z]\!])$ be the extended affine Weyl group of $G$. An element $x\in\widetilde W$ is called fundamental if the Iwahori double coset $IxI$ (where $I$ is the inverse image of $B$ under the projection map $L^+G\rightarrow G$) is $I$-$\phi$-conjugate to $x$, compare \cite{Nie} for details and equivalent characterizations. By \cite{Nie}, Thm. 1.4, we may choose the representatives $b$ in the corollary to be at the same time representatives of fundamental elements of $\widetilde W$. 

Consider $I$ as a parahoric group scheme over $O = \kappa\psz$, and for every $n\in\mathbb N$ let $I_n$ denote the kernel $I(O)\rightarrow I(O/(z^n))$.

\begin{lemma}\label{lemfundalc}
Let $x$ be a fundamental element in $\widetilde W$, let $M$ be the centralizer of its Newton point and choose a representative in $LM(k)$ of $x$, also denoted by $x$. Then for every $n\in\mathbb N$, the map $I_n\rightarrow I_n x I_n$ with $i\mapsto i x\phi(i)$ is surjective. 
\end{lemma}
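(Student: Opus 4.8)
The plan is to prove surjectivity of the map $I_n \to I_n x I_n$, $i \mapsto i x \phi(i)$, by a successive approximation argument over the congruence filtration of $I$, reducing at each step to a statement about $\phi$-linear maps between vector groups. First I would recall that since $x$ is a fundamental element with Newton point centralizer $M$, the chosen representative $x \in LM(k)$ normalizes $I$ in the sense that $x^{-1} I_n x$ makes sense after passing to $LM$; the key structural input is that conjugation by $x$ preserves the Moy--Prasad-type filtration of $I$ (up to a controlled shift that is harmless because $M$ is the Newton centralizer, so $x$ acts with "slope zero" on the relevant root subgroups). Concretely, let $I_n \supseteq I_{n+1} \supseteq \cdots$ be the congruence filtration, so that $I_n/I_{n+1}$ is a vector group over $k$ (a sum of root-line contributions and a torus part), and $\phi$ acts on it semilinearly while $\mathrm{Ad}(x)$ permutes the graded pieces according to the action of $x$ on affine roots.

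The main steps, in order: (1) Fix a target $g \in I_n x I_n$; write $g = x g'$ with $g' \in I_n$ after using that $I_n x I_n = x I_n'$ for a suitable conjugate filtration subgroup, reducing to: find $i \in I_n$ with $x^{-1}(i x \phi(i)) = g'$, i.e. $(x^{-1} i x)\,\phi(i) = g'$. (2) Work modulo $I_{n+1}$ first: on the graded piece $I_n/I_{n+1}$, the equation becomes an equation of the form $A(\bar i) + \phi(\bar i) = \bar g'$ where $A = \mathrm{Ad}(x^{-1})$ is a $k$-linear automorphism of the finite-dimensional $k$-vector space $I_n/I_{n+1}$ and $\phi$ is its $q$-Frobenius semilinear endomorphism; the map $\bar i \mapsto A(\bar i) + \phi(\bar i)$ is surjective because a $q$-semilinear-plus-linear self-map of a finite-dimensional vector space over an algebraically closed field is surjective (this is the classical Lang-type fact: $x \mapsto A(x) + x^{(q)}$ is surjective on $\mathbb{A}^N_k$, being a dominant map of smooth connected group varieties of the same dimension, or directly by the étale-ness of $\bar i \mapsto A(\bar i)$ modulo the Frobenius). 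That $x$ is \emph{fundamental} is exactly what guarantees $A$ preserves the filtration so this linear-algebra reduction is legitimate. (3) Inductively lift: given a solution modulo $I_{m}$, the obstruction to lifting to a solution modulo $I_{m+1}$ lies again in a vector group $I_m/I_{m+1}$ and is killed by the same surjectivity of $\bar i \mapsto A(\bar i) + \phi(\bar i)$ on that graded piece; since $I_n$ is $z$-adically complete (it is $\ker(I(O) \to I(O/z^n))$ for the $z$-adically complete $O = k\psz$), the successive approximations converge to an honest element $i \in I_n$ solving the equation.

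The step I expect to be the main obstacle is step (1)--(2): carefully checking that $\mathrm{Ad}(x)$ respects the congruence filtration of $I_n$ when $x$ is a representative of a fundamental element lying in $LM(k)$ for $M$ the Newton centralizer. This is where the hypothesis "$x$ fundamental" is genuinely used, and one must invoke the characterization (as in \cite{Nie}) that fundamental elements are, up to $\sigma$-conjugacy by $I$, exactly those for which $I x I$ is $I$-$\phi$-conjugate to $x$, together with the fact that the Newton slopes being zero on the root groups appearing in $\mathrm{Lie}(I_n/I_{n+1})$ forces $\mathrm{Ad}(x)$ to act without pushing those graded pieces out of the filtration. Once this compatibility is in hand, everything else is the routine Lang-style argument with vector groups over an algebraically closed field, together with $z$-adic completeness to pass from the pro-finite-level statement to $I_n$ itself.
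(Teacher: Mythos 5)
There is a genuine gap, and it sits exactly at the step you yourself flag as the main obstacle: the claim that $\mathrm{Ad}(x)$ (or $\mathrm{Ad}(x^{-1})$) preserves the congruence filtration of $I$, so that the problem reduces to a single equation $\bar i\mapsto A(\bar i)+\phi(\bar i)$ on the vector group $I_n/I_{n+1}$ with $A$ a linear automorphism. This is false, and your justification gets the geometry backwards. The group $M$ is by definition the part of $G$ on which the Newton slopes of $x$ vanish; on the unipotent radical $N$ of the associated parabolic the slopes are strictly positive and on the opposite radical $\bar N$ strictly negative. Fundamentality of $x$ gives precisely $x\phi(I_{N})x^{-1}\subseteq I_{N}$ (a strict contraction on some affine root lines) and $x\phi(I_{\bar N})x^{-1}\supseteq I_{\bar N}$ (an expansion). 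Hence whichever way you conjugate in step (1), one of the two unipotent pieces of $I_n$ is pushed outside of $I_n$, the induced map on $I_n/I_{n+1}$ is not defined there, and the single Lang-type surjectivity statement you invoke does not apply to the whole graded piece. (A secondary issue: the case $n=0$, where $I/I_1$ is not a vector group, needs a separate input — it is exactly the defining property of fundamental alcoves, cf.\ \cite{GoertzHeNie_P-alcoves}.)

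The repair forces the structure of the paper's proof: decompose $I_n/I_{n+1}$ according to $M$, $N$, $\bar N$. On the $M$-part, $x$ has length $0$ in $M$, so $\mathrm{Ad}(x)$ does preserve $I_{M,n}$ and your Lang-type argument is correct there. On the $N$- and $\bar N$-parts the mechanism is different: one uses that $\phi_x\colon g\mapsto x\phi(g)x^{-1}$ is topologically nilpotent on $I_{N,n}$ (a suitable power $x'\phi^c$ maps $I_{N,n}$ into $I_{N,n+1}$), so the twisted-conjugation equation is solved not by Lang's theorem but by the finite geometric series $\delta=i\,\phi_x(i)\cdots\phi_x^{c-1}(i)$, which conjugates $ix$ into $I_{N,n+1}x$; the $\bar N$-part is symmetric using $xI_{\bar N}x^{-1}\supseteq I_{\bar N}$. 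Your step (3) (successive approximation over the filtration, using completeness) is fine once the graded-piece statement is established in this corrected, three-part form.
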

\begin{proof}
For $n=0$, this is a well-known property of fundamental alcoves, see for example \cite{GoertzHeNie_P-alcoves}, Theorem 3.3.1. Assume that $n>0$. In particular, we then have $[I_n,I_n]\subseteq I_{n+1}$. Using induction, it is enough to show that $I_n\cdot_{\phi} I_{n+1}xI_{n+1}\rightarrow I_nxI_n$ is surjective, where $I_n$ acts by $\phi$-conjugation.

Being fundamental and basic in $M$, the element $x\in LM(k)$ is a representative of a length 0 element of the extended affine Weyl group of $M$ (compare \cite{Nie}). Let $P$ be the parabolic subgroup of $G$ defined by the Newton point of $x$ and let $N$ be its unipotent radical. Let $\bar N$ be the unipotent radical of the opposite parabolic. Let $I_{N,n}, I_{\bar N,n}$ be the intersections with $I_n$. Since $I_n/I_{n+1}$ is commutative, it is enough to show that for $I'=I_{M,n},I_{N,n}$, or $I_{\bar N,n}$, the map $I'\cdot_{\phi} I_{n+1}xI_{n+1}\rightarrow I'I_{n+1}xI_{n+1}I'$  with $i\mapsto i x\phi(i)$ is surjective. For $I'=I_{M,n}$ recall that $x$ is of length $0$ in $M$ and thus, $I_M=I\cap M$ is stable under conjugation by $x$. Hence the same also holds for each $I_{M,n}=I_n\cap M$. Using the commutativity of $I_n/I_{n+1}$, it is enough to show that $$I_{M,n}/I_{M,n+1}\rightarrow I_{M,n}xI_{M,n}/I_{M,n+1}=xI_{M,n}/I_{M,n+1}$$ with $g\mapsto g^{-1}x\phi(g)$ is surjective. This in turn is a consequence of Lang's theorem. Now consider $I'=I_{N,n}$. Since $x$ is fundamental, and $I$ and $N$ are $\phi$-stable, we have $x\phi(I_{N})x^{-1}\subseteq I_N$, and thus also $x\phi(I_{N,n})x^{-1}\subseteq I_{N,n}$ for every $n$. Similarly, $I_{N,n}xI_{N,n}=I_{N,n}x$. Furthermore, $N$ is the unipotent radical of the parabolic defined by the Newton point of $x$, or equivalently by any positive power of $x\phi$ that is of the form $x'\phi^c$ where $x'$ is a $\phi$-stable representative of an element of $X_*(T)$. Hence $x'\phi^c(I_{N,n})(x')^{-1}\subseteq I_{N,n+1}$ for any such $c$. Let $i\in I_{N,n}$ and let $\phi_x:g\mapsto x\phi(g)x^{-1}$. Let $\delta=i\phi_x(i)\dotsm \phi_x^{c-1}(i)$. Then $\delta\in I_{N,n}$ and $\delta^{-1} ix\phi(\delta)=x\phi(\phi_x^{c-1}(i))=\phi_x^c(i)x=x'\phi^c(i)(x')^{-1}x\in I_{N,n+1}x$ which proves the assertion for $I'=I_{N,n}$. The argument for $I'=I_{\bar N,n}$ is analogous, using that $xI_{\bar N}x^{-1}\supseteq I_{\bar N}$.
\end{proof}

\begin{proof}[Proof of Theorem \ref{ExistTraverso}]
By Proposition \ref{TruncationSurjective} we only need to show injectivity. More precisely, it is enough to show that there is an $N_0$ such that
\[
\Bunline_G^{[\mu]}(\Sht) \lto \Bunline_G^{[\mu]}(\Sht,N_0)
\]
is injective on isomorphism classes of $k$-valued points. Let $\mathcal G$ and $\mathcal G'$ be $k$-valued points of the left hand side. We trivialize the $G$-torsors underlying $\mathcal G$ and $\mathcal G'$, thus they are of the form $(LG_{k}, b\phi)$ and $(LG_{k}, b'\phi)$ for some $b,b'\in LG(k)$ bounded by $\mu$. Choosing the trivializations compatibly with the isomorphism of truncated $G$-shtukas, we may assume that $b'=h_1bh_2$ with $h_1,h_2\in L^+_{N}G(k)=K_N$ for some $N$, yet to be determined. Let $b_0$ be a representative of a fundamental element in $[b]$. Let $S$ be as in Corollary \ref{cor_bound_ADLV}. Then there is a $g\in G(\breve F)$ with $g^{-1}b_0\phi(g)=b$, i.e., $g\in X_{\leq \mu}(b_0)$. By the corollary, we may choose $g\in \coprod_{\xi\in S}G(\mathcal O_{\breve F})\xi(t)G(\mathcal O_{\breve F})$. We have
\[
gb'\phi(g)^{-1}=(gh_1g^{-1})b_0\phi(gh_2g^{-1})^{-1}.
\]
Define
\begin{equation}\label{EqDefForIsomCutOff}
C := \max_{\alpha,\xi \in S} \langle \alpha,\xi\rangle, 
\end{equation}
where $\alpha$ runs over all roots. For every $C<n\in \mathbb N$, every  $g\in \coprod_{\xi\in S}L^+G(k)\xi(t)L^+G(k)$ and every $h\in K_n$ we have $ghg^{-1}\in K_{n-C}$. Thus if we choose $N$ larger than $C$, we have $gb'\phi(g)^{-1}\in gK_Nb K_N g^{-1}\subseteq  K_{N-C}b_0K_{N-C}$. In particular, from Lemma \ref{lemfundalc} for $n=1$ we obtain that $\mathcal G,\mathcal G'$ are isogenous as soon as $N>C$ since then the right hand side is contained in $[b_0]$. Assume now that $N>2C$. By the lemma, $gb'\phi(g)^{-1}\in I_{N-C-1}b_0 I_{N-C-1}$ can be written as $ib_0\phi(i)^{-1}$ for some $i\in I_{N-C-1}\subseteq K_{N-C-1}$. Hence 
$b'=(g^{-1}ig)b\phi(g^{-1}i^{-1}g)$ and $g^{-1}ig$ is in $K_{N-2C-1}\subseteq K$. Thus $b'$ is $K$-$\phi$-conjugate to $b$, and the local $G$-shtukas $\mathcal G$ and $\mathcal G'$ are isomorphic.
\end{proof}

\begin{remark}
The least possible $N_1$ such that any two $\mathcal G,\mathcal G'\in \Bunline_G^{[\mu]}(\Sht)(k)$ mapping to the same point in $\Bunline_G^{[\mu]}(\Sht,N_1)$ are isogenous is called the isogeny cutoff for the datum $(G,\mu)$. Similarly, the least possible $N_0$ as in Theorem \ref{ExistTraverso} is called the isomorphism cutoff for $(G,\mu)$. From the proof of the theorem, we obtain explicit bounds $C+1$ for the isogeny cutoff and $2C+1$ for the isomorphism cutoff in terms of the set $S$ in Corollary \ref{cor_bound_ADLV}.
\end{remark}

\section{Truncated displays}\label{Chapter:TruncatedDisplays}

In this section, we apply the general results to displays and their truncations. Many of the results of this section just rephrase results from Lau in \cite{Lau_HigherFrames} and Daniels in \cite{Daniels_TannakianFrameWork} in our language. Exceptions are the geometrization of the theory obtained by defining and using the various stacks $R^{\Disp}$, $R^{\Disp,N}$, or $R^{\Disp,N,n}$. 

In this section, set $\kappa := \FF_p$ the field with $p$-elements and $O = \ZZ_p$. Choose an algebraic closure $\kgbar$ of $\FF_p$ and let $\ZZ_p^{\ur}$ be the ring of integers in the corresponding maximal unramified extension of $\QQ_p$.

Let $R$ denote a $p$-adically complete ring, let $W(R)$ be the ring of $p$-adic Witt vectors. For every $N \geq 1$ we denote by $W_N(R)$ its truncation, so that $W_1(R) = R$. We also set $W_{\infty}(R) := W(R)$. For $1 \leq N \leq \infty$ we denote by $V\colon W_N(R) \to W_N(R)$ the Verschiebung and set $I_N(R) := V(W_N(R))$ which is an ideal of $W_N(R)$ such that $W_N(R)/I_N(R) \cong R$.

We will often consider $p$-adically formal stacks. These are defined as stacks on the site of bounded $p$-adically complete rings endowed with the $p$-completely flat topology.

\subsection{The truncated display stacks}\label{Section:TruncatedDisplayStack}

Let $1 \leq N \leq \infty$. Every element of $I_N(R)$ is of the form $V(x)$ for a unique $x \in W_N(R)$. Define a filtration on $W_N(R)$ (in the sense of Definition~\ref{DefFilteredRing}) by $\Fil^i := I_{N+1}(R)$ for $i > 0$, where we set $\infty + 1 := \infty$, with maps
\[
\ldots \ltoover{p} \Fil^{i+1} \ltoover{p} \Fil^{i} \ltoover{p} \ldots \ltoover{p} \Fil^1 \lto \Fil^0 = W_N(R)
\]
defined as follows. For $i \geq 1$ the maps $\Fil^{i+1} \to \Fil^i$ are given by multiplication with $p$. The map $\Fil^1 \to W_N(R)$ is the truncation map $I_{N+1}(R) \to I_N(R)$ followed by the inclusion $I_N(R) \to W(R)$. So the image of $\Fil^1 \to W_N(R)$ is $I_N(R)$.
The multiplication $\Fil^i \times \Fil^j \to \Fil^{i+j}$ for $i,j \geq 1$ is given by
\[
I_{N+1}(R) \times I_{N+1}(R) \lto I_{N+1}(R), \qquad V(x)\cdot V(y) := V(xy), \quad\text{for $x,y \in W(R)$.}
\]
This defines a filtration since one has $V(x)V(y) = pV(xy)$ for all $x,y \in W_N(R)$. 

We get the attached $\ZZ$-graded $W_N(R)$-algebra $\Rees(W_N(R))$ and the attached Rees stack
\[
\Re(W_N(R)) = [\GG_{m,W(R)}\backslash \Spec \Rees (W_N(R))],
\]
which we call the \emph{$N$-truncated Rees--Witt stack}. For $N = \infty$, it it simply called the \emph{Rees--Witt stack}. For all $1 \leq N \leq N' \leq \infty$ we obtain a closed immersion of algebraic stacks
\[
\Re(W_N(R)) \lto \Re(W_{N'}(R)).
\]

Next we endow the filtered ring $(W_N(R), (\Fil^i)_i)$ with the structure of a frame (Definition~\ref{DefFrame}). For this, we start with the following remark.

\begin{remark}\label{RemarkReesWittRep}
The quotient algebra $\Rees(W_N(R))^-$, which defines the repeller locus, is defined in $\Rees(W_N(R))$ by the ideal $I^+$ with $I^+_i = p^{-i}I_N(R) = I_N(R)^{-i+1}$ for $i \leq 0$ by Remark~\ref{ReesAttractor} and because we have $I_N(R)^i = p^{i-1}I_N(R)$ for all $i \geq 1$.
\end{remark}
 
Let us now define a frame first for $N = \infty$. Let $\phi$ be the Frobenius endomorphism on $W(R)$ and define $\sigma\colon \Spec W(R) \to \Re(W(R))$ inducing $\phi$ by Lemma~\ref{ConstructFrame} as follows. Let $\sigma^*\colon \phi^*\Rees(W(R)) \to \bigoplus_{i\in \ZZ}W(R)^{\otimes i}$ with $\sigma^*_{-1}(1 \otimes t) = p \in W(R)^{\otimes-1}$, $\sigma^*_0(1 \otimes a) = \phi(a)$, and $\sigma^*_i(1 \otimes V(a)) = a$ for $a \in W(R)$ and $i \geq 1$. For $i \geq 1$, the image of $\Rees(A)_i \subseteq I^+$ under $\sigma^*_i$ generates $W(R)$ and hence $\sigma$ factors through a morphism $\sigma\colon \Spec W(R) \to \Re(W(r))^{\ne-}$ by Lemma~\ref{ConstructFrame} using Remark~\ref{RemarkReesWittRep}.

Hence we obtain the \emph{Witt frame} $(W(R), (\Fil^i)_i, \sigma)$, reproducing Lau's definition in \cite{Lau_HigherFrames} of the Witt frame in our language. We get the attached frame stack (Definition~\ref{DefFrameStack})
\begin{equation}\label{EqDefRDisp}
R^{\Disp} := \Fcal(W(R), (\Fil^i)_i, \sigma)
\end{equation}
which we call the \emph{displayfication of $R$}.

Now let us consider the truncated case, i.e., $N < \infty$. In this case, the Frobenius $\phi$ induces a ring endomorphism $\phi\colon W_N(R) \to W_N(R)$ only if $R$ is of characteristic $p$. Hence let us assume this for now, see also Remark~\ref{TruncatedWittPAdic} below for a variant for $p$-adically complete rings.

Let $\sigma\colon \Spec W_N(R) \to \Re(W_N(R))$ be defined as in the non-truncated case above. In other words, it is the unique map of stacks making the diagram
\[\xymatrix{
\Spec W_N(R) \ar[r]^{\sigma} \ar[d] & \Re(W_N(R)) \ar[d] \\
\Spec W(R) \ar[r]^{\sigma} & \Re(W(R))
}\]
commutative. We obtain a frame structure on $\Re(W_N)$ which we call the \emph{$N$-truncated Witt frame}. The attached frame stack is denoted by
\begin{equation}\label{EqDefRDispN}
R^{\Disp,N} := \Fcal(W_N(R), (\Fil^i)_i, \sigma)
\end{equation}
and is called the \emph{$N$-truncated displayfication of $R$}. We also set $R^{\Disp,\infty} := R^{\Disp}$.
Note that the truncated Witt frame is not a quotient frame of the Witt frame in the sense of Definition~\ref{ExtensionOfFrames}. But still we have for all $1 \leq N \leq N' \leq \infty$ a commutative diagram
\[\xymatrix{
\Re(W_N(R)) \ar[r] \ar[d] & R^{\Disp,N} \ar[d] \\
\Re(W_{N'}(R)) \ar[r] & R^{\Disp,N'}.
}\]

\begin{remark}\label{TruncatedWittPAdic}
We can also define a variant of $R^{\Disp,N}$ if $R$ is $p$-adically complete following ideas from Hoff \cite{Hoff_EKORSiegel} and \cite{Hoff_EKORGeneral}. In this case, the Frobenius $\phi$ induces maps $W_N(R) \to W_n(R)$ for all $n < N$. We denote this induced map again by $\phi$. Thus we fix integers $1 \leq n < N < \infty$. In this case we obtain as above a map $\sigma\colon W_n(R) \to \Re(W_N(R))$. We also denote by $\tau$ the composition of $\Spec W_n(R) \to W_N(R)$ with the canonical map $\Spec W_N(R) \to \Re(W_N(R))$ \eqref{EqDefineTau2} and then define a stack $R^{\Disp,N,n}$ as the coequalizer of $\sigma$ and $\tau$.

Let us endow
\[
\Ncal := \set{(N,n) \in \ZZ^2}{1 \leq n < N}
\]
with the product partial order
\[
(N,n) \leq (N',n') {:}\iff N \leq N' \text{\ and\ } n \leq n'.
\]
Then one obtains for all pairs $(N,n) \leq (N',n')$ a map
\[
R^{\Disp,N,n} \lto R^{\Disp,N',n'}
\]
\end{remark}

The following result describes $1$-truncated displays. It is a stacky reformulation of a result of Lau \cite[2.1.7]{Lau_HigherFrames}.

\begin{proposition}\label{1TruncDispFZip}
Let $R$ be a ring of characteristic $p$.
\begin{assertionlist}
\item\label{1TruncDispFZip1}
One has a pushout diagram of Adams stacks
\[\xymatrix{
\B{\GG_{m,R}} \sqcup \B{\GG_{m,R}} \ar[r]^-{0 \sqcup \infty} \ar[d]_{\id_R \sqcup \Frob_R} & [\GG_{m,R}\backslash\PP^1_R] \ar[d] \\
\B{\GG_{m,R}} \ar[r]^i & \Re(W_1(R)).
}\]
\item\label{1TruncDispFZip2}
There is an isomorphism of stacks, functorial in $R$,
\begin{equation}\label{EqIdentifyZipDisp1}
R^{\Zip} \liso R^{\Disp,1}.
\end{equation}
\end{assertionlist}
\end{proposition}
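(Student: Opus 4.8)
The plan is to prove assertion~\ref{1TruncDispFZip1} first and then deduce~\ref{1TruncDispFZip2} from it by comparing the pushout presentations of $R^{\Zip}$ and $R^{\Disp,1}$. For~\ref{1TruncDispFZip1}, the starting point is the general structure of the Rees stack attached to the $v$-adic filtration with $v = 0$, worked out in Section~\ref{Sec:v0}: for a ring $A$ and the zero map, $\Re(A,0)$ is glued from $\Re(A,0)^- = A^{\Fil}$ and $\Re(A,0)^+ = [\GG_{m,A}\backslash \Spec\Sym_A(L)]$ along $\Re(A,0)^0 = \B{\GG_{m,A}}$. Here $W_1(R) = R$ and the filtration on $R$ defining the $1$-truncated Rees--Witt stack has $\Fil^1 \to \Fil^0 = R$ the truncation map $I_2(R) \to I_1(R)$; since $I_1(R) = V(W_1(R))$ and $W_1(R) = R$, this map is the zero map $R \to R$ (as $V$ is zero on $W_1$). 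Hence the $1$-truncated Witt filtration is exactly the $v = 0$ filtration of Section~\ref{Sec:v0} with $L = R$, so $\Re(W_1(R))$ is the stack glued from $R^{\Fil} = [\GG_{m,R}\backslash(\AA^1_R)^-]$ and $[\GG_{m,R}\backslash\AA^1_R] = [\GG_{m,R}\backslash(\AA^1_R)^+]$ along $\B{\GG_{m,R}}$. Gluing these two copies of $[\GG_{m,R}\backslash \AA^1_R]$ along $\B{\GG_{m,R}}$ is precisely taking the quotient by $\GG_m$ of the pushout of $\{0\} \sqcup \{\infty\} \to \PP^1_R$ with itself along the two fixed sections, which is the stated pushout square (before applying the Frobenius twist). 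The left vertical map in~\ref{1TruncDispFZip1} involves $\Frob_R$ on the second component: this twist enters through $\sigma$, not through $\Re(W_1(R))$ itself --- but wait, one must check carefully whether~\ref{1TruncDispFZip1} really describes $\Re(W_1(R))$ or already $R^{\Disp,1}$; comparing with \eqref{EqReesDisp1}, the algebra $D_R$ has the Frobenius relation $f(0)^q = g(0)$ built in, so the pushout in~\ref{1TruncDispFZip1} with the $\Frob_R$ in the left vertical map is $[\GG_{m,R}\backslash \Spec D_R]$, the ``$\phi$-biZip'' stack of Section~\ref{Sec:v0}, not $\Re(W_1(R))$ --- so I expect~\ref{1TruncDispFZip1} as literally stated to need the observation that the ``biZip'' stack $[\GG_{m,R}\backslash \Spec D_R]$ coincides with $\Re(W_1(R))$, via the classical identification of the degree-zero part $\Rees(W_1(R))_0 = W_1(R) = R$ with its Frobenius-twisted incarnation; I would handle this by writing out both graded rings explicitly and exhibiting the isomorphism $\Rees(W_1(R)) \cong D_R$ sending the generator of $\Fil^1$ to $u$ and matching $t$, noting the Frobenius appears because $\Rees(W_1(R))_{-1} = W_1(R)$ maps to $\Rees(W_1(R))_0$ via the structural map which, through $V$ and the Witt-vector arithmetic, carries a $q$-power.

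For~\ref{1TruncDispFZip2}, once~\ref{1TruncDispFZip1} is established, I would compare it with the defining pushout \eqref{EqDefRFZip1} of $R^{\Zip}$: both $R^{\Zip}$ and $\Re(W_1(R))$ (or rather the appropriate pushout stack) are built from $[\GG_{m,R}\backslash \PP^1_R]$, two copies of $\B{\GG_{m,R}}$, and the map $\id_R \sqcup \Frob_R$; and then $R^{\Disp,1}$ is the coequalizer of $\tau, \sigma$ over $\Re(W_1(R))$ exactly as $R^{\Zip}$ is the coequalizer in \eqref{EqDefRFZip2}. Concretely, I would identify the graded ring $C_R = R[t,u]/(tu)$ appearing implicitly in $R^{\Sht,1}$ (cf.~Proposition~\ref{CompareZip}), the ring $D_R$ of \eqref{EqReesDisp1}, and $\Rees(W_1(R))$, and write down explicit $\GG_m$-equivariant ring isomorphisms intertwining the two pairs of sections $\tau, \sigma$ resp.~$\tau', \sigma'$; since the frame structure $\sigma$ on $\Re(W_1(R))$ was defined (restricting the Witt-frame $\sigma$) to induce the Frobenius $\phi$ on $W_1(R) = R$, i.e.~$\Frob_R$, these data match on the nose, and passing to coequalizers in the $2$-category of Adams stacks (legitimate because all the stacks in sight are Adams stacks by Proposition~\ref{QuotientStackAdam} and colimits of Adams stacks exist) gives the functorial isomorphism \eqref{EqIdentifyZipDisp1}.

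The main obstacle I anticipate is the bookkeeping identification of the graded Rees--Witt algebra $\Rees(W_1(R))$ with the concrete algebra $D_R$ of \eqref{EqReesDisp1} --- in particular tracking exactly where the $q$-power Frobenius enters. The subtlety is that $W_1(R) = R$ but the Witt-vector Verschiebung and the induced maps $\Fil^{i+1} \to \Fil^i$, together with the frame map $\sigma$ (whose degree-$0$ component is the Witt Frobenius $\phi$, which on $W_1(R) = R$ is $a \mapsto a^p$), conspire so that the ``biZip'' twist by $\Frob_R$ in~\ref{1TruncDispFZip1} is genuinely present and lands on the correct component. I would resolve this by being completely explicit: compute $\Fil^i = I_2(R)$ for $i \geq 1$, the multiplication $V(x)V(y) = V(xy)$, identify $I_2(R)/(\text{image of }I_3(R)) \cong R$ via $V(x) \mapsto \bar x$, and check that under the resulting presentation $\Rees(W_1(R)) = R[t,u]/(tu)$ the structural inclusion of the degree-$0$ part into the attractor vs.~repeller loci carries the Frobenius, matching precisely the defining relation $f(0)^q = g(0)$ of $D_R$. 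Everything else --- the pushout description of Section~\ref{Sec:v0}, the coequalizer formalism, the Adams-stack colimit --- is then formal.
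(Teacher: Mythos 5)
Your overall strategy is the paper's own: everything reduces to the graded-algebra isomorphism $\Rees(W_1(R))\cong D_R$, and part~(2) then follows by matching the coequalizer presentation \eqref{EqDefRFZip2} of $R^{\Zip}$ with the defining coequalizer of $R^{\Disp,1}$, after checking that $\tau,\sigma$ correspond to $\tau',\sigma'$ (the degree-zero component of $\sigma$ being the Witt Frobenius, i.e.\ $\Frob_R$ on $W_1(R)=R$). Your location of where the Frobenius enters, namely the $W_1(R)$-module structure $a\cdot V(x)=V(a^px)$ on $\Fil^1=I_2(R)$, is exactly the right computation and is the only substantive content of the paper's ``straightforward'' isomorphism.

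There are, however, two concrete problems. First, your write-up wavers between $\Rees(W_1(R))\cong R[t,u]/(tu)=C_R$ and $\Rees(W_1(R))\cong D_R$; only the latter is correct. Since $\Fil^1=I_2(R)$ is $R$ with the Frobenius-restricted module structure, the $1$-truncated Witt filtration is \emph{not} the $v=0$ filtration with $L=R$ from Section~\ref{Sec:v0} (for imperfect $R$ the module $\Fil^1$ is not even invertible), and $C_R\not\cong D_R$ as graded $R$-algebras --- they are only universally homeomorphic (Proposition~\ref{CompareZip}). The detour through $C_R$ and $R^{\Sht,1}$ must be dropped; the proof has to go directly through $D_R$. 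Second, your argument for part~(1) asserts that gluing the two fixed-point substacks of $[\GG_{m,R}\backslash\PP^1_R]$ to a single $\B{\GG_{m,R}}$ is the same as gluing two copies of $[\GG_{m,R}\backslash\AA^1_R]$ along their fixed loci. It is not: in $[\GG_{m,R}\backslash\PP^1_R]$ the two affine charts are already glued along the open orbit, so the square as printed computes $[\GG_{m,R}\backslash Z_R]=R^{\Zip}$ (one open point), whereas $[\GG_{m,R}\backslash\Spec D_R]=\Re(W_1(R))$ has two open points; one can also detect the discrepancy by comparing rank-$2$ vector bundles over a field via the necessary condition in Theorem~\ref{AdamsStackCocomplete}. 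What the isomorphism $D_R\cong\Rees(W_1(R))$ actually yields --- and what the paper's proof establishes --- is the pushout square with $[\GG_{m,R}\backslash(\AA^1_R)^+]\sqcup[\GG_{m,R}\backslash(\AA^1_R)^-]$ in the upper right corner, i.e.\ the Ferrand square for $D_R=R[t]\times_RR[u]$. You should prove that version of~(1) rather than paper over the mismatch; the square with $\PP^1_R$ is instead a presentation of $R^{\Disp,1}$ itself and so essentially restates~(2).
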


Pulling back $\GL_n$-bundles via \eqref{EqIdentifyZipDisp1} gives an alternative proof of \cite[3.6.4]{Lau_HigherFrames}. 

\begin{proof}
Recall the $\ZZ$-graded $\FF_p$-algebra $D_R$ from \eqref{EqReesDisp1}. It is straight forward to see that one has an isomorphism of $\ZZ$-graded algebras $D_R \cong \Rees(W_1(R))$ which implies \ref{1TruncDispFZip1} after forming quotients by $\GG_m$. This is compatible with pushouts, since forming quotients can be itself expressed as a colimit. The isomorphism $D_R \cong \Rees(W_1(R))$ also shows \ref{1TruncDispFZip2} by \eqref{EqDefRFZip2} and the definition of $R^{\Disp,1}$.
\end{proof}


\subsection{Classification of truncated $G$-displays}

From now on let $G$ be a smooth affin group scheme over $\ZZ_p$. Our next goal is to apply the general results to describe moduli spaces of $G$-bundles on the stacks defined above.

Since $R$ is a $p$-adically complete ring, $W_N(R)$ is $I_N(R)$-adically complete and $p$-adically complete \cite[Prop.~3]{Zink_Displays}. In particular $(W_N(R),I_N(R))$ is a henselian pair and hence $\Rees(W_N(R))$ is henselian (Definition~\ref{DefHenselianGradedRing}). Therefore Corollary~\ref{LiftingGBundlesReesStack} gives the following result, which for $G = \GL_n$ can be seen as a reformulation for the existence of normal decompositions for displays as in \cite[3.3]{Lau_HigherFrames}.

\begin{proposition}\label{GBundlesReesWitt}
There is an isomorphism of pointed sets
\[
H^1(\Re(W_N(R)),G) \iso H^1(\B{\GG_{m,R}},G).
\]
\end{proposition}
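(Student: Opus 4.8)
The plan is to deduce Proposition~\ref{GBundlesReesWitt} directly from Corollary~\ref{LiftingGBundlesReesStack}, since all the real work has already been done there. First I would recall that Corollary~\ref{LiftingGBundlesReesStack} applies to any filtered ring $(A,(\Fil^j))$ such that the pair $(A,t(\Fil^1))$ is henselian, giving bijections
\[
H^1(\Re(A),G) \cong H^1(\Re(A)^+,G) \cong H^1(\Re(A)^-,G) \cong H^1(\B{\GG_{m,R}},G),
\]
where $R = A/t(\Fil^1)$. So the only thing to check is that the filtered ring $(W_N(R),(\Fil^i)_i)$ constructed in Section~\ref{Section:TruncatedDisplayStack} satisfies the henselian-pair hypothesis, and that the quotient $W_N(R)/t(\Fil^1)$ is canonically identified with $R$.

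For the second point I would simply unwind the definitions: by construction the map $t\colon \Fil^1 \to \Fil^0 = W_N(R)$ is the composition of the truncation map $I_{N+1}(R) \to I_N(R)$ with the inclusion $I_N(R) \hookrightarrow W_N(R)$, so its image is $I_N(R)$, and therefore $W_N(R)/t(\Fil^1) = W_N(R)/I_N(R) \cong R$ — exactly the isomorphism used to form the boundary map into $H^1(\B{\GG_{m,R}},G)$. For the henselian-pair hypothesis I would invoke the fact, cited in the paragraph just before the statement, that $W_N(R)$ is $I_N(R)$-adically complete (and $p$-adically complete) by \cite[Prop.~3]{Zink_Displays}, hence $(W_N(R),I_N(R)) = (W_N(R),t(\Fil^1))$ is a henselian pair. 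This is also precisely the observation that makes $\Rees(W_N(R))$ a henselian $\ZZ$-graded ring in the sense of Definition~\ref{DefHenselianGradedRing}.

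With these two checks in place, Corollary~\ref{LiftingGBundlesReesStack} (equivalently Theorem~\ref{LiftGBundles} applied to the henselian graded ring $\Rees(W_N(R))$) immediately yields the full and essentially surjective morphism $\Bun_G(\Re(W_N(R))) \to \Bun_G(\B{\GG_{m,R}})$ and in particular the claimed bijection on $H^1$. Note the statement works uniformly for $1 \le N \le \infty$ since the $p$-adic and $I_N$-adic completeness of $W_N(R)$ holds in all these cases (for $N = \infty$ this is Zink's original statement; for finite $N$ the truncation is an honest quotient by a nilpotent-in-the-$V$-filtration ideal, or one deduces it from the $N = \infty$ case and the compatibility of the filtrations). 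I expect no genuine obstacle here; the only mild subtlety is being careful that the identification $W_N(R)/t(\Fil^1) \cong R$ used to define the target $\B{\GG_{m,R}}$ is the same one appearing in the framework of Corollary~\ref{LiftingGBundlesReesStack}, which is immediate from the explicit description of $t$ on $\Fil^1$ given in Section~\ref{Section:TruncatedDisplayStack}. One could optionally also record the intermediate identifications with $H^1(\Re(W_N(R))^{\pm},G)$, but for the stated proposition only the first and last terms are needed.
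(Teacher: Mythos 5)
Your proof is correct and follows essentially the same route as the paper: the paper likewise observes that $W_N(R)$ is $I_N(R)$-adically complete by \cite[Prop.~3]{Zink_Displays}, so that $(W_N(R),I_N(R))=(W_N(R),t(\Fil^1))$ is a henselian pair and $\Rees(W_N(R))$ is henselian in the sense of Definition~\ref{DefHenselianGradedRing}, whence Corollary~\ref{LiftingGBundlesReesStack} applies with $W_N(R)/t(\Fil^1)\cong R$. Your extra care in identifying $t(\Fil^1)=I_N(R)$ from the explicit description of the filtration is exactly the right check and matches the paper's construction.
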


Let us now study the following prestacks of $G$-bundles.

\begin{definition}\label{DefGDisplayStack}
For every $p$-adically complete ring $R$ we set
\begin{align*}
\Bunline_G(W)(R) &:= \Bun_G(\Re(W(R))), \\
\Bunline_G(W_N)(R) &:= \Bun_G(\Re(W_N(R))), \qquad 1 \leq N \leq \infty,\\
\Bunline_G(\Disp)(R) &:= \Bun_G(R^{\Disp}), \\
\Bunline_G(\Disp,N,n)(R) &:= \Bun_G(R^{\Disp,N,n}), \qquad 1 \leq n < N < \infty, \\
\intertext{if $pR = 0$, we also set}
\Bunline_G(\Disp,N)(R) &:= \Bun_G(R^{\Disp,N}), \qquad 1 \leq N \leq \infty.
\end{align*}
\end{definition}

\begin{lemma}\label{BunlineDispStack}
All these prestacks are stacks for the $p$-completely flat topology.
\end{lemma}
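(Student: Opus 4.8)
The plan is to reduce the descent statement for each of these prestacks to a descent statement for $G$-bundles on an algebraic stack, and then invoke the standard fact that $\B{G}$ is an fpqc-stack whenever $G \to \Spec\ZZ_p$ is an fpqc covering (which holds here since $G$ is smooth affine). Concretely, for a $p$-adically complete ring $R$ and a $p$-completely flat cover $R \to R'$, one must check that $\Bun_G$ of the relevant stack over $R$ is the equalizer of $\Bun_G$ over $R'$ against $\Bun_G$ over $R' \widehat{\otimes}_R R'$.

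First I would handle the Rees--Witt stacks $\Bunline_G(W_N)$ for $1 \leq N \leq \infty$. The key input is that the formation of $W_N(-)$ and of the filtration pieces $I_{N+1}(-)$ commutes with (completed) base change in a way compatible with the $p$-completely flat topology: if $R \to R'$ is $p$-completely faithfully flat, then $W_N(R) \to W_N(R')$ is faithfully flat (this is in Zink's work and in Lau's \cite{Lau_HigherFrames}, and for $W_N$ with $N<\infty$ it is even just polynomial base change since $W_N$ is representable by affine $n$-space). Hence $\Re(W_N(R')) = \Re(W_N(R)) \otimes_{W_N(R)} W_N(R')$ and, crucially, $\Re(W_N(R))$ together with its base changes forms an fpqc-descent datum of algebraic stacks; since $\B{G}$ satisfies fpqc descent and morphisms into it can be checked after faithfully flat base change, $R \sends \Bun_G(\Re(W_N(R)))$ is an fpqc-sheaf. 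This also gives $\Bunline_G(W) = \Bunline_G(W_\infty)$ by passing to the limit, using $W(R) = \lim_N W_N(R)$ and Corollary~\ref{GeneralLimitGBundle}, or more directly because a limit of stacks is a stack.

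Next I would deduce the statement for the frame stacks $R^{\Disp}$, $R^{\Disp,N}$, $R^{\Disp,N,n}$. Each of these is, by definition, a coequalizer (in Adams stacks) of two maps $\Spec W_\bullet(R) \rightrightarrows \Re(W_\bullet(R))$, and by Remark~\ref{VecFrameStack} (together with Example~\ref{BGAdams}, since $O = \ZZ_p$ is Dedekind so $\B{G}$ is an Adams stack) one has
\[
\Bun_G(R^{\Disp,N}) = \lim\bigl(\Bun_G(\Re(W_N(R))) \rightrightarrows \Bun_G(\Spec W_N(R))\bigr),
\]
and similarly for the other two. Both $\Bun_G(\Re(W_N(-)))$ and $\Bun_G(\Spec W_N(-))$ are fpqc-stacks — the first by the previous paragraph, the second because $W_N(-)$ preserves $p$-completely flat covers and $\B{G}$ is an fpqc-stack — and the two arrows ($\tau^*$ and $\sigma^*$, the latter built from the Frobenius) are morphisms of prestacks. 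Since a limit of fpqc-stacks over a diagram of maps of prestacks is again an fpqc-stack, the frame-stack prestacks are fpqc-stacks as well. For $R^{\Disp,N,n}$ one uses the same equalizer description with $W_n$ on the target of one arrow and $W_N$ on the source, which does not affect the argument.

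The main obstacle I anticipate is purely bookkeeping rather than conceptual: one has to be careful that ``$p$-completely flat'' descent is the right topology, i.e.\ that all the relevant functors ($W_N$, the filtration ideals, the Rees construction, and the coequalizer in Adams stacks) interact correctly with $p$-completion and with completed tensor products, and that the descent datum for $\Re(W_N(R))$ over $R' \widehat{\otimes}_R R'$ really is effective at the level of $G$-bundles. For $N < \infty$ this is unproblematic since everything is of finite presentation; the $N = \infty$ case is then obtained by the limit argument, which is also where one might instead simply invoke that $\Bunline_G(W) = \lim_N \Bunline_G(W_N)$ and limits of stacks are stacks. I expect the proof in the paper to be short, citing the representability/flatness of $W_N$ and the fact that all the constructions are compatible with base change, and then reducing to the fpqc-descent of $\B{G}$.
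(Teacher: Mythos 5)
Your overall architecture matches the paper's: first establish that $\Bunline_G(W_N)$ is a stack, then observe that the display prestacks are equalizers of stacks and hence stacks. The second step is exactly what the paper does and is fine. The problem is in the first step, and it is a genuine gap rather than bookkeeping. You reduce to ordinary fpqc descent for $\B{G}$ along the ring map $W_N(R) \to W_N(R')$. But the descent one has to verify is for the $p$-completely flat topology on $R$, so the relevant \v{C}ech object in degree one is $\Re(W_N(R'\widehat{\otimes}_R R'))$, whereas fpqc descent for $\B{G}$ along $W_N(R)\to W_N(R')$ would only give you a limit over $\Re\bigl(W_N(R')\otimes_{W_N(R)}W_N(R')\bigr)$. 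These are not the same: the Witt vector functor does not commute with (completed) tensor products, even for $N<\infty$ where $W_N$ is representable by affine space as a scheme but not as a ring-valued functor compatible with pushouts. So the descent data you must show to be effective are not \v{C}ech descent data for the map $W_N(R)\to W_N(R')$, and the standard stack property of $\B{G}$ does not apply. Relatedly, your claimed input that $W_N(R)\to W_N(R')$ is faithfully flat whenever $R\to R'$ is ($p$-completely) faithfully flat is not true for general ring maps; it holds for \'etale maps and in the perfect case, but not in the generality needed here.

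What is actually required is the descent theorem for modules (or $G$-bundles) over Witt vectors along faithfully flat maps of the base — Zink's and Lau's result, \cite[4.3.2]{Lau_HigherFrames}, combined with a Tannakian description of $G$-bundles to pass from vector bundles to $G$-bundles. This is precisely what the paper invokes (via \cite[3.5]{Daniels_TannakianFrameWork} for $N=\infty$ and the easier truncated analogue for $N<\infty$), and it is the conceptual heart of the lemma, not a routine compatibility check. Once that input is granted, the rest of your argument — that $R\mapsto \Bun_G(\Spec W_N(R))$ is a stack and that the display prestacks are equalizers of stacks — goes through as in the paper.
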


For $\Bunline_G(\Disp,N)$ this just means that it is an fpqc-stack on the catagory of all $\FF_p$-algebras.

\begin{proof}
The prestacks $\Bunline_G(W_N)$ are stacks: For $N = \infty$ this follows from \cite[3.5]{Daniels_TannakianFrameWork} since one has a Tannakian description of $G$-bundles by \cite[A.30]{Wedhorn_ExtendBundles}. The truncated case can be shown similarly, only that the used descent result by Lau \cite[4.3.2]{Lau_HigherFrames} is easier to prove in the truncated case. Now $R \sends \Bun_G(W_N(R))$ is also a stack for all $1 \leq N < \infty$ (use for instance \cite[2.12]{BH_GMuWindows}). This shows that the prestacks $\Bunline_G(\Disp)$, $\Bunline_G(\Disp,N,n)$, $\Bunline_G(\Disp,N)$ are all equalizers of stacks, hence they are also stacks.
\end{proof}

Now fix a conjugacy class $[\mu]$ of cocharacters of $G$ defined over some finite flat extension dvr $O_{[\mu]}$ of $\ZZ_p$. If $G$ is reductive, we let $O_{[\mu]}$ its ring of definition, a finite \'etale extension of $O$ by Remark~\ref{SplittingNormal}.

\begin{definition}\label{DefGDisplay}
Let $R$ be a $p$-adically complete ring. A \emph{$G$-display over $R$} is defined as a $G$-bundle over $R^{\Disp}$. An \emph{$(N,n)$-truncated $G$-display over $R$} is a $G$-bundle over $R^{\Disp,N,n}$. If $R$ is of characteristic $p$, then an \emph{$N$-truncated $G$-display over $R$} is a $G$-bundle over $R^{\Disp,N}$.

We say that a (truncated or not) display $\Escr$ is of type $[\mu]$ if it is \'etale locally isomorphic to $\Escr_{[\mu]}$ (Definition~\ref{DefineGModMu}).
\end{definition}

We obtain substacks $\Bunline_G^{[\mu]}(\bigstar)$ for $\bigstar \in \{W,W_N, \Disp,(\Disp,N,n), (\Disp,N)\}$ over $O_{[\mu]}$. If $G$ is reductive, these are open and closed substacks \eqref{EqDecomposeBunGBGGm}.

\begin{remark}\label{RelateToHoff}
As already mentioned (without proof) by Hoff in \cite[2.8]{Hoff_EKORSiegel}, the groupoid of pairs of type $(h,d)$ over $R$ in the sense of loc.~cit.\ Definition~2.2 is equivalent to the groupoid of $\GL_h$-bundles over $\Re(W(R))$ of type $[\mu_d]$, where $[\mu_d]$ is the conjugacy class of the standard minuscule cocharacter
\[
\mu_d\colon \GG_m \to \GL_h, \qquad t \sends \twomatrix{tI_d}{0}{0}{I_{h-d}}.
\]
This implies easily that the groupoid of displays of type $(h,d)$ over $R$ is equivalent to the groupoid of $\GL_h$-bundles of type $[\mu_d]$ over $R^{\Disp}$.
Christopher Lang gives a proof of these statements in Appendix~\ref{APPLang}.

A similar result with the same proof holds for $N$-truncated pairs, $(N,n)$-truncated displays, and $N$-truncated displays.
\end{remark}

Next we give descriptions of all these stacks as quotient stacks, similarly as in the shtuka case in Chapter~\ref{TRUNCHECKE}. First recall the notion of truncated Witt loop groups. For each $p$-adically complete $R$-algebra we set
\begin{align*}
L^WG(R) &:= G(W(R)[1/p]), \\
L^{W,+}G(R) &:= G(W(R)), \\
L^{W,(N)}G(R) &:= G(W_N(R)), \qquad 1 \leq N \leq \infty.
\end{align*}
Hence $L^{W,(\infty)}G = L^{W,+}(G)$. Then $L^{W,(N)}G$ is an affine group scheme. It is of finite type for $N < \infty$.

As in Section~\ref{EGENERAL} we also have a display group $E_N^W(G,[\mu])$ attached to $R \sends \Re(W_N(R))$ for all $1 \leq N \leq \infty$ which by Proposition~\ref{EMuRepresentable} is a formal affine group scheme over $\Spf \ZZ_p$ which is smooth if $N < \infty$. Moreover $E_{\infty}(G,[\mu]) = \lim_{N<\infty}E_N(G,[\mu])$. The general results Theorem~\ref{GeneralBunlineGMu} and Theorem~\ref{DescribeBunlineFramStack} then specialize to the following description of the various stacks of $G$-bundles.

\begin{theorem}\label{DescribeGDisplays}
One has equivalences of stacks for the $p$-completely flat topology on the category of bounded $p$-adically complete rings
\begin{align*}
\Bunline^{[\mu]}_G(W_N) &\cong \B{E_N^W(G,[\mu])}, \\
\Bunline^{[\mu]}_G(\Disp) &\cong [E_{\infty}^W(G,[\mu])\backslash L^{W,+}G],\\
\Bunline^{[\mu]}_G(\Disp,{N,n}) &\cong [E_{N}^W(G,[\mu])\backslash L^{W,(n)}G].\\
\intertext{Further, one has equivalences of stacks for the fpqc-topology on the category of $\FF_p$-algebras}
\Bunline^{[\mu]}_G(\Disp,{N}) &\cong [E_{N}^W(G,[\mu])\backslash L^{W,(N)}G].
\end{align*}
\end{theorem}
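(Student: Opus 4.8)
The plan is to deduce Theorem~\ref{DescribeGDisplays} from the general classification result Theorem~\ref{DescribeBunlineFramStack} and Theorem~\ref{GeneralBunlineGMu} by checking that the functor $R \sends (W_N(R), (\Fil^i)_i, \sigma)$ (with the truncated Witt frame of Section~\ref{Section:TruncatedDisplayStack}) satisfies Assumption~\ref{ConditionFrameFunctor}. Concretely, I would take $O = \ZZ_p$, $\kappa = \FF_p$, and let $A$ be the functor sending an $\FF_p$-algebra $R$ to the truncated Witt frame, together with the canonical isomorphism $W_N(R)/I_N(R) \iso R$ (which plays the role of $\alpha_R$, matching $t(\Fil^1) = I_N(R)$). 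For the displayfication case $N = \infty$ the same argument is run with the Witt frame $(W(R),(\Fil^i)_i,\sigma)$; the first formula $\Bunline^{[\mu]}_G(W_N) \cong \B{E_N^W(G,[\mu])}$ is then exactly \eqref{EqDescribeBunGMuGenHecke}, valid for $1 \le N \le \infty$.

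The three conditions of Assumption~\ref{ConditionFrameFunctor} are verified as follows. For \ref{ConditionFrameFunctora}: by construction $\Fil^i_{W_N(R)} = I_{N+1}(R)$ for $i \ge 1$ and $\Fil^0 = W_N(R)$, and $R \sends W_N(R)$ and $R \sends I_{N+1}(R)$ are representable by affine schemes (a product of copies of $\AA^1$ when $N < \infty$, using the Witt coordinate description), exactly as in the proof of Proposition~\ref{ENMuRep}. For \ref{ConditionFrameFunctorb}: $W_N(R)$ is $I_N(R)$-adically complete by \cite[Prop.~3]{Zink_Displays} (as already invoked for Proposition~\ref{GBundlesReesWitt}), hence $(W_N(R), I_N(R)) = (W_N(R), t(\Fil^1))$ is a henselian pair. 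For \ref{ConditionFrameFunctorc}: one needs $\B{L^AG} \to L^A\B{G}$ to be an equivalence, i.e.\ $\B{L^{W,(N)}G} \iso (R \sends \Bun_G(W_N(R)))$; this is the statement that $R \sends \Bun_G(\Spec W_N(R))$ is the classifying stack of $L^{W,(N)}G$, which is part of the content recalled in the proof of Lemma~\ref{BunlineDispStack} (for $N<\infty$ via \cite[2.12]{BH_GMuWindows}, for $N = \infty$ via \cite[3.5]{Daniels_TannakianFrameWork}), so I would simply cite those. Once these hold, Proposition~\ref{EMuRepresentable} gives that $E_N^W(G,[\mu])$ is a (formal) affine group scheme, smooth for $N < \infty$, and Remark~\ref{PropertiesLAG} identifies $L^A G = L^{W,(N)}G$.

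With the hypotheses in place, Theorem~\ref{DescribeBunlineFramStack} yields directly
\[
\Bunline^{[\mu]}_G(\Fcal(A)) \cong [E_A(G,[\mu]) \backslash L^A G],
\]
and unwinding the identifications $E_A(G,[\mu]) = E_N^W(G,[\mu])$, $L^A G = L^{W,(N)}G$, and $\Fcal(W_N(R),(\Fil^i)_i,\sigma) = R^{\Disp,N}$ (resp.\ $= R^{\Disp}$ for $N = \infty$), this is the asserted description $\Bunline^{[\mu]}_G(\Disp,N) \cong [E_N^W(G,[\mu]) \backslash L^{W,(N)}G]$. For the mixed-characteristic cases $\Bunline^{[\mu]}_G(\Disp)$ and $\Bunline^{[\mu]}_G(\Disp,N,n)$ the argument is the same except that the two maps $\tau,\sigma$ in the coequalizer now go $\Spec W(R) \to \Re(W(R))$ (resp.\ $\Spec W_n(R) \to \Re(W_N(R))$); the induced action $E \times L^W{}^{,+}G \to L^W{}^{,+}G$ (resp.\ on $L^{W,(n)}G$) is the one from \eqref{EqGenActionEonG} with $\tau$ the inclusion and $\sigma(g) = \phi(\ldots)$ as spelled out in the shtuka analogue, and the same combinatorial Lemma~\ref{EqualizerClassifyingStack} produces the quotient stack. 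The one genuinely non-formal input — and hence the main obstacle — is condition \ref{ConditionFrameFunctorc} together with the fact that the relevant prestacks are stacks for the $p$-completely flat topology: this rests on the descent statements of Lau and Daniels (and \cite{BH_GMuWindows} in the truncated case), which are exactly the results cited in Lemma~\ref{BunlineDispStack}, so the proof will consist largely in checking that their hypotheses apply here and that the $n < N$ bookkeeping in Remark~\ref{TruncatedWittPAdic} is compatible with the frame formalism; everything else is a direct specialization of the general theorems.
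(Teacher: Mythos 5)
Your proposal is correct and follows essentially the same route as the paper: the paper's proof likewise consists of verifying the three conditions of Assumption~\ref{ConditionFrameFunctor} for the (truncated) Witt frame functors — representability of $R \sends W_N(R)$ and $R \sends \Fil^i$ by (cofiltered limits of) affine spaces, the henselian property of $(W_N(R),I_N(R))$ from $I_N(R)$-adic completeness, and Condition~\ref{ConditionFrameFunctorc} via \cite[2.12]{BH_GMuWindows} — and then specializing Theorem~\ref{GeneralBunlineGMu} and Theorem~\ref{DescribeBunlineFramStack}. Your extra care about the $(N,n)$ bookkeeping and the identification $L^AG = L^{W,(N)}G$ is consistent with what the paper leaves implicit.
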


\begin{proof}
We have to check that the Assumptions~\ref{ConditionFrameFunctor} are satisfied. For \ref{ConditionFrameFunctora} this is clear in the truncated case since $R \sends W_N(R)$, $R \sends I_N(R)$ are represented as schemes by an affine space for $N < \infty$. For $N = \infty$ they are represented by a cofiltered limit of affine spaces which is an affine scheme. We have already remarked above that Condition~\ref{ConditionFrameFunctorb} is satisfied since $W_N(R)$ is $I_N(R)$-adically complete. Finally, Condition~\ref{ConditionFrameFunctorc} is satisfied by \cite[2.12]{BH_GMuWindows}).
\end{proof}

\begin{corollary}\label{LimitTruncatedDisplays}
One has
\begin{align*}
\Bunline^{[\mu]}_G(W) &= \lim_{N<\infty}\Bunline^{[\mu]}_G(W_N), \\
\Bunline^{[\mu]}_G(\Disp) &= \lim_{(N,n) \in \Ncal^{\opp}}\Bunline^{[\mu]}_G(\Disp,{N,n}),\\
\Bunline^{[\mu]}_G(\Disp) &= \lim_{N<\infty} \Bunline^{[\mu]}_G(\Disp,{N})
\end{align*}
\end{corollary}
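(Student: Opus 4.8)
The plan is to reduce all three equalities to the corresponding statements at the level of Adams stacks, namely that $\Re(W(R))$ (resp.\ $R^{\Disp}$, resp.\ $R^{\Disp}$ again) is the $2$-colimit of the diagram of its truncations $\bigl(\Re(W_N(R))\bigr)_{N<\infty}$ (resp.\ $\bigl(R^{\Disp,N,n}\bigr)_{(N,n)\in\Ncal}$, resp.\ $\bigl(R^{\Disp,N}\bigr)_{N<\infty}$), and then to apply $\Hom_{\ZZ_p}(-,\B{G})$. Since $O=\ZZ_p$ is a Dedekind domain, $\B{G}$ is an Adams stack by Example~\ref{BGAdams}, so a colimit of Adams stacks is turned into a limit of groupoids, exactly as in the proof of Corollary~\ref{GeneralLimitGBundle}; this gives $\Bunline_G(\bigstar)=\lim\Bunline_G(\bigstar_{\bullet})$ before restricting the type. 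Passing to the $[\mu]$-component is then formal: the type maps to $\Bunline_G(\B{\GG_{m}})$ are compatible with all transition maps and have a target independent of the truncation level, so taking the preimage of $\Bunline^{[\mu]}_G(\B{\GG_{m}})$ commutes with the limit (fibre products commute with limits). Alternatively, for the first identity one may argue directly from Theorem~\ref{DescribeGDisplays} together with the already recorded identity $E_\infty^W(G,[\mu])=\lim_{N<\infty}E_N^W(G,[\mu])$ and a countable-tower version of Proposition~\ref{LimitClassifying}, which is available because the kernels $\ker\bigl(E_{N+1}^W(G,[\mu])\to E_N^W(G,[\mu])\bigr)$ are vector groups and $E_N^W(G,[\mu])$ is smooth for $N<\infty$, so sections of the relevant torsors lift step by step over any affine base.

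For the first identity I would establish $\colim_{N<\infty}\Re(W_N(R))\iso\Re(W(R))$ by applying Proposition~\ref{ColimitQuotientStack} to the $\NN^{\opp}$-diagram of $\ZZ$-graded $\ZZ_p$-algebras $\Rees(W_N(R))$: the transition maps are surjective because $W_{N+1}(R)\to W_N(R)$ and $I_{N+2}(R)\to I_{N+1}(R)$ are; the kernel of $W_{N+1}(R)\to W_N(R)$ lies in the Jacobson radical because $W_{N+1}(R)$ is $I_{N+1}(R)$-adically complete (using $p$-adic completeness of $R$); and the graded pieces are controlled as in the verification of Assumption~\ref{ConditionFrameFunctora} in the proof of Theorem~\ref{DescribeGDisplays}, where $R\sends W_N(R)$ and $R\sends I_N(R)$ are represented by affine spaces. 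Corollary~\ref{GeneralLimitGBundle} then yields $\Bunline_G(W)(R)=\lim_{N<\infty}\Bunline_G(W_N)(R)$ functorially in $R$, and restricting the type as above gives $\Bunline^{[\mu]}_G(W)=\lim_{N<\infty}\Bunline^{[\mu]}_G(W_N)$. This is the exact analogue of Proposition~\ref{InftyHeckeAsColimit} in the Witt setting.

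For the two identities involving $R^{\Disp}$ I would use that the $2$-category of Adams stacks is cocomplete (Theorem~\ref{AdamsStackCocomplete}), so colimits commute with colimits and in particular with coequalizers. Combining $\colim_{N<\infty}\Re(W_N(R))=\Re(W(R))$ with $\colim_{n<\infty}\Spec W_n(R)=\Spec W(R)$ (Example~\ref{AdicColimitAdam}, since $W(R)$ is $I_N(R)$-adically and $p$-adically complete) and forming the coequalizer of $\sigma$ and $\tau$ in each variable gives $\colim_{(N,n)\in\Ncal}R^{\Disp,N,n}\iso R^{\Disp}$ and, when $pR=0$, $\colim_{N<\infty}R^{\Disp,N}\iso R^{\Disp}$; here one uses that $\{(N+1,N)\}_{N\ge1}$ is cofinal in $\Ncal$, so the first colimit reduces to a single filtered colimit and one is again reduced to the two basic colimit identities above. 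Applying $\Hom_{\ZZ_p}(-,\B{G})$ and restricting the type, as before, yields the stated equalities; this is Corollary~\ref{LimitBunGTruncFrame} and Theorem~\ref{ColimitNTruncShtuka}\,\ref{ColimitNTruncShtuka1} transported to the (truncated) Witt frame, the only new feature being that the truncated Witt frame is not a quotient frame of the Witt frame — but the commuting squares recorded in Section~\ref{Section:TruncatedDisplayStack}, together with the two colimit computations, are all the argument uses.

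The main obstacle I expect is precisely the first step, the colimit identity $\colim_{N<\infty}\Re(W_N(R))\iso\Re(W(R))$: checking the hypotheses of Proposition~\ref{ColimitQuotientStack} for the Rees--Witt algebras is where the analysis of the truncated Witt rings $W_N(R)$ and their ideals $I_N(R)$ really enters (surjectivity and the Jacobson-radical condition are immediate from Zink's structure theory, while the finiteness condition on the graded pieces $\Fil^i_{W_N(R)}$ is the delicate one and must be handled through the representability of $R\sends I_N(R)$ by an affine space rather than through module-finiteness over $W_N(R)$). Once this colimit identity and its $G$-bundle consequence are in place, everything else is formal manipulation of (co)limits and a routine restriction to the type-$[\mu]$ component.
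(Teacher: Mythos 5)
Your main route has a genuine gap at its very first (and, as you say yourself, decisive) step. To get $\colim_{N<\infty}\Re(W_N(R))\iso\Re(W(R))$ you invoke Proposition~\ref{ColimitQuotientStack}, but its hypothesis~\ref{ColimitQuotientStackc} requires the graded pieces $(B_N)_i$ to be finite projective over $(B_N)_0=W_N(R)$, and for the Rees--Witt algebra this fails: in positive degrees $(B_N)_i=\Fil^i_{W_N(R)}=I_{N+1}(R)$, and under $V$ this is $W_N(R)$ with the module structure twisted by Frobenius, $a\cdot V(x)=V(F(a)x)$. Already for $N=1$ and $R=\FF_p[x]/(x^2)$ one gets $F_*R\cong (R/(x))^{\oplus 2}$, which is neither flat nor projective; in general $\Fil^i$ is finite projective only when Frobenius is finite flat (e.g.\ $R$ perfect), which is far from the generality of the corollary. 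Your proposed substitute --- representability of $R\sends I_N(R)$ by an affine space --- is irrelevant here: that is Assumption~\ref{ConditionFrameFunctor}~\ref{ConditionFrameFunctora}, used via Proposition~\ref{EMuRepresentable} to make the display group an affine group scheme, and it does not replace the finite-projectivity hypothesis in the colimit criterion. This is also why the paper, unlike in the shtuka case (where $\Fil^i_{R_N}=R_N$ is free of rank one, so \eqref{EqReesColimit} and \eqref{EqFrameColimit} apply), never asserts a geometric colimit identity $\colim_N\Re(W_N(R))\cong\Re(W(R))$ or $\colim R^{\Disp,N,n}\cong R^{\Disp}$; so the subsequent ``apply $\Hom(-,\B{G})$'' step has nothing to stand on as written.

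The paper instead deduces the corollary purely from the quotient-stack descriptions of Theorem~\ref{DescribeGDisplays} --- essentially the argument you only sketch as an ``alternative'' for the first identity. Namely, $\Bunline^{[\mu]}_G(W_N)\cong\B{E^W_N(G,[\mu])}$ with $E^W_\infty(G,[\mu])=\lim_{N<\infty}E^W_N(G,[\mu])$, and $\Bunline^{[\mu]}_G(\Disp,N,n)\cong[E^W_N(G,[\mu])\backslash L^{W,(n)}G]$ with $L^{W,+}G=\lim_n L^{W,(n)}G$; one then checks that forming $\B{(-)}$, respectively the quotient stack, commutes with these countable limits, using that the transition maps $E^W_{N+1}\to E^W_N$ are smooth surjections with vector-group kernels (Proposition~\ref{TruncationWittDisplayGroup}) and likewise for $L^{W,(n+1)}G\to L^{W,(n)}G$, so that torsors and trivializations can be lifted step by step over an affine base (a tower analogue of Proposition~\ref{LimitClassifying}). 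If you carry that lifting argument out for all three identities, you recover the intended proof; as it stands, your primary route is not salvageable without a new argument replacing Proposition~\ref{ColimitQuotientStack} in the Witt setting.
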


The same arguments as in the equi-characteristic shtuka case show that one has the following description of $E_{N}^W(G,\mu)$ if $[\mu]$ is represented by a cocharacter $\mu$. Let $R$ be a bounded $p$-complete ring such that $W(R)$ is $p$-torsion free. This implies that $G(W(R)) \to G(W(R)[1/p])$ is injective. Then
\begin{equation}\label{EqEInftyGWMu}
E^W_{\infty}(G,\mu)(R) = L^{W,+}G(R) \cap \mu^{-1}L^{+,W}G(R)\mu \subseteq L^WG(R).
\end{equation}
If $R$ is in addition an $\FF_p$-algebra, then one has for $1 \leq N < \infty$ that
\begin{equation}\label{EqENWGMu}
E_N^W(G,\mu)(R) = E^W_{\infty}(G,\mu)(R)/(L^W_NG(R) \cap \mu^{-1}L^W_NG(R)\mu),
\end{equation}
where $L^W_NG := \Ker(L^{W,+}G \to L^{W,(N)}G)$.

For a characterization of rings $R$ such that $W(R)$ is $p$-torsion free, see \cite[4.1]{MunozBertrand_CharPropWitt}. All reduced rings $R$ have this property. In characteristic $p$, also the converse holds.

\begin{proposition}\label{TruncationWittDisplayGroup}
Let $1 \leq N < \infty$. Then the canonical map
\[
E_{N+1}^W(G,\mu) \to E_N^W(G,\mu)
\]
is a surjective homomorphism of smooth algebraic groups. Its kernel is a vector group of dimension $\dim G$.
\end{proposition}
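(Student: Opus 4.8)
The plan is to follow the proof of Proposition~\ref{KernelTruncationE} almost verbatim, transporting it from the $z$-adic filtered ring to the truncated Witt frame, with the uniformizer $z$ replaced by $p$; only one genuinely new arithmetic fact about truncated Witt vectors is needed.

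\textbf{Surjectivity.} I would apply Proposition~\ref{EFunctorialSurjective} to the closed immersion $\alpha_R\colon \Re(W_N(R))\to\Re(W_{N+1}(R))$ from Section~\ref{Section:TruncatedDisplayStack}. In degree $0$ the corresponding map of graded rings is the Witt truncation $W_{N+1}(R)\to W_N(R)$, and its kernel is the image $V^N W_{N+1}(R)$ of $V^N$. For $N\geq 1$ this ideal is square zero: from the universal identities $V^N(a)V^N(b)=p^N V^N(ab)$ and $p\,V^N(x)=FV^{N+1}(x)$, together with $V^{N+1}=0$ on $W_{N+1}(R)$, one gets $p^N V^N=0$ there. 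A square-zero (hence nilpotent) ideal lies in the Jacobson radical, so $(W_{N+1}(R),\Ker)$ is a henselian pair. As $[\mu]\backslash G$ is smooth, Proposition~\ref{EFunctorialSurjective} yields surjectivity of $E_{N+1}^W(G,\mu)(R)\to E_N^W(G,\mu)(R)$ for every $R$; since source and target are the smooth affine algebraic groups provided by Theorem~\ref{DescribeGDisplays} (via Proposition~\ref{EMuRepresentable}) and the truncation map is the evident functorial homomorphism, it is faithfully flat.

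\textbf{The kernel.} Using \eqref{EqEInftyGWMu} and \eqref{EqENWGMu} in place of \eqref{EqEInftyGMu} and \eqref{EqENGMu}, the kernel $K$ is described, on rings $R$ with $W(R)$ $p$-torsion free, as
\[
K(R)=\bigl(L^W_N G(R)\cap \mu^{-1}L^W_N G(R)\mu\bigr)\,\big/\,\bigl(L^W_{N+1}G(R)\cap \mu^{-1}L^W_{N+1}G(R)\mu\bigr),
\]
a subquotient of the congruence quotient $L^W_N G/L^W_{N+1}G$. Fixing a faithful representation $G\hookrightarrow\GL_h$ adapted to $\mu$, so that $\operatorname{Ad}(\mu(p))$ scales the $(a,b)$ matrix entry by $p^{\,w_a-w_b}$, one checks exactly as in Proposition~\ref{KernelTruncationE} that in each matrix position the double congruence condition leaves precisely one coefficient free, ranging over all of $R$, and that inside $G$ this recovers $\operatorname{Lie}(G)\otimes R$; hence $K\cong\GG_{a}^{\dim G}$. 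I would actually prefer to compute $K(R)$ for an arbitrary test ring directly, as the group of automorphisms of the standard $G$-bundle of type $\mu$ on $\Re(W_{N+1}(R))$ restricting to the identity on $\Re(W_N(R))$, read off from the normal decomposition of Proposition~\ref{GBundlesReesWitt} and the observation that the kernel of $\Rees(W_{N+1}(R))_j\to\Rees(W_N(R))_j$ is free of rank one over $R$ in every degree $j$; this circumvents the $p$-torsion hypothesis.

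\textbf{The main obstacle.} The only real work is the dimension bookkeeping for $K$: one must verify that imposing $g\in L^W_N G$ and $\mu g\mu^{-1}\in L^W_N G$ simultaneously, and then dividing by the level-$(N+1)$ versions, does not lower the rank below $\dim G$. Proposition~\ref{KernelTruncationE} disposes of this in a single line in the shtuka case; in the Witt setting it is slightly more delicate because $p^N W(R)$ and $V^N W(R)$ need not agree and $W_N(R)$ may carry $p$-torsion, which is precisely why I would route the kernel computation through the explicit graded pieces of $\Rees(W_{N+1}(R))$ rather than through \eqref{EqENWGMu}.
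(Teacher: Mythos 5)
Your strategy coincides with the paper's: Proposition~\ref{TruncationWittDisplayGroup} is obtained there exactly as Proposition~\ref{KernelTruncationE}, namely surjectivity via Proposition~\ref{EFunctorialSurjective} applied to the closed immersion $\Re(W_N(R))\to\Re(W_{N+1}(R))$, and the kernel via the congruence description \eqref{EqENWGMu}; your proposal follows this template, and the alternative kernel computation through the graded pieces of $\Rees(W_{N+1}(R))$ (in the spirit of Example~\ref{EXGmuViaAutomorphism}) is a sensible way to avoid the $p$-torsion-freeness hypothesis, though you do not carry it out.

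There is, however, a concrete error in your verification of the henselian-pair hypothesis. The identity $pV^N(x)=F\bigl(V^{N+1}(x)\bigr)$ is correct, but in mixed characteristic $F$ maps $W_{N+2}(R)$ to $W_{N+1}(R)$, so the $V^{N+1}$ appearing in it lives one truncation level higher; the fact that $V^{N+1}=0$ on $W_{N+1}(R)$ therefore does not yield $p^NV^N=0$ there. Indeed the kernel $K=\Ker\bigl(W_{N+1}(R)\to W_N(R)\bigr)$ is not square zero for general $p$-adically complete $R$: one has $V^N(a)V^N(b)=V^N(p^Nab)$, and in $W_{N+1}(\ZZ_p)$ the element $V^N(1)^2=V^N(p^N)$ is nonzero because $p^N\notin VW(\ZZ_p)$. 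The claim is correct precisely when $R$ is an $\FF_p$-algebra, where $F$ and $V$ commute, $pW_{N+1}(R)\subseteq VW_{N+1}(R)$, and hence $K^2\subseteq V^{2N}W_{N+1}(R)=0$. Since \eqref{EqEInftyGWMu} and \eqref{EqENWGMu}, which you (like the paper) use for the kernel, are themselves only stated for $\FF_p$-algebras with $W(R)$ $p$-torsion free, your argument goes through once the test category is restricted to characteristic $p$ (which is also the setting of all subsequent applications, Corollaries~\ref{ClassifyIsomClassesTruncDisplay} and \ref{TruncationDisplaysSurjective} and Theorem~\ref{ExistTraversoDisplay}); but as written, with the display group evaluated on arbitrary $p$-adically complete rings, the square-zero statement and its justification fail, and for mixed-characteristic test rings you would need a separate argument that the pair $(W_{N+1}(R),K)$ is henselian, or else work with your proposed degreewise computation on the Rees algebra from the start. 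The dimension bookkeeping for the kernel is at the same level of detail as the paper's one-line assertion in Proposition~\ref{KernelTruncationE} and is fine in that restricted setting.
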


As in the equi-characteristc shtuka case (Corollary~\ref{DescribePointsShtuka} and Proposition~\ref{TruncationSurjective}) one deduces via Theorem~\ref{DescribeGDisplays} the following results.

\begin{corollary}\label{ClassifyIsomClassesTruncDisplay}
For every strictly henselian local $\FF_p$-algebra $R$ and for every $1 \leq N \leq \infty$, one has an equivalence of groupoids
\[
\Bunline^{[\mu]}_G(\Disp,{N})(R) \cong [E_{N}^W(G,\mu)(R)\backslash G(W_N(R))].
\]
\end{corollary}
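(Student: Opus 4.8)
The plan is to derive this corollary directly from Theorem~\ref{DescribeGDisplays}, exactly as Corollary~\ref{DescribePointsShtuka} was deduced from Theorem~\ref{DescribeModSpaceTruncShtukas} in the equi-characteristic shtuka case. By Theorem~\ref{DescribeGDisplays} we have an equivalence of stacks (for the fpqc topology on $\FF_p$-algebras when $N < \infty$, and for the $p$-completely flat topology when $N = \infty$)
\[
\Bunline^{[\mu]}_G(\Disp,N) \cong [E_N^W(G,\mu)\backslash L^{W,(N)}G],
\]
so the task reduces to computing the $R$-points of the quotient stack on the right when $R$ is a strictly henselian local $\FF_p$-algebra. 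For a quotient stack $[H\backslash X]$ the groupoid of $R$-points is the groupoid of pairs $(\Pcal, f)$ where $\Pcal$ is an $H$-torsor over $\Spec R$ and $f\colon \Pcal \to X$ is $H$-equivariant; if every $H$-torsor over $\Spec R$ is trivial then this groupoid is equivalent to the action groupoid $[H(R)\backslash X(R)]$.

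First I would treat the case $N < \infty$. Here $E_N^W(G,\mu)$ is a smooth affine algebraic group over $\FF_p$ by Theorem~\ref{DescribeGDisplays} (equivalently Proposition~\ref{EMuRepresentable}\ref{EMuRepresentable2} together with Proposition~\ref{TruncationWittDisplayGroup}). Since $R$ is strictly henselian local, every torsor under a smooth affine group scheme over $\Spec R$ for the fpqc (equivalently \'etale) topology is trivial. Therefore $[E_N^W(G,\mu)\backslash L^{W,(N)}G](R) \cong [E_N^W(G,\mu)(R)\backslash L^{W,(N)}G(R)]$, and since $L^{W,(N)}G(R) = G(W_N(R))$ by definition of the truncated Witt loop group, this is exactly $[E_N^W(G,\mu)(R)\backslash G(W_N(R))]$, as claimed.

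For $N = \infty$ the group $E_\infty^W(G,\mu) = \lim_{N<\infty} E_N^W(G,\mu)$ is only pro-smooth, so the triviality of torsors needs the analogue of Corollary~\ref{DimE}\ref{DimE2} in the Witt setting: every $E_\infty^W(G,\mu)$-torsor over a strictly henselian local ring is already \'etale locally (hence, over a strictly henselian ring, globally) trivial. This follows from Proposition~\ref{TruncationWittDisplayGroup} — the transition maps $E_{N+1}^W(G,\mu) \to E_N^W(G,\mu)$ are surjective with vector group kernels — by the same argument cited for Corollary~\ref{DimE}\ref{DimE2}, namely \cite[A.4.8]{RS_IntersectionMotive} applied to the pro-smooth group with smooth successive quotients. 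Granting this, torsors under $E_\infty^W(G,\mu)$ over $\Spec R$ are trivial, and likewise the relevant fact is that $L^{W,(\infty)}G(R) = G(W(R))$. Hence $[E_\infty^W(G,\mu)\backslash L^{W,(\infty)}G](R) \cong [E_\infty^W(G,\mu)(R)\backslash G(W(R))]$, giving the statement for $N = \infty$. Functoriality in $R$ is automatic from the functoriality of the equivalence in Theorem~\ref{DescribeGDisplays} and of the action-groupoid construction.

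I expect the only genuine subtlety — and thus the main obstacle — to be the $N = \infty$ case, specifically verifying that $E_\infty^W(G,\mu)$-torsors over a strictly henselian local $\FF_p$-algebra are trivial. For $N < \infty$ everything is formal once one invokes smoothness, but for $N = \infty$ one must be careful that the relevant cohomological vanishing statement ($H^1$ of a strictly henselian local ring with coefficients in the pro-smooth group is trivial) really does follow from the structure of $E_\infty^W(G,\mu)$ as an inverse limit with vector group transition kernels; this is where the reference to \cite{RS_IntersectionMotive} does the work, exactly as in the proof of Corollary~\ref{DimE}\ref{DimE2}. Everything else is a direct translation of the shtuka-case argument into the Witt-vector language.
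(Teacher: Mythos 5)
Your proposal is correct and follows essentially the same route as the paper, which deduces the corollary from Theorem~\ref{DescribeGDisplays} together with the triviality of display-group torsors over a strictly henselian local ring, exactly mirroring the deduction of Corollary~\ref{DescribePointsShtuka} from Theorem~\ref{DescribeModSpaceTruncShtukas} and Corollary~\ref{DimE}. You correctly isolate the only real point needing care, namely the $N=\infty$ case via the pro-smooth structure of $E^W_\infty(G,\mu)$ with vector-group transition kernels and the argument of \cite{RS_IntersectionMotive}, which is precisely what the paper's reference to the shtuka case supplies.
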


\begin{corollary}\label{TruncationDisplaysSurjective}
For all $\infty \geq N' > N \geq 1$ and for every $\kappa$-algebra $R$, the truncation map $\Bunline_G^{[\mu]}(\Disp,N')(R) \lto \Bunline_G^{\mu}(\Disp,N)(R)$ is locally for the \'etale topology on $R$ essentially surjective. 
\end{corollary}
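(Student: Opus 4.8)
The plan is to mimic the proof of Proposition~\ref{TruncationSurjective} from the equi-characteristic shtuka case verbatim, replacing the positive loop group by the positive Witt loop group and the truncated shtuka moduli description by the truncated display description of Theorem~\ref{DescribeGDisplays}. Concretely, by Theorem~\ref{DescribeGDisplays} we have $\Bunline_G^{[\mu]}(\Disp,N) \cong [E_N^W(G,\mu)\backslash L^{W,(N)}G]$ for every $1 \leq N \leq \infty$, so an $R$-valued point of $\Bunline_G^{[\mu]}(\Disp,N)$ is, locally for the fpqc topology, represented by an element of $L^{W,(N)}G(R) = G(W_N(R))$. The first step is to upgrade ``locally for fpqc'' to ``locally for \'etale'': since $N < \infty$, the display group $E_N^W(G,\mu)$ is a smooth affine group scheme by Proposition~\ref{EMuRepresentable}~\ref{EMuRepresentable2} (the smoothness being recorded also in Proposition~\ref{TruncationWittDisplayGroup} for the kernel of the truncation map), hence every $E_N^W(G,\mu)$-torsor is \'etale locally trivial, so any $R$-valued point of the quotient stack lifts \'etale locally on $R$ to a point of $G(W_N(R))$.

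The second, and genuinely arithmetic, step is to lift a point of $G(W_N(R))$ to a point of $G(W_{N'}(R))$. Here one uses that $G$ is smooth over $\ZZ_p$ together with the fact that the surjection $W_{N'}(R) \to W_N(R)$ has kernel contained in a nilpotent (or at least topologically nilpotent, hence for smoothness purposes harmless after passing to finite truncations) ideal. More precisely, for $N < N' < \infty$ the kernel of $W_{N'}(R) \to W_N(R)$ is a nilpotent ideal when $R$ is of characteristic $p$ (indeed it is generated by Verschiebung classes $V^N(x)$, and a product of $N'/N$ of these vanishes), so smoothness of $G$ gives essential surjectivity of $G(W_{N'}(R)) \to G(W_N(R))$ directly. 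For $N' = \infty$ one instead observes that $W(R)$ is $I_N(R)$-adically and $p$-adically complete by \cite[Prop.~3]{Zink_Displays}, so $(W(R), \ker(W(R) \to W_N(R)))$ is a henselian pair, and smoothness of $G$ again yields the lift. Pushing the lifted point forward along $L^{W,(N')}G \to \Bunline_G^{[\mu]}(\Disp,N')$ produces an object mapping to the prescribed object of $\Bunline_G^{[\mu]}(\Disp,N)(R)$, which is exactly essential surjectivity \'etale locally.

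The main obstacle I anticipate is purely bookkeeping rather than conceptual: one must be careful that the two arrows $\tau_N, \sigma_N$ defining the quotient stack are genuinely compatible under the transition maps $R^{\Disp,N} \to R^{\Disp,N'}$, i.e.\ that lifting a point of $G(W_N(R))$ to $G(W_{N'}(R))$ is compatible with the identification of $R$-points of the quotient stacks — but this is automatic from the functoriality of the whole setup in $N$ recorded in Section~\ref{Section:TruncatedDisplayStack} and in Theorem~\ref{DescribeGDisplays}. As in the shtuka case (cf.\ Remark~\ref{TruncationSurjectivePrecise}), the obstruction to a global lift lives in $H^1(R, E_N^W(G,\mu)) \cong H^1(R, \Cent_G(\mu))$, which vanishes \'etale locally; one could add this refinement as a remark but it is not needed for the statement. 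All ingredients — Theorem~\ref{DescribeGDisplays}, smoothness of $E_N^W(G,\mu)$, smoothness of $G$, and henselianity/nilpotence of the relevant Witt-vector kernels — are already available, so the proof is short.
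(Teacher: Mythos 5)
Your proposal is correct and takes essentially the same route as the paper, which simply transports the argument of Proposition~\ref{TruncationSurjective} to the Witt setting via Theorem~\ref{DescribeGDisplays}: smoothness of $E_N^W(G,\mu)$ gives the \'etale-local lift of a point of the quotient stack to $G(W_N(R))$, and smoothness of $G$ combined with the nilpotence (for $N'<\infty$, $pR=0$) or henselianity (for $N'=\infty$) of the kernel of $W_{N'}(R)\to W_N(R)$ gives the lift to $G(W_{N'}(R))$. Your extra verification that this kernel is nilpotent in characteristic $p$ is a useful detail the paper leaves implicit.
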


Now the same proof as for Theorem~\ref{ExistTraverso}, replacing $k\psz$ by $W(k)$, shows the existence of Traverso bounds for truncated $G$-display of type $\mu$:

\begin{theorem}\label{ExistTraversoDisplay}
Let $G$ be reductive over $\ZZ_p$ and let $[\mu]$ be a conjugacy class of cocharacter of $G$, defined over a finite \'etale dvr extension $O_{[\mu]}$ of $\ZZ_p$. Let $k$ be an algebraically closed extension of the residue filed of $O_{[\mu]}$. Then there exists an $N_0 < \infty$ such the truncation map $\Bunline_G^{\mu}(\Disp,N') \lto \Bunline_G^{\mu}(\Disp,N)$ is bijective on isomorphism classes of $k$-valued points for every $\infty \geq N'\geq N \geq N_0$.
\end{theorem}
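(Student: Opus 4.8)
The plan is to transcribe the proof of Theorem~\ref{ExistTraverso} almost verbatim, replacing the ring $k\psz = \bar\kappa\psz$ by the Witt vectors $W(k)$ and, accordingly, the loop groups $L^+G$, $LG$, $L_nG$ by their Witt analogues $L^{W,+}G$, $L^WG$, $L^W_nG$. First I would reduce, exactly as in the shtuka case, via Corollary~\ref{TruncationDisplaysSurjective} to proving injectivity on isomorphism classes of $k$-valued points, i.e. that there is $N_0$ such that $\Bunline_G^{[\mu]}(\Disp,\infty) \to \Bunline_G^{[\mu]}(\Disp,N_0)$ is injective on isomorphism classes over every algebraically closed $k$. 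Using Theorem~\ref{DescribeGDisplays}, a $k$-point of $\Bunline_G^{[\mu]}(\Disp,\infty)$ is trivialized to the form $(L^{W,+}G_k, b\phi)$ for some $b \in L^{W,+}G(k)$ bounded by $\mu$, where now $\phi$ is the Witt-vector Frobenius on $W(k)$; two such points $b,b'$ become equal in $\Bunline_G^{[\mu]}(\Disp,N)$ iff $b' = h_1 b \phi(h_2)^{-1}$ with $h_1,h_2 \in \ker(G(W(k)) \to G(W_N(k))) = L^W_NG(k)$ (after choosing trivializations compatibly with the $N$-truncated isomorphism).

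The geometric input is the theory of affine Deligne--Lusztig varieties for $G$ over $W(k)[1/p]$, i.e. in the mixed-characteristic (Witt-vector) setting developed by Zhu and by Bhatt--Scholze; the statements I would cite are the mixed-characteristic analogues of Corollary~\ref{cor_bound_ADLV} (boundedness of affine Deligne--Lusztig varieties modulo the action of $J_b$), together with the fact that one may choose the representatives $b$ of the finitely many classes in $B(G,\mu)$ to be representatives of fundamental (length-zero-in-$M$) elements of the Iwahori--Weyl group $\widetilde W$ of $G$; these are purely group-combinatorial facts about $\widetilde W$ and hold verbatim in mixed characteristic. Lemma~\ref{lemfundalc} --- the surjectivity of $i \mapsto i x \phi(i)$ from $I_n$ onto $I_n x I_n$ for fundamental $x$ --- also carries over verbatim with $I$ now a parahoric group scheme over $\ZZ_p$ and $I_n$ the congruence subgroups, since its proof only uses Lang's theorem on the reductive quotient, $\phi$-stability of unipotent radicals associated to the Newton point, and commutativity of $I_n/I_{n+1}$, all of which are insensitive to equal versus mixed characteristic. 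With these three ingredients in place, the estimate is formal: define $C := \max_{\alpha,\xi} \langle \alpha, \xi\rangle$ over roots $\alpha$ and the finite set $S$ of cocharacters from the ADLV bound, and then the same conjugation-contraction argument ($g h g^{-1} \in K_{n-C}$ for $g \in K\xi(\varpi)K$, $h\in K_n$, where $\varpi$ is the uniformizer and $K_n = L^W_nG(k)$) shows that taking $N_0 = 2C+1$ works: for $N > C$ one gets isogeny, and for $N > 2C$ one rewrites $g b' \phi(g)^{-1} = i b_0 \phi(i)^{-1}$ with $i \in I_{N-C-1}$, whence $b' = (g^{-1}ig)\, b\, \phi(g^{-1}ig)$ with $g^{-1}ig \in K_{N-2C-1} \subseteq K = L^{W,+}G(k)$, so $b$ and $b'$ are $K$-$\phi$-conjugate and the displays are isomorphic.

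I expect the main obstacle to be purely bibliographic rather than mathematical: one must locate, in the mixed-characteristic affine-Grassmannian literature (Zhu's thesis/paper on mixed-characteristic affine Grassmannians, Bhatt--Scholze's prismatization-adjacent work on $\mathrm{Gr}$, and the Witt-vector affine Deligne--Lusztig results of Hamacher--Viehmann and of Cornut--Nicole or their successors), the precise analogues of the cited statements "\cite{Cornut_Nicole}~Prop.~8" and "\cite{HamacherViehmann_fin}~Lemma~5.3" and "\cite{Nie}~Thm.~1.4", and confirm that the combinatorics of fundamental elements and the Lang-theorem surjectivity argument transfer without change. Beyond that, I would want to double-check that $L^{W,+}G$, as a (pro-)affine group scheme over $\FF_p$ acting on the Witt-vector affine Grassmannian, and the display groups $E^W_N(G,[\mu])$, which are smooth of the same dimension $\dim G$ with maximal reductive quotient $\Cent_G(\mu)$ by Proposition~\ref{TruncationWittDisplayGroup}, interact with reduction mod $p^n$ exactly as $L^+G$ does mod $z^n$; this is where the hypothesis that $k$ is algebraically closed (so $W(k)$ is $p$-torsion free and $G(W(k)) \hookrightarrow G(W(k)[1/p])$, matching \eqref{EqEInftyGWMu}) is used, and it is the only delicate point in the otherwise mechanical translation.
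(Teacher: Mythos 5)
Your proposal is correct and follows exactly the paper's own argument: the paper proves this theorem by the single sentence ``the same proof as for Theorem~\ref{ExistTraverso}, replacing $k\psz$ by $W(k)$,'' and your write-up is precisely that translation, with the same three ingredients (the mixed-characteristic analogue of Corollary~\ref{cor_bound_ADLV}, fundamental representatives of the classes in $B(G,\mu)$, and the Witt-vector version of Lemma~\ref{lemfundalc}) and the same bound $N_0 = 2C+1$ from \eqref{EqDefForIsomCutOff}. Your attention to the bibliographic task of locating the mixed-characteristic ADLV references and to the $p$-torsion-freeness of $W(k)$ is exactly where the (unwritten) work in the paper's one-line proof lies.
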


%
%
%


\section{Prismatic displays, prismatic Breuil--Kisin modules, and prismatic $F$-gauges}\label{Sec:PrismaticDisplays}

In this section, let $\Aline := (A,\delta,I)$ be a bounded prism and let $\phi\colon A \to A$ be the attached Frobenius endomorphism.

We attach two frames to $\Aline$ which we call the prismatic frame and the Nygaard frame. First, we will study the prismatic frame. Most results in these two sections are rephrasements of results of Kazuhiro Ito \cite{Ito_PrismaticGDisplaysDescent} and \cite{Ito_DefPrismaticGDisplays} or slight generalizations thereof in our language using the principle of geometrization. 

Moreover, we will study truncations in this setting only in a later paper.

\subsection{Prismatic displays and prismatic Breuil--Kisin modules}

\begin{void}\textbf{The prismatic frame.}
We can attach a frame to every bounded prism as follows. Let $\Re(A,I)$ be the attached Rees stack defined in Section~\ref{Sec:ReesVAdic}, see in particular Remark~\ref{VAdicSpecialCases}. It contains the open substacks $\Re(A,I)^{\ne-}$ and $\Re(A,I)^{\ne+}$, which are both isomorphic to $\Spec A$ and whose intersection is $\Spec A[1/I]$. Their union is the open substack $\Re(A,I)^{\ne0}$, which is a non-separated scheme carrying an ample pair of line bundles (Remark~\ref{ReAne0ResolutionProp}).

We define $\sigma$ as in \eqref{EqDefSigmaVAdicFrame}. This indeed recovers the description in \cite[5.2.3]{Ito_PrismaticGDisplaysDescent} if one can and does choose a generator $d$ of $I$: Then one has an isomorphism of $\ZZ$-graded $A$-algebras $T(I) \iso T(A)$, given in degree $i$ by $a \sends a/d^i$, which induces after taking quotients by $\GG_m$ the identity $\Spec A \iso \Spec A$. Therefore, we $\sigma$ is given by
\[
\sigma_d^*\colon \phi^*\Rees(A,I) \lto T(A),
\]
where $\sigma^*_{d,i}$ is given for $i \geq 0$ by the $\phi$-linear map $I^i \to A$, $a \sends \phi(a/d^i)$.

We obtain a frame stack $\Fcal(\Aline)$, and its open substack $\Fcal(\Aline)^{\ne0}$ (Definition~\ref{DefFrameStack}), which we denote by
\[
\Aline^{\BK} \subset \Aline^{\Disp}.
\]
We call the frame stack $\Aline^{\Disp}$ the \emph{prism display stack} of $\Aline$ and its open substack $\Aline^{\BK}$ the \emph{Breuil-Kisin stack} of $\Aline$.
If $\Aline$ can be oriented by a generator $d \in I$, the underlying Rees algebra is isomorphic to $A[t,u]/(tu-d)$ (Remark~\ref{VAdicSpecialCases}~\ref{VAdicSpecialCases2}).

%
\end{void}

\begin{remark}\label{FunctorialInPrism}
Let $f\colon \Aline = (A,I,\delta) \to \Aline' = (A',I',\delta')$ be a map of prisms. Then by functoriality (Remark~\ref{FunctorialReesAv}) we obtain an induced map of $\Re(A,I) \to \Re(A',I')$ which induces maps
\[
\Aline^{\Disp} \lto \Aline^{\prime\Disp}, \qquad \Aline^{\BK} \lto \Aline^{\prime\BK}.
\]
\end{remark}

As a special case of Proposition~\ref{LiftGBundlesReAv} we have for every smooth affine group scheme $G$ over $A$ a bijection of pointed sets
\begin{equation}\label{EqGBunReesPrism}
H^1(\Re(A,I),G) \iso H^1(\B{\GG_{m,A/I}},G),
\end{equation}
which may be viewed as the existence of a prismatic normal decomposition.

We have the following definitions, which are reformulations of definitions due to Ito \cite[5.2.1]{Ito_PrismaticGDisplaysDescent}, \cite[3.1.2]{Ito_PrismaticGDisplaysDescent}.

\begin{definition}\label{DefGDisplayBKModule}
Let $\Aline = (A,I,\delta)$ be a bounded prism and let $G$ be an affine flat group scheme over $A$.
\begin{assertionlist}
\item
A \emph{$G$-display over $\Aline$} is a $G$-bundle over $\Aline^{\Disp}$.
\item
A \emph{Breuil-Kisin $G$-bundle over $\Aline$} is a $G$-bundle over $\Aline^{\BK}$.
\end{assertionlist}
\end{definition}

The following result reproves and generalizes \cite[5.3.8]{Ito_PrismaticGDisplaysDescent}. It is a special case of Corollary~\ref{GBundlesZAdicFRameStack}.

\begin{proposition}\label{BKDisplay}
Let $G$ be an affine flat group scheme over a Dedekind domain $O$ and let $\Aline$ be a bounded prism such that $A$ is an $O$-algebra.
\begin{assertionlist}
\item
Pullback via the open immersion $\Aline^{\BK} \lto \Aline^{\Disp}$ yields a fully faithful functor from the groupoid of $G$-displays of $\Aline$ to the groupoid of Breuil-Kisin $G$-bundles over $\Aline$.
\item
Let $G$ be in addition smooth. The essential image consists of those Breuil-Kisin $G$-bundles that have a type $[\mu]$ (in the sense of Definition~\ref{DefineGModMu}~\ref{DefineGModMu3}).
\end{assertionlist}
\end{proposition}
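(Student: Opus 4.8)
\textbf{Proof proposal for Proposition~\ref{BKDisplay}.}
The plan is to deduce both assertions from Corollary~\ref{GBundlesZAdicFRameStack} applied to the frame $(A, v, \phi)$ in which $v\colon I \hookrightarrow A$ is the inclusion of the invertible ideal $I$ and $\phi$ is the Frobenius of the prism $\Aline$. First I would record that this frame is precisely the one whose frame stack is $\Fcal(A,v,\phi) = \Aline^{\Disp}$ and whose open substack $\Fcal(A,v,\phi)^{\ne 0}$ is $\Aline^{\BK}$, by the very definitions given at the start of this section (and the identification $\Re(A,I) = \Re(A,v)$ from Remark~\ref{VAdicSpecialCases}~\ref{VAdicSpecialCases1}). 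Since $\Aline$ is a bounded prism, the distinguished element generating $I$ is a nonzerodivisor on $A$ (this is part of the definition of a prism), so $v$ is injective and Corollary~\ref{GBundlesZAdicFRameStack} applies: $A$ is an $O$-algebra by hypothesis, $O$ is a Dedekind domain, and $G$ is an affine flat (for the second part, smooth) group scheme over $O$.

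For assertion~(1), Corollary~\ref{GBundlesZAdicFRameStack}~\ref{GBundlesZAdicFRameStack1} gives directly that pullback along $\Aline^{\BK} = \Fcal(A,v,\phi)^{\ne 0} \to \Fcal(A,v,\phi) = \Aline^{\Disp}$ is fully faithful on the groupoid of $G$-bundles; unwinding Definition~\ref{DefGDisplayBKModule} this is exactly the claim that the restriction functor from $G$-displays over $\Aline$ to Breuil--Kisin $G$-bundles over $\Aline$ is fully faithful. One mild point to check here is that Corollary~\ref{GBundlesZAdicFRameStack}~\ref{GBundlesZAdicFRameStack1} is stated for $G$ smooth affine over $O$ via the chain of results Proposition~\ref{FullyFaithfulness0Complement}, Corollary~\ref{RestrictGBunReAvne0}, Lemma~\ref{GBundlesFrameStack}; but Proposition~\ref{FullyFaithfulness0Complement}(1) and Theorem~\ref{ExtendGBundle}~\ref{ExtendGBundle1} only require $G$ to be affine and flat, and Lemma~\ref{GBundlesFrameStack} is purely formal, so the fully faithfulness part of assertion~(1) holds already under the weaker hypothesis that $G$ is affine flat, as claimed. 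I would make this observation explicit rather than citing Corollary~\ref{GBundlesZAdicFRameStack} verbatim.

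For assertion~(2), with $G$ now smooth, Corollary~\ref{GBundlesZAdicFRameStack}~\ref{GBundlesZAdicFRameStack2} identifies the essential image of $\Bun_G(\Aline^{\Disp}) \to \Bun_G(\Aline^{\BK})$ as those $G$-bundles on $\Aline^{\BK} = \Fcal(A,v,\phi)^{\ne 0}$ that are \'etale locally isomorphic to $\Escr_{\mu}$; combining with Corollary~\ref{RestrictGBunReAvne0}, this is exactly the condition that the Breuil--Kisin $G$-bundle has a type $[\mu]$ in the sense of Definition~\ref{DefineGModMu}~\ref{DefineGModMu3}, i.e.\ there is an \'etale surjection $\Spec O' \to \Spec O$ and a conjugacy class $[\mu]$ of cocharacters of $G$ over $O'$ for which the bundle is \'etale locally isomorphic to $\Escr_{[\mu]}$. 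This is the statement of assertion~(2).

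I do not expect any serious obstacle: the work has all been done in Section~\ref{Sec:ReesVAdic}, and the proof amounts to matching up definitions. The only genuinely substantive input is the (standard) fact that a distinguished element of a bounded prism is a nonzerodivisor, needed to invoke the injectivity hypothesis on $v$; everything else is bookkeeping, in particular tracking that the frame stack attached to $(A, I, \phi)$ in the sense of Section~\ref{Sec:FrameAvphi} agrees with the stacks $\Aline^{\Disp}$ and $\Aline^{\BK}$ as defined at the opening of the present section. The comparison with Ito's formulation of $\sigma$ via an oriented generator $d \in I$ has already been carried out in the ``prismatic frame'' paragraph above, so no further argument is required there.
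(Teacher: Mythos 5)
Your proposal is correct and is essentially the paper's own proof: the paper disposes of Proposition~\ref{BKDisplay} by declaring it a special case of Corollary~\ref{GBundlesZAdicFRameStack}, exactly as you do. Your extra observation that the fully faithfulness in assertion~(1) only needs $G$ affine flat (via Proposition~\ref{FullyFaithfulness0Complement}(1) and Theorem~\ref{ExtendGBundle}~\ref{ExtendGBundle1}, not the smoothness assumed in the corollary as stated) is a worthwhile refinement that the paper glosses over, and it is needed to justify the weaker hypothesis in part~(1).
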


If $\mu$ is a cocharacter of $G$, then $G$-bundles on $\Aline^{\BK}$ that are \'etale locally isomorphic to $\Escr_{\mu}$ are the same as $G$-Breuil--Kisin modules of type $\mu$ as defined in \cite[5.1.4]{Ito_PrismaticGDisplaysDescent}.

\begin{example}\label{DisplayPerfectRing}
Let $R$ be a perfect ring of characteristic $p$. Then $\underline{W(R)} := (W(R), (p), \delta)$ is a prism, where $\delta$ is given by $p\delta(x) = \phi(x) - x^p$. Then $\underline{W(R)}^{\Disp} = R^{\Disp}$, where the right hand side is defined in \eqref{EqDefRDisp}. In particular a $G$-display over $\underline{W(R)}$ is the same as a $G$-display over $R$.

If $R = k$ is a perfect field, then we deduce from Corollary~\ref{GBundlesZAdicFRameStack}~\ref{GBundlesZAdicFRameStack3} that pullback via the open immersion $\underline{W(k)}^{\BK} \to \underline{W(k)}^{\Disp}$ yields an equivalence of $G$-displays and Breuil--Kisin $G$-bundles over $\underline{W(k)}$ if $G$ is reductive.
\end{example}


\subsection{Prismatic $F$-gauges}

\begin{void} \textbf{The Nygaard frame.}
Let $\Aline = (A,I,\delta)$ be a bounded prism. For $i \geq 0$ define the \emph{Nygaard filtration}
\[
\Fil_{\Ncal}^i := \set{a \in A}{\phi(a) \in I^i}
\]
which is a filtration by ideals of $A$. We denote by $\Rees_{\Ncal}\Aline$ the corresponding Rees algebra and by $\Aline^{\Ncal}$ the corresponding Rees stack. Using Remark~\ref{ConstructFrame} we define $\sigma\colon \Spec A \to \Aline^{\Ncal}$ by
\[
\sigma^*\colon \phi^*\Rees_{\Ncal}\Aline \lto \bigoplus_{i\in \ZZ}I^{\otimes i},
\]
where $\sigma^*_i\colon \phi^*\Fil_{\Ncal}^i \to I^i$ for $i \geq 0$ is the $A$-linear map corresponding to $\phi\colon \Fil_{\Ncal}^i \to I^i$ and where $\sigma^*_{-1}(1 \otimes t) = 1 \in A \subseteq I^{-1}$. We obtain a frame
\[
(A, (\Fil^i_{\Ncal}), \sigma)
\]
which we call the \emph{Nygaard frame} of the prism $(A,I,\delta)$. We denote the attached frame stack by 
\[
\Aline^{\Syn}
\]
and call it the \emph{syntomic frame stack} of $\Aline$.
\end{void}

\begin{example}\label{ReesQRSP}
Let $R$ be a quasi-regular semiperfectoid (or short qrsp) ring and let $(\Prism_R,I)$ be the initial object in the prismatic site of $R$. By definition, there exists a perfectoid ring mapping to $R$ and hence this prism is orientable, i.e. $I = (d)$ for a regular element $d \in \Prism_R$. Let $\varphi\colon \Prism_R \to \Prism_R$ be its Frobenius. Endowing $\Prism_R$ with the Nygaard filtration, we obtain the corresponding Rees stack
\[
R^{\Ncal} := \Re(\Prism_R).
\]
By \cite[12.2]{BhattScholze_PrismaticCohomology}, we have $\Prism_R/\Fil^1_{\Ncal} \cong R$, i.e. $(R^{\Ncal})^0 = \B{\GG_{m,R}}$. We write
\[
R^{\Syn} := (\Prism_R,I)^{\Syn}.
\]
By \cite[5.5.11]{Bhatt_PrismaticFGauges}, the $R^{\Ncal}$ and the $R^{\Syn}$ defined here are the algebraic versions of the stack $R^{\Ncal}$ and $R^{\Syn}$ defined in loc.~cit., where the formal spectrum (instead of the algebraic spectrum) of the Rees algebra is used in the definition.
\end{example}

\begin{example}\label{ReesPerfectoid}
A special case of Example~\ref{ReesQRSP} is the Rees stack of an integral perfectoid ring $R$. In this case $(\Prism_R,(d)) = (A_{\inf}(R),(d))$ and $\varphi$ is an isomorphism. Here $A_{\inf}(R) := W(R^{\flat})$ is the ring of Witt vectors of the tilt $R^{\flat}$ of $R$. Hence the Nygaard filtration is simply the $\varphi^{-1}(d)$-adic filtration on $A_{\inf}(R)$. Hence we are in the situation of Chapter~\ref{Sec:ReesVAdic} with $v$ the inclusion $(\varphi^{-1}(d)) \to W(R^{\flat})$. In particular $R^{\Ncal} = \Re(W(R^{\flat})[t,u]/(tu - \varphi^{-1}(d)))$ and Corollary~\ref{GBundlesZAdicFRameStack} holds (note that $W(R^{\flat})$ is a Dedekind domain if and only if $R$ is a field).
\end{example}

\begin{definition}\label{DefPrismaticFGauges}
Let $O$ be a Dedekind domain, let $G$ be a smooth affine group scheme over $O$, and let $\Aline$ be a prism over $O$. A \emph{prismatic $G$-gauge over $\Aline$} is a $G$-bundle on $\Aline^{\Syn}$. If $R$ is a qrsp ring we call a $G$-bundle over $R^{\Syn}$ a \emph{prismatic $G$-gauge over $R$}.
\end{definition}

\begin{remark}\label{NygaardPrismaticFrame}
Let $\Aline = (A,I,\delta)$ be a prism. There is a natural map of frames from the Nygaard frame to the prismatic frame of $\Aline$ which is defined as follows.

We claim that $\phi\colon A \to A$ induces such a map. By definition, $\phi$ sends $\Fil^i_{\Ncal}$ to $I^i$ for all $i$. Hence we obtain a map of Rees stacks $\Re(A,I) \to \Aline^{\Ncal}$. It remains to check that $\phi$ is compatible with $\sigma$. For this we can work locally and assume that $I = (d)$. Then the commutativity of the following diagram shows the desired compatability since the horizontal maps induce the respective maps $\sigma$
\[\xymatrix{
\Fil^i_{\Ncal} \ar[r]^{\phi} \ar[d]_{\phi} & I^i \ar[r]^{a \sends a/d^i} & A \ar[d]^{\phi} \\
I^i \ar[r]^{a \sends a/d^i} & A \ar[r]^{\phi} & A
}\]
for $i \geq 0$.

This yields a map
\begin{equation}\label{EqMapDispSyn}
\Aline^{\Disp} \lto \Aline^{\Syn}
\end{equation}
which is functorial in $\Aline$.

By composition with the open immersion $\Aline^{\BK} \to \Aline^{\Disp}$ we obtain a map $\Aline^{\BK} \to \Aline^{\Syn}$. Pullback via this map yields a functor from the groupoid of prismatic $G$-gauges over $\Aline$ to the groupoid of Breuil--Kisin $G$-bundles. If $\Aline$ is the prism attached to a quasi-regular semiperfectoid ring, then this is the functor constructed in \cite[8.2.11]{Ito_PrismaticGDisplaysDescent} and applying the general criterion \cite[1.7]{Wedhorn_ExtendBundles} to the Rees stacks and then using Corollary~\ref{GBundlesFrameStack} one sees that this functor is fully faithful for flat affine group schemes $G$, recovering (and slightly generalizing) Ito's result.
\end{remark}

\appendix

\section{Colimits of Adams stacks}\label{APPADAMS}

In this appendix we recall the complete and cocomplete 2-category of Adams stacks introduced by Sch\"appi. We prove that a Ferrand pushout of Adams schemes is a pushout in the category of Adams stacks. Recall the definition of an Adams stack.

\begin{definition}\label{DefAdamsStack}
Let $R$ be a ring. A qcqs fpqc-stack $\Xcal$ on $\Affrel{R}$ is called \emph{Adams stack} if it satisfies the following conditions.
\begin{definitionlist}
\item\label{DefAdamsStacka}
$\Xcal$ has an affine diagonal.
\item\label{DefAdamsStackb}
$\Xcal$ admits a faithfully flat morphism from an affine scheme
\item\label{DefAdamsStackc}
$\Xcal$ has the resolution property (i.e., every quasi-coherent $\Oscr_{\Xcal}$-module is the quotient of a filtered colimit of vector bundles).
\end{definitionlist}

An $R$-scheme is called \emph{Adams scheme} if it is an Adams stack if considered as fpqc-stack.
\end{definition}

\begin{remark}\label{AdamsStackHopfAlgebroid}
Let $R$ be a ring. Let $\Xcal$ be an fpqc-stack on $\Affrel{R}$ that satisfies Conditions~\ref{DefAdamsStacka} and~\ref{DefAdamsStackb} from Definition~\ref{DefAdamsStack}. Choose a faithfully flat morphism $X_0 = \Spec A \to \Xcal$ and set $X_1 = X_0 \times_{\Xcal} X_0$. Then $X_1$ is an affine scheme by hypothesis and $(X_0,X_1)$ has a canonical structure of a groupoid with flat source and target maps $X_1 \to X_0$, given by the projections. Conversely, every groupoid consisting of affine schemes with flat source and target maps defines an fpqc-stack satisfying Conditions~\ref{DefAdamsStacka} and~\ref{DefAdamsStackb} \cite[Section 3.3]{Naumann_StackFormalGroups}.
\end{remark}

\begin{example}\label{AmpleAdams}
Every quasi-compact scheme that has an ample family of line bundles is an Adams scheme as such schemes are automatically quasi-separated \cite[0FXS]{Stacks} and have the resolution property \cite[0GMM]{Stacks}.
\end{example}

The following two results give plenty of examples of Adams stacks.

\begin{proposition}\label{QuotientStackAdam}
Let $S$ be an affine scheme and let $X$ be a quasi-affine scheme over $S$. Let $G$ be a geometrically reductive group scheme over $S$ that admits a closed embedding into $\GL_m$ for some $m$. Then the quotient stack $[G \backslash X]$ is an Adams stack over $S$.
\end{proposition}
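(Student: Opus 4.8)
The plan is to check the three defining conditions of an Adams stack for $\Xcal := [G\backslash X]$ over $R := \Gamma(S,\Oscr_S)$: that it is a quasi-compact quasi-separated fpqc-stack with (a) affine diagonal, (b) a faithfully flat morphism from an affine scheme, and (c) the resolution property. Since $G$ is geometrically reductive and admits a closed embedding into $\GL_m$, the morphism $G\to S$ is affine, flat and of finite presentation; hence $\Xcal$ is a quotient stack by a flat affine group scheme, the quotient morphism $X\to\Xcal$ is an fppf covering, and $\Xcal$ is quasi-compact because $X$ is (being quasi-affine over the affine scheme $S$). That $\Xcal$ is an fpqc-stack then follows, being an fppf stack with quasi-affine diagonal, once we verify (a).

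For (a) I would base change $\Delta_{\Xcal/S}$ along the fppf covering $X\times_S X\to\Xcal\times_S\Xcal$; one obtains the morphism $G\times_S X\to X\times_S X$, $(g,x)\mapsto(gx,x)$, which is affine because $G\times_S X$ is affine over $X$ via the second projection (as $G/S$ is affine) while $X\times_S X$ is separated over $X$ (as $X/S$ is quasi-affine, hence separated); affineness descends along fppf coverings of the target, so $\Delta_{\Xcal/S}$ is affine and $\Xcal$ is quasi-separated. For (b): $X$ is quasi-compact, so it has a finite affine open cover $\{U_i\}_i$; then $U:=\coprod_i U_i$ is affine and $U\to X$ is faithfully flat and locally of finite presentation, whence $U\to X\to\Xcal$ is a faithfully flat morphism from an affine scheme.

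The resolution property (c) is the crux, and I expect it to be the main obstacle. The plan is to reduce to $G=\GL_m$ and then invoke the equivariant resolution theorem. Using the embedding $G\hookrightarrow\GL_m$, form the contracted product $Y:=(X\times_S\GL_m)/G$ with its residual $\GL_m$-action; then $[G\backslash X]\cong[\GL_m\backslash Y]$ canonically. First one checks that $Y$ is again quasi-affine over $S$: the $G$-action on the affine scheme $\bar X:=\Spec\Gamma(X,\Oscr_X)$ is locally finite (valid since $G\to S$ is affine, flat and of finite presentation, so that the coordinate ring of $G$ is $R$-flat and the fundamental theorem of comodules applies), hence $X\hookrightarrow\bar X$ is a $G$-equivariant quasi-compact open immersion, and forming contracted products yields a quasi-compact open immersion $Y\hookrightarrow\bar Y:=(\bar X\times_S\GL_m)/G$. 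Here $\bar Y$ is an affine scheme over $S$: it maps to $G\backslash\GL_m$, which is affine over $S$ by the relative form of Matsushima's criterion for the geometrically reductive group scheme $G$, and the map $\bar Y\to G\backslash\GL_m$ is affine by fppf descent of affineness along the $G$-torsor $\GL_m\to G\backslash\GL_m$ (over which it becomes the affine morphism $\bar X\times_S\GL_m\to\GL_m$). Thus $Y$, being a quasi-compact open subscheme of the affine scheme $\bar Y$, is quasi-affine over $S$. Finally, for a quasi-affine scheme over an affine base equipped with a $\GL_m$-action the associated quotient stack has the resolution property, being generated (in the sense of condition (c)) by the twists of the tautological bundle and its dual; this is Thomason's equivariant resolution theorem, reformulated through quotient stacks by Totaro and Gross.

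The real difficulties are concentrated in this last step. Since the resolution property is not inherited by arbitrary quasi-compact open substacks, it is essential that $Y$ itself — not merely $\bar Y$ — be quasi-affine; this is exactly what forces the passage through the contracted product and the use of geometric reductivity via Matsushima's criterion, whose relative form (together with representability of $G\backslash\GL_m$ by an $S$-scheme) over a possibly non-Noetherian base requires some care. Likewise, the classical equivariant resolution theorem is usually stated over a field or a Noetherian base; over the general base relevant to the applications one should either reduce to the Noetherian case by a limit argument — writing $S$ as a cofiltered limit of Noetherian affine schemes, spreading out $X$, $G$ and the embedding $G\hookrightarrow\GL_m$, applying the Noetherian statement and passing to the colimit — or invoke the version recorded in the work of Sch\"appi (respectively Hall--Rydh). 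Granting these inputs, the three conditions assemble to show that $[G\backslash X]$ is an Adams stack over $S$.
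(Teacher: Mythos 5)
Your proof is correct, and it verifies the same three Adams conditions that the paper checks, but the two arguments diverge at the crux. The paper's proof is essentially a citation: affine diagonal and fpqc descent come from $X$ being quasi-affine and $G\to S$ affine via \cite[Section A.3]{Wedhorn_ExtendBundles}, and the resolution property is taken wholesale from \cite[Example A.29]{Wedhorn_ExtendBundles}, which is already stated over a general base. You instead reconstruct the argument behind that citation: the contracted product $Y=(X\times_S\GL_m)/G$ with $[G\backslash X]\cong[\GL_m\backslash Y]$, extension of the action to the affine hull $\bar X$ (legitimate, since $G\to S$ is flat and affine so $\Gamma(G\times_S X)=\Gamma(G)\otimes\Gamma(X)$), affineness of $G\backslash\GL_m$ from geometric reductivity (Alper's relative Matsushima criterion), hence quasi-affineness of $Y$, and finally Thomason--Totaro--Gross equivariant resolution for $[\GL_m\backslash Y]$ with $Y$ quasi-affine. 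Your route has the merit of making explicit exactly where the two hypotheses on $G$ (geometric reductivity and the closed embedding into $\GL_m$) are used, and your treatment of the affine diagonal (descent of affineness along $X\times_S X\to\Xcal\times_S\Xcal$, using that $G\times_S X$ is affine over $X$ and $X/S$ is separated) and of condition (b) (a finite affine cover of the quasi-compact $X$) is sound; what the paper's route buys is brevity and the fact that the cited result already covers the non-Noetherian base, so the spreading-out or limit argument you rightly flag as delicate is not needed. The only caveats in your write-up are precisely the ones you identify yourself (representability and affineness of $G\backslash\GL_m$, and the resolution theorem, over a non-Noetherian base); both are available in the literature you name, so there is no genuine gap.
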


\begin{proof}
Since $X$ is quasi-affine, it has an affine diagonal. As $G \to S$ is affine, $[G \backslash X]$ has an affine diagonal and is a stack for the fpqc topology by \cite[Section A.3]{Wedhorn_ExtendBundles}. It has the resolution property by \cite[Example A.29]{Wedhorn_ExtendBundles}.
\end{proof}

\begin{example}(\cite[Example A.28]{Wedhorn_ExtendBundles})\label{BGAdams}
Let $R$ be a regular ring of dimension $\leq 1$ and let $G$ be an affine flat group scheme of finite type over $R$. Then $\B{G}$ is an Adams stack over $R$.
\end{example}

We learned the following lemma from Eike Lau.

\begin{lemma}\label{AffineAdamsStack}
Let $f\colon \Xcal \to \Ycal$ be a relatively affine morphisms of prestacks and suppose that $\Ycal$ is an Adams stack. Then $\Xcal$ is an Adams stack.
\end{lemma}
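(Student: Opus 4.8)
The statement asserts that the total space of a relatively affine morphism $f\colon \Xcal \to \Ycal$ over an Adams stack $\Ycal$ is again an Adams stack. I would verify each of the three defining conditions of Definition~\ref{DefAdamsStack} in turn, leaning on the corresponding property of $\Ycal$ together with the good behaviour of affine morphisms. First note that since $f$ is affine, $\Xcal$ is in particular an fpqc-stack (affine morphisms satisfy fpqc descent), qcqs over $\Ycal$ hence qcqs absolutely, so it makes sense to ask whether it is Adams.

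\emph{Affine diagonal.} One has $\Delta_{\Xcal} = \Delta_{\Xcal/\Ycal}$ composed with the base change of $\Delta_{\Ycal}$ along $\Xcal \times \Xcal \to \Ycal \times \Ycal$. The relative diagonal $\Delta_{\Xcal/\Ycal}$ is affine because $f$ is affine and hence separated, and the pullback of the affine morphism $\Delta_{\Ycal}$ is affine; a composition of affine morphisms is affine, so $\Delta_{\Xcal}$ is affine.

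\emph{Existence of a faithfully flat cover by an affine scheme.} Choose a faithfully flat morphism $U = \Spec A \to \Ycal$ as provided by Condition~\ref{DefAdamsStackb} for $\Ycal$. Form the base change $\Xcal \times_{\Ycal} U \to \Xcal$, which is faithfully flat. Since $f$ is affine and $U$ is affine, $\Xcal \times_{\Ycal} U$ is an affine scheme, relatively affine over $U = \Spec A$. This gives the desired affine faithfully flat cover of $\Xcal$.

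\emph{Resolution property.} This is the step I expect to be the main obstacle, since the resolution property does not in general descend or ascend along affine morphisms without some extra input. The idea is to use $f_*$: for $f$ affine, $f_*$ is exact on quasi-coherent modules and $f^*f_* \to \mathrm{id}$; moreover any quasi-coherent $\Oscr_{\Xcal}$-module $\Mscr$ is a quotient of $f^*f_*\Mscr$ (this is the standard fact that the counit is an epimorphism for an affine morphism, because it is so after the faithfully flat base change to an affine scheme, where it reduces to surjectivity of $B \otimes_A M \to M$ for a module $M$ over the $A$-algebra $B$). Now $f_*\Mscr$ is a quasi-coherent module on the Adams stack $\Ycal$, hence a quotient of a filtered colimit $\colim_i \Escr_i$ of vector bundles on $\Ycal$; applying the exact functor $f^*$ and composing with the counit shows $\Mscr$ is a quotient of $\colim_i f^*\Escr_i$, a filtered colimit of vector bundles on $\Xcal$. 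This establishes Condition~\ref{DefAdamsStackc} for $\Xcal$ and completes the proof. The only subtlety worth spelling out is the epimorphy of $f^*f_*\Mscr \to \Mscr$, which I would check by reducing via the faithfully flat affine cover of $\Ycal$ constructed above to the affine situation, where it is elementary.
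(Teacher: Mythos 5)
Your proposal is correct and follows essentially the same route as the paper: the key step, establishing the resolution property by pushing forward along the affine morphism $f$, resolving $f_*\Mscr$ by vector bundles on $\Ycal$, and using surjectivity of the counit $f^*f_*\Mscr \to \Mscr$ (checked after faithfully flat base change to the affine situation), is exactly the paper's argument. The only cosmetic difference is that you verify the affine diagonal and the affine cover directly, whereas the paper packages both into the groupoid presentation $(X_0,X_1)$ of Remark~\ref{AdamsStackHopfAlgebroid}; both are fine.
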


\begin{proof}
Choose a faithfully flat morphism $Y_0 = \Spec A \to \Ycal$ and let $Y_1 = Y_0 \times_{\Ycal} Y_0$. Since $f$ is affine, $X_0 := Y_0 \times_{\Ycal} \Xcal$ and
\[
X_1 := Y_1 \times_{\Ycal} \Xcal = X_0 \times_{\Xcal} X_0
\]
are affine schemes and $(X_0,X_1)$ has the structure of a groupoid with flat source and target map such that $\Xcal$ is the associated fpqc-stack (Remark~\ref{AdamsStackHopfAlgebroid}). Therefore $\Xcal$ satisfies Conditions~\ref{DefAdamsStacka} and~\ref{DefAdamsStackb} from Definition~\ref{DefAdamsStack}.

It remains to check that $\Xcal$ has the resolution property. Let $\Fscr$ be a quasi-coherent module on $\Xcal$. By our hypothesis on $\Ycal$, there exists a surjective homomorphism $\bigoplus \Escr_i \to f_*\Fscr$, where $(\Escr_i)_i$ is a family of vector bundles on $\Ycal$. We obtain a surjective homomorphism $\bigoplus f^*\Escr_i \to f^*f_*\Fscr \to \Fscr$, where the second map is surjective since $f$ is affine.
\end{proof}

We have the following theorem by Sch\"appi.

\begin{theorem}(\cite[1.1.1]{Schaeppi_ConstructingColimits} and \cite[3.2.1, 3.2.6, 3.1.7]{Schaeppi_DescentTannaka})\label{AdamsStackCocomplete}
Let $R$ be a ring. The 2-category of Adams stacks over $R$ is complete and cocomplete. Limits are formed as limits of prestacks.

Colimits can be recognized as follows. Let $\Ical$ be a small connected category, let $\Xcal\colon i \sends \Xcal_i$ be a diagram of non-empty Adams stacks indexed by $\Ical$, and let $(f_i\colon \Xcal_i \to \Wcal)$ be a pseudococone\footnote{We suppress 2-morphisms in the notation.} in the 2-category of Adams stacks on the diagram $\Xcal$. Suppose that the following conditions hold.
\begin{definitionlist}
\item\label{AdamsStackCocomplete1}
The pseudocone obtained by applying $\Vec(-)$ to the pseudococone $(f_i)$ induces an equivalence of $R$-linear categories
\[
\Vec(\Wcal) \liso \lim_i \Vec(\Xcal_i).
\]
\item\label{AdamsStackCocomplete2}
There exists a subset $I_0$ of $\Ob(\Ical)$ such that for any $\Ical$-diagram $i \sends \Ccal_i$ of categories the functor $\lim \Ccal_i \to \prod_{i\in I_0}\Ccal_i$ reflects isomorphisms and such that for any module $\Escr$ of finite presentation over $\Wcal$ with $f_i^*\Escr = 0$ for all $i \in I_0$ one has $\Escr = 0$.
\end{definitionlist}
Then the pseudocone induces an equivalence $\colim_i \Xcal_i \liso \Wcal$. Moreover, Condition~\ref{AdamsStackCocomplete1} is also necessary.
\end{theorem}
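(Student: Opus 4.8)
The plan is to deduce everything from Sch\"appi's extension of Tannaka duality; the proof I would give is essentially his. The key input is that the $2$-functor sending an Adams stack $\Xcal$ to its $R$-linear symmetric monoidal category of vector bundles $\Vec(\Xcal)$ (and, after ind-completing, to its category of quasi-coherent $\Oscr_{\Xcal}$-modules) is fully faithful onto the $2$-category of ``weakly Tannakian'' $R$-linear tensor categories, with an explicit reconstruction $\Xcal \simeq \mathrm{Spec}^{\otimes}\bigl(\Vec(\Xcal)\bigr)$. Since this functor reverses the direction of transition maps, a colimit of Adams stacks must correspond to a limit of the associated tensor categories, and the whole theorem becomes the statement that these categorical (co)limits exist and that $\mathrm{Spec}^{\otimes}$ carries them back into Adams stacks.

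First I would treat limits, the easy half. The Adams stacks form a full sub-$2$-category of prestacks, so it suffices to check this subcategory is closed under small limits computed in prestacks. Given a diagram $i \mapsto \Xcal_i$ with prestack limit $\Xcal$, the diagonal of $\Xcal$ is a limit of the affine diagonals of the $\Xcal_i$ along affine maps, hence affine; a faithfully flat affine cover is obtained from compatible affine covers of the $\Xcal_i$ by the usual fibre-product construction; and the resolution property passes to $\Xcal$ by pulling vector bundles back along the projections $\Xcal \to \Xcal_i$. This gives completeness and the assertion that limits are prestack limits.

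For cocompleteness I would set $\colim_i \Xcal_i := \mathrm{Spec}^{\otimes}\bigl(\lim_i \Vec(\Xcal_i)\bigr)$, the inner limit taken in $R$-linear symmetric monoidal categories, and the point is that $\lim_i \Vec(\Xcal_i)$ is again weakly Tannakian. This is where the hypotheses enter: one must (i) produce enough dualizable objects generating the limit category under colimits, using the resolution property of each $\Vec(\Xcal_i)$; (ii) produce a faithful, conservative, right-exact monoidal fibre functor to modules over some $R$-algebra, built from a compatible system of flat affine covers $U_i \to \Xcal_i$ with $U := \lim_i U_i$ affine, pulled back along $U \to \Xcal$; and (iii) verify the remaining axioms (flatness of the fibre functor, detection of line bundles, and so on). The non-emptiness and connectedness of $\Ical$ together with the generating subset $I_0 \subseteq \Ob(\Ical)$ of Condition~\ref{AdamsStackCocomplete2} are exactly the combinatorial input that makes the underlying abelian-category manipulations go through: they allow one to reflect isomorphisms and to conclude that a finitely presented quasi-coherent module vanishing on all $i \in I_0$ vanishes. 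I expect steps (ii)--(iii) --- exhibiting the weakly Tannakian structure, and in particular the flat affine fibre functor on the limit category --- to be the genuine obstacle; once that is in place, Sch\"appi's reconstruction produces an Adams stack and the universal property is formal.

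Finally, for the recognition criterion: given a pseudococone $(f_i \colon \Xcal_i \to \Wcal)$ with $\Wcal$ Adams, full faithfulness of $\Vec(-)$ yields a canonical comparison morphism $\colim_i \Xcal_i \to \Wcal$, which is an equivalence of Adams stacks precisely when it induces an equivalence on vector bundles, i.e.\ when $\Vec(\Wcal) \to \lim_i \Vec(\Xcal_i)$ is an equivalence; this is Condition~\ref{AdamsStackCocomplete1}, whose necessity is thereby immediate. The remaining work is to upgrade Condition~\ref{AdamsStackCocomplete1} to an equivalence of the full module categories: one ind-completes and checks that a conservative, colimit-preserving tensor functor between such categories which is an equivalence on a set of generators is itself an equivalence --- and this is exactly what Condition~\ref{AdamsStackCocomplete2} guarantees. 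Granting Sch\"appi's duality theorem, everything else is bookkeeping around the definition of an Adams stack.
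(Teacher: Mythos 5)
This statement is not proved in the paper at all: it is imported verbatim from Sch\"appi's work (\cite[1.1.1]{Schaeppi_ConstructingColimits} and \cite[3.2.1, 3.2.6, 3.1.7]{Schaeppi_DescentTannaka}), so there is no internal argument to compare yours against. Your outline does correctly identify the strategy behind those references --- the contravariant biequivalence between Adams stacks and weakly Tannakian $R$-linear tensor categories via $\Vec(-)$, so that colimits of stacks become bilimits of tensor categories and the recognition criterion reduces to an equivalence on vector bundles. As a description of where the theorem comes from, this is accurate.

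As a proof, however, it has genuine gaps, and you flag the largest one yourself: the entire content of Sch\"appi's construction is the verification that $\lim_i \Vec(\Xcal_i)$ is again weakly Tannakian (existence of a flat affine fibre functor, generation by dualizable objects, the right-exactness axioms), and you write that you ``expect'' this to be the obstacle rather than carrying it out. Two further points are too quick. First, your treatment of Condition~\ref{AdamsStackCocomplete2} as a conservativity statement misses what it is actually for: in Sch\"appi's formulation (and in the only place the present paper uses it, namely the proof of Proposition~\ref{ColimitQuotientStack}) it is the hypothesis that lets one detect \emph{epimorphisms} of finitely presented modules on $\Wcal$ after restriction to the $\Xcal_i$ with $i \in I_0$; this is needed because the biequivalence lands in \emph{right-exact} symmetric monoidal categories, and the naive limit of the underlying categories need not carry the correct right-exact structure without it. Second, in the limits half, closure of Adams stacks under arbitrary small limits of prestacks --- in particular the resolution property of the limit --- is asserted via ``pull back vector bundles along the projections,'' which suffices for neither infinite products nor general fibre products without further argument; this too is part of what the cited references establish. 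So the proposal is a faithful roadmap to the external proof, but the substantive steps are deferred rather than supplied.
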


%

\begin{corollary}\label{FerrandAdamsPushout}
Let
\[\xymatrix{
Z \ar[r]^i \ar[d]_g & X \\
Z'
}\]
be a diagram of Adams schemes. Suppose that $i$ is a closed immersion and that $g$ is an integral morphism such that every fiber of $g$ is contained in an open affine subscheme of $Z$. Consider the Ferrand pushout of this diagram \cite[7.1]{Ferrand_Conducteur}
\[\xymatrix{
Z \ar[r]^i \ar[d]_g & X \ar[d]^f \\
Z' \ar[r]^{i'} & X'.
}\]
Suppose that $X'$ is an Adams scheme. Then this is a pushout diagram in the 2-category of Adams stacks.
\end{corollary}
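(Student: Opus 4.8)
The plan is to apply Sch\"appi's recognition criterion for colimits of Adams stacks, Theorem~\ref{AdamsStackCocomplete}, to the small connected index category $\Ical$ with three objects $0,1,2$ and two non-identity arrows $0\to 1$, $0\to 2$, equipped with the diagram $\Xcal$ sending $0\sends Z$, $1\sends X$, $2\sends Z'$ and the two arrows to $i$ and $g$; its colimit is the pushout of $X\leftarrow Z\to Z'$. Since $X$, $Z$, $Z'$ are Adams schemes and $X'$ is assumed to be one, and since the Ferrand square strictly commutes, $(f\colon X\to X',\ i'\colon Z'\to X')$ is a pseudococone on $\Xcal$ in the $2$-category of Adams stacks. (If $Z=\emptyset$ the diagram is a coproduct and $X'$ is just the disjoint union, which one treats directly; if $Z\ne\emptyset$ then automatically $X\ne\emptyset$ and $Z'\ne\emptyset$, so all three stacks are non-empty as required by the theorem.) It then remains to verify Conditions~\ref{AdamsStackCocomplete1} and~\ref{AdamsStackCocomplete2} for this pseudococone.

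For Condition~\ref{AdamsStackCocomplete1} I must show that pullback induces an equivalence
\[
\Vec(X')\ \liso\ \Vec(X)\times_{\Vec(Z)}\Vec(Z'),
\]
that is, Milnor patching of vector bundles along the Ferrand pushout. The reference \cite[2.2]{Ferrand_Conducteur}, already used above (e.g.\ in the proof of Proposition~\ref{DescribeVBReesStack}), gives this in the affine case: writing $\Spec C=\Spec B\times_{\Spec D}\Spec D'$ with $B\to D$ surjective, a finite projective $C$-module is the same datum as finite projective modules over $B$ and $D'$ together with an isomorphism of their base changes to $D$. By the construction of the Ferrand pushout, the hypotheses on $i$ and $g$ — in particular that each fiber of $g$ lies in an affine open — guarantee that $X'$ has a Zariski cover by affines of the form $\Spec(B\times_D D')$ with $\Spec B\subseteq X$, $\Spec D'\subseteq Z'$, $\Spec D=Z\times_X\Spec B\subseteq Z$ open, and moreover $\Oscr_{X'}=f_*\Oscr_X\times_{(fi)_*\Oscr_Z}i'_*\Oscr_{Z'}$. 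Full faithfulness then follows by gluing the affine statement over such a cover, using Zariski descent for $\Vec(-)$ and for the relevant $\Hom$-sheaves; essential surjectivity follows because a patching datum $(\Escr,\Escr',\vartheta)$ produces a quasi-coherent, finitely presented $\Oscr_{X'}$-module that restricts to a vector bundle on each chart by the affine case, hence is a vector bundle.

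For Condition~\ref{AdamsStackCocomplete2} take $I_0=\{1,2\}$. For a cospan-shaped limit of categories a morphism is an isomorphism as soon as its components over $1$ and $2$ are (the component over $0$ being determined), so $\lim_i\Ccal_i\to\Ccal_1\times\Ccal_2$ reflects isomorphisms. If $\Escr$ is a finitely presented $\Oscr_{X'}$-module with $f^*\Escr=0$ and $i'^*\Escr=0$, then for $x\in X$ with image $x'=f(x)$ one has $(f^*\Escr)_x\otimes\kappa(x)=\Escr_{x'}\otimes_{\Oscr_{X',x'}}\kappa(x)$, so $\Escr_{x'}=0$ by Nakayama; similarly $\Escr$ vanishes at every point in the image of $i'$. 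Since $|X'|=|X|\sqcup_{|Z|}|Z'|$ is covered by the images of $f$ and $i'$, the module $\Escr$ has empty support, hence $\Escr=0$. Both conditions hold, so Theorem~\ref{AdamsStackCocomplete} yields $\colim(\,X\leftarrow Z\to Z'\,)\liso X'$ in the $2$-category of Adams stacks, which is the assertion.

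The main obstacle is Condition~\ref{AdamsStackCocomplete1}: upgrading Ferrand's affine Milnor-patching theorem to the global equivalence $\Vec(X')\simeq\Vec(X)\times_{\Vec(Z)}\Vec(Z')$, i.e.\ making explicit that the Ferrand pushout is Zariski-locally an affine Ferrand pushout (this is exactly where the fiber hypothesis on $g$ is used) and that patching data glue. Everything else — the choice of $\Ical$ and $I_0$, and the support argument for Condition~\ref{AdamsStackCocomplete2} — is formal.
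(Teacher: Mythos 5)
Your proposal is correct and follows exactly the paper's route: apply Sch\"appi's recognition criterion (Theorem~\ref{AdamsStackCocomplete}) to the cospan diagram and verify the two conditions. The paper simply cites \cite[6.3.3]{TemkinTyomkin_Ferrand} and a globalization of \cite[2.2, 7.4]{Ferrand_Conducteur} where you sketch the globalized Milnor patching and the support/Nakayama argument directly, so the substance is the same.
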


It is not clear to us whether the other hypotheses already imply that $X'$ is an Adams scheme.

\begin{proof}
We use Theorem~\ref{AdamsStackCocomplete}. Condition~\ref{AdamsStackCocomplete1} is satisfied by \cite[6.3.3]{TemkinTyomkin_Ferrand} and Condition~\ref{AdamsStackCocomplete1} holds by a globalization of \cite[2.2, ii)]{Ferrand_Conducteur}, see  \cite[7.4]{Ferrand_Conducteur}.
\end{proof}


\section{A flatness criterion}

We will also use the following flatness criterion for which we could not find a reference. It is of course well known for noetherian rings.

\begin{proposition}\label{FlatnessCrit}
Let $A$ be a ring, $I \subseteq A$ an ideal, let $M$ be an $A$-module of finite presentation, and let $\Escr$ be the corresponding quasi-coherent module on $X := \Spec A$. Set $U := X \setminus V(I)$. Then $\Escr$ is a vector bundle if and only if the following conditions are satisfied.
\begin{definitionlist}
\item\label{FlatnessCritb}
$M/IM$ is a flat $A/I$-module and $\Tor^A_1(A/I,M) = 0$.
\item\label{FlatnessCritc}
$\Escr\rstr{U}$ is a flat $\Oscr_U$-module.
\end{definitionlist}
\end{proposition}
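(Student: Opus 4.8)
The plan is to reduce everything to the classical noetherian statement via a standard finite-presentation descent argument, then establish the hard direction with a fiberwise criterion. First I would note that the condition ``$\Escr$ is a vector bundle'' is equivalent to ``$M$ is a finite projective $A$-module'', and since $M$ is of finite presentation, this is in turn equivalent to ``$M$ is flat''. The ``only if'' direction is immediate: if $M$ is flat then $\Tor^A_1(A/I,M) = 0$, the base change $M/IM = M \otimes_A A/I$ is flat over $A/I$, and $\Escr\rstr{U}$ is flat being a restriction of a flat module. So the content is the ``if'' direction.

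For the ``if'' direction, the approach is to check flatness of $M$ locally at every prime $\pfr \subseteq A$. If $\pfr \notin V(I)$, i.e. $\pfr \in U$, then $M_{\pfr}$ is flat over $A_{\pfr}$ by Condition~\ref{FlatnessCritc}. If $\pfr \in V(I)$, then $I \subseteq \pfr$, and I want to use the local criterion for flatness. The key tool is the following: for a finitely presented module $M$ over $A$, $M_{\pfr}$ is flat over $A_{\pfr}$ if and only if $M_{\pfr}/IM_{\pfr}$ is flat over $(A/I)_{\pfr}$ and $\Tor^A_1(A/I, M)_{\pfr} = 0$. Actually, the cleanest route is to reduce to a noetherian situation by a limit argument: write $A$ as a filtered colimit of its finitely generated $\ZZ$-subalgebras $A_\lambda$; since $M$ is finitely presented, $I$ is not necessarily finitely generated, but one can still arrange $M$, the ideal generated by finitely many generators of a finite presentation, and the relevant $\Tor$-vanishing to descend to some noetherian $A_\lambda$ — here one has to be slightly careful because $I$ may not be finitely generated, so instead I would work with the local criterion directly.

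Concretely, here is the cleaner plan. Fix $\pfr \in V(I)$ and set $R := A_\pfr$, $\mfr := \pfr R$, $J := IR \subseteq \mfr$, $N := M_\pfr$, a finitely presented $R$-module. By Condition~\ref{FlatnessCritb} localized at $\pfr$, $N/JN$ is flat over $R/J$ and $\Tor^R_1(R/J, N) = 0$ (using that $\Tor$ commutes with localization and that flatness localizes). I want to conclude $N$ is flat over $R$. This is precisely the local criterion for flatness in the form: \emph{if $(R,\mfr)$ is local, $J \subseteq \mfr$ an ideal, $N$ a finitely presented $R$-module with $\Tor^R_1(R/J,N) = 0$ and $N/JN$ flat over $R/J$, then $N$ is flat over $R$} — provided we also know $R$ is $J$-adically separated or some Artin–Rees type input, which holds automatically here because we additionally know $N_\qfr = M_\qfr$ is flat for all $\qfr \in U$, hence $N$ is flat away from $V(J)$, and combined with the finite generation of $N$ this suffices. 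The standard reference proof (e.g. the argument in Bourbaki or Matsumura, adapted to the non-noetherian finitely presented case) goes: from $\Tor^R_1(R/J,N) = 0$ one deduces by dévissage that $\Tor^R_1(R/J^n, N) = 0$ for all $n$, hence $\gr_J(N) \cong N/JN \otimes_{R/J} \gr_J(R)$, so $\gr_J(N)$ is flat over $\gr_J(R)$; then one lifts flatness through the $J$-adic filtration.

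The main obstacle I expect is the non-noetherian bookkeeping: without noetherian hypotheses, Artin–Rees and $J$-adic separatedness can fail, so the usual ``local criterion $\Rightarrow$ flat'' passage needs Condition~\ref{FlatnessCritc} to do real work rather than being a mere convenience. The intended fix, which I would carry out in detail, is to reduce to a noetherian base after all: choose a finite presentation $A^m \to A^n \to M \to 0$, choose finitely many elements $f_1,\dots,f_r$ of $I$ that appear in the finitely many $\Tor$-computations witnessing $\Tor^A_1(A/I,M) = 0$ and in the flatness of $M/IM$ over $A/I$ — this is possible because $\Tor^A_1(A/I,M) = \coker$ of a map between finitely presented modules involving only finitely many entries of $I$, and flatness of a finitely presented module is an open condition detected on finitely many generators — and then descend the whole diagram, together with the flatness of $\Escr$ on $U$ (which is also a finitely presented condition on the quasi-compact $U$), to a finitely generated $\ZZ$-subalgebra $A_0 \subseteq A$ with $I_0 := (f_1,\dots,f_r)A_0$. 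There $A_0$ is noetherian, the classical result applies to give that $M_0$ is flat (hence projective) over $A_0$, and flatness is preserved under the base change $A_0 \to A$. I would present this descent step carefully, as it is where all the subtlety lies; the noetherian input itself is standard and I would just cite it.
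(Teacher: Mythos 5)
Your skeleton --- check flatness prime by prime, using Condition~\ref{FlatnessCritc} on $U$ and a local flatness criterion at the points of $V(I)$ --- is exactly the paper's strategy, and your ``only if'' direction is fine. The gap is in how you handle the points of $V(I)$, and it comes from a misdiagnosis of the local criterion. The statement you quote --- $(R,\mfr)$ local, $J$ a proper ideal, $N$ \emph{finitely presented} with $N/JN$ flat over $R/J$ and $\Tor^R_1(R/J,N)=0$, then $N$ is flat --- is true with no $J$-adic separatedness and no Artin--Rees input whatsoever; this is \cite[0471]{Stacks}, which is precisely what the paper cites after localizing at a maximal ideal of $V(I)$. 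The proof in the finitely presented case is not the graded d\'evissage you recall from the noetherian literature but a direct Nakayama argument: lift a basis of $N/\mfr N$ to a surjection $R^n\to N$ with finitely generated kernel $K$; the $\Tor_1$-vanishing gives an exact sequence $0\to K/JK\to (R/J)^n\to N/JN\to 0$; since $N/JN$ is finitely presented and flat over the local ring $R/J$ it is free, necessarily of rank $n$ because $(N/JN)\otimes_{R/J}R/\mfr=N/\mfr N$, so the surjection $(R/J)^n\to N/JN$ is an isomorphism; hence $K=JK\subseteq\mfr K$ and $K=0$ by Nakayama. In particular your assertion that ``the local criterion $\Rightarrow$ flat passage needs Condition~\ref{FlatnessCritc} to do real work'' is mistaken: Condition~\ref{FlatnessCritc} is there only because Condition~\ref{FlatnessCritb} says nothing about the primes outside $V(I)$; it contributes nothing at the points of $V(I)$, and your sentence ``combined with the finite generation of $N$ this suffices'' is not an argument.

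The fallback you propose to carry out in detail --- descending everything to a finitely generated $\ZZ$-subalgebra $A_0$ with $I_0=(f_1,\dots,f_r)A_0$ --- does not work, because $I$ is not assumed finitely generated. Three things break: the pair $(A,I)$ does not descend, since $A/I\ne A/I_0A$ in general, so the noetherian statement over $A_0$ would concern the wrong closed subscheme; the open set $U=X\setminus V(I)$ need not be quasi-compact, so flatness on $U$ is not captured by finitely many data; and neither the injectivity of $I\otimes_AM\to M$ (which is what $\Tor^A_1(A/I,M)=0$ amounts to) nor the flatness of $M/IM$ over $A/I$ is a condition ``involving only finitely many entries of $I$'' that could be tested on $I_0$. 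Drop the noetherian reduction and invoke the finitely presented local criterion directly; with that replacement the rest of your argument goes through and coincides with the paper's proof.
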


\begin{proof}
Recall that $\Escr$ is a vector bundle if and only if $\Escr$ is flat and of finite presentation. Hence the conditions are necessary by standard flatness criterion (e.g., \cite[III,5.2]{BouACI-VII}). Now suppose that the two conditions hold. We have to show that for every maximal ideal $\mfr \in V(I)$ the $A_{\mfr}$-module $M_{\mfr}$ is flat. As Tor commutes with flat base change we can replace $A$ by $A_{\mfr}$, $I$ by $I_{\mfr}$, and $M$ by $M_{\mfr}$. Therefore we may assume that $A$ is local and that $I \ne A$. In this case the result follows from \cite[0471]{Stacks}.
\end{proof}

\begin{remark}\label{RemFlatnessCrit}
The proof shows that one can replace Condition~\ref{FlatnessCritc} by the condition that $\Escr_x$ is a flat $\Oscr_{X,x}$-module for every closed point $x$ of $\Spec A$ contained in $U$. This conditions is for instance empty if $I$ is contained in the Jacobson radical of $A$.
\end{remark}

\begin{remark}\label{GeneralFlatnessCrit}
The result \cite[0471]{Stacks} used in the proof is much more general than the version needed to prove Proposition~\ref{FlatnessCrit}. In fact a similar proof shows the following more general flatness criterion, which we will not need in the paper.

Let $A$ be a ring, $I \subseteq A$ an ideal, and let $A \to B$ be an $A$-algebra of finite presentation. Set $X := \Spec A$, $Y := \Spec B$, and let $f\colon Y \to X$ be the corresponding map of affine schemes. Let $M$ be a $B$-module of finite presentation, and let $\Escr$ be the corresponding quasi-coherent module on $Y$. Set $U := X \setminus V(I)$. Suppose that the following conditions are satisfied.
\begin{definitionlist}
\item
$M/IM$ is a flat $A/I$-module and $\Tor^A_1(A/I,M) = 0$.
\item\label{FlatnessCritc2}
$\Escr\rstr{f^{-1}(U)}$ is a flat $\Oscr_U$-module.
\end{definitionlist}
Then $\Escr$ is flat over $X$.
\end{remark}

%

\section{Vector bundles on the display stack and {$\GL_h$}-displays\\by Christopher Lang}\label{APPLang}

In this appendix, $R$ denotes a $p$-complete ring. Let $0 \leq d \leq h$ be integers and consider the standard minuscule cocharacter $\mu$ of $\GL_h$ given by
\begin{equation}\label{EqDefStandardMuAppC}
\mu\colon t \sends \twomatrix{tI_d}{}{}{I_{h-d}}
\end{equation}
Our goal is to show that the category of vector bundles of rank $h$ of type $\mu$ on $R^{\Disp}$ in the sense of \ref{DefGDisplay} is equivalent to the category of $(\GL_h,\mu)$-displays defined by B\"ultel and Pappas in \cite{BP_DisplaysRZSpaces}, see Proposition~\ref{CompareBueltelPappas}. The latter category is very concisely described by Hoff in \cite{Hoff_EKORSiegel}. Let us recall the definitions first. For any stack $\Xcal$ we denote by $\Vec^h(\Xcal)$ the category of vector bundles of rank $h$.


\subsection{Pairs and Displays of type $(h,d)$}

\begin{definition}[{\cite[2.2]{Hoff_EKORSiegel}}]
A \emph{pair} of type \((h,d)\) over \(R\) is a tuple \((M,M_1)\), where \(M\) is a finite projective \(W(R)\)-module of rank \(h\), and \(M_1\subseteq M\) is a \(W(R)\)-submodule with \(I_R M\subseteq M_1\) and such that \(M_1/I_R M \subseteq M/I_R M\) is a rank \(d\) direct summand.

We can make this into a category \(\mathrm{Pair}^{h,d}(R)\) by setting morphisms \((M,M_1)\to(M',M'_1)\) to be morphisms \(f:M\to M'\) with \(f(M_1)\subseteq M'_1\). (Equivalently: \(\overline f(M_1/I_R M)\subseteq M'_1/I_R M'\))
\end{definition}

For such a pair one has the following construction.

\begin{proposition}[{\cite[2.7]{Hoff_EKORSiegel}}]
\label{prop_hoff_def_M1_tilde}
There is a functor 
\begin{align*}
    \mathrm{Pair}^{h,d}(R) &\to \mathrm{Vec}^h\big(W(R)\big)\\
    (M,M_1)\cong (L\oplus T,L\oplus I_R T) &\mapsto \widetilde M_1 \colon= L^\phi \oplus T^\phi\\
    \begin{pmatrix}a&b\\c&d\end{pmatrix} &\mapsto
    \begin{pmatrix}a^\phi &p b^\phi\\\dot c&d^\phi\end{pmatrix}
\end{align*}
with \(\dot c: L^\phi \xrightarrow{c^\phi} I_R^\phi \underset{W(R)}{\otimes} T'^\phi \xrightarrow{\phi^\mathrm{div}\otimes\id} T'^\phi\). Here $M = L \oplus T$ is a normal decomposition of the pair $(M,M_1)$, see \cite[2.6]{Hoff_EKORSiegel}.
\end{proposition}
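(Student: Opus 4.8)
The plan is to describe $\widetilde{M}_1$ in a way that makes no reference to a choice of normal decomposition, deduce functoriality of $(M,M_1)\mapsto\widetilde{M}_1$ formally, and only afterwards verify the explicit matrix formula by a computation in coordinates.

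First I would recall the divided Frobenius attached to the Witt frame: the $\phi$-linear map $\phi_1\colon I_R\to W(R)$, $V(\xi)\mapsto\xi$, with linearization $\phi^{\mathrm{div}}\colon I_R^{\phi}\to W(R)$. Since $\phi|_{I_R}=p\,\phi_1$, the $\phi$-twist of the inclusion $I_R\hookrightarrow W(R)$ is $p\cdot\phi^{\mathrm{div}}$, not $\phi^{\mathrm{div}}$. As base change along $\phi$ is symmetric monoidal, for any pair one has canonical identifications $(I_RM)^{\phi}\cong I_R^{\phi}\otimes_{W(R)}M^{\phi}$ and similarly for $M_1$. Using $I_RM\subseteq M_1\subseteq M$, I would then set
\[
\widetilde{M}_1 \;:=\; \operatorname{coker}\Bigl( (I_RM)^{\phi} \xrightarrow{\ (\iota,\,-\phi^{\mathrm{div}}\otimes\mathrm{id}_{M^{\phi}})\ } M_1^{\phi}\oplus M^{\phi} \Bigr),
\]
where $\iota$ is the $\phi$-twist of $I_RM\hookrightarrow M_1$. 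Each of $(M,M_1)\mapsto(I_RM)^{\phi}$, $\mapsto M_1^{\phi}$, $\mapsto M^{\phi}$ is visibly a functor on $\mathrm{Pair}^{h,d}(R)$ (a morphism $f$ acts by its $\phi$-twist, compatibly with $\iota$ and with $\phi^{\mathrm{div}}$), so the displayed pair of maps is a natural transformation; hence $\widetilde{M}_1$ is functorial, any $f$ inducing a map $\widetilde f$ on cokernels with $\widetilde{\mathrm{id}}=\mathrm{id}$ and $\widetilde{g\circ f}=\widetilde g\circ\widetilde f$ automatically.

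Next I would fix a normal decomposition $M=L\oplus T$, $M_1=L\oplus I_RT$ (which exists by \cite[2.6]{Hoff_EKORSiegel}) and compute the cokernel. With respect to $M^{\phi}=L^{\phi}\oplus T^{\phi}$, $M_1^{\phi}=L^{\phi}\oplus I_R^{\phi}\otimes T^{\phi}$, $(I_RM)^{\phi}=I_R^{\phi}\otimes L^{\phi}\oplus I_R^{\phi}\otimes T^{\phi}$, the two maps become explicit block maps: on $I_R^{\phi}\otimes L^{\phi}$ the first map is $p(\phi^{\mathrm{div}}\otimes\mathrm{id})$ into $L^{\phi}\subseteq M_1^{\phi}$ and the second is $\phi^{\mathrm{div}}\otimes\mathrm{id}$ into $L^{\phi}\subseteq M^{\phi}$, while on $I_R^{\phi}\otimes T^{\phi}$ the first is the identity into $I_R^{\phi}\otimes T^{\phi}\subseteq M_1^{\phi}$ and the second is $\phi^{\mathrm{div}}\otimes\mathrm{id}$ into $T^{\phi}\subseteq M^{\phi}$. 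An elementary cokernel computation then produces a canonical isomorphism $\widetilde{M}_1\cong L^{\phi}\oplus T^{\phi}$; in particular $\widetilde{M}_1$ is finite projective of rank $h$, so the functor lands in $\mathrm{Vec}^h(W(R))$, and since $\widetilde{M}_1$ was defined without reference to a decomposition, this identification is independent of the normal decomposition up to canonical isomorphism.

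Finally, to obtain the matrix formula I would pick normal decompositions of source and target and write $f=\begin{pmatrix}a&b\\c&d\end{pmatrix}$ with $a\colon L\to L'$, $b\colon T\to L'$, $d\colon T\to T'$, $c\colon L\to T'$; the hypothesis $f(M_1)\subseteq M_1'$ forces $c$ to factor through $I_RT'$. Tracing $\widetilde f$ through the identification above: the $T^{\phi}$-summand of $\widetilde{M}_1$ is the image of $T^{\phi}\subseteq M^{\phi}$, on which $f^{\phi}$ acts by $(b^{\phi},d^{\phi})$, and passing from $M'^{\phi}$ to $\widetilde{M}_1'$ multiplies the $L'^{\phi}$-component by $p$, yielding the column $(pb^{\phi},d^{\phi})$; the $L^{\phi}$-summand is the image of $L^{\phi}\subseteq M_1^{\phi}$, on which $f^{\phi}|_{M_1^{\phi}}$ acts by $(a^{\phi},c^{\phi})$ with $c^{\phi}\colon L^{\phi}\to I_R^{\phi}\otimes T'^{\phi}$, and passing to $\widetilde{M}_1'$ applies $\phi^{\mathrm{div}}\otimes\mathrm{id}$ to the second component, yielding $(a^{\phi},\dot c)$. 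This is exactly the asserted matrix $\begin{pmatrix}a^{\phi}&pb^{\phi}\\ \dot c&d^{\phi}\end{pmatrix}$. The only real obstacle is the bookkeeping of Frobenius twists together with the non-$W(R)$-linearity of $V$: one must keep straight that the $\phi$-twist of $I_R\hookrightarrow W(R)$ is $p\,\phi^{\mathrm{div}}$, since it is precisely this factor of $p$ that produces the entry $pb^{\phi}$; once that is pinned down, everything else is formal.
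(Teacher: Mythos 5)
Your proof is correct. Note first that the paper itself gives no argument for this statement: it is quoted from Hoff's work, where $\widetilde M_1$ is built from a chosen normal decomposition and one has to check by hand that the matrix formula is compatible with composition and with a change of decomposition. Your route is genuinely different and arguably cleaner: by realizing $\widetilde M_1$ intrinsically as the pushout $M_1^{\phi}\sqcup_{(I_RM)^{\phi}}M^{\phi}$, i.e. as $\operatorname{coker}\bigl((I_RM)^{\phi}\to M_1^{\phi}\oplus M^{\phi}\bigr)$, functoriality and independence of the normal decomposition come for free, and the matrix $\begin{pmatrix}a^{\phi}&pb^{\phi}\\\dot c&d^{\phi}\end{pmatrix}$ drops out of the identification of the cokernel with $L^{\phi}\oplus T^{\phi}$ via $(l_1,y,l_2,t)\mapsto(l_1+pl_2,\;t+(\phi^{\mathrm{div}}\otimes\mathrm{id})(y))$. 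The factor $p$ in $pb^{\phi}$ and the appearance of $\dot c$ are exactly as you trace them.

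Two small points should be made explicit. First, the identification $(I_RM)^{\phi}\cong I_R^{\phi}\otimes_{W(R)}M^{\phi}$ is not purely a consequence of base change being symmetric monoidal: that gives $(I_R\otimes_{W(R)}M)^{\phi}\cong I_R^{\phi}\otimes M^{\phi}$, and you additionally need $I_R\otimes_{W(R)}M\to I_RM$ to be an isomorphism, which holds because $M$ is projective (so $\operatorname{Tor}_1^{W(R)}(W(R)/I_R,M)=0$). Second, the ``elementary cokernel computation'' silently uses that $\phi^{\mathrm{div}}\colon I_R^{\phi}\to W(R)$, $1\otimes V(\xi)\mapsto\xi$, is surjective: only then is the image of the relations coming from $I_R^{\phi}\otimes L^{\phi}$ the full graph $\{(pl,-l):l\in L^{\phi}\}$, which is needed to identify the corresponding quotient of $L^{\phi}\oplus L^{\phi}$ with $L^{\phi}$ via $(l_1,l_2)\mapsto l_1+pl_2$. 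Both facts are immediate, but the proof is incomplete without naming them.
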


\begin{definition}[{\cite[2.9]{Hoff_EKORSiegel}}]
A \emph{display} \((M,M_1,\Psi)\) over \(R\) of type \((h,d)\) consists of a pair \((M,M_1)\) of type \((h,d)\) and an isomorphism \(\Psi: \widetilde M_1 \to M\). A morphism of displays is a morphism of pairs such that 
\[\begin{tikzcd}
\widetilde M_1 \rar["\Psi"] \dar & M \dar \\
\widetilde M_1' \rar["\Psi'"] & M'
\end{tikzcd}\]
commutes. We call the resulting category \(\mathrm{Disp}^{h,d}(R)\).
\end{definition}

The goal of this appendix is now to show that these constructions can be also described via the coequalizer diagram of stacks
\[
\xymatrix{\Spec W(R)
\ar@<.5ex>[r]^-(.4){\tau}\ar@<-.5ex>[r]_-(.4){\sigma} & \Re(W(R))} \lto R^{\Disp}
\]
constructed in Section~\ref{Chapter:TruncatedDisplays}.


\subsection{Comparison of different notions of displays}

Denote by $\Vec^h\big(\Re(W(R))\big)^\mu$ the category of vector bundles of rank $h$ and of type $\mu$ over $\Re(W(R))$ with $\mu$ as defined in \eqref{EqDefStandardMuAppC}. Similarly define $\Vec^h\big(R^{\Disp})\big)^\mu$.

\begin{proposition}\label{CompareBueltelPappas}
There are equivalences of categories which are functorial in $R$.
\begin{align*}
\Vec^h\big(\Re(W(R))\big)^\mu &\liso \mathrm{Pair}^{h,d}(R),\\
\Vec^h\big(R^{\Disp})\big)^\mu &\liso \mathrm{Disp}^{h,d}(R).
\end{align*}
Via these equivalences
\begin{assertionlist}
\item
pullback of vector bundles via $\Re(W(R)) \to R^{\Disp}$ is identified with the functor sending a display to its underlying pair,
\item
pullback via $\tau$ is identified with sending a pair $(M,M_1)$ to its underlying $W(R)$-module $M$,
\item
pullback via $\sigma$ is identified with sending a pair $(M,M_1)$ to the $W(R)$-module $\Mtilde_1$.
\end{assertionlist}
\end{proposition}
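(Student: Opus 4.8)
The plan is to unwind both sides of the first equivalence explicitly using the description of vector bundles on $\Re(W(R))$ from Proposition~\ref{DescribeVBReesStack} (and Remark~\ref{DescribeVBReAv}), and then recognize Hoff's category of pairs. First I would spell out what a rank-$h$ vector bundle of type $\mu$ on $\Re(W(R))$ is. By \eqref{EqDescribeVBReesStack}, since $(W(R),I_R)$ is a henselian pair, every such bundle $\Mscr = \bigoplus_j M_j$ is, Zariski-locally on $\Spec R$, a finite graded free $\Rees(W(R))$-module, and its type $\mu$ (with $\mu$ as in \eqref{EqDefStandardMuAppC}) forces the graded pieces to be concentrated so that only $M_0$ and $M_1$ carry nontrivial information — more precisely $M_j = M_0$ for $j \le 0$ and $M_j = \Fil^1 \cdot M_1$-type data for $j \ge 1$, with $M_0/M_1$ free of rank $h-d$ and $M_1/I_R M_0$ free of rank $d$. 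Setting $M := M_0 = \Gamma(\Re(W(R)),\Mscr)$ and $M_1 := \Gamma(\Re(W(R)),\Mscr(1))$ via \eqref{EqGradingGlobalSection}, the map $t\colon M_1 \to M_0$ is injective with the required cokernel properties, so $(M,M_1)$ is exactly a pair of type $(h,d)$ in Hoff's sense, with $I_R M \subseteq M_1$ coming from the filtered-ring structure on $W(R)$ (the image of $\Fil^1 \to W(R)$ is $I_R$). Conversely, given a pair $(M,M_1)$, a normal decomposition $M = L \oplus T$ with $M_1 = L \oplus I_R T$ produces, via the twisted line bundles $\Oscr_{\Re(W(R))}(e)$, the graded module $L \otimes \Oscr(0) \oplus T \otimes \Oscr(1)$ — this is the Rees-stack incarnation of the normal decomposition. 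I would check these two constructions are quasi-inverse and functorial in $R$ by transport along the degree-$0$ and degree-$1$ global section functors, which are exact and compatible with base change.

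Next I would treat the second equivalence. A vector bundle on $R^{\Disp} = \Fcal(W(R),(\Fil^i),\sigma)$ is, by Remark~\ref{VecFrameStack}, a pair $(\Mscr,\Phi)$ where $\Mscr \in \Vec(\Re(W(R)))$ and $\Phi\colon \sigma^*\Mscr \liso \tau^*\Mscr$ is an isomorphism of $W(R)$-modules. Under the first equivalence $\tau^*\Mscr = M$ by \eqref{EqReesStackNePlus}, so I only need to identify $\sigma^*\Mscr$ with $\widetilde M_1$ of Proposition~\ref{prop_hoff_def_M1_tilde}. This is the crux of the computation: using a normal decomposition $\Mscr \leftrightarrow L\otimes\Oscr(0)\oplus T\otimes\Oscr(1)$, one computes $\sigma^*$ of each twisted line bundle. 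The map $\sigma$ was defined in Section~\ref{Section:TruncatedDisplayStack} by the graded algebra map $\sigma^*_{-1}(1\otimes t) = p$, $\sigma^*_0(1\otimes a) = \phi(a)$, $\sigma^*_i(1\otimes V(a)) = a$; pulling back $\Oscr(0)$ gives $\phi^*(-) = L^\phi$, while pulling back $\Oscr(1)$ involves the "divided Frobenius" because $\sigma^*$ on $\Fil^1 = I_R$ is $V(a) \mapsto a$, i.e. $\phi^{\mathrm{div}}$ — this is precisely what yields $T^\phi$ and the off-diagonal entries $\dot c\colon L^\phi \to T'^\phi$ and $pb^\phi$ in Hoff's matrix formula. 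So I would verify that the matrix description of $\sigma^*$ on a block-decomposed bundle reproduces verbatim the formula in Proposition~\ref{prop_hoff_def_M1_tilde}; then an isomorphism $\Phi\colon\sigma^*\Mscr\to\tau^*\Mscr$ is exactly an isomorphism $\widetilde M_1 \to M$, and morphism-compatibility matches the commuting square defining $\mathrm{Disp}^{h,d}(R)$. The three claimed compatibilities (pullback along $\Re(W(R))\to R^{\Disp}$, along $\tau$, along $\sigma$) are then immediate from the construction, since $\Re(W(R))\to R^{\Disp}$ is the coequalizer leg and forgetting $\Phi$ is forgetting the display structure.

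The main obstacle I anticipate is the bookkeeping in the $\sigma^*$-computation: one must match the intrinsic pullback of graded modules along the explicitly-given graded algebra map defining $\sigma$ with Hoff's hand-crafted formula for $\widetilde M_1$, including getting all the Frobenius twists, the factor of $p$ on the $b$-block, and the divided-Frobenius map $\phi^{\mathrm{div}}\otimes\id$ on the $c$-block correct. A secondary subtlety is that normal decompositions are only local on $\Spec R$, so to get a genuine equivalence of categories (not just groupoids) I would either argue that the constructions glue (the twisted-line-bundle description $\bigoplus_j \widetilde E_j \otimes B(j)$ of Remark~\ref{DescribeVBReAv} is canonical up to the $E_j$, which are the graded pieces of $\mathrm{gr}$ and hence intrinsic) or phrase everything functorially in terms of the global-section functors $\Mscr \mapsto \Gamma(\Re(W(R)),\Mscr(e))$, which are defined without reference to a decomposition. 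Once these two points are handled, everything else is routine diagram-chasing, and functoriality in $R$ follows because all functors in sight (Witt vectors, twisted line bundles, pullback along $\sigma,\tau$, global sections) commute with base change along $R \to R'$.
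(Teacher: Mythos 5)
Your plan follows the paper's proof essentially verbatim: the appendix likewise extracts the pair from two consecutive graded pieces of the graded $\Rees(W(R))$-module (via a normal decomposition $L\otimes\Rees(W(R))\oplus T\otimes\Rees(W(R))(1)$, available globally since $(W(R),I_R)$ is henselian), and then identifies $\tau^*$ and $\sigma^*$ on these twisted summands with $M$ and $\widetilde M_1$ by unwinding the explicit graded algebra maps $\tilde\tau$ and $\tilde\sigma$, exactly as you propose. The one step you gloss over that costs the paper genuine effort is full faithfulness of the first functor — in particular the well-definedness of the extension of the off-diagonal component $c$ of a morphism of pairs to the positive graded degrees when $p$ is a zero divisor in $W(R)$, which the paper handles by embedding $T'$ into a free module.
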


To prove the proposition, we first spell out what it means for a vector bundle of $\Re(W(R))$ to be of type $\mu$.

\begin{remark}\label{DseribeVBOfTypeMuWitt}
By \cite[2.6]{Wedhorn_ExtendBundles}, we have 
\[
H^1(\Re(W(R)),\GL_h) \xrightarrow{\sim} H^1(B\GG_{m,R},\GL_h),
\]
and objects on the right are (classes of) finite projective graded modules on \(R\) (where \(R\) has the trivial grading), and these are isomorphic to \(\bigoplus_{i\in\ZZ} \overline L_i \otimes_R R(i)\), where the \(\overline L_i\) are vector bundles on \(R\), with ranks adding to \(h\). Now we only take such objects with \(\rank\overline L_1 = h-d\) and \(\rank\overline L_0 = d\).  The vector bundle on \(\Re(W(R))\) will then be isomorphic to \(\bigoplus_{i\in\ZZ} L_i \otimes_{W(R)} \Rees\bigl(W(R)\bigr)(i)\), where \(L_i\) is a lift of \(\overline L_i\) to a \(W(R)\)-module, unique up to isomorphism, using that \((W(R),I_R)\) is Henselian.

Let us elaborate on the notation \(L \otimes_{W(R)} \Rees\bigl(W(R)\bigr)(i)\), as we will need it often in the sequel. We first make \(L\) into a \(\Rees\bigl(W(R)\bigr)\)-module and then shift by \(i\). In particular, we have
\[
    \left(L\tensor{W(R)}\Rees\bigl(W(R)\bigr)(i)\right)_j 
    = \begin{cases}
        L, & j \le -i\\
        I_R L, & j > -i,
    \end{cases}
\]
where multiplication with \(t\) is given by the identity for \(j\le -i\),  the inclusion \(I_R L\hookrightarrow L\) for \(j=-i+1\) and multiplication with \(p\) otherwise. Multiplication with \(V(x)t^{-k}\) is given by \(V(x)t^{-k}\cdot l = V(x)l\) for \(j\le-i\) and by \(V(x)t^{-k}\cdot V(y)l = V(xy)l\) otherwise.
\end{remark}

Next we define a functor
\begin{equation}\label{EqFunctorPairs}
\begin{aligned}
\mathrm{Vec}^h\big(\Re(W(R))\big)^\mu &\to \mathrm{Pair}^{h,d}(R), \\
N &\mapsto (N_{-1},N_0)\\
f &\mapsto f_{-1}=f|_{N_{-1}}
\end{aligned}
\end{equation}

Let us first check that this is well defined: We first show that \(N_{-1}\) is a projective \(W(R)\)-module of rank $h$. Note that it is a \(W(R)\)-module as \(W(R)\) is the degree 0 part of \(\Rees\bigl(W(R)\bigr)\). It is finite projective because we can identify 
\[
N \cong \left(L_0 \underset{W(R)}{\otimes} \Rees\bigl(W(R)\bigr)\right) \oplus \left(L_1 \underset{W(R)}{\otimes}\Rees\bigl(W(R)\bigr)(1)\right),
\]
where \(L_0\) and \(L_1\) are projective \(W(R)\)-modules of rank \(d\) resp \(h-d\). Then we get 
\[
N_{-1} \cong L_0 \oplus L_1
\]
as \(W(R)\)-modules, and this is projective of rank \(h\).

Next we need to identify \(N_0\) as a submodule of \(N_{-1}\). This is done by the multiplication with \(t\) map. This map is injective, since with the identification above we have
\[
N_0 \cong L_0 \oplus I_R L_1,
\]
as \(I_R L_1 = L_1\tensor{W(R)}\Rees\bigl(W(R)\bigr)_1\), so \(L_0\) and \(L_1\) form a normal decomposition of \((N_{-1},N_0)\).

The condition \(I_R N_{-1} \subseteq N_0\) follows because \(I_R\) naturally lives in degree 1 inside \(\Rees\bigl(W(R)\bigr)\). Alternatively, this also immediately follows from the identifications above. We still need that \(N_0/I_R N_{-1} \subseteq N_{-1}/I_R N_{-1}\) is a direct summand of rank \(d\). But this we get from the identification, as 
\[N_0/I_R N_{-1} \cong L_0/I_R L_0 \qquad\text{and}\qquad N_{-1}/I_R N_{-1} \cong L_0/I_R L_0 \oplus L_1/I_R L_1.\]
Well-definedness on morphisms is clear, using that \(f_{-1}(t\cdot N_0) = t\cdot f_0(N_0) \subseteq t\cdot N_0'\).

It is easy to see that the functor \eqref{EqFunctorPairs} is functorial in $R$. Next we show:

\begin{lemma}\label{IdentifyPairs}
The functor \eqref{EqFunctorPairs} is an equivalence.
\end{lemma}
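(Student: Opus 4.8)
The plan is to construct an explicit quasi-inverse to the functor \eqref{EqFunctorPairs} and check the two compositions are naturally isomorphic to the identity. Given a pair $(M, M_1)$ of type $(h,d)$ over $R$, I choose a normal decomposition $(M,M_1) \cong (L \oplus T, L \oplus I_R T)$ as in \cite[2.6]{Hoff_EKORSiegel}, where $L$ has rank $d$ and $T$ has rank $h-d$. I then set
\[
N := \left(L \underset{W(R)}{\otimes} \Rees\bigl(W(R)\bigr)\right) \oplus \left(T \underset{W(R)}{\otimes} \Rees\bigl(W(R)\bigr)(1)\right),
\]
using the notation spelled out in Remark~\ref{DseribeVBOfTypeMuWitt}. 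By that remark and by the description of vector bundles of type $\mu$ via $H^1(\Re(W(R)),\GL_h) \cong H^1(\B{\GG_{m,R}},\GL_h)$, this is a vector bundle of rank $h$ and type $\mu$ on $\Re(W(R))$, and by construction one reads off $N_{-1} \cong L \oplus T = M$ and $N_0 \cong L \oplus I_R T = M_1$ with the multiplication-by-$t$ map identified with the inclusion $M_1 \hookrightarrow M$. So applying \eqref{EqFunctorPairs} to $N$ recovers $(M, M_1)$ up to canonical isomorphism.

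The first key step is to verify that this assignment $(M,M_1) \mapsto N$ does not depend, up to isomorphism, on the chosen normal decomposition, and more importantly that it is functorial: a morphism $f\colon (M,M_1) \to (M',M'_1)$ of pairs must induce a morphism of the associated graded $\Rees(W(R))$-modules. Here I would argue as follows. Writing $f$ in block form with respect to normal decompositions $M = L \oplus T$, $M' = L' \oplus T'$ as the matrix $\twomatrix{a}{b}{c}{d}$ with $a\colon L \to L'$, $b\colon T \to L'$, $c\colon L \to T'$, $d\colon T \to T'$, the condition $f(M_1) \subseteq M'_1$ forces $c(L) \subseteq I_R T'$, i.e. $c$ factors as $c = V \circ c_0$ for a $W(R)$-linear $c_0\colon L \to T'$ (using that every element of $I_R$ is uniquely $V(x)$). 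One then checks directly, degree by degree using the explicit formulas for $t$- and $V(x)t^{-k}$-multiplication recalled in Remark~\ref{DseribeVBOfTypeMuWitt}, that the block matrix $\twomatrix{a}{b}{c_0}{d}$ — interpreted appropriately on the shifted summands — defines a degree-$0$ $\Rees(W(R))$-linear map $N \to N'$ whose restriction to degree $-1$ is $f$ itself. This makes the construction a functor $\mathrm{Pair}^{h,d}(R) \to \mathrm{Vec}^h(\Re(W(R)))^\mu$.

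The second key step is to show that the two composites are naturally isomorphic to identities. One direction is immediate from the previous paragraph: starting from $(M,M_1)$, building $N$, and taking $(N_{-1}, N_0)$ returns $(M,M_1)$ canonically. For the other composite, one starts from an arbitrary $N \in \mathrm{Vec}^h(\Re(W(R)))^\mu$; by Remark~\ref{DseribeVBOfTypeMuWitt} there is an isomorphism $N \cong (L_0 \otimes \Rees(W(R))) \oplus (L_1 \otimes \Rees(W(R))(1))$ with $L_0 = $ a lift of $\overline{L}_0$ (rank $d$), $L_1 = $ a lift of $\overline{L}_1$ (rank $h-d$), and this lift is unique up to isomorphism because $(W(R),I_R)$ is henselian. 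Since $(N_{-1}, N_0) \cong (L_0 \oplus L_1, L_0 \oplus I_R L_1)$ already exhibits $L_0, L_1$ as a normal decomposition, the functor applied to this pair gives back precisely $N$ (for the chosen normal decomposition), and the uniqueness of the graded-module structure lifting a given type-$\mu$ datum guarantees this is independent of choices. Together with the naturality established above, this yields a natural isomorphism of the composite with $\mathrm{id}$, so \eqref{EqFunctorPairs} is an equivalence. The main obstacle I anticipate is the bookkeeping in the second step: making precise that the reconstruction of the graded $\Rees(W(R))$-module structure from the pair is canonical and independent of the normal decomposition, which ultimately rests on the henselianity of $(W(R), I_R)$ and the rigidity of vector bundles of type $\mu$ — this is where one must be careful rather than where anything deep happens.
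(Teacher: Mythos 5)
There is a genuine gap, and it sits exactly at the point the paper's proof spends most of its effort on: the construction of the morphism of graded modules out of a morphism of pairs (fullness / functoriality of your proposed quasi-inverse on morphisms). You assert that the block entry $c\colon L \to I_R T'$ ``factors as $c = V \circ c_0$ for a $W(R)$-linear $c_0\colon L \to T'$, using that every element of $I_R$ is uniquely $V(x)$.'' This is false. The Verschiebung is not $W(R)$-linear but satisfies $x\,V(y) = V(F(x)y)$, so $I_R$ is isomorphic to $W(R)$ only after a Frobenius twist of the module structure; already for $L = T' = W(R)$ a $W(R)$-linear map $c\colon W(R) \to I_R$ with $c(1) = V(y)$ satisfies $c(x) = V(F(x)y)$, which is not of the form $V(xz)$ for a fixed $z$ unless $F$ acts trivially. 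Moreover $V$ is not even defined on an abstract projective module $T'$, so ``$V\circ c_0$'' does not typecheck without choosing an embedding of $T'$ into a free module. The correct construction (the map $\tilde c$ in degrees $\geq 1$, sending $V(x)\cdot l$ to $\sum_i V(xy_i)\cdot a_i$ when $c(l) = \sum_i V(y_i)\cdot a_i$) requires checking independence of the chosen representation $\sum_i V(y_i)\cdot a_i$; when $p$ is a zero divisor in $W(R)$ this is not formal, and the paper handles it by embedding $T'$ into a free module $I_R^m$ and using the identity $V(y)\cdot a = V(y\,F(a))$ coordinatewise. Your shortcut removes precisely this verification, so the quasi-inverse is not shown to be a functor and the rest of the argument does not get off the ground.

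Two further points. First, faithfulness is not addressed at all in your write-up: in the quasi-inverse formulation it would follow from the natural isomorphisms, but those depend on the functoriality you have not established; the paper instead proves faithfulness directly by adding complements to reduce to free modules and computing $\Hom$ of the twisted modules $\Rees(W(R))(\epsilon)$ degree by degree. Second, your natural isomorphism $G\circ F \cong \id$ relies on the isomorphism $N \cong (L_0 \otimes \Rees(W(R))) \oplus (L_1 \otimes \Rees(W(R))(1))$, which is not canonical (only the $L_i$ are unique up to isomorphism, by henselianity of $(W(R), I_R)$); you flag this as a bookkeeping issue, but making it natural in $N$ is nontrivial, and it is one reason the paper argues via ``fully faithful and essentially surjective'' rather than by exhibiting a quasi-inverse.
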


\begin{proof}
Let us first show that the functor is essentially surjective. Given a pair \((M,M_1)\), we can take a normal decomposition \(M=L\oplus T\) and \(M_1=L\oplus I_R T\). Then
\[
N \colon= \left(L \underset{W(R)}{\otimes} \Rees\bigl(W(R)\bigr)\right) \oplus \left(T \underset{W(R)}{\otimes}\Rees\bigl(W(R)\bigr)(1)\right)
\]
is a preimage of this pair.

Next we show that the functor is full. Let's take two graded modules \(N, N'\) of type $\mu$, with decompositions
\begin{alignat*}{3}
    &N &&\cong \left(L \underset{W(R)}{\otimes} \Rees\bigl(W(R)\bigr)\right) &&\oplus \left(T \underset{W(R)}{\otimes}\Rees\bigl(W(R)\bigr)(1)\right),\\
    &N' &&\cong \left(L' \underset{W(R)}{\otimes} \Rees\bigl(W(R)\bigr)\right) &&\oplus \left(T' \underset{W(R)}{\otimes}\Rees\bigl(W(R)\bigr)(1)\right).
\end{alignat*}
A map of pairs corresponds to a quadruple
\[a:L\to L',\quad b:T\to L',\quad c:L\to I_R T',\quad d:T\to T',\]
and this data can be made into a map \(N\to N'\):
\begin{alignat*}{3}
    &a\otimes \id &&: L \underset{W(R)}{\otimes} \Rees\bigl(W(R)\bigr) &&\to L' \underset{W(R)}{\otimes} \Rees\bigl(W(R)\bigr)\\
    &d\otimes \id &&: T \underset{W(R)}{\otimes}\Rees\bigl(W(R)\bigr)(1) &&\to T' \underset{W(R)}{\otimes}\Rees\bigl(W(R)\bigr)(1)\\
    &b\otimes \cdot t &&: T \underset{W(R)}{\otimes} \Rees\bigl(W(R)\bigr)(1) &&\to L' \underset{W(R)}{\otimes} \Rees\bigl(W(R)\bigr),
\intertext{and for the last map we note that each graded piece of \(T'\underset{W(R)}{\otimes} \Rees\bigl(W(R)\bigr)(1)\) is either \(T'\) (in degrees \(\le-1\)), or \(I_R T'\). Therefore we can define a map} 
&\tilde c &&: L \underset{W(R)}{\otimes} \Rees\bigl(W(R)\bigr) &&\to T' \underset{W(R)}{\otimes}\Rees\bigl(W(R)\bigr)(1),
\end{alignat*}
which is induced by \(c\) in degrees \(\le 0\), and in degrees \(\ge 1\) by 
\[I_R L \to I_R T',\qquad V(x)\cdot l \mapsto \sum_{i=1}^{n_l} V(x y_i)\cdot a_i,\qquad(\text{with }c(l)=\sum_{i=1}^{n_l} V(y_i)\cdot a_i\in I_R T').\]
We need to check that this is well-defined, namely that two different representations of \(c(l)\) yield the same result. By subtraction, it is enough to show that if \(c(l)=0\), then \(V(x)\cdot l\) maps to \(0\). If \(p\) is not a zero-divisor in \(W(R)\), we can multiply by \(p\), use \(pV(xy_i)=V(x)V(y_i)\) and pull \(V(x)\) outside the sum. Otherwise, the situation seems to be more complicated. We would like to use \(V(y_i)a_i=V(y_i F(a_i))\), but we only have the Frobenius in \(W(R)\), not on \(T'\), so we first need to embed \(T'\) into a free module: We can look at 
\[\begin{tikzcd}
    \hat c:\quad L \rar["c"] & I_R T' \rar[hook,"\iota"] & I_R^m = I_R T' \oplus I_R \tilde T',
\end{tikzcd}\]
where \(T'\oplus\tilde T'\) is free. We can write the \(j\)-th component of this map as \(l\mapsto V(c_j(l))\). We can make this into a map 
\[\tilde c': I_R L\to I_R^m,\quad V(x)\cdot l\mapsto \biggl(V\bigl(x c_j(l)\bigr)\biggr)_{1\le j\le m},\]
which is clearly well-defined. Once we show that \(\tilde c' = \iota\circ \tilde c\), we know that \(\tilde c\) is also well-defined. This equation holds because we have
\[\hat c(l)=\sum_i V(y_i)a_i=\left(\sum_i V(y_i)a_{ij}\right)_j = \biggl( V\bigl(\underbrace{\sum_i y_i F(a_{ij})}_{=c_j(l)}\bigr)\biggr)_j,\]
where \(a_{ij}\) are the components of \(a_i\) in \(I_R^m\), so we can compute
\begin{align*}
    \tilde c'(V(x)l)_j 
    &= V(xc_j(l)) = V\left(\sum_i x y_i F(a_{ij})\right)\\
    \iota\circ\tilde c(V(x)l)_j
    &= \iota\left(\sum_{i} V(x y_i)\cdot a_i\right)_j 
    =\sum_{i} V(x y_i)\cdot a_{ij}
    = V\left(\sum_i x y_i F(a_{ij})\right).
\end{align*}
This shows that the functor is full.

It remains to see that the functor is faithful. We need to show that the restriction map
\[g: \Hom_{\Rees(W(R))}(N,N') \to \Hom_{W(R)}(N_{-1},N'_{-1})\]
is injective. 
For this we take \(W(R)\)-modules \(\tilde L, \tilde L', \tilde T, \tilde T'\) such that \(L\oplus \tilde L\) etc are all free (of rank \(n_L,n_{L'},n_T,n_{T'}\)). Then we look at the map
\[g\oplus\tilde g: \Hom_{\Rees(W(R))}(N\oplus \tilde N, N'\oplus \tilde N') \to \Hom_{W(R)}((N\oplus \tilde N)_{-1},(N'\oplus \tilde N')_{-1}),\]
where \(\tilde N\) resp.\ \(\tilde N'\) are defined with \(\tilde L, \tilde T\) resp.\ \(\tilde L', \tilde T'\), and observe that 
\[N\oplus \tilde N = \Rees\bigl(W(R)\bigr)^{n_L}\oplus \Rees\bigl(W(R)\bigr)^{n_T}(1).\]
Using \(\Hom_R(A^n,B^m)=\Hom_R(A,B)^{n\times m}\), we can eliminate the products, as \(g\oplus\tilde g\) maps each matrix entry separately. So we need to check that maps of the form
%
\begin{align*}
\Hom_{\Rees(W(R))}\Big(\Rees\bigl(W(R)\bigr)(\epsilon),\Rees\bigl(W(R)\bigr)(\delta)\Big) &\to \Hom_{W(R)}\big(W(R),W(R)\big)=W(R),\\
f&\mapsto f|_{-1}
\end{align*}
are injective for \(\epsilon,\delta\in\{0,1\}\). But maps on the left are determined by the image \(f(1)\) of the element \(1\in\Rees\bigl(W(R)\bigr)\), and \(f(1)\) has to be a homogeneous of degree \(\epsilon-\delta\), i.e.\ \(-1\), \(0\) or \(1\). Hence the map is in fact the natural map from the degree \(-1\), \(0\) or \(1\) of \(\Rees\bigl(W(R)\bigr)\) to \(W(R)\), which is indeed injective. Hence \(g\oplus \tilde g\) is injective, and therefore \(g\) as well.
\end{proof}

To finish the proof of Proposition~\ref{CompareBueltelPappas} it now suffices to show the following lemma.

\begin{lemma}\label{IdentifySigmaTau}
Via the equivalence \eqref{EqFunctorPairs} the pullback via $\tau$ is identified with the functor $(M,M_1) \sends M$ and the pullback via $\sigma$ is identified with $(M,M_1) \sends \Mtilde_1$.
\end{lemma}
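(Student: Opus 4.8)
The plan is to compute the two pullback functors $\tau^{*}$ and $\sigma^{*}$ directly on the explicit description of vector bundles of type $\mu$ recalled in Remark~\ref{DseribeVBOfTypeMuWitt}, and to match the outcome with Proposition~\ref{prop_hoff_def_M1_tilde}. I would fix $N\cong\bigl(L\otimes_{W(R)}\Rees(W(R))\bigr)\oplus\bigl(T\otimes_{W(R)}\Rees(W(R))(1)\bigr)$ with $L$, $T$ finite projective over $W(R)$ of ranks $d$ and $h-d$, and write $L^{\phi}:=W(R)\otimes_{W(R),\phi}L$, similarly $T^{\phi}$. In the grading of Remark~\ref{DseribeVBOfTypeMuWitt} one has $N_{-1}=L\oplus T$ and $N_{0}=L\oplus I_{R}T$, so that the pair attached to $N$ by \eqref{EqFunctorPairs} is $(M,M_{1})=(L\oplus T,\;L\oplus I_{R}T)$ with normal decomposition $M=L\oplus T$. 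Since $\tau$ identifies $\Spec W(R)$ with the open substack $\Re(W(R))^{\ne+}$ on which $t$ is invertible, $\tau^{*}N$ is the underlying $W(R)$-module $N_{-\infty}=\colim\bigl(\cdots\xrightarrow{t}N_{j+1}\xrightarrow{t}N_{j}\xrightarrow{t}N_{j-1}\xrightarrow{t}\cdots\bigr)$ of the filtered module $N$ (see \eqref{EqReesStackNePlus}). From the explicit grading, $t\colon N_{j}\to N_{j-1}$ is an isomorphism for every $j\le-1$, so the canonical map $N_{-1}\to N_{-\infty}$ is an isomorphism, natural in $N$, and on morphisms it restricts $f$ to $f|_{N_{-1}}$, which is precisely the underlying map of the associated morphism of pairs. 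This identifies $\tau^{*}$ with $(M,M_{1})\sends M$.

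For $\sigma$ I would first recall (Remark~\ref{MapsToQuotientbyGGm}) that it is given by the trivial line bundle together with the graded-algebra map $\sigma^{*}\colon\phi^{*}\Rees(W(R))\to T(W(R))$ with $\sigma^{*}_{-1}(1\otimes t)=p$, $\sigma^{*}_{0}(1\otimes a)=\phi(a)$ and $\sigma^{*}_{i}(1\otimes V(a))=a$ for $i\ge1$, so that $\sigma^{*}P=\bigl(T(W(R))\otimes_{\phi^{*}\Rees(W(R))}\phi^{*}P\bigr)_{0}$ for a graded module $P$. On objects the identification is then immediate: the composite of $\sigma$ with the structure map $\Re(W(R))\to\Spec W(R)$ equals $\Spec\phi$; the summand $L\otimes_{W(R)}\Rees(W(R))$ is the pullback of $L$ along that structure map, and $T\otimes_{W(R)}\Rees(W(R))(1)$ is such a pullback of $T$ twisted by $\Oscr_{\Re(W(R))}(1)$; and $\sigma^{*}\Oscr_{\Re(W(R))}(1)$ is trivial, being trivialized by the image of $V(1)\in\Fil^{1}$ since $\sigma^{*}_{1}(1\otimes V(1))=1$. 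Hence $\sigma^{*}N\cong L^{\phi}\oplus T^{\phi}=\Mtilde_{1}$, matching Proposition~\ref{prop_hoff_def_M1_tilde} on objects.

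The remaining, and main, point is to match $\sigma^{*}$ with Hoff's functor on morphisms. Given $f\colon N\to N'$ corresponding to a quadruple $(a,b,c,d)$ assembled into a graded map as in the proof of Lemma~\ref{IdentifyPairs}, I would apply $\sigma^{*}$ to the four blocks $a\otimes\id$, $d\otimes\id$, $b\otimes(\cdot t)$ and $\tilde c$ and read off the result through the identifications above. The blocks $a\otimes\id$ and $d\otimes\id$ are induced by $a$ in degree $0$ and by $d$ in degree $-1$, and $\sigma^{*}_{0}=\phi(-)$ turns them into $a^{\phi}$ and $d^{\phi}$. The block $b\otimes(\cdot t)$ is $b$ in degree $-1$ post-composed with multiplication by $t$, and since the map $\cdot t\colon\Oscr_{\Re(W(R))}(1)\to\Oscr_{\Re(W(R))}$ pulls back along $\sigma$ to multiplication by $\sigma^{*}(1\otimes t)=p$, this block gives $pb^{\phi}$. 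Finally $\tilde c$ is, in degree $0$, the map $c\colon L\to I_{R}T'$; writing $c(l)=\sum_{i}V(y_{i})\tau_{i}'$ with $y_{i}\in W(R)$, $\tau_{i}'\in T'$, and using $\sigma^{*}_{i}(1\otimes V(y_{i}))=y_{i}$, one finds $\sigma^{*}(\tilde c)(1\otimes l)=\sum_{i}y_{i}\otimes\tau_{i}'\in (T')^{\phi}$, which is exactly $\dot c(1\otimes l)$, because the $\phi$-semilinear map $V(x)\mapsto x$ on $I_{R}$ has linearization $\phi^{\mathrm{div}}\colon I_{R}^{\phi}\to W(R)$ and hence $(\phi^{\mathrm{div}}\otimes\id)\circ c^{\phi}=\sigma^{*}(\tilde c)$. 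Assembling the four entries yields $\sigma^{*}f=\begin{pmatrix}a^{\phi}&pb^{\phi}\\\dot c&d^{\phi}\end{pmatrix}$, which is the value of the functor of Proposition~\ref{prop_hoff_def_M1_tilde} on the morphism of pairs $(a,b,c,d)$, and every step is visibly compatible with ring maps $R\to R'$. The hard part is precisely this computation: one has to read off the four blocks in mutually compatible degrees, so that the degree shift implicit in ``$\cdot t$'' is what manufactures the factor $p$, and one has to match the somewhat delicate degree-$\ge1$ formula defining $\tilde c$ in the proof of Lemma~\ref{IdentifyPairs} — present precisely because $W(R)$ may carry $p$-torsion — with the divided Frobenius $\phi^{\mathrm{div}}$; none of this is deep, but it requires care. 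This completes the proof of the lemma, and thereby of Proposition~\ref{CompareBueltelPappas}.
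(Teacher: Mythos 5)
Your proposal is correct and follows essentially the same route as the paper's proof: the decisive step in both is the explicit evaluation of $\sigma^*$ on the four blocks $(a\otimes\id,\ b\otimes\cdot t,\ \tilde c,\ d\otimes\id)$, with the factor $p$ coming from $\sigma^*(1\otimes t)=p$ and the identification of $\sigma^*(\tilde c)$ with $\dot c=(\phi^{\mathrm{div}}\otimes\id)\circ c^\phi$ via $V(y)\mapsto y$, which is exactly the content of the paper's diagram chase. Your treatment of $\tau^*$ via the underlying module $N_{-\infty}$ on the $t$-invertible locus is a slightly more structural packaging of the paper's tensor computation along $\tilde\tau$, but not a genuinely different argument.
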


Let us recall the definition of $\tau, \sigma\colon \Spec W(R) \to \Re(W(R))$ from Section~\ref{Section:TruncatedDisplayStack}. By the description of quotient stacks, both maps correspond to line bundles \(L\) on \(\Spec W(R)\) together with equivariant maps \(L\to \Spec\Rees\bigl(W(R)\bigr)\), or in the language of modules, a rank 1 projective module \(A\) over \(W(R)\) and a graded map \(\Rees\bigl(W(R)\bigr)\to \bigoplus_{i\in\ZZ}A^{\otimes i}\cdot t^{-i}\). For our two special points we choose \(A=W(R)\) both times, and for the graded maps we have 
\[\begin{NiceArray}{rr@{\;}c@{\;}lw{r}{1.7cm}@{\;}ll}
\Block{2-1}{\tilde\tau:}&\Block{2-1}{\Rees\bigl(W(R)\bigr)} &\Block{2-1}{\to}& \Block{2-1}{W(R)[t^{\pm1}],}& xt^{-i} &\mapsto xt^{-i},& i\le 1\\
& & & & xt^{-i} &\mapsto pxt^{-i}, & i\ge2\\
& \\
\Block{2-1}{\tilde\sigma:}&\Block{2-1}{\phi^*\Rees\bigl(W(R)\bigr)} &\Block{2-1}{\to}& \Block{2-1}{W(R)[t^{\pm1}],}& xt^{-i} &\mapsto \phi(x)p^{-i} t^{-i}, & i\le 0 \\
& & & & V(x)t^{-i} &\mapsto xt^{-i}, & i\ge1
\CodeAfter
\SubMatrix{\{}{1-5}{2-5}{.}
\SubMatrix{\{}{4-5}{5-5}{.}
\end{NiceArray}\]
In Section~\ref{Section:TruncatedDisplayStack}, $\tilde{\sigma}$ is denoted by $\sigma^*$.
%

\begin{proof}
If \(N = \left(L \underset{W(R)}{\otimes} \Rees\bigl(W(R)\bigr)\right) \oplus \left(T \underset{W(R)}{\otimes}\Rees\bigl(W(R)\bigr)(1)\right)\), then we have to show
\begin{align*}
    \tau^*N &\quad\text{corresponds to}\quad L\oplus T \quad\text{and}\\
    \sigma^*N &\quad\text{corresponds to}\quad L^\phi \oplus T^\phi.
\end{align*}
To see this, we look at the diagram 
\[\begin{tikzcd}
    & \QCoh{[\Spec W(R)[t^{\pm1}]/\GG_m]} \\
    \QCoh{\Spec W(R)} \ar[ur,"M\mapsto \bigoplus_{i\in\ZZ}M\cdot t^i","\sim"' sloped] & & {\QCoh{\Re(W(R))}.} \ar[ul,"{N\otimes \tilde\tau \mapsfrom N}", sloped, near start] \ar[ll,"\tau^*N \mapsfrom N"]
\end{tikzcd}\]
So in order to compute \(\tau^* N\), we only need to compute any degree of \(N\otimes \tilde\tau^*\). But given the decomposition into \(L\) and \(T\), this is an easy task, as 
\[\left(L\tensor{W(R)}\Rees\bigl(W(R)\bigr)\right)\tensor{\Rees(W(R)),\tilde\tau}W(R)[t^{\pm1}] = L\tensor{W(R)}W(R)[t^{\pm1}] = \bigoplus_{i\in\ZZ} L\cdot t^{-i}.\]
(This uses that \(W(R)\to\Rees\bigl(W(R)\bigr)\xrightarrow{\tilde\tau}W(R)[t^{\pm1}]\) is the canonical map \(x\mapsto xt^0\).) For the \(T\)-component basically the same calculation holds true, just the degrees are shifted by 1. This shows \(\tau^*N=L\oplus T\). It remains to show \(\sigma^*N=L^\phi \oplus T^\phi\).

For \(\sigma^* N\) we do the same computation, but with \(\tau\) replaced by \(\sigma\) everywhere.
\[\left(L\tensor{W(R)}\Rees\bigl(W(R)\bigr)\right)\tensor{\Rees(W(R)),\tilde\sigma}W(R)[t^{\pm1}] = L \tensor{W(R),\phi}W(R)[t^{\pm1}] = \bigoplus_{i\in\ZZ} L^\phi\cdot t^{-i}\]
with \(\phi: W(R)\to\Rees\bigl(W(R)\bigr)\xrightarrow{\tilde\sigma}W(R)[t^{\pm1}],\;\;x\mapsto \phi(x)t^0\). Again, up to shift, the same holds for \(T\). This shows the claim, and hence the map \(\Psi\) from displays and \(\psi\) from Rees stacks can be identified with each other through the choice of a normal decomposition. 

We still need to check compatibility on morphisms, i.e.\ that the diagram 
\[\begin{tikzcd}[ampersand replacement = \&]
    L^\phi \oplus T^\phi \rar["\Psi"] \dar["{\left(\begin{smallmatrix}
        a^\phi & pb^\phi \\
        \dot c & d^\phi
    \end{smallmatrix}\right)}"'] \& L \oplus T \dar["{\left(\begin{smallmatrix}
        a&b\\c&d
    \end{smallmatrix}\right)}"] \\[10pt]
    L'^\phi \oplus T'^\phi \rar["\Psi'"] \& L' \oplus T'
\end{tikzcd}\]
commutes if and only if 
\[\begin{tikzcd}[ampersand replacement = \&]
    \sigma^* N \rar["\psi"] \dar["{\sigma^*\left(\begin{smallmatrix}
        a\otimes\id & b\otimes \cdot t \\
        \tilde c & d\otimes \id
    \end{smallmatrix}\right)}"'] \& \tau^* N \dar["{\tau^*\left(\begin{smallmatrix}
        a\otimes\id & b\otimes \cdot t \\
        \tilde c & d\otimes \id
    \end{smallmatrix}\right)}"] \\[10pt]
    \sigma^* N' \rar["\psi'"] \& \tau^* N'
\end{tikzcd}\]
commutes. For this we check that both 
\[\begin{tikzcd}[ampersand replacement = \&]
    L\oplus T \rar["\sim"] \dar["{\left(\begin{smallmatrix}
        a&b\\c&d
    \end{smallmatrix}\right)}"'] \& \tau^* N \dar["{\tau^*\left(\begin{smallmatrix}
        a\otimes\id & b\otimes \cdot t \\
        \tilde c & d\otimes \id
    \end{smallmatrix}\right)}"] \\[10pt]
    L'\oplus T' \rar["\sim"] \& \tau^* N'
\end{tikzcd}\qquad\text{and}\qquad
\begin{tikzcd}[ampersand replacement = \&]
    L^\phi \oplus T^\phi \rar["\sim"] \dar["{\left(\begin{smallmatrix}
        a^\phi & pb^\phi \\
        \dot c & d^\phi
    \end{smallmatrix}\right)}"'] \& \sigma^* N \dar["{\sigma^*\left(\begin{smallmatrix}
        a\otimes\id & b\otimes \cdot t \\
        \tilde c & d\otimes \id
    \end{smallmatrix}\right)}"] \\[10pt]
    L'^\phi \oplus T'^\phi \rar["\sim"] \& \sigma^* N'
\end{tikzcd}\]
commute. The essential step is checking the "\(c\)"-component on the right. This we do by looking at the following diagram, where we use that \(c(l)\) can be written as (a sum of) \(V(y_l)\cdot a_l\), for \(l\in L\). The index 0 or 1 stands for the homogeneous part of the corresponding degree of the graded module. For \(T'\), we have degree 1 due to the shift.

\[\scalebox{0.588}{
\begin{tikzcd}[ampersand replacement = \&]
l\otimes x \arrow[rrr, maps to] \arrow[dd, maps to]       \& \& \& l\otimes x \arrow[dd, maps to] \\
\& {L^\phi = L\tensor{W(R),\phi}W(R)} \arrow[r, "l\otimes x\mapsto l\otimes x","\sim"'] \dar["{\substack{l\otimes x\\ \rotatebox[origin=c]{-90}{\(\mapsto\)}\\ (V(y_l)\otimes x)\otimes (a_l\otimes1)}}"',"c^\phi"] \ar[dd,"\dot c", bend left=60, shift left=12.5] \& {\left(L\tensor{W(R),\phi}W(R)[t^{\pm1}]\right)_0} \dar["{\rotatebox[origin=c]{90}{\(\sim\)}}"',"{\substack{l\otimes \sigma(x)y\\ \rotatebox[origin=c]{90}{\(\mapsto\)}\\ l\otimes x\otimes y}}"] \\[10pt]
(V(y_l)\otimes x)\otimes(a_l\otimes 1) \arrow[d, maps to] \& \big(I_R\otimes_\phi W(R)\big)\tensor{W(R)}\big(T'\otimes_\phi W(R)\big) \dar["{\substack{(V(y)\otimes x_1)\otimes(m\otimes x_2)\\ \rotatebox[origin=c]{-90}{\(\mapsto\)}\\ m\otimes y x_1 x_2}}"',"\phi^\mathrm{div}\otimes \id"] \& {\left(L\tensor{W(R)}\Rees\bigl(W(R)\bigr)\otimes_{\tilde\sigma} W(R)[t^{\pm1}]\right)_0}=\sigma^*L \dar["\sigma^*\tilde c"',"{\substack{l\otimes x\otimes y \\ \rotatebox[origin=c]{-90}{\(\mapsto\)}\\ \tilde c(l\otimes x)\otimes y}}"]  \& l\otimes 1\otimes x \arrow[d, maps to]             \\[10pt]
a_l\otimes y_l x \arrow[rd, maps to]                      \& T'\otimes_\phi W(R) \arrow[rd,"m\otimes x\mapsto m\otimes xt\inv"' sloped] \& {\left(T'\tensor{W(R)}\Rees\bigl(W(R)\bigr)\otimes_{\tilde\sigma} W(R)[t^{\pm1}]\right)_1} = \sigma^* T' \dar["{\rotatebox[origin=c]{90}{\(\sim\)}}"',"{\substack{m\otimes x\otimes y\\ \rotatebox[origin=c]{-90}{\(\mapsto\)}\\ m\otimes \sigma(x)y}}"] \& a_l\otimes V(y_l)t\inv\otimes x \arrow[d, maps to] \\[10pt]
\& a_l\otimes y_l x t\inv \& {\left(T'\tensor{W(R),\phi}W(R)[t^{\pm1}]\right)_1} \& a_l \otimes y_l x t\inv
\end{tikzcd}
}\]
\end{proof}



%

\bibliographystyle{amsalpha}
\bibliography{references}

\end{document}